\newtheorem{theorem}{Theorem}[section]
\newtheorem{definition}[theorem]{Definition}
\newtheorem{proposition}[theorem]{Proposition}
\begin{document}

\title[Noncommutative spheres]{Quantum isometries, noncommutative spheres, and related integrals}

\author{Teodor Banica}
\address{T.B.: Department of Mathematics, Cergy-Pontoise University, 95000 Cergy-Pontoise, France. {\tt teodor.banica@u-cergy.fr}}

\subjclass[2010]{14A22 (16T05)}
\keywords{Quantum isometry, Noncommutative sphere}

\begin{abstract}
The sphere $S^{N-1}_\mathbb R$ has a half-liberated analogue $S^{N-1}_{\mathbb R,*}$, and a free analogue $S^{N-1}_{\mathbb R,+}$. This is a presentation of the construction and main properties of these noncommutative spheres, $S^{N-1}_\mathbb R\subset S^{N-1}_{\mathbb R,*}\subset S^{N-1}_{\mathbb R,+}$, and of their quantum isometry groups.
\end{abstract}

\maketitle

\tableofcontents

\section*{Introduction}

A recent discovery, from \cite{bsp}, \cite{bve}, states that under the ``strongest possible axioms'', there are only three orthogonal quantum groups, $O_N\subset O_N^*\subset O_N^+$. These quantum groups correspond to three ``main'' noncommutative spheres, $S^{N-1}_\mathbb R\subset S^{N-1}_{\mathbb R,*}\subset S^{N-1}_{\mathbb R,+}$, introduced and studied in \cite{bgo}, and in a number of subsequent papers. 

We discuss here these constructions and results, by using a ``sphere-first'' approach, which is perhaps more natural. All the needed preliminaries are included. 

This is based on lecture notes from a minicourse given at the Summer school ``Topological quantum groups'', Bedlewo 2015. It is a pleasure to thank Uwe Franz, Adam Skalski and Piotr So\l tan for the invitation, and for the nice  organization of the meeting.

\section{Noncommutative spheres}

We adhere here to the general principle that ``the noncommutative spaces are the abstract duals of the operator algebras''. Our starting point will be:

\begin{definition}
A $C^*$-algebra is a complex algebra with unit, with an involution $*$ and a norm $||.||$, such that the Cauchy sequences converge, and such that $||aa^*||=||a||^2$.
\end{definition}

The basic example is the matrix algebra $M_N(\mathbb C)$, with involution $(M^*)_{ij}=\overline{M}_{ji}$, and with the norm $||M||=\sup_{||x||=1}||Mx||$. More generally, we have as example $B(H)$, the algebra of bounded operators $T:H\to H$ on a Hilbert space $H$, with involution given by $<T^*x,y>=<x,Ty>$, and with norm $||T||=\sup_{||x||=1}||Tx||$. The GNS theorem states that any $C^*$-algebra appears as closed $*$-subalgebra of some $B(H)$.

Another key example is $C(X)$, the algebra of continuous functions on
a compact space $X$, with involution $f^*(x)=\overline{f(x)}$, and with norm $||f||=\sup_{x\in X}|f(x)|$. The Gelfand theorem states that any
commutative $C^*$-algebra is of this form. To be more precise, given a commutative $C^*$-algebra $A$, the underlying compact space $X=Spec(A)$ is the set of characters $\chi:A\to\mathbb C$, with topology making the evaluation maps continuous. 

In view of Gelfand's theorem, we can formulate:

\begin{definition}
The category of noncommutative compact spaces is the category of the $C^*$-algebras, with the arrows reversed. Given a noncommutative compact space $X$, coming from a $C^*$-algebra $A$, we write $A=C(X)$, and $X=Spec(A)$.
\end{definition}

Observe that the category of usual compact spaces embeds into the category of noncommutative compact spaces. More precisely, a compact space $X$ corresponds to the noncommutative space associated to the algebra $A=C(X)$. In addition, in this situation, $X$ can be recovered as a Gelfand spectrum, $X=Spec(A)$.

Consider now the standard sphere, $S^{N-1}_\mathbb R=\{x\in\mathbb R^N|\sum_ix_i^2=1\}$. In order to discuss its noncommutative analogues, we must first understand the associated algebra $C(S^{N-1}_\mathbb R)$. The result here, coming from the Gelfand theorem, is as follows:

\begin{proposition}
We have the presentation result
$$C(S^{N-1}_\mathbb R)=C^*_{comm}\left(x_1,\ldots,x_N\Big|x_i=x_i^*,\sum_ix_i^2=1\right)$$
where by $C^*_{comm}$ we mean universal commutative $C^*$-algebra.
\end{proposition}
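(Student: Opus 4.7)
The plan is to use Gelfand duality: denote the right-hand side universal algebra by $A$, show it is a well-defined commutative $C^*$-algebra, compute its spectrum $Spec(A)$ by inspecting characters, and verify that the spectrum is homeomorphic to $S^{N-1}_\mathbb R$.

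First I would address the well-definedness of $A$. The defining relations are $x_i=x_i^*$ and $\sum_i x_i^2=1$, and one needs the supremum of admissible $C^*$-seminorms on the free commutative $*$-algebra modulo these relations to be finite. The key observation is that in any commutative $C^*$-algebra $B$ realizing the relations with elements $y_1,\dots,y_N$, each $y_i^2$ is positive and majorized by $\sum_j y_j^2=1$, hence $\|y_i\|^2=\|y_i^2\|\leq 1$. This uniform bound ensures that the universal $C^*$-norm exists and makes $A$ a genuine unital commutative $C^*$-algebra.

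Next, I would apply the Gelfand theorem to get $A=C(X)$ with $X=Spec(A)$. A character $\chi:A\to\mathbb C$ is determined by the scalars $\lambda_i:=\chi(x_i)$. Because $x_i=x_i^*$, each $\lambda_i$ is real; because $\sum_i x_i^2=1$, the tuple satisfies $\sum_i\lambda_i^2=1$. Conversely any point of $S^{N-1}_\mathbb R$ clearly defines such a character through evaluation of polynomials (extended by continuity), so the assignment $\chi\mapsto(\lambda_1,\dots,\lambda_N)$ furnishes a bijection $X\to S^{N-1}_\mathbb R$. Continuity follows because the coordinate maps $\chi\mapsto\chi(x_i)$ are continuous in the weak-$*$ topology on $X$, and the inverse map is continuous since it is a continuous bijection between compact Hausdorff spaces.

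Finally, I would reconcile this with the universal property directly. The coordinate functions $x_1,\dots,x_N\in C(S^{N-1}_\mathbb R)$ are self-adjoint, pairwise commuting, and satisfy $\sum_ix_i^2=1$, so universality yields a surjective $*$-morphism $\pi:A\to C(S^{N-1}_\mathbb R)$. Injectivity is then equivalent to the spectrum identification $Spec(A)\cong S^{N-1}_\mathbb R$ established above, because $\pi$ dualizes to the homeomorphism $S^{N-1}_\mathbb R\to X$ obtained by evaluation at points. The main obstacle I anticipate is the very first step, checking that the universal object exists; once that is in place the rest is a routine unwinding of Gelfand duality together with the fact that a bijective continuous map between compact Hausdorff spaces is a homeomorphism.
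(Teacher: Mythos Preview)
Your proof is correct and follows essentially the same route as the paper: apply Gelfand duality to the universal commutative $C^*$-algebra and identify its spectrum with $S^{N-1}_\mathbb R$ via the coordinate functions. The paper phrases the conclusion slightly differently, building two morphisms (one via universality and Stone--Weierstrass, one via the inclusion $X\subset S^{N-1}_\mathbb R$ coming from Gelfand) and checking they are mutually inverse, while you compute the character space directly and add the explicit norm-bound argument for well-definedness; but the underlying idea is the same.
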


\begin{proof}
We have a morphism from right to left, which by the Stone-Weierstrass theorem is surjective. In the other sense now, the universal algebra on the right being commutative, by the Gelfand theorem it must be of the form $C(X)$, for a certain compact space $X$. The coordinate functions $x_i$ provide us with an embedding $X\subset\mathbb R^N$, and then the quadratic condition $\sum_ix_i^2=1$ shows that we have $X\subset S^{N-1}_\mathbb R$. Thus, we have as well a morphism from left to right. Since the two morphisms that we constructed map standard coordinates to standard coordinates, they are inverse to each other, and we are done.
\end{proof}

The idea now is to replace the commutation relations $ab=ba$ between the standard coordinates on $S^{N-1}_\mathbb R$ by some weaker relations. A first choice is that of using no relations at all. A second choice, coming from the easy quantum group philosophy \cite{bsp}, is that of using the ``half-commutation'' relations $abc=cba$. So, let us formulate:

\begin{definition}
Associated to any $N\in\mathbb N$ is the following universal $C^*$-algebra:
$$C(S^{N-1}_{\mathbb R,+})=C^*\left(x_1,\ldots,x_N\Big|x_i=x_i^*,\sum_ix_i^2=1\right)$$
The quotient of this algebra by the relations $x_ix_jx_k=x_kx_jx_i$ is denoted $C(S^{N-1}_{\mathbb R,*})$.
\end{definition}

Observe that the above two algebras are indeed well-defined, because the quadratic relations $\sum_ix_i^2=1$ show that we have $||x_i||\leq1$, for any $C^*$-norm. Thus the biggest $C^*$-norm is bounded, and the enveloping $C^*$-algebras are well-defined.

Given a noncommutative compact space $X=Spec(A)$, its classical version $X_{class}$, which is a usual compact space, is by definition the Gelfand spectrum $X_{class}=Spec(A/I)$, where $I\subset A$ is the commutator ideal. With this convention, we have:

\begin{proposition}
We have inclusions of noncommutative compact spaces 
$$S^{N-1}_\mathbb R\subset S^{N-1}_{\mathbb R,*}\subset S^{N-1}_{\mathbb R,+}$$
and $S^{N-1}_\mathbb R$ is the classical version of both the spaces on the right.
\end{proposition}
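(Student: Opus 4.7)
The plan is to translate everything into the language of $C^*$-algebras, where the claimed inclusions of noncommutative spaces become surjective $*$-homomorphisms
$$C(S^{N-1}_{\mathbb R,+})\twoheadrightarrow C(S^{N-1}_{\mathbb R,*})\twoheadrightarrow C(S^{N-1}_\mathbb R),$$
and the classical-version statement becomes: the commutator quotient of each of the first two algebras is isomorphic to $C(S^{N-1}_\mathbb R)$.

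First I would dispatch the two surjections. The map $C(S^{N-1}_{\mathbb R,+})\twoheadrightarrow C(S^{N-1}_{\mathbb R,*})$ is immediate from Definition~1.4, as $C(S^{N-1}_{\mathbb R,*})$ is by construction a quotient. For $C(S^{N-1}_{\mathbb R,*})\twoheadrightarrow C(S^{N-1}_\mathbb R)$, I would invoke the universal property: inside $C(S^{N-1}_\mathbb R)$, the standard coordinates are self-adjoint, satisfy $\sum_i x_i^2=1$, and (being commutative) satisfy $x_ix_jx_k=x_kx_jx_i$; hence the universal property of $C(S^{N-1}_{\mathbb R,*})$ yields a morphism sending generators to generators, which is surjective by Stone--Weierstrass (or simply because its image contains the generators).

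For the classical version statement, I would argue for $S^{N-1}_{\mathbb R,+}$ and note the case of $S^{N-1}_{\mathbb R,*}$ is identical. Let $I$ be the commutator ideal of $A=C(S^{N-1}_{\mathbb R,+})$, and let $B=A/I$. The images $\bar x_i$ of the generators are self-adjoint, pairwise commuting, and satisfy $\sum_i\bar x_i^2=1$; so by Proposition~1.3 the universal property of $C(S^{N-1}_\mathbb R)$ produces a morphism $C(S^{N-1}_\mathbb R)\to B$ sending coordinates to $\bar x_i$. Conversely, the surjection $A\twoheadrightarrow C(S^{N-1}_\mathbb R)$ constructed above factors through $B$, because $C(S^{N-1}_\mathbb R)$ is commutative, giving a morphism $B\to C(S^{N-1}_\mathbb R)$. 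The two morphisms match generators with generators, so they are mutually inverse, and $B\cong C(S^{N-1}_\mathbb R)$ as required.

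None of the steps is really an obstacle: the whole argument is a careful unpacking of universal properties plus an application of Proposition~1.3. The only point requiring a second of thought is the factorization through the commutator ideal in the last paragraph, which works precisely because the target algebra is commutative, so the commutators are annihilated automatically.
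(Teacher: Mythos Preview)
Your proof is correct and follows essentially the same approach as the paper: the inclusions come from the quotient maps induced by the fact that commutation implies half-commutation, and the classical-version assertion is deduced from Proposition~1.3. The paper's proof is simply a terser version of yours, saying only that the last assertion ``follows from Proposition~1.3'' where you have spelled out the two mutually inverse morphisms explicitly.
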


\begin{proof}
Since the commutation relations $ab=ba$ imply the half-commutation relations $abc=cba$, we have quotient maps $C(S^{N-1}_{\mathbb R,+})\to C(S^{N-1}_{\mathbb R,*})\to C(S^{N-1}_\mathbb R)$, which correspond to inclusions as above. As for the last assertion, this follows from Proposition 1.3.
\end{proof}

As already mentioned, the definition of $S^{N-1}_{\mathbb R,*}$ is quite tricky. Our claim is that, under strong axioms, this sphere is the unique intermediate one $S^{N-1}_\mathbb R\subset S\subset S^{N-1}_{\mathbb R,+}$. This will be discussed later on, in section 2 below. For the moment, let us just record an elementary result, which can serve as a temporary motivation for the study of our 3 spheres:

\begin{proposition}
The closed subspace $S^{(k)}\subset S^{N-1}_{\mathbb R,+}$ obtained by imposing the relations $a_1\ldots a_k=a_k\ldots a_1$ to the standard coordinates of $S^{N-1}_{\mathbb R,+}$ is as follows:
\begin{enumerate}
\item At $k=1$ we have $S^{(k)}=S^{N-1}_{\mathbb R,+}$.

\item At $k=2,4,6,\ldots$ we have $S^{(k)}=S^{N-1}_\mathbb R$.

\item At $k=3,5,7,\ldots$ we have $S^{(k)}=S^{N-1}_{\mathbb R,*}$.
\end{enumerate}
\end{proposition}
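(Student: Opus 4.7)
The plan is to treat $k=1$, $k$ even, and $k$ odd (with $k \geq 3$) as three separate cases. For $k=1$ the relation $a_1 = a_1$ is vacuous, so $S^{(1)} = S^{N-1}_{\mathbb R,+}$ trivially. For all $k \geq 2$ the inclusion $S^{N-1}_{\mathbb R} \subseteq S^{(k)}$ is immediate, so in each subcase the task is to identify which further quotient relations the $k$-reversal generates.

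For $k \geq 4$ my main tool will be a uniform ``pair-and-sum'' trick. In the $k$-reversal $a_1 \cdots a_k = a_k \cdots a_1$ I would specialise the $a_j$'s so that, apart from a few free endpoints, all remaining positions are grouped in consecutive pairs set equal to the same generator, $a_{2r} = a_{2r+1} = x_{l_r}$. Each such pair contributes a factor $x_{l_r}^2$ on each side; summing over the pair indices $l_r$ and applying $\sum_l x_l^2 = 1$ collapses those factors to $1$, so only the relation on the free endpoints survives. For even $k = 2m$, leaving $a_1 = x_i$ and $a_{2m} = x_j$ free yields $x_i x_j = x_j x_i$, giving $S^{(k)} \subseteq S^{N-1}_{\mathbb R}$. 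For odd $k = 2m+1$, leaving $a_1 = x_i$, $a_2 = x_j$, $a_3 = x_l$ free yields $x_i x_j x_l = x_l x_j x_i$, giving $S^{(k)} \subseteq S^{N-1}_{\mathbb R,*}$.

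The main obstacle will be the converse inclusion $S^{N-1}_{\mathbb R,*} \subseteq S^{(k)}$ for odd $k \geq 5$: one must derive the full $k$-reversal from half-commutation alone, without recourse to the sphere relation. My plan is to observe that applying half-commutation $a_{i-1} a_i a_{i+1} = a_{i+1} a_i a_{i-1}$ inside a longer word realizes exactly the positional transposition $(i-1, i+1)$. The subgroup of $\mathrm{Sym}(k)$ generated by $(1,3), (2,4), \ldots, (k-2,k)$ is precisely $\mathrm{Sym}(\text{odd positions}) \times \mathrm{Sym}(\text{even positions})$. For $k = 2m+1$ odd, the reversal $i \mapsto 2m+2-i$ preserves parity of position, hence lies in this subgroup and can be realized by a sequence of half-commutations, yielding $a_1 \cdots a_k = a_k \cdots a_1$ in $S^{N-1}_{\mathbb R,*}$.

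This same parity observation also explains, as a sanity check, why the sphere relation is genuinely needed in the even case: for $k = 2m$, the reversal $i \mapsto 2m+1-i$ swaps parities of positions and therefore escapes the subgroup reachable by half-commutations alone, so the collapse to full commutation really must exploit $\sum_l x_l^2 = 1$.
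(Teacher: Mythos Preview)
Your argument is correct and rests on the same summation trick $\sum_l x_l^2=1$ that the paper uses, but the two proofs are organised differently. The paper proceeds inductively: it shows $S^{(k+2)}\subset S^{(k)}$ for every $k\geq2$ by setting only the last two letters equal and summing once, then chains these inclusions down to $S^{(2)}$ or $S^{(3)}$; you instead collapse all the interior pairs in a single step, going directly from the $k$-reversal to the $2$- or $3$-reversal. Both are equally valid; the paper's version is marginally cleaner because it needs only the one relation $a_1\ldots a_kx_j^2=x_j^2a_k\ldots a_1$ and no bookkeeping about which positions are paired.

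Your treatment of the converse $S^{N-1}_{\mathbb R,*}\subset S^{(k)}$ for odd $k$ is actually more detailed than the paper's, which simply declares it ``elementary to check''. Your observation that half-commutation realises exactly the positional transpositions $(i-1,i+1)$, that these generate $\mathrm{Sym}(\text{odd})\times\mathrm{Sym}(\text{even})\subset S_k$, and that the reversal $i\mapsto k+1-i$ is parity-preserving precisely when $k$ is odd, is a clean way to make that step explicit; this is essentially the structure of the group $S_\infty^*$ that the paper develops later in Proposition~2.5. So your argument anticipates that later material and uses it here, which is a nice unification.

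One small notational point: your pairing formula $a_{2r}=a_{2r+1}$ literally reads as pairing positions $(2,3),(4,5),\ldots$, which fits the even case but clashes with the odd case where positions $2,3$ are among the free endpoints. The intent is clear, but if you write this up you should say ``consecutive pairs among the non-free positions'' rather than committing to that specific indexing.
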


\begin{proof}
Since the relations $ab=ba$ imply the relations $a_1\ldots a_k=a_k\ldots a_1$ for $k\geq2$, we have $S^{(2)}\subset S^{(k)}$ for $k\geq2$. It is also elementary to check that the relations $abc=cba$ imply the relations $a_1\ldots a_k=a_k\ldots a_1$ for $k\geq3$ odd, so $S^{(3)}\subset S^{(k)}$ for $k\geq3$ odd.

Our claim now is that we have $S^{(k+2)}\subset S^{(k)}$, for any $k\geq2$. In order to prove this, we must show that the relations $a_1\ldots a_{k+2}=a_{k+2}\ldots a_1$ between $x_1,\ldots,x_N$ imply the relations $a_1\ldots a_k=a_k\ldots a_1$ between $x_1,\ldots,x_N$. But this holds indeed, because:
\begin{eqnarray*}
x_{i_1}\ldots x_{i_{k+2}}=x_{i_{k+2}}\ldots x_{i_1}
&\implies&x_{i_1}\ldots x_{i_k}x_j^2=x_j^2x_{i_k}\ldots x_{i_1}\\
&\implies&\sum_jx_{i_1}\ldots x_{i_k}x_j^2=\sum_jx_j^2x_{i_k}\ldots x_{i_1}\\
&\implies&x_{i_1}\ldots x_{i_k}=x_{i_k}\ldots x_{i_1}
\end{eqnarray*}

Summing up, we have proved that we have inclusions $S^{(2)}\subset\ldots\subset S^{(6)}\subset S^{(4)}\subset S^{(2)}$ and $S^{(3)}\subset\ldots\subset S^{(7)}\subset S^{(5)}\subset S^{(3)}$, and this gives the result.
\end{proof}

Given a closed subspace $S\subset S^{N-1}_{\mathbb R,+}$, the associated ``noncommutative cube'' $K\subset S$ is obtained by setting $C(K)=C(S)/<x_i^2=\frac{1}{N}>$. As a basic example, for the usual sphere $S=S^{N-1}_\mathbb R$ we obtain the usual cube, $K=\{x\in\mathbb R^N|x_i=\pm\frac{1}{\sqrt{N}}\}$. Also, given a discrete group $\Gamma$, we denote by $\widehat{\Gamma}$ the noncommutative space dual to $A=C^*(\Gamma)$. We have:

\begin{proposition}
The noncommutative cubes associated to the $3$ spheres are
$$\begin{matrix}
S^{N-1}_\mathbb R&\subset&S^{N-1}_{\mathbb R,*}&\subset&S^{N-1}_{\mathbb R,+}\\
\\
\cup&&\cup&&\cup\\
\\
\widehat{\mathbb Z_2^N}&\subset&\widehat{\mathbb Z_2^{\circ N}}&\subset&\widehat{\mathbb Z_2^{*N}}
\end{matrix}$$
where $\mathbb Z_2^{\circ N}=<g_1,\ldots,g_N|g_i^2=1,g_ig_jg_k=g_kg_jg_i>$. All these inclusions are proper.
\end{proposition}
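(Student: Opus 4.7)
The plan is to compute each noncommutative cube directly from its presentation, via the substitution $g_i=\sqrt{N}\, x_i$. For any of the three spheres $S$, adjoining the relations $x_i^2=\tfrac{1}{N}$ forces $\sum_i x_i^2=1$ automatically, so $C(K)$ is presented by self-adjoint $x_i$ with $x_i^2=\tfrac{1}{N}$ together with whatever extra relations defined $S$. Rescaling, this becomes a presentation of a universal $C^*$-algebra on self-adjoint unitaries $g_i^2=1$ subject to: no further relation for $S^{N-1}_{\mathbb R,+}$; the half-commutation $g_ig_jg_k=g_kg_jg_i$ for $S^{N-1}_{\mathbb R,*}$; and full commutation $g_ig_j=g_jg_i$ for $S^{N-1}_\mathbb R$. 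By the universal property of the group $C^*$-algebra of a finitely presented discrete group generated by involutions, these algebras are respectively $C^*(\mathbb Z_2^{*N})$, $C^*(\mathbb Z_2^{\circ N})$, and $C^*(\mathbb Z_2^N)$, yielding the three duals in the statement.

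For the properness of the horizontal inclusions of cubes, I would argue directly at the level of groups. The inclusion $\widehat{\mathbb Z_2^{\circ N}}\subsetneq\widehat{\mathbb Z_2^{*N}}$ is immediate for $N\geq 3$ from normal form in the free product: the words $g_1g_2g_3$ and $g_3g_2g_1$ are distinct reduced words, so the half-commutation is a genuinely new relation.

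For $\widehat{\mathbb Z_2^N}\subsetneq\widehat{\mathbb Z_2^{\circ N}}$, the key step is to exhibit a non-abelian quotient of $\mathbb Z_2^{\circ N}$. I would use the two Pauli involutions $\sigma=\begin{pmatrix}0&1\\1&0\end{pmatrix}$ and $\tau=\begin{pmatrix}1&0\\0&-1\end{pmatrix}$ in $M_2(\mathbb C)$, which are self-adjoint unitaries with $\sigma\tau=-\tau\sigma$. Sending each $g_i$ to $\sigma$ or $\tau$ (with at least one of each when $N\geq 2$) gives a valid representation, because the reversal of a three-letter word in $\{\sigma,\tau\}$ with $a$ copies of $\sigma$ and $b$ copies of $\tau$ picks up a sign $(-1)^{ab}$, and $a+b=3$ forces one of $a,b$ to be even. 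The image is non-abelian since $\sigma\tau\neq\tau\sigma$, so the abelianization map $\mathbb Z_2^{\circ N}\to\mathbb Z_2^N$ has nontrivial kernel. The vertical inclusions are automatically proper, since imposing $x_i^2=\tfrac{1}{N}$ strictly quotients $C(S)$.

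The main point requiring care is the Pauli compatibility: the parity argument above handles all half-commutations uniformly, but without it one would need a direct check of the eight three-letter words in $\{\sigma,\tau\}$, which is straightforward but tedious.
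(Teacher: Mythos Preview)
Your identification of the three cubes via the rescaling $g_i=\sqrt{N}\,x_i$ is exactly the paper's argument. The paper handles properness more tersely: it simply asserts that the quotient maps $\mathbb Z_2^{*N}\to\mathbb Z_2^{\circ N}\to\mathbb Z_2^N$ are proper and that $\widehat{\mathbb Z_2^N}\subset S^{N-1}_\mathbb R$ is proper, whereas you supply explicit witnesses (normal form in the free product, and the Pauli representation). Your parity argument for the Pauli check is correct: reversing a length-$3$ word in two anticommuting involutions flips every $\sigma$--$\tau$ pair, contributing $(-1)^{ab}$, and $a+b=3$ forces $ab$ even.

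Two small points. First, you treat the bottom-row and vertical inclusions but do not explicitly close the loop on the top-row sphere inclusions; the paper observes that proper inclusions of cubes force proper inclusions of spheres, since equal spheres would have equal associated cubes. Second, your assertion that ``imposing $x_i^2=\tfrac{1}{N}$ strictly quotients $C(S)$'' deserves one line of justification: on $S^{N-1}_\mathbb R$ the function $x_1^2$ is nonconstant, so $x_1^2\neq\tfrac{1}{N}$ in $C(S^{N-1}_\mathbb R)$, and hence in the larger algebras as well. Your restriction to $N\geq 3$ for the rightmost properness is in fact necessary: when $N=2$ every half-commutation relation among two involutions is already a consequence of $g_i^2=1$, so $\mathbb Z_2^{\circ 2}=\mathbb Z_2^{*2}$.
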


\begin{proof}
Let us first compute the noncommutative cube $K^{N-1}_{\mathbb R,+}$ associated to $S^{N-1}_{\mathbb R,+}$. Since the relations $x_i^2=\frac{1}{N}$ imply the quadratic condition $\sum_ix_i^2=1$, we have:
$$C(K^{N-1}_{\mathbb R,+})=C^*\left(x_1,\ldots,x_N\Big|x_i=x_i^*,x_i^2=\frac{1}{N}\right)$$

On the other hand, consider the group $\mathbb Z_2^{*N}=<g_1,\ldots,g_N|g_i^2=1>$. Since $g_i=g_i^{-1}$, we have $g_i=g_i^*$ in the corresponding group algebra, which is therefore given by:
$$C^*(\mathbb Z_2^{*N})=C^*\left(g_1,\ldots,g_N\Big|g_i=g_i^*,g_i^2=1\right)$$

Thus we have an isomorphism $C(K^{N-1}_{\mathbb R,+})\simeq C^*(\mathbb Z_2^{*N})$, given by $x_i=g_i/\sqrt{N}$, and at the level of the correspoding noncommutative spaces we obtain $K^{N-1}_{\mathbb R,+}\simeq\widehat{\mathbb Z_2^{*N}}$. This establishes the inclusion on the right, and the other two vertical inclusions are now clear, too.

Finally, since $\widehat{\mathbb Z_2^N}\subset S^{N-1}_\mathbb R$ is proper, so are the other two vertical inclusions. Also, since the quotient maps $\mathbb Z_2^{*N}\to\mathbb Z_2^{\circ N}\to\mathbb Z_2^N$ are both proper, so are both the horizontal inclusions on the bottom, and hence the horizontal inclusions on top as well.
\end{proof}

The above occurrence of $\mathbb Z_2^{\circ N}$ is quite of interest. As a first observation, if $h_1,\ldots,h_N$ are the standard generators of $\mathbb Z^N$, then we have a matrix model, as follows:
$$C^*(\mathbb Z_2^{\circ N})\to M_2(C^*(\mathbb Z^N))\quad:\quad g_i\to\begin{pmatrix}0&h_i\\ h_i^{-1}&0\end{pmatrix}$$ 

Indeed, the matrices $G_i$ on the right satisfy the relations $G_i=G_i^*$, $G_i^2=1$ and $G_iG_jG_k=G_kG_jG_i$, which define the group algebra on the left.

The point now is that $S^{N-1}_{\mathbb R,*}$ itself has a similar model, as follows:

\begin{proposition}
We have a morphism of $C^*$-algebras
$$C(S^{N-1}_{\mathbb R,*})\to M_2(C(S^{N-1}_\mathbb C))\quad:\quad x_i\to\begin{pmatrix}0&z_i\\ \bar{z}_i&0\end{pmatrix}$$ 
where $S^{N-1}_\mathbb C=\{z\in\mathbb C^N|\sum_i|z_i|^2=1\}$ is the unit complex sphere.
\end{proposition}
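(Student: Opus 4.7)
The strategy is to invoke the universal property of $C(S^{N-1}_{\mathbb R,*})$ from Definition 1.4. Concretely, the algebra $C(S^{N-1}_{\mathbb R,*})$ is the universal $C^*$-algebra generated by self-adjoint elements $x_1,\ldots,x_N$ satisfying $\sum_i x_i^2 = 1$ and the half-commutation relations $x_ix_jx_k = x_kx_jx_i$. So to produce the asserted morphism, it suffices to exhibit a family of elements $X_i \in M_2(C(S^{N-1}_\mathbb C))$ satisfying the same three relations, and then universality produces the morphism sending $x_i \mapsto X_i$.

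I would therefore set $X_i = \begin{pmatrix}0 & z_i \\ \bar z_i & 0\end{pmatrix}$ and verify each of the three relations in turn. Self-adjointness is immediate from the formula $(M^*)_{ij} = M_{ji}^*$ for matrices over a $C^*$-algebra, which swaps the two off-diagonal entries and conjugates them. For the quadratic relation, a direct computation gives $X_i^2 = |z_i|^2 \cdot 1_{M_2}$, so that $\sum_i X_i^2 = \left(\sum_i |z_i|^2\right) \cdot 1_{M_2}$, and this equals the identity thanks to the defining equation $\sum_i |z_i|^2 = 1$ of $S^{N-1}_\mathbb C$.

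For the half-commutation relation, I expect the only slightly delicate step: one first computes $X_iX_j = \begin{pmatrix}z_i\bar z_j & 0 \\ 0 & \bar z_iz_j\end{pmatrix}$, which is diagonal, and then
\[
X_iX_jX_k = \begin{pmatrix} 0 & z_i\bar z_j z_k \\ \bar z_i z_j \bar z_k & 0\end{pmatrix}, \qquad X_kX_jX_i = \begin{pmatrix} 0 & z_k\bar z_j z_i \\ \bar z_k z_j \bar z_i & 0\end{pmatrix}.
\]
These are equal precisely because $C(S^{N-1}_\mathbb C)$ is commutative, so the entries coincide. This is really the heart of the matter: the half-commutation $abc = cba$ in the target is being absorbed by the full commutativity of the scalar algebra after one passes through the off-diagonal embedding.

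No serious obstacle is anticipated, and this is essentially a universal-property check. The only conceptual point worth flagging is why the embedding must be off-diagonal rather than diagonal: a diagonal embedding would factor through the classical sphere $S^{N-1}_\mathbb R$ and carry no new information, whereas the off-diagonal embedding turns products of three $X_i$'s into off-diagonal matrices with alternating products $z_i \bar z_j z_k$, which is exactly the pattern that makes the relation $abc=cba$ a consequence of commutativity without forcing $ab=ba$.
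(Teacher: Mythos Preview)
Your proof is correct and follows essentially the same approach as the paper: verify that the matrices $X_i$ are self-adjoint, that their squares sum to $1$, and that the triple product $X_iX_jX_k$ has off-diagonal entries $z_i\bar z_jz_k$ and $\bar z_iz_j\bar z_k$, which are symmetric in $i,k$ by commutativity of $C(S^{N-1}_\mathbb C)$. Your additional remark explaining why the embedding must be off-diagonal is a nice conceptual observation not present in the paper's proof.
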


\begin{proof}
We have to prove that the matrices $X_i$ on the right satisfy the defining relations for $S^{N-1}_{\mathbb R,*}$. These matrices are indeed self-adjoint, and their squares sum up to 1:
$$\sum_iX_i^2=\sum_i\begin{pmatrix}0&z_i\\ \bar{z}_i&0\end{pmatrix}^2=\sum_i\begin{pmatrix}|z_i|^2&0\\0&|z_i|^2\end{pmatrix}=\begin{pmatrix}1&0\\0&1\end{pmatrix}$$

Regarding now the half-commutation relations, observe that we have:
$$X_iX_jX_k=\begin{pmatrix}0&z_i\\ \bar{z}_i&0\end{pmatrix}\begin{pmatrix}0&z_j\\ \bar{z}_j&0\end{pmatrix}\begin{pmatrix}0&z_k\\ \bar{z}_k&0\end{pmatrix}=\begin{pmatrix}0&z_i\bar{z}_jz_k\\ \bar{z}_iz_j\bar{z}_k&0\end{pmatrix}$$

Since this quantity is symmetric in $i,k$, this gives the result.
\end{proof}

It is standard to prove that the above model of $C^*(\mathbb Z_2^{\circ N})$ is faithful. The same happens for $C(S^{N-1}_{\mathbb R,*})$, and we will prove this, after developing a number of useful tools.

We recall that $P^{N-1}_\mathbb R$ is the space of lines in $\mathbb R^N$ passing through the origin. We have a quotient map $S^{N-1}_\mathbb R\to P^{N-1}_\mathbb R$, which produces an embedding $C(P^{N-1}_\mathbb R)\subset C(S^{N-1}_\mathbb R)$, and the image of this embedding is the algebra generated by the variables $p_{ij}=x_ix_j$. 

In general now, based on this observation, we can formulate:

\begin{definition}
The projective version of $S\subset S^{N-1}_{\mathbb R,+}$ is the quotient space $S\to PS$ determined by the fact that $C(PS)\subset C(S)$ is the subalgebra generated by $p_{ij}=x_ix_j$.
\end{definition}

It follows from the above discussion that we have $PS^{N-1}_\mathbb R=P^{N-1}_\mathbb R$, and our goal now is to compute the projective version of the remaining 2 spheres. As a first observation, the projective version of $S^{N-1}_{\mathbb R,*}$ is a classical space, because its coordinates commute:
$$abcd=cbad=cdab$$

We will prove that this space is the complex projective one, $PS^{N-1}_{\mathbb R,*}=P^{N-1}_\mathbb C$.

For this purpose, we will need a functional analytic description of $P^{N-1}_\mathbb R,P^{N-1}_\mathbb C$. The result here, which is similar to the one in Proposition 1.3 above, is as follows:

\begin{proposition}
We have presentation results as follows,
\begin{eqnarray*}
C(P^{N-1}_\mathbb C)&=&C^*_{comm}\left((p_{ij})_{i,j=1,\ldots,N}\Big|p=p^*=p^2,Tr(p)=1\right)\\
C(P^{N-1}_\mathbb R)&=&C^*_{comm}\left((p_{ij})_{i,j=1,\ldots,N}\Big|p=\bar{p}=p^*=p^2,Tr(p)=1\right)
\end{eqnarray*}
where by $C^*_{comm}$ we mean as usual universal commutative $C^*$-algebra.
\end{proposition}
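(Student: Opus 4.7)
The plan is to follow the same two-directional argument used in the proof of Proposition~1.3, with Gelfand's theorem reducing the problem to a linear-algebra identification of the Gelfand spectrum. I treat the complex case in detail; the real case will differ only in one step.

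First I would build the morphism from the universal algebra on the right to $C(P^{N-1}_\mathbb{C})$ on the left. On $P^{N-1}_\mathbb{C}=S^{N-1}_\mathbb{C}/U(1)$, the functions $p_{ij}([z])=z_i\bar{z}_j$ are well-defined (invariant under $z\mapsto \lambda z$ with $|\lambda|=1$), and a direct check shows the matrix $p=(p_{ij})$ is self-adjoint, idempotent, and of trace $1$. By the universal property we obtain a $*$-morphism $\Phi$ sending generators to generators, and by Stone--Weierstrass it is surjective, since the $p_{ij}$ separate points of $P^{N-1}_\mathbb{C}$ (two unit vectors defining the same rank-one projection differ by a phase).

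In the other direction, the universal algebra $A$ on the right is commutative, so by Gelfand it equals $C(X)$ for some compact space $X=\mathrm{Spec}(A)$. A character $\chi\in X$ evaluates the generators to produce a matrix $P_\chi=(\chi(p_{ij}))\in M_N(\mathbb{C})$ which satisfies $P_\chi=P_\chi^*=P_\chi^2$ and $\mathrm{Tr}(P_\chi)=1$, hence is a rank-one orthogonal projection in $M_N(\mathbb{C})$. Such projections are exactly those of the form $P=zz^*$ for a unit vector $z\in S^{N-1}_\mathbb{C}$, unique up to a scalar of modulus $1$, so the assignment $\chi\mapsto [z_\chi]$ gives a continuous embedding $X\hookrightarrow P^{N-1}_\mathbb{C}$. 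This yields a $*$-morphism $\Psi:C(P^{N-1}_\mathbb{C})\to A$, again sending standard generators to standard generators. Since $\Phi$ and $\Psi$ agree on generators, they are mutually inverse, proving the first identity.

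For the real case, the extra relation $p=\bar p$ forces the matrix $P_\chi$ to have real entries, so $P_\chi$ is a rank-one orthogonal projection in $M_N(\mathbb{R})$, necessarily of the form $P=xx^T$ for a unit vector $x\in S^{N-1}_\mathbb{R}$, unique up to sign. This gives $X\hookrightarrow P^{N-1}_\mathbb{R}$ and the argument concludes as before. The one step that is not purely formal, and which I would expect to be the only potential obstacle, is the identification of rank-one orthogonal projections with points of the projective space; but this is the standard spectral description of such projections and requires no noncommutative input.
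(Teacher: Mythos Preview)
Your proof is correct and follows essentially the same approach as the paper: both identify $P^{N-1}_\mathbb C$ and $P^{N-1}_\mathbb R$ with the spaces of rank-one orthogonal projections in $M_N(\mathbb C)$ and $M_N(\mathbb R)$ respectively, and then invoke Gelfand's theorem. The paper's proof is simply a terse two-line version of the argument you have written out in detail.
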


\begin{proof}
We use the fact that $P^{N-1}_\mathbb C,P^{N-1}_\mathbb R$ are respectively the spaces of rank one projections in $M_N(\mathbb C),M_N(\mathbb R)$. With this picture in mind, the first formula is clear from the Gelfand theorem. Also, since $P^{N-1}_\mathbb R\subset P^{N-1}_\mathbb C$ appears by restricting attention to the matrices which are real, the relation to be added is $p=\bar{p}$, and this gives the second formula. 
\end{proof}

The above result suggests the following definition:

\begin{definition}
Associated to any $N\in\mathbb N$ is the following universal algebra,
$$C(P^{N-1}_+)=C^*\left((p_{ij})_{i,j=1,\ldots,N}\Big|p=p^*=p^2,Tr(p)=1\right)$$
whose abstract spectrum is called ``free projective space''.
\end{definition}

Observe that we have embeddings of noncommutative spaces $P^{N-1}_\mathbb R\subset P^{N-1}_\mathbb C\subset P^{N-1}_+$, and that the complex projective space $P^{N-1}_\mathbb C$ is the classical version of $P^{N-1}_+$.

We have the following result, first established in \cite{bgo}:

\begin{theorem}
The projective versions of the $3$ spheres are given by
$$\begin{matrix}
S^{N-1}_\mathbb R&\subset&S^{N-1}_{\mathbb R,*}&\subset&S^{N-1}_{\mathbb R,+}\\
\\
\downarrow&&\downarrow&&\downarrow\\
\\
P^{N-1}_\mathbb R&\subset&P^{N-1}_\mathbb C&\subset&\mathcal P^{N-1}_+
\end{matrix}$$
where $\mathcal P^{N-1}_+$ is a certain noncommutative compact space, contained in $P^{N-1}_+$.
\end{theorem}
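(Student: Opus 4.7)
The plan is to treat each of the three projective versions separately, moving from left to right in the diagram.

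For $S^{N-1}_\mathbb R$ the identification $PS^{N-1}_\mathbb R=P^{N-1}_\mathbb R$ has essentially been recorded already in the remarks preceding Definition 1.9: by the Gelfand picture, the subalgebra of $C(S^{N-1}_\mathbb R)$ generated by the products $p_{ij}=x_ix_j$ is precisely $C(P^{N-1}_\mathbb R)$.

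For $S^{N-1}_{\mathbb R,+}$ I would verify directly that the $p_{ij}=x_ix_j$ satisfy the universal relations of Definition 1.10. Indeed $(x_ix_j)^*=x_jx_i=p_{ji}$ gives $p^*=p$; the computation
$$\sum_k p_{ik}p_{kj}=x_i\Bigl(\sum_k x_k^2\Bigr)x_j=x_ix_j=p_{ij}$$
gives $p^2=p$; and $\sum_i p_{ii}=\sum_i x_i^2=1$ gives $Tr(p)=1$. By universality this produces a surjective $*$-homomorphism $C(P^{N-1}_+)\twoheadrightarrow C(PS^{N-1}_{\mathbb R,+})$, and $\mathcal P^{N-1}_+$ is simply defined as the resulting quotient of $P^{N-1}_+$. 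No injectivity is claimed here, and indeed there is no obvious reason to expect the kernel to be trivial.

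For $S^{N-1}_{\mathbb R,*}$ both inclusions must be proved. Since the $p_{ij}$ on this sphere commute (the calculation $abcd=cbad=cdab$ noted just before Definition 1.9), the relations verified above are now commutative ones, so Proposition 1.10 yields a surjection $\alpha\colon C(P^{N-1}_\mathbb C)\twoheadrightarrow C(PS^{N-1}_{\mathbb R,*})$, sending the standard projective coordinates to $p_{ij}=x_ix_j$. For the reverse direction I would use the matrix model of Proposition 1.8. One computes
$$X_iX_j=\begin{pmatrix}z_i\bar{z}_j & 0 \\ 0 & \bar{z}_iz_j\end{pmatrix},$$
so the image of $C(PS^{N-1}_{\mathbb R,*})$ under the model lies inside the commutative diagonal subalgebra of $M_2(C(S^{N-1}_\mathbb C))$, on which the top-left entry is a well-defined $*$-homomorphism. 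Composing gives a map $\beta\colon C(PS^{N-1}_{\mathbb R,*})\to C(S^{N-1}_\mathbb C)$ with $\beta(p_{ij})=z_i\bar{z}_j$, whose image is precisely the subalgebra generated by these functions, namely $C(P^{N-1}_\mathbb C)$. Since $\beta\circ\alpha$ acts as the identity on the generators $p_{ij}$ of $C(P^{N-1}_\mathbb C)$, the surjection $\alpha$ is also injective, hence an isomorphism.

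The main obstacle is the half-liberated case: constructing $\alpha$ is a formal consequence of universality, but showing it is injective really requires the existence of the explicit model of Proposition 1.8. Without such a model there would be no obvious way to produce enough concrete $*$-homomorphisms out of $C(PS^{N-1}_{\mathbb R,*})$ to separate characters of $P^{N-1}_\mathbb C$, which is ultimately what makes the classical target $P^{N-1}_\mathbb C$ (rather than some intermediate noncommutative object analogous to $\mathcal P^{N-1}_+$) the right answer.
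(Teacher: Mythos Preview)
Your proposal is correct and follows essentially the same route as the paper: the classical and free cases are handled identically, and in the half-liberated case the paper likewise uses commutativity of the $p_{ij}$ to get the inclusion $PS^{N-1}_{\mathbb R,*}\subset P^{N-1}_\mathbb C$, and the matrix model of Proposition~1.8 to get the reverse inclusion. Your treatment of the reverse direction is in fact a touch more explicit than the paper's, which simply asserts $\langle P_{ij}\rangle=C(P^{N-1}_\mathbb C)$ without isolating the top-left projection; note also a minor slip in numbering (the universal relations for $P^{N-1}_+$ are in Definition~1.11, not~1.10).
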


\begin{proof}
The assertion at left is true by definition. For the assertion at right, we have to prove that the variables $p_{ij}=x_ix_j$ over the free sphere $S^{N-1}_{\mathbb R,+}$ satisfy the defining relations for $C(P^{N-1}_+)$ from Definition 1.11, and the verification here goes as follows:
\begin{eqnarray*}
(p^*)_{ij}&=&p_{ji}^*=(x_jx_i)^*=x_ix_j=p_{ij}\\
(p^2)_{ij}&=&\sum_kp_{ik}p_{kj}=\sum_kx_ix_k^2x_j=x_ix_j=p_{ij}\\
Tr(p)&=&\sum_kp_{kk}=\sum_kx_k^2=1
\end{eqnarray*}

Regarding now the middle assertion, stating that we have $PS^{N-1}_{\mathbb R,*}=P^{N-1}_\mathbb C$: 

``$\subset$'' follows from the relations $abc=cba$, which imply $abcd=cbad=cbda$. Indeed, this shows that $PS^{N-1}_{\mathbb R,*}$ is classical, and so $PS^{N-1}_{\mathbb R,*}\subset(P^{N-1}_+)_{class}=P^{N-1}_\mathbb C$.

``$\supset$'' follows by using the model in Proposition 1.8. Indeed, the representation there maps $p_{ij}\to P_{ij}=diag(z_i\bar{z}_j,\bar{z}_iz_j)$, and so maps $<p_{ij}>\to <P_{ij}>=C(P^{N-1}_\mathbb C)$.
\end{proof}

Let us prove now that the matrix model in Proposition 1.8 is faithful. As a warm-up here, we first prove the result in the group case. The statement, from \cite{bve}, is:

\begin{proposition}
We have an embedding of $C^*$-algebras
$$C^*(\mathbb Z_2^{\circ N})\subset M_2(C^*(\mathbb Z^N))\quad:\quad g_i\to\begin{pmatrix}0&h_i\\ h_i^{-1}&0\end{pmatrix}$$ 
where $h_1,\ldots,h_N$ are the standard generators of $\mathbb Z^N$.
\end{proposition}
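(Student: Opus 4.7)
The plan is to pull the canonical faithful tracial state
$\tau(M)=\frac{1}{2}\int_{\mathbb T^N}(M_{11}(z)+M_{22}(z))\,dz$
on $M_2(C^*(\mathbb Z^N))=M_2(C(\mathbb T^N))$ back through the proposed map $\psi$, show that the pullback equals the canonical group trace on $C^*(\mathbb Z_2^{\circ N})$, and then use amenability to conclude that $\psi$ is injective.

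First I would do a direct matrix calculation: a product of the generators $G_i=\begin{pmatrix}0 & h_i\\ h_i^{-1} & 0\end{pmatrix}$ is diagonal when the number of factors is even and off-diagonal when it is odd, so an immediate computation gives $\tau(\psi(g_{i_1}\cdots g_{i_n}))=\delta_{v,0}$ for $n$ even (with $v=\sum_j(-1)^{j+1}e_{i_j}\in\mathbb Z^N$) and $0$ for $n$ odd. In other words, $\tau\circ\psi$ coincides on group elements with $g\mapsto\delta_{\pi(g),e}$, where $\pi:\mathbb Z_2^{\circ N}\to\mathbb Z^N\rtimes\mathbb Z_2$ is the homomorphism $g_i\mapsto(e_i,1)$, with $\mathbb Z_2$ acting on $\mathbb Z^N$ by inversion. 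The defining relations $g_i^2=1$ and $g_ig_jg_k=g_kg_jg_i$ are obviously satisfied in the semidirect product, so $\pi$ is well-defined.

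Second I would show $\pi$ is injective by identifying $\mathbb Z_2^{\circ N}$ with $\mathbb Z^{N-1}\rtimes\mathbb Z_2$, with $\mathbb Z_2$ acting by inversion. The subgroup $H=\langle g_ig_j\rangle$ has index $2$; applying the half-commutation twice yields $(g_ig_j)(g_lg_m)=(g_lg_m)(g_ig_j)$, so $H$ is abelian; and $H$ is generated by $g_1g_2,\ldots,g_1g_N$, which $\psi$ sends to the manifestly independent diagonal matrices $\mathrm{diag}(h_1h_i^{-1},h_1^{-1}h_i)$. Hence $H\cong\mathbb Z^{N-1}$ freely. The computation $g_1(g_1g_i)g_1=g_ig_1=(g_1g_i)^{-1}$ supplies the semidirect product structure, and under this identification $\pi$ becomes the inclusion of $\mathbb Z^{N-1}\rtimes\mathbb Z_2$ as the index-$2$ subgroup $\{(v,s):\sum_kv_k\equiv s\pmod 2\}\subset\mathbb Z^N\rtimes\mathbb Z_2$, which is visibly injective. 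Thus $\tau\circ\psi$ agrees with the canonical trace $\tau_0$ on $\mathbb C[\mathbb Z_2^{\circ N}]$.

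Finally, $\mathbb Z_2^{\circ N}\cong\mathbb Z^{N-1}\rtimes\mathbb Z_2$ is virtually abelian, hence amenable, so $C^*(\mathbb Z_2^{\circ N})=C^*_r(\mathbb Z_2^{\circ N})$ and $\tau_0$ is faithful on the whole $C^*$-algebra. By continuity $\tau\circ\psi=\tau_0$ on all of $C^*(\mathbb Z_2^{\circ N})$. For any $a\ge 0$ with $\psi(a)=0$ we then have $\tau_0(a)=\tau(\psi(a))=0$, forcing $a=0$; this gives injectivity of $\psi$. The real work is the second step, and specifically the freeness of $g_1g_2,\ldots,g_1g_N$ inside $H$---using the target matrix model itself as a certificate for this freeness is what sidesteps the appearance of circular reasoning.
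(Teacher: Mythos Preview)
Your proof is correct. Both your argument and the paper's hinge on the same group-theoretic fact, namely that $\pi:\mathbb Z_2^{\circ N}\to\mathbb Z^N\rtimes\mathbb Z_2$, $g_i\mapsto(h_i,\tau)$, is injective; the paper dispatches this in one line (a word in $\ker\pi$ has each $g_i$ appearing equally often at odd and even positions, hence is trivial), while you derive it from the sharper structural identification $\mathbb Z_2^{\circ N}\cong\mathbb Z^{N-1}\rtimes\mathbb Z_2$. The genuine difference is at the $C^*$-level: the paper simply composes the induced injection $C^*(\mathbb Z_2^{\circ N})\hookrightarrow C^*(\mathbb Z^N\rtimes\mathbb Z_2)$ with the standard crossed-product embedding $C^*(\mathbb Z^N\rtimes\mathbb Z_2)\hookrightarrow M_2(C^*(\mathbb Z^N))$, whereas you bypass the crossed product entirely by pulling back the faithful trace on $M_2(C(\mathbb T^N))$ and invoking amenability to identify it with the canonical group trace. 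Your route is more self-contained and yields the explicit isomorphism type of $\mathbb Z_2^{\circ N}$ as a by-product; the paper's route is shorter but assumes the reader knows the matrix model for a $\mathbb Z_2$-crossed product. Your remark that using $\psi$ itself to certify freeness of $g_1g_2,\ldots,g_1g_N$ is not circular is exactly right: only the well-definedness of $\psi$, not its injectivity, is being used there.
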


\begin{proof}
Consider the crossed product $\mathbb Z^N\rtimes\mathbb Z_2$, with the group $\mathbb Z_2=<\tau>$ acting on $\mathbb Z^N$ via $\tau\cdot x=x^{-1}$. Our claim is that we have a group embedding, as follows:
$$\mathbb Z_2^{\circ N}\subset\mathbb Z^N\rtimes\mathbb Z_2\quad:\quad g_i\to (h_i,\tau)$$

Indeed, the elements $G_i=(h_i,\tau)$ are reflections, and satisfy $abc=cba$. Regarding now the injectivity, the point here is that each word $w$ in the kernel must be such that each $g_i$ appears an equal number of times at odd and even positions in $w$. Thus, $w=1$.

We therefore have an embedding $C^*(\mathbb Z_2^{\circ N})\subset C^*(\mathbb Z^N\rtimes\mathbb Z_2)$, and by composing with the standard embedding $C^*(\mathbb Z^N\rtimes\mathbb Z_2)\subset M_2(C^*(\mathbb Z^N))$, the result follows.
\end{proof}

In the sphere case now, we have the following result, from \cite{bic}:

\begin{theorem}
We have an embedding of $C^*$-algebras
$$C(S^{N-1}_{\mathbb R,*})\subset M_2(C(S^{N-1}_\mathbb C))\quad:\quad x_i\to\begin{pmatrix}0&z_i\\ \bar{z}_i&0\end{pmatrix}$$ 
where $S^{N-1}_\mathbb C=\{z\in\mathbb C^N|\sum_i|z_i|^2=1\}$ is the unit complex sphere.
\end{theorem}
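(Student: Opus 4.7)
The plan is to factor the matrix model through a crossed product and then exploit a $\mathbb Z_2$-grading together with Theorem 1.12. Let $A = C(S^{N-1}_\mathbb C)$, equipped with the $\mathbb Z_2$-action $\alpha$ given by complex conjugation $z_i \mapsto \bar z_i$, and let $u$ denote the canonical self-adjoint unitary in $A \rtimes \mathbb Z_2$ implementing $\alpha$. The standard covariant embedding
$$A \rtimes \mathbb Z_2 \hookrightarrow M_2(A), \qquad a \mapsto \mathrm{diag}(a,\alpha(a)), \qquad u \mapsto \begin{pmatrix} 0 & 1 \\ 1 & 0 \end{pmatrix}$$
is faithful and sends $z_i u$ precisely to the matrix $\begin{pmatrix} 0 & z_i \\ \bar z_i & 0 \end{pmatrix}$ appearing in Proposition 1.8. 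Thus the model factors as $\Phi: C(S^{N-1}_{\mathbb R,*}) \to A \rtimes \mathbb Z_2$, $x_i \mapsto z_i u$, followed by this faithful embedding, and it suffices to prove that $\Phi$ itself is injective.

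Next, observe that the defining relations $x_i = x_i^*$, $\sum_i x_i^2 = 1$, and $x_i x_j x_k = x_k x_j x_i$ all preserve word length modulo $2$, so $C(S^{N-1}_{\mathbb R,*})$ is $\mathbb Z_2$-graded by length parity; write any polynomial element as $F = F_e + F_o$. Computing in the crossed product, $\Phi(x_{i_1} \cdots x_{i_m}) = z_{i_1} \bar z_{i_2} z_{i_3} \cdots u^m$, which lies in $A$ when $m$ is even and in $A u$ when $m$ is odd. Since $A \rtimes \mathbb Z_2 = A \oplus Au$ as a vector space, the equation $\Phi(F) = 0$ decouples into $\Phi(F_e) = 0$ and $\Phi(F_o) = 0$.

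For the even part, every monomial factorizes as $x_{i_1} x_{i_2} \cdots x_{i_{2k}} = p_{i_1 i_2} p_{i_3 i_4} \cdots p_{i_{2k-1} i_{2k}}$, so $F_e$ lies in the projective subalgebra $\langle p_{ij}\rangle = C(PS^{N-1}_{\mathbb R,*})$. By Theorem 1.12 this subalgebra is canonically identified with $C(P^{N-1}_\mathbb C)$ via $p_{ij} \mapsto z_i \bar z_j$, and this identification is precisely the restriction of $\Phi$ to $\langle p_{ij}\rangle$; hence $\Phi(F_e) = 0$ forces $F_e = 0$. For the odd part, use $\sum_i x_i^2 = 1$ to write $F_o = \sum_i x_i (x_i F_o)$. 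Each $x_i F_o$ is of even length and satisfies $\Phi(x_i F_o) = \Phi(x_i) \Phi(F_o) = 0$, so the even case just proved forces $x_i F_o = 0$ in $C(S^{N-1}_{\mathbb R,*})$ for every $i$; summing yields $F_o = 0$.

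The main obstacle is really the even case, and all of its difficulty has already been absorbed into Theorem 1.12: recognizing Proposition 1.8 as factoring through a crossed product, and identifying the even subalgebra of $C(S^{N-1}_{\mathbb R,*})$ with $C(P^{N-1}_\mathbb C)$, is the real content. Once the $\mathbb Z_2$-grading splits $\Phi$ into its even and odd pieces, the odd part collapses to a one-line manipulation with the sphere relation.
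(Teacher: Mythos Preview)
Your approach is essentially the same as the paper's: factor through the crossed product $C(S^{N-1}_\mathbb C)\rtimes\mathbb Z_2$, embed that faithfully into $M_2(C(S^{N-1}_\mathbb C))$, and deduce injectivity from Theorem 1.12 on the projective subalgebra. The paper states exactly this outline and then refers to \cite{bic} for the passage from ``injective on the even part'' to ``injective everywhere''; your parity argument $F_o=\sum_ix_i(x_iF_o)$ supplies precisely that missing step, so your write-up is in fact more self-contained than the paper's own proof.

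One small point worth tightening: you phrase the argument for ``polynomial elements'', but injectivity of a $*$-homomorphism on a dense $*$-subalgebra does not by itself give injectivity on the $C^*$-completion. The fix is immediate here: the sign flip $x_i\mapsto -x_i$ is a $C^*$-automorphism of $C(S^{N-1}_{\mathbb R,*})$, so the parity decomposition $F=F_e+F_o$ and your odd-part reduction are valid for arbitrary $F$, not just polynomials. With that remark added, the proof is complete.
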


\begin{proof}
As in the group case, this follows by using crossed products. To be more precise, our first claim is that we have an embedding of $C^*$-algebras, as follows:
$$C(S^{N-1}_{\mathbb R,*})\subset C(S^{N-1}_\mathbb C)\rtimes\mathbb Z_2\quad:\quad x_i\to z_i\otimes\tau$$

Indeed, the elements $X_i=z_i\otimes\tau$ are self-adjoint, satisfy $abc=cba$, and their squares sum up to 1. Regarding now the injectivity, this follows from Theorem 1.12, which shows that the morphism is injective on the subalgebra $C(P^{N-1}_\mathbb C)$. See \cite{bic}.

Now observe that we have as well an embedding as follows, where $f\to\widetilde{f}$ is the automorphism of $C(S^{N-1}_\mathbb C)$ induced by the conjugation of the coordinates, $z_i\to\bar{z}_i$:
$$C(S^{N-1}_\mathbb C)\rtimes\mathbb Z_2\subset M_2(C(S^{N-1}_\mathbb C))\quad:\quad f\otimes1\to\begin{pmatrix}f&0\\0&\widetilde{f}\end{pmatrix}\ ,\ f\otimes\tau\to\begin{pmatrix}0&f\\\widetilde{f}&0\end{pmatrix}$$

By composing with the embedding found above, the result follows.
\end{proof}

\section{Axiomatization, classification}

In this section we axiomatize our three spheres, $S^{N-1}_\mathbb R\subset S^{N-1}_{\mathbb R,*}\subset S^{N-1}_{\mathbb R,+}$. We already know from Proposition 1.6 that these are exactly the closed subspaces $S\subset S^{N-1}_{\mathbb R,+}$ which can be obtained by imposing relations of type $a_1\ldots a_k=a_k\ldots a_1$ to the standard coordinates of $S^{N-1}_{\mathbb R,+}$. We will improve here this result, by using ``arbitrary permutations''.

To be more precise, let us start with the following notion:

\begin{definition}
A monomial sphere is a subset $S\subset S^{N-1}_{\mathbb R,+}$ obtained via relations of type
$$x_{i_1}\ldots x_{i_k}=x_{i_{\sigma(1)}}\ldots x_{i_{\sigma(k)}},\ \forall (i_1,\ldots,i_k)\in\{1,\ldots,N\}^k$$
with $\sigma\in S_k$ being certain permutations, of variable size $k\in\mathbb N$.
\end{definition}

Observe that the basic 3 spheres are all monomial, with the permutations producing $S^{N-1}_\mathbb R,S^{N-1}_{\mathbb R,*}$ being the standard crossing and the half-liberated crossing:
$$\xymatrix@R=10mm@C=8mm{\circ\ar@{-}[dr]&\circ\ar@{-}[dl]\\\circ&\circ}\qquad\qquad\qquad
\xymatrix@R=10mm@C=5mm{\circ\ar@{-}[drr]&\circ\ar@{-}[d]&\circ\ar@{-}[dll]\\\circ&\circ&\circ}$$ 

Here, and in what follows, we agree to represent the permutations $\sigma\in S_k$ by diagrams between two rows of $k$ points, acting by definition downwards.

Observe also that Proposition 1.6 reformulates as follows:

\begin{proposition}
The monomial spheres coming from mirroring permutations,
$$\xymatrix@R=10mm@C=8mm{\circ\ar@{-}[d]\\\circ}
\qquad\quad
\xymatrix@R=10mm@C=8mm{\circ\ar@{-}[dr]&\circ\ar@{-}[dl]\\\circ&\circ}\qquad\quad
\xymatrix@R=10mm@C=5mm{\circ\ar@{-}[drr]&\circ\ar@{-}[d]&\circ\ar@{-}[dll]\\\circ&\circ&\circ}
\qquad\quad
\xymatrix@R=10mm@C=5mm{\circ\ar@{-}[drrr]&\circ\ar@{-}[dr]&\circ\ar@{-}[dl]&\circ\ar@{-}[dlll]\\\circ&\circ&\circ&\circ}
\qquad\quad
\xymatrix@R=5mm@C=5mm{\\\ldots\ldots}
$$ 
are precisely the $3$ main spheres, $S^{N-1}_\mathbb R\subset S^{N-1}_{\mathbb R,*}\subset S^{N-1}_{\mathbb R,+}$.
\end{proposition}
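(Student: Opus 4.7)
The plan is to directly reduce to Proposition 1.6. The mirroring permutation on $k$ strands is the order-reversing permutation $\sigma_k \in S_k$ with $\sigma_k(i) = k+1-i$, which is precisely the picture drawn: each point on the top is connected to its mirror image on the bottom. Under Definition 2.1, the monomial relation associated to $\sigma_k$ is
$$x_{i_1}\ldots x_{i_k} = x_{i_{\sigma_k(1)}}\ldots x_{i_{\sigma_k(k)}} = x_{i_k}\ldots x_{i_1},$$
which is exactly the relation $a_1\ldots a_k = a_k\ldots a_1$ used to cut out the space $S^{(k)}$ in Proposition 1.6.

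Hence, the monomial sphere produced by the $k$-th mirroring permutation coincides with $S^{(k)}$. Proposition 1.6 then immediately gives the trichotomy: at $k=1$ the relation is trivial, so we obtain $S^{N-1}_{\mathbb R,+}$; at $k=2$, and more generally for every even $k\geq 2$, we obtain $S^{N-1}_\mathbb R$; at $k=3$, and more generally for every odd $k\geq 3$, we obtain $S^{N-1}_{\mathbb R,*}$. Letting $k$ vary over $\mathbb N$, the family of such monomial spheres is exactly $\{S^{N-1}_\mathbb R, S^{N-1}_{\mathbb R,*}, S^{N-1}_{\mathbb R,+}\}$, which is the desired conclusion.

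There is essentially no obstacle here, since Proposition 1.6 does all the real work (the inductive collapse of the even and odd families via the trick of multiplying by $x_j^2$ and summing over $j$). The only content of the present statement is the observation that the reversal relations $a_1\ldots a_k = a_k\ldots a_1$ are exactly the monomial relations, in the sense of Definition 2.1, that are encoded by the pictorial mirroring permutations. So the proof is simply: identify the mirroring diagram with $\sigma_k(i) = k+1-i$, rewrite the resulting monomial relation as a reversal, and quote Proposition 1.6.
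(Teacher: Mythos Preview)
Your proof is correct and takes essentially the same approach as the paper: identify the mirroring diagram with the order-reversing permutation, observe that the associated monomial relation is exactly the reversal $a_1\ldots a_k=a_k\ldots a_1$, and invoke Proposition 1.6. The paper's own proof is just this one-line reduction.
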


\begin{proof}
This follows indeed from Proposition 1.6, because the relations $a_1\ldots a_k=a_k\ldots a_1$ used there are precisely those coming from mirroring permutations.
\end{proof}

We will prove in what follows that the basic 3 spheres are the only monomial ones. For this purpose, it is convenient to introduce the inductive limit $S_\infty=\bigcup_{k\geq0}S_k$, with the inclusions $S_k\subset S_{k+1}$ being given by $\sigma\in S_k\implies\sigma(k+1)=k+1$. In terms of elements of $S_\infty$, the definition of the monomial spheres reformulates as follows:

\begin{proposition}
The monomial spheres are the subsets $S\subset S^{N-1}_{\mathbb R,+}$ obtained via relations
$$x_{i_1}\ldots x_{i_k}=x_{i_{\sigma(1)}}\ldots x_{i_{\sigma(k)}},\ \forall (i_1,\ldots,i_k)\in\{1,\ldots,N\}^k$$
associated to certain elements $\sigma\in S_\infty$, where $k\in\mathbb N$ is such that $\sigma\in S_k$. 
\end{proposition}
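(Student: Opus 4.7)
The content of this proposition is that the embedding $S_k\subset S_{k+1}$ defined by $\sigma\mapsto\widetilde\sigma$ with $\widetilde\sigma(k+1)=k+1$ is compatible with the ``relations generated'' assignment: the sphere determined by $\sigma\in S_k$ equals the sphere determined by its image $\widetilde\sigma\in S_{k+1}$. Once this is checked, the equivalence of Definition 2.1 and the reformulation in terms of $S_\infty$ follows by induction/direct limit.

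My plan is to fix $\sigma\in S_k$ and compare the two relation sets: the one from $\sigma$ (of size $k$) and the one from $\widetilde\sigma$ (of size $k+1$). For the easy direction, if the monomial relation from $\sigma$ holds, then multiplying both sides on the right by $x_{i_{k+1}}$ yields the monomial relation from $\widetilde\sigma$, since $\widetilde\sigma$ fixes the final index. Thus $\sigma$-relations imply $\widetilde\sigma$-relations on $S^{N-1}_{\mathbb R,+}$.

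For the converse direction, I would reuse the summation trick from the proof of Proposition 1.6. Assuming the $\widetilde\sigma$-relation
\[
x_{i_1}\ldots x_{i_k}x_{i_{k+1}}=x_{i_{\sigma(1)}}\ldots x_{i_{\sigma(k)}}x_{i_{k+1}}
\]
holds for all choices of indices, specialize $i_{k+1}=j$, then sum over $j\in\{1,\ldots,N\}$. Using the sphere relation $\sum_j x_j^2=1$ on both sides gives
\[
x_{i_1}\ldots x_{i_k}=x_{i_1}\ldots x_{i_k}\sum_j x_j^2=\sum_j x_{i_{\sigma(1)}}\ldots x_{i_{\sigma(k)}}x_j^2=x_{i_{\sigma(1)}}\ldots x_{i_{\sigma(k)}},
\]
which is the $\sigma$-relation. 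So the two sides define the same quotient of $C(S^{N-1}_{\mathbb R,+})$, hence the same monomial sphere.

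The main step, really the only nontrivial one, is this summation argument — but it is exactly the same device already used in Proposition 1.6, so no new ideas are required. With the identification $\sigma\leftrightarrow\widetilde\sigma$ established, Definition 2.1 (monomial spheres indexed by $\sigma\in S_k$, variable $k$) matches the reformulation (monomial spheres indexed by $\sigma\in S_\infty$, where the ambient $k$ is any integer with $\sigma\in S_k$), giving the proposition.
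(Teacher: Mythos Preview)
Your proof is correct and follows exactly the same route as the paper's: show that the $\sigma$-relations (length $k$) and the $\widetilde\sigma$-relations (length $k+1$) define the same quotient, the nontrivial direction being handled by the summation trick with $\sum_j x_j^2=1$. One small slip in your exposition: you write ``specialize $i_{k+1}=j$, then sum over $j$'', but summing the length-$(k+1)$ relation directly yields a trailing $\sum_j x_j$, not $\sum_j x_j^2$; your display (correctly) has $x_j^2$, so you must first multiply both sides on the right by $x_j$ before summing --- exactly the extra line the paper writes out.
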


\begin{proof}
We must prove that the relations $x_{i_1}\ldots x_{i_k}=x_{i_{\sigma(1)}}\ldots x_{i_{\sigma(k)}}$ are left unchanged when replacing $k\to k+1$. But this follows from $\sum_ix_i^2=1$, because:
\begin{eqnarray*}
x_{i_1}\ldots x_{i_k}x_{i_{k+1}}=x_{i_{\sigma(1)}}\ldots x_{i_{\sigma(k)}}x_{i_{k+1}}
&\implies&x_{i_1}\ldots x_{i_k}x_{i_{k+1}}^2=x_{i_{\sigma(1)}}\ldots x_{i_{\sigma(k)}}x_{i_{k+1}}^2\\
&\implies&\sum_{i_{k+1}}x_{i_1}\ldots x_{i_k}x_{i_{k+1}}^2=\sum_{i_{k+1}}x_{i_{\sigma(1)}}\ldots x_{i_{\sigma(k)}}x_{i_{k+1}}^2\\
&\implies&x_{i_1}\ldots x_{i_k}=x_{i_{\sigma(1)}}\ldots x_{i_{\sigma(k)}}
\end{eqnarray*}

Thus we can indeed ``simplify at right'', and this gives the result.
\end{proof}

In order to prove the uniqueness result, we use group theory methods. We call a subgroup $G\subset S_\infty$ filtered when it is stable under concatenation, in the sense that when writing $G=(G_k)$ with $G_k\subset S_k$, we have $\sigma\in G_k,\pi\in G_l\implies \sigma\pi\in G_{k+l}$. With this convention, each monomial sphere comes from a filtered group of permutations:

\begin{proposition}
The monomial spheres are the subsets $S_G\subset S^{N-1}_{\mathbb R,+}$ given by
$$C(S_G)=C(S^{N-1}_{\mathbb R,+})\Big/\Big<x_{i_1}\ldots x_{i_k}=x_{i_{\sigma(1)}}\ldots x_{i_{\sigma(k)}},\forall (i_1,\ldots,i_k)\in\{1,\ldots,N\}^k,\forall\sigma\in G_k\Big>$$
where $G=(G_k)$ is a filtered subgroup of $S_\infty=(S_k)$.
\end{proposition}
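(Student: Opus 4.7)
The plan is to argue the two directions of the equivalence, with the nontrivial work being the forward direction, where starting from an arbitrary monomial sphere we must exhibit a filtered subgroup producing it.

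For the easy direction: given a filtered subgroup $G=(G_k)$, the subset $S_G$ is a monomial sphere directly from Definition 2.1 (and Proposition 2.3, which guarantees the $k$-level definition is consistent with the $S_\infty$-level one). So the content is in the reverse direction.

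For the converse direction, suppose $S\subset S^{N-1}_{\mathbb R,+}$ is an arbitrary monomial sphere, cut out by relations coming from some set $E\subset S_\infty$. I propose to define
$$G=\Big\{\sigma\in S_\infty\,\Big|\,x_{i_1}\ldots x_{i_k}=x_{i_{\sigma(1)}}\ldots x_{i_{\sigma(k)}}\text{ in }C(S),\ \forall(i_1,\ldots,i_k)\Big\},$$
and show that $G$ is a filtered subgroup of $S_\infty$ with $S=S_G$. The equality $S=S_G$ is essentially automatic: the inclusion $S_G\subset S$ follows from $E\subset G$, while the reverse inclusion holds because every relation in $G$ is by construction satisfied on $S$, so adding them does not further quotient $C(S)$.

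The main task is then the algebraic verification that $G$ is a filtered subgroup, which splits into four checks. Closure under composition in $G_k$ comes from substituting the index relabeling $j_m=i_{\sigma(m)}$ in the defining relation for $\tau$, then chaining with that for $\sigma$; closure under inversion comes from the substitution $i_m=j_{\sigma^{-1}(m)}$ in the defining relation for $\sigma$; the identity is trivial. The filtered property, that $\sigma\in G_k$ and $\pi\in G_l$ force the concatenation $\sigma\pi\in G_{k+l}$, is the step I expect to write most carefully: starting from the $\sigma$-relation on $x_{i_1}\ldots x_{i_k}$, multiply on the right by $x_{i_{k+1}}\ldots x_{i_{k+l}}$, and then apply the $\pi$-relation with indices shifted by $k$, yielding precisely the concatenated relation
$$x_{i_1}\ldots x_{i_{k+l}}=x_{i_{\sigma(1)}}\ldots x_{i_{\sigma(k)}}x_{i_{k+\pi(1)}}\ldots x_{i_{k+\pi(l)}}.$$

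None of these steps is genuinely hard; the only subtlety worth flagging is the compatibility between the $S_k$- and $S_\infty$-level formulations, which is exactly what Proposition 2.3 provides. Once that compatibility is accepted, the verifications above are pure bookkeeping in $S_\infty$, and the statement follows.
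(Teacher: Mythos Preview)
Your proposal is correct and follows essentially the same route as the paper: define $G_k$ as the set of all permutations whose associated relations hold on $S$, observe $S=S_G$, and verify that $G$ is a filtered subgroup by checking closure under composition, inversion, and concatenation. The paper's proof is terser---it simply remarks that the relations ``can be composed and reversed'' and ``can be concatenated''---but your more explicit bookkeeping for each of these checks, and your justification of both inclusions in $S=S_G$, amount to the same argument spelled out in full.
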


\begin{proof}
We know from Proposition 2.3 that the construction in the statement produces a monomial sphere. Conversely, given a monomial sphere $S\subset S^{N-1}_{\mathbb R,+}$, let us set:
$$G_k=\left\{\sigma\in S_k\Big|x_{i_1}\ldots x_{i_k}=x_{i_{\sigma(1)}}\ldots x_{i_{\sigma(k)}},\forall (i_1,\ldots,i_k)\in\{1,\ldots,N\}^k\right\}$$

With $G=(G_k)$ we have $S=S_G$, so it remains to prove that $G$ is a filtered group.

Since the relations $x_{i_1}\ldots x_{i_k}=x_{i_{\sigma(1)}}\ldots x_{i_{\sigma(k)}}$ can be composed and reversed, each $G_k$ follows to be stable under composition and inversion, and is therefore a group. 

Also, since the relations $x_{i_1}\ldots x_{i_k}=x_{i_{\sigma(1)}}\ldots x_{i_{\sigma(k)}}$ can be concatenated as well, our group $G=(G_k)$ is stable under concatenation, and we are done. 
\end{proof}

As an illustration, the groups $\{1\}\subset S_\infty$ produce the spheres $S^{N-1}_{\mathbb R,+}\supset S^{N-1}_\mathbb R$. In order to discuss now the half-liberated case, we will need:

\begin{proposition}
Let $S_\infty^*\subset S_\infty$ be the set of permutations having the property that when labelling cyclically the legs $\bullet\circ\bullet\circ\ldots$, each string joins a black leg to a white leg.
\begin{enumerate}
\item $S_\infty^*$ is a filtered subgroup of $S_\infty$, generated by the half-liberated crossing.

\item We have $S_{2k}^*\simeq S_k\times S_k$, and $S^*_{2k+1}\simeq S_k\times S_{k+1}$, for any $k\in\mathbb N$.
\end{enumerate}
\end{proposition}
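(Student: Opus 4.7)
My plan is to first decode the cyclic labelling and reformulate $S_k^*$ as the group of parity-preserving permutations, after which both parts of the proposition become essentially combinatorial.

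The first step is the labelling convention. Reading around the $2k$ boundary points in cyclic order (top row $1, \dots, k$ left-to-right, then bottom row $k, \dots, 1$ right-to-left) and assigning colors $\bullet\circ\bullet\circ\ldots$ along this cycle, I would verify that top position $i$ receives the opposite color to bottom position $i$ for every $i$, independently of $k$. Hence a string $i\to\sigma(i)$ joins a black leg to a white leg precisely when $i\equiv\sigma(i)\pmod{2}$. In other words, $S_k^*$ is the subgroup of $S_k$ consisting of permutations that preserve the parity of positions.

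Given this reformulation, part (2) is immediate: a parity-preserving permutation is determined by its independent actions on the odd positions and on the even positions, which yields $S_{2k}^*\simeq S_k\times S_k$ (there are $k$ odd and $k$ even positions) and $S_{2k+1}^*\simeq S_k\times S_{k+1}$ (there are $k+1$ odd and $k$ even positions). For the group and filtration assertions in part (1), each $S_k^*$ is clearly a subgroup of $S_k$ since parity preservation is stable under composition and inversion, and closure under concatenation needs only a short check: if $\sigma\in S_k^*$ and $\pi\in S_l^*$, then the concatenation sends $k+j\mapsto k+\pi(j)$, which has the same parity as $k+j$ because $\pi$ preserves parity.

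Finally, for the generation statement, the half-liberated crossing is the transposition $(1,3)\in S_3^*$. Concatenating it with identities on the left and on the right produces, for every admissible $k$ and $i$, the transposition $(i,i+2)\in S_k^*$. These ``distance-two'' transpositions generate the symmetric group on the odd positions and the symmetric group on the even positions separately, which together exhaust $S_k^*$. I expect the only mildly delicate point, which I would flag as the main thing to handle carefully, is that the generation step genuinely uses both operations available in a filtered subgroup: concatenation (to pad the half-liberated crossing into any slot of $S_k$) and ordinary composition within $S_k$ (to weave these padded transpositions into an arbitrary parity-preserving permutation); neither operation alone suffices.
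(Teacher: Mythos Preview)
Your proposal is correct and follows essentially the same route as the paper, but with more detail. The paper's proof is a sketch: it declares the subgroup and filtration properties ``clear'', calls the generation statement ``routine to check, by double inclusion'', and for part (2) exhibits the cases $k=3,4$ by picture and says ``the same argument works in general''. Your explicit reformulation of $S_k^*$ as the parity-preserving permutations is exactly the mechanism underlying the paper's pictures (and is stated verbatim by the paper later, in Step 4 of the proof of Theorem 2.8), so you are not taking a different path, just making the implicit step explicit and then reading off (1) and (2) from it.
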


\begin{proof}
The fact that $S_\infty^*$ is indeed a subgroup of $S_\infty$, which is filtered, is clear. Observe now that the half-liberated crossing has the ``black-to-white'' joining property:
$$\xymatrix@R=10mm@C=5mm{\circ\ar@{-}[drr]&\bullet\ar@{.}[d]&\circ\ar@{-}[dll]\\\bullet&\circ&\bullet}$$ 

Thus this crossing belongs to $S_3^*$, and it is routine to check, by double inclusion, that the filtered subgroup of $S_\infty$ generated by it is the whole $S_\infty^*$.  Regarding now the last assertion, observe first that $S_3^*,S_4^*$ consist of the following permutations:
$$\xymatrix@R=10mm@C=5mm{\circ\ar@{-}[d]&\bullet\ar@{.}[d]&\circ\ar@{-}[d]\\\bullet&\circ&\bullet}\qquad\qquad 
\xymatrix@R=10mm@C=5mm{\circ\ar@{-}[drr]&\bullet\ar@{.}[d]&\circ\ar@{-}[dll]\\\bullet&\circ&\bullet}$$ 
\smallskip
$$\xymatrix@R=10mm@C=3mm{\circ\ar@{-}[d]&\bullet\ar@{.}[d]&\circ\ar@{-}[d]&\bullet\ar@{.}[d]\\\bullet&\circ&\bullet&\circ}\quad\qquad
\xymatrix@R=10mm@C=3mm{\circ\ar@{-}[drr]&\bullet\ar@{.}[d]&\circ\ar@{-}[dll]&\bullet\ar@{.}[d]\\\bullet&\circ&\bullet&\circ}\quad\qquad
\xymatrix@R=10mm@C=3mm{\circ\ar@{-}[drr]&\bullet\ar@{.}[drr]&\circ\ar@{-}[dll]&\bullet\ar@{.}[dll]\\\bullet&\circ&\bullet&\circ}\quad\qquad
\xymatrix@R=10mm@C=3mm{\circ\ar@{-}[d]&\bullet\ar@{.}[drr]&\circ\ar@{-}[d]&\bullet\ar@{.}[dll]\\\bullet&\circ&\bullet&\circ}$$

Thus we have $S_3^*=S_1\times S_2$ and $S_4^*=S_2\times S_2$, with the first component coming from dotted permutations, and with the second component coming from the solid line permutations. The same argument works in general, and gives the last assertion.
\end{proof}

Now back to the main 3 spheres, the result is as follows:

\begin{proposition}
The monomial spheres $S^{N-1}_\mathbb R\subset S^{N-1}_{\mathbb R,*}\subset S^{N-1}_{\mathbb R,+}$ come respectively from the filtered groups $S_\infty\supset S_\infty^*\supset\{1\}$.
\end{proposition}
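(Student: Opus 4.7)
The plan is to identify, for each of the three spheres, the filtered subgroup of $S_\infty$ that produces it via the construction of Proposition 2.5; namely, I would show that $S^{N-1}_{\mathbb R,+}=S_{\{1\}}$, that $S^{N-1}_{\mathbb R,*}=S_{S_\infty^*}$, and that $S^{N-1}_\mathbb R=S_{S_\infty}$, each by a double inclusion argument at the level of the quotient $C^*$-algebras.

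The first case is immediate from the definition: the trivial filtered group $\{1\}$ adds no non-trivial monomial relations to $C(S^{N-1}_{\mathbb R,+})$, so $S_{\{1\}}=S^{N-1}_{\mathbb R,+}$. The case of $S^{N-1}_\mathbb R$ is almost as short: the transposition $(12)\in S_2\subset S_\infty$ encodes the relation $x_ix_j=x_jx_i$, forcing $S_{S_\infty}\subset S^{N-1}_\mathbb R$; and conversely every monomial identity $x_{i_1}\ldots x_{i_k}=x_{i_{\sigma(1)}}\ldots x_{i_{\sigma(k)}}$ holds trivially in the commutative algebra $C(S^{N-1}_\mathbb R)$, so $S^{N-1}_\mathbb R\subset S_{S_\infty}$.

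The half-liberated case is the substantive one, and here I would invoke Proposition 2.6, which identifies $S_\infty^*$ as the filtered subgroup of $S_\infty$ \emph{generated} by the half-liberated crossing $\tau\in S_3^*$. The permutation $\tau$ encodes precisely the relation $x_ix_jx_k=x_kx_jx_i$ defining $S^{N-1}_{\mathbb R,*}$, and all other relations coming from elements of $S_\infty^*$ are likewise imposed in $S_{S_\infty^*}$, so $S_{S_\infty^*}\subset S^{N-1}_{\mathbb R,*}$. For the reverse inclusion I would observe, following the bookkeeping in the proof of Proposition 2.5, that the set
$$G^*=\left\{\sigma\in S_\infty\,\Big|\,x_{i_1}\ldots x_{i_k}=x_{i_{\sigma(1)}}\ldots x_{i_{\sigma(k)}}\text{ holds in }C(S^{N-1}_{\mathbb R,*})\text{ for all indices}\right\}$$
is closed under inversion, composition and concatenation, hence is itself a filtered subgroup of $S_\infty$. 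Since $G^*$ contains $\tau$, and since by Proposition 2.6 the filtered group generated by $\tau$ is the whole of $S_\infty^*$, we obtain $S_\infty^*\subset G^*$, i.e.\ $S^{N-1}_{\mathbb R,*}\subset S_{S_\infty^*}$.

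The one point requiring genuine care is the final stability observation — that ``$\sigma$ yields a valid monomial relation'' is closed under the three filtered-group operations. This is not deep (composition pastes two identities together, inversion swaps sides, and concatenation follows from multiplying two such identities and using that the indices range over all tuples), but it is the piece without which Proposition 2.6's generation statement cannot be leveraged. Once this stability is in place, the identification of the three filtered groups is essentially automatic from the single generating relation $abc=cba$.
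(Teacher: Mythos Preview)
Your argument is correct and follows essentially the same route as the paper: the free and classical cases are immediate, and the half-liberated case rests on the fact that $S_\infty^*$ is the filtered subgroup generated by the half-liberated crossing, together with the observation (from the proof that monomial spheres come from filtered groups) that the set of permutations whose relations hold in a given sphere is itself a filtered subgroup. The paper's proof is a one-line pointer to this same generation statement; you have simply unpacked it.

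One minor correction: your proposition numbers are off by one throughout. What you call ``Proposition 2.5'' (the construction $G\mapsto S_G$ and the filtered-group verification) is Proposition 2.4 in the paper, and what you call ``Proposition 2.6'' (the generation of $S_\infty^*$ by the half-liberated crossing) is Proposition 2.5(1). The statement you are proving is itself Proposition 2.6.
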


\begin{proof}
This is clear by definition in the classical and in the free cases. In the half-liberated case, the result follows from Proposition 2.5 (1) above.
\end{proof}

Now back to the general case, consider a monomial sphere $S_G\subset S^{N-1}_{\mathbb R,+}$, with the filtered group $G\subset S_\infty$ taken to be maximal, as in the proof of Proposition 2.4. 

We have the following key observation:

\begin{proposition}
The filtered group $G\subset S_\infty$ associated to a monomial sphere $S\subset S^{N-1}_{\mathbb R,+}$ is stable under the following operations, on the corresponding diagrams:
\begin{enumerate}
\item Removing outer strings.

\item Removing neighboring strings.
\end{enumerate}
\end{proposition}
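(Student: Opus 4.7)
My plan is to use the sphere relation $\sum_j x_j^2=1$ as a ``cap'', exactly as in the proof of Proposition 2.3, but now applied at the end of a word (for outer strings) or in the middle of a word (for neighboring strings). Throughout I will work from the relation $x_{i_1}\cdots x_{i_k}=x_{i_{\sigma(1)}}\cdots x_{i_{\sigma(k)}}$, valid for all tuples $(i_1,\ldots,i_k)\in\{1,\ldots,N\}^k$.

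For (1), by symmetry I may assume that the outer string to be removed is the rightmost one, so $\sigma(k)=k$, and the factor $x_{i_k}$ sits at the end of both sides of the relation. Multiplying by $x_{i_k}$ on the right and summing over $i_k$ replaces that common factor by $\sum_{i_k}x_{i_k}^2=1$, yielding precisely the defining relation for the restriction $\sigma|_{\{1,\ldots,k-1\}}\in S_{k-1}$; hence this restriction lies in $G_{k-1}$. The leftmost case is identical, multiplying by $x_{i_1}$ on the left.

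For (2), I would suppose two neighboring strings connect top positions $t,t+1$ to bottom positions $s,s+1$ (in either order) and specialize $i_t=i_{t+1}=j$. This forces an $x_jx_j=x_j^2$ factor at positions $t,t+1$ on the left, and, crucially, an $x_j^2$ factor at positions $s,s+1$ on the right, regardless of whether the two strings cross. Summing over $j$ and invoking $\sum_jx_j^2=1$ then cancels these middle blocks, producing the defining relation for the permutation $\sigma'\in S_{k-2}$ obtained by removing the two pairs of endpoints and relabeling.

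The main obstacle I expect is the index bookkeeping in (2): one must check that the reduced relation matches exactly the defining relation for $\sigma'$, in particular in the crossed subcase where $\sigma(t)=s+1$ and $\sigma(t+1)=s$. This is purely combinatorial and should follow by tracking carefully the surviving indices on both sides; the key structural point is that the two neighboring hypothesis guarantees that the specialized indices $j$ land in \emph{adjacent} slots on both sides, which is exactly what is needed for $\sum_jx_j^2=1$ to detach cleanly.
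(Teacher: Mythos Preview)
Your proposal is correct and follows essentially the same approach as the paper. The paper's proof is just a more compressed version of yours: it writes the rightmost outer case as $Xa=Ya\implies Xa^2=Ya^2\implies X=Y$ and the neighboring case as $XabY=ZabT$ (or $XabY=ZbaT$) $\implies Xa^2Y=Za^2T\implies XY=ZT$, which is exactly your ``specialize the two indices to be equal, then sum and use $\sum_j x_j^2=1$'' argument, including the crossed subcase you singled out.
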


\begin{proof}
Both these results follow by using the quadratic condition:

(1) Regarding the outer strings, by summing over $a$, we have indeed:
$$Xa=Ya\implies Xa^2=Ya^2\implies X=Y$$
$$aX=aY\implies a^2X=a^2Y\implies X=Y$$

(2) Regarding the neighboring strings, once again by summing over $a$, we have:
$$XabY=ZabT\implies Xa^2Y=Za^2T\implies XY=ZT$$
$$XabY=ZbaT\implies Xa^2Y=Za^2T\implies XY=ZT$$

Thus $G=(G_k)$ has both the properties in the statement.
\end{proof}

We are now in position of stating and proving the axiomatization result regarding our 3 noncommutative spheres, which was recently obtained in \cite{bme}:

\begin{theorem}
The spheres $S^{N-1}_\mathbb R\subset S^{N-1}_{\mathbb R,*}\subset S^{N-1}_{\mathbb R,+}$ are the only monomial ones.
\end{theorem}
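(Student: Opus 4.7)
The plan is to combine Propositions 2.4 and 2.7, which together say that monomial spheres correspond bijectively to filtered subgroups $G = (G_k) \subset S_\infty$ that are stable under the two string-removal operations. In view of Proposition 2.6, it then suffices to prove that the only such $G$ are $\{1\}$, $S_\infty^*$, and $S_\infty$.

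After disposing of the trivial case $G = \{1\}$, I would split according to whether $G \subset S_\infty^*$. Suppose first that $\{1\} \neq G \subset S_\infty^*$. Since $S_2 \cap S_\infty^* = \{1\}$ and $S_3^*$ contains exactly one non-identity element, namely the half-liberated crossing $\sigma_*$, the goal becomes to reduce any non-identity $\sigma \in G_k \cap S_k^*$ via outer and neighboring removals down to $\sigma_* \in G_3$; by Proposition 2.5(1) this then forces $G = S_\infty^*$. Suppose next that $G \not\subset S_\infty^*$: pick $\sigma \in G_k$ violating the parity condition of Proposition 2.5, meaning some index $i$ with $i \not\equiv \sigma(i) \pmod{2}$, and now aim to reduce $\sigma$ all the way down to the basic crossing $\tau \in G_2$. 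This crossing generates $S_\infty$ as a filtered group, since its concatenations with identities give all adjacent transpositions, and these in turn generate every $S_k$.

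The main obstacle in both branches will be the reduction itself, in particular controlling how the two removal operations interact with position parities. Removing an outer or neighboring string shifts labels by one and so flips the parity of the shifted positions; careful bookkeeping is required to guarantee that the target property (being parity-violating, respectively being a non-identity element of $S^*$) survives some well-chosen removal. When no removal applied directly to $\sigma$ preserves the property, I would invoke the group structure of $G$, using $\sigma^{-1}$, powers $\sigma^n$, and products with previously produced elements of $G$, together with concatenations by identities that embed $\sigma$ into a larger $G_{k+l}$ and create the adjacent-string configurations required for the neighboring-strings reduction. A finite case analysis on the cycle structure of $\sigma$, organized by induction on $k$, should then deliver the desired generator in $G_2$ or $G_3$ and finish the proof.
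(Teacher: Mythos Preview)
Your overall framework is right and matches the paper: reduce to classifying filtered subgroups $G\subset S_\infty$ stable under the two string-removal operations, and show these are exactly $\{1\},S_\infty^*,S_\infty$. The case split according to whether $G\subset S_\infty^*$ is also reasonable. What is missing, however, is the concrete mechanism that makes the reduction go through; ``a finite case analysis on the cycle structure of $\sigma$, organized by induction on $k$'' is where the real content lies, and you have not supplied it.

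The paper's proof avoids an open-ended induction by inserting one sharp trick at the outset: from any nontrivial $\tau\in G$ it manufactures a \emph{3-cycle} in $G$. Namely, strip outer strings from $\tau$ on each side to get $\tau'\in G_k,\tau''\in G_s$ acting nontrivially on their extreme legs, set $\pi=\tau'\otimes id_{s-1}$ and $\sigma=id_{k-1}\otimes\tau''$ so that $supp(\pi)\cap supp(\sigma)$ is a single point, and then the commutator $\sigma^{-1}\pi^{-1}\sigma\pi$ is a 3-cycle. A 3-cycle has essentially no cycle structure to analyse: after stripping outer strings and deleting pairs of adjacent vertical strings it lands on one of four explicit small permutations, each of which visibly yields the basic or the half-liberated crossing. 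This gives $S_\infty^*\subset G$ for \emph{every} nontrivial $G$, not just those contained in $S_\infty^*$. Only then does the paper treat the case $G\not\subset S_\infty^*$, and there it uses the already-established inclusion $S_\infty^*\subset G$ in an essential way: one conjugates a transposition $(i,k)\in S_k^*\subset G$ by the parity-violating $\sigma$ to produce a transposition $(\sigma(i),k)$ with $\sigma(i)$ odd and $k$ even, which then strips down to the basic crossing. Your Case~2 tries to reach the basic crossing from a single parity-violating $\sigma$ without first knowing $S_\infty^*\subset G$; that conjugation step is then unavailable, and it is not clear your toolbox suffices. I would reorganise along the paper's lines: produce a 3-cycle first via the commutator trick, and let the rest follow.
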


\begin{proof}
We will prove that the only filtered groups $G\subset S_\infty$ satisfying the conditions in Proposition 2.7 are $\{1\}\subset S_\infty^*\subset S_\infty$, correspoding to our 3 spheres. In order to do so, consider such a filtered group $G\subset S_\infty$, assumed to be non-trivial, $G\neq\{1\}$.

\underline{Step 1.} Our first claim is that $G$ contains a 3-cycle. For this purpose, we use a standard trick, stating that if $\pi,\sigma\in S_\infty$ have support overlapping on exactly one point, say $supp(\pi)\cap supp(\sigma)=\{i\}$, then the commutator $\sigma^{-1}\pi^{-1}\sigma\pi$ is a 3-cycle, namely $(i,\sigma^{-1}(i),\pi^{-1}(i))$. Indeed the computation of the commutator goes as follows:
$$\xymatrix@R=7mm@C=5mm{\pi\\ \sigma\\ \pi^{-1}\\ \sigma^{-1}}\qquad
\xymatrix@R=6mm@C=5mm{\\ \\ =}\qquad
\xymatrix@R=5mm@C=5mm{
\circ&\circ\ar@{-}[drr]&\circ&\bullet\ar@{-}[dl]&\circ\ar@{.}[d]&\circ\ar@{-}[d]&\circ\ar@{.}[d]\\
\circ\ar@{.}[d]&\circ\ar@{.}[d]&\circ\ar@{-}[d]&\bullet\ar@{-}[dr]&\circ&\circ\ar@{-}[dll]&\circ\\
\circ&\circ&\circ\ar@{-}[dr]&\bullet\ar@{-}[dll]&\circ\ar@{-}[d]&\circ\ar@{.}[d]&\circ\ar@{.}[d]\\
\circ\ar@{.}[d]&\circ\ar@{-}[d]&\circ\ar@{.}[d]&\bullet\ar@{-}[drr]&\circ\ar@{-}[dl]&\circ&\circ\\
\circ&\circ&\circ&\bullet&\circ&\circ&\circ
}$$

Now let us pick a non-trivial element $\tau\in G$. By removing outer strings at right and at left we obtain permutations $\tau'\in G_k,\tau''\in G_s$ having a non-trivial action on their right/left leg, and by taking $\pi=\tau'\otimes id_{s-1},\sigma=id_{k-1}\otimes\tau''$, the trick applies.

\underline{Step 2.} Our second claim is $G$ must contain one of the following permutations:
$$\xymatrix@R=10mm@C=2mm{
\circ\ar@{-}[dr]&\circ\ar@{-}[dr]&\circ\ar@{-}[dll]\\
\circ&\circ&\circ}\qquad\quad
\xymatrix@R=10mm@C=2mm{
\circ\ar@{-}[drr]&\circ\ar@{.}[d]&\circ\ar@{-}[dr]&\circ\ar@{-}[dlll]\\
\circ&\circ&\circ&\circ}\qquad\quad
\xymatrix@R=10mm@C=2mm{
\circ\ar@{-}[dr]&\circ\ar@{-}[drr]&\circ\ar@{.}[d]&\circ\ar@{-}[dlll]\\
\circ&\circ&\circ&\circ}\qquad\quad
\xymatrix@R=10mm@C=2mm{
\circ\ar@{-}[drr]&\circ\ar@{.}[d]&\circ\ar@{-}[drr]&\circ\ar@{.}[d]&\circ\ar@{-}[dllll]\\
\circ&\circ&\circ&\circ&\circ}$$

Indeed, consider the 3-cycle that we just constructed. By removing all outer strings, and then all pairs of adjacent vertical strings, we are left with these permutations.

\underline{Step 3.} Our claim now is that we must have $S_\infty^*\subset G$. Indeed, let us pick one of the permutations that we just constructed, and apply to it our various diagrammatic rules. From the first permutation we can obtain the basic crossing, as follows:
$$\xymatrix@R=5mm@C=5mm{
\circ\ar@{-}[d]&\circ\ar@{-}[dr]&\circ\ar@{-}[dr]&\circ\ar@{-}[dll]\\
\circ\ar@{-}[dr]&\circ\ar@{-}[dr]&\circ\ar@{-}[dll]&\circ\ar@{-}[d]\\
\circ&\circ&\circ&\circ}\qquad
\xymatrix@R=5mm@C=5mm{
\\ \to\\}\qquad
\xymatrix@R=6mm@C=5mm{
\circ\ar@{-}[ddr]\ar@/^/@{.}[r]&\circ\ar@{-}[ddl]&\circ\ar@{-}[ddr]&\circ\ar@{-}[ddl]\\
\\
\circ\ar@/_/@{.}[r]&\circ&\circ&\circ}\qquad
\xymatrix@R=5mm@C=5mm{
\\ \to\\}\qquad
\xymatrix@R=6mm@C=5mm{
\circ\ar@{-}[ddr]&\circ\ar@{-}[ddl]\\
\\
\circ&\circ}$$

Also, by removing a suitable $\slash\hskip-2.1mm\backslash$ shaped configuration, which is represented by dotted lines in the diagrams below, we can obtain the basic crossing from the second and third permutation, and the half-liberated crossing from the fourth permutation:
$$\xymatrix@R=10mm@C=2mm{
\circ\ar@{.}[drr]&\circ\ar@{.}[d]&\circ\ar@{-}[dr]&\circ\ar@{-}[dlll]\\
\circ&\circ&\circ&\circ}\qquad\quad
\xymatrix@R=10mm@C=2mm{
\circ\ar@{-}[dr]&\circ\ar@{.}[drr]&\circ\ar@{.}[d]&\circ\ar@{-}[dlll]\\
\circ&\circ&\circ&\circ}\qquad\quad
\xymatrix@R=10mm@C=2mm{
\circ\ar@{.}[drr]&\circ\ar@{.}[d]&\circ\ar@{-}[drr]&\circ\ar@{-}[d]&\circ\ar@{-}[dllll]\\
\circ&\circ&\circ&\circ&\circ}$$

Thus, in all cases we have a basic or half-liberated crossing, and so $S_\infty^*\subset G$.

\underline{Step 4.} Our last claim, which will finish the proof, is that there is no proper intermediate subgroup $S_\infty^*\subset G\subset S_\infty$. In order to prove this, observe that $S_\infty^*\subset S_\infty$ is the subgroup of  parity-preserving permutations, in the sense that ``$i$ even $\implies$ $\sigma(i)$ even''. 

Now let us pick an element $\sigma\in S_k-S_k^*$, with $k\in\mathbb N$. We must prove that the group $G=<S_\infty^*,\sigma>$ equals the whole $S_\infty$. In order to do so, we use the fact that $\sigma$ is not parity preserving. Thus, we can find $i$ even such that $\sigma(i)$ is odd. 

In addition, up to passing to $\sigma|$, we can assume that $\sigma(k)=k$, and then, up to passing one more time to $\sigma|$, we can further assume that $k$ is even.

Since both $i,k$ are even we have $(i,k)\in S_k^*$, and so $\sigma(i,k)\sigma^{-1}=(\sigma(i),k)$ belongs to $G$. But, since $\sigma(i)$ is odd, by deleting an appropriate number of vertical strings, $(\sigma(i),k)$ reduces to the basic crossing $(1,2)$. Thus $G=S_\infty$, and we are done.
\end{proof}

Summarizing, we have now a quite satisfactory axiomatization of our three spheres. We can axiomatize as well our noncommutative projective spaces, as follows:

\begin{definition}
A monomial projective space is a closed subset $P\subset P^{N-1}_+$ of the free projective space obtained via relations of type
$$p_{i_1i_2}\ldots p_{i_{k-1}i_k}=p_{i_{\sigma(1)}i_{\sigma(2)}}\ldots p_{i_{\sigma(k-1)}i_{\sigma(k)}},\ \forall (i_1,\ldots,i_k)\in\{1,\ldots,N\}^k$$
with $\sigma$ ranging over a certain subset of $\bigcup_{k\in2\mathbb N}S_k$, stable under $\sigma\to|\sigma|$.
\end{definition}

Observe the similarity with Definition 2.1. The only subtlety in the projective case is the stability under $\sigma\to|\sigma|$, which in practice means that if the above relation associated to $\sigma$ holds, then the following relation, associated to $|\sigma|$, must hold as well:
$$p_{i_0i_1}\ldots p_{i_ki_{k+1}}=p_{i_0i_{\sigma(1)}}p_{i_{\sigma(2)}i_{\sigma(3)}}\ldots p_{i_{\sigma(k-2)}i_{\sigma(k-1)}}p_{i_{\sigma(k)}i_{k+1}}$$

As an illustration, the basic projective spaces are all monomial:

\begin{proposition}
The spaces $P^{N-1}_\mathbb R\subset P^{N-1}_\mathbb C\subset P^{N-1}_+$ are all monomial, with
$$\xymatrix@R=10mm@C=8mm{\circ\ar@{-}[dr]&\circ\ar@{-}[dl]\\\circ&\circ}\qquad\qquad\qquad
\xymatrix@R=10mm@C=3mm{\circ\ar@{-}[drr]&\circ\ar@{-}[drr]&\circ\ar@{-}[dll]&\circ\ar@{-}[dll]\\\circ&\circ&\circ&\circ}$$
producing respectively $P^{N-1}_\mathbb R,P^{N-1}_\mathbb C$.
\end{proposition}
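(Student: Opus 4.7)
The plan is to verify, in each case, that the defining relations produced by the pictured permutation (together with its $|\sigma|$-extensions required by Definition 2.9) match the presentations of $C(P^{N-1}_\mathbb R)$ and $C(P^{N-1}_\mathbb C)$ given in Proposition 1.10. The case $P = P^{N-1}_+$ is trivial: it is already defined without any monomial-type relations, so we use the empty family of permutations.

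For $P^{N-1}_\mathbb C$, the pictured permutation is $\tau = (1,3)(2,4) \in S_4$, and the associated monomial relation reads
\[ p_{i_1 i_2} p_{i_3 i_4} = p_{i_{\tau(1)} i_{\tau(2)}} p_{i_{\tau(3)} i_{\tau(4)}} = p_{i_3 i_4} p_{i_1 i_2}, \]
which is precisely commutativity of the generators $p_{ij}$. Since $P^{N-1}_\mathbb C$ is, by Proposition 1.10 and the remark after Definition 1.11, the classical version of $P^{N-1}_+$, this single family of relations cuts out exactly $P^{N-1}_\mathbb C$. The $|\tau|$-stability produces $p_{i_0 i_1} p_{i_2 i_3} p_{i_4 i_5} = p_{i_0 i_3} p_{i_4 i_1} p_{i_2 i_5}$, which is an immediate consequence of commutativity, both sides being products of the same three generators.

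For $P^{N-1}_\mathbb R$, the pictured permutation is the basic crossing $\sigma = (1,2) \in S_2$, giving the relation $p_{i_1 i_2} = p_{i_2 i_1}$, i.e.\ symmetry of the matrix $p$. Its $|\sigma|$-extension yields the further relation $p_{i_0 i_1} p_{i_2 i_3} = p_{i_0 i_2} p_{i_1 i_3}$. I would combine these to derive commutativity as follows: applying the $|\sigma|$-relation with renamed indices $(c, d, a, b)$ and then the symmetry $p_{ij} = p_{ji}$, one obtains
\[ p_{cd} p_{ab} = p_{ca} p_{db} = p_{ac} p_{bd} = p_{ab} p_{cd}, \]
the final equality being the original $|\sigma|$-relation. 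Thus the generators commute. Combined with the relation $p = p^*$ built into $P^{N-1}_+$, which reads $(p_{ij})^* = p_{ji}$, the symmetry $p_{ij} = p_{ji}$ forces each $p_{ij}$ to be self-adjoint, hence real-valued in the now-commutative $C^*$-algebra. Together with $p = p^* = p^2$ and $Tr(p) = 1$, this reproduces exactly the presentation of $C(P^{N-1}_\mathbb R)$ from Proposition 1.10.

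The only delicate point is the correct handling of $|\sigma|$-stability: in the $P^{N-1}_\mathbb R$ case this is essential, being the precise mechanism by which symmetry of $p$ is upgraded to full commutativity, whereas in the $P^{N-1}_\mathbb C$ case it is automatic once the main $\tau$-relation is imposed. Otherwise the verification is a direct comparison with Proposition 1.10.
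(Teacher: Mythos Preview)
Your treatment of $P^{N-1}_\mathbb R$ is correct and matches the paper's argument essentially word for word: the crossing gives $p_{ab}=p_{ba}$, the shifted version gives $p_{ab}p_{cd}=p_{ac}p_{bd}$, and combining these yields commutativity, whence $P=P^{N-1}_\mathbb R$ by Proposition 1.10.

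There is, however, a genuine error in your $P^{N-1}_\mathbb C$ argument. You correctly write the $|\tau|$-relation as
\[
p_{i_0i_1}p_{i_2i_3}p_{i_4i_5}=p_{i_0i_3}p_{i_4i_1}p_{i_2i_5},
\]
but then assert this follows from commutativity because ``both sides are products of the same three generators.'' This is false: the left side involves $p_{i_0i_1},p_{i_2i_3},p_{i_4i_5}$ while the right side involves $p_{i_0i_3},p_{i_4i_1},p_{i_2i_5}$, which for generic indices are entirely different elements of the algebra. Commutativity of the $p_{ij}$ alone does not give this identity, so you have not actually shown that imposing the $|\tau|$-relation leaves you at $P^{N-1}_\mathbb C$ rather than at some proper subspace.

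The paper closes this gap by passing to the concrete realisation $p_{ij}=z_i\bar z_j$ on $P^{N-1}_\mathbb C$: both sides of the shifted relation then become $z_{i_0}\bar z_{i_1}z_{i_2}\bar z_{i_3}z_{i_4}\bar z_{i_5}$ (up to reordering the commuting scalar functions $z_a,\bar z_b$), so the relation is automatic. You need this step, or an equivalent one, to complete the argument.
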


\begin{proof}
We must divide the algebra $C(P^{N-1}_+)$ by the relations associated to the diagrams in the statement, as well as those associated to their shifted versions, given by:
$$\xymatrix@R=10mm@C=3mm{\circ\ar@{-}[d]&\circ\ar@{-}[dr]&\circ\ar@{-}[dl]&\circ\ar@{-}[d]\\\circ&\circ&\circ&\circ}\qquad\qquad\qquad 
\xymatrix@R=10mm@C=3mm{\circ\ar@{-}[d]&\circ\ar@{-}[drr]&\circ\ar@{-}[drr]&\circ\ar@{-}[dll]&\circ\ar@{-}[dll]&\circ\ar@{-}[d]\\\circ&\circ&\circ&\circ&\circ&\circ}$$ 

(1) The basic crossing, and its shifted version, produce the relations $p_{ab}=p_{ba}$ and $p_{ab}p_{cd}=p_{ac}p_{bd}$. Now by using these relations several times, we obtain:
$$p_{ab}p_{cd}=p_{ac}p_{bd}=p_{ca}p_{db}=p_{cd}p_{ab}$$

Thus, the space produced by the basic crossing is classical, $P\subset P^{N-1}_\mathbb C$, and by using one more time the relations $p_{ab}=p_{ba}$ we conclude that we have $P=P^{N-1}_\mathbb R$.

(2) The fattened crossing, and its shifted version, produce the relations $p_{ab}p_{cd}=p_{cd}p_{ab}$ and $p_{ab}p_{cd}p_{ef}=p_{ad}p_{eb}p_{cf}$. The first relations tell us that the projective space must be classical, $P\subset P^{N-1}_\mathbb C$. Now observe that with $p_{ij}=z_i\bar{z}_j$, the second relations read:
$$z_a\bar{z}_bz_c\bar{z}_dz_e\bar{z}_f=z_a\bar{z}_dz_e\bar{z}_bz_c\bar{z}_f$$

Since these relations are automatic, we have $P=P^{N-1}_\mathbb C$, and we are done.
\end{proof}

We can now formulate our projective classification result, as follows:

\begin{theorem}
The basic $3$ projective spaces, namely 
$$P^{N-1}_\mathbb R\subset P^{N-1}_\mathbb C\subset P^{N-1}_+$$
are the only monomial ones, in the above sense.
\end{theorem}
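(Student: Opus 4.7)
My strategy parallels the proof of Theorem 2.8, transported from spheres to projective spaces. First, I would associate to each monomial projective space $P \subset P^{N-1}_+$ the maximal defining set $G = (G_k)_{k \in 2\mathbb N}$, where $G_k \subset S_k$ consists of all permutations $\sigma$ whose associated monomial projective relation holds on $P$. Arguing as in Proposition 2.4, this set is closed under composition at each level, inversion, and concatenation, and by Definition 2.9 it is also stable under the shift operation $\sigma \to |\sigma|$.

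Next, using the projective axioms $p = p^2$ and $Tr(p) = 1$, which in particular yield the identity $\sum_k p_{kk} = 1$, I would establish that $G$ is closed under removing pairs of outer strings and pairs of adjacent interior strings, in analogy with Proposition 2.7. The summation trick works as before: contracting two neighboring indices into a single $p_{kk}$ and summing over $k$ produces a shorter relation of length $k-2$.

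The key reduction is the following. Each $\sigma \in G_k$ is equivalent, via the formal identification $p_{ij} = x_i x_j$, to a sphere relation $x_{i_1} \ldots x_{i_k} = x_{i_{\sigma(1)}} \ldots x_{i_{\sigma(k)}}$. Hence $G$ generates a filtered subgroup $G^{\mathrm{sph}} \subset S_\infty$ in the sphere setting, inheriting the closure properties from the projective side (and further augmented by $\sum_a x_a^2 = 1$). By Theorem 2.8 I would therefore conclude $G^{\mathrm{sph}} \in \{\{1\}, S_\infty^*, S_\infty\}$, which places $G$ inside the even-length part of one of these three sphere groups.

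The main obstacle is to show equality, rather than mere containment, in the two non-trivial cases. In the free case $G^{\mathrm{sph}} = \{1\}$, triviality of the sphere lift forces triviality of $G$, giving immediately $P = P^{N-1}_+$. In the other cases I would need to verify that $G$ itself, not merely its sphere lift, already contains the generators identified in Proposition 2.11, namely the fattened crossing together with its shift for $P^{N-1}_\mathbb C$, and the basic crossing together with its shift for $P^{N-1}_\mathbb R$. My plan here is to reproduce the argument of Steps 1 through 3 of the proof of Theorem 2.8 inside $\bigcup_{k \in 2\mathbb N} S_k$, working only with even-length permutations and exploiting the shift stability $\sigma \to |\sigma|$ to bridge between lengths whenever the sphere argument would have used an odd-length intermediate permutation.
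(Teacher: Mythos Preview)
Your overall strategy---associate a filtered family of permutations to the monomial projective space and reduce to Theorem~2.8---is correct and matches the paper's approach. However, you miss the key simplifying move, and your workaround introduces unnecessary complications.

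The paper does not keep $G$ restricted to even levels. Instead it \emph{extends} the definition of $G_k$ to odd $k$ by setting
\[
G_k=\{\sigma\in S_k\mid \mathcal R_{|\sigma}\text{ holds over }P\}\qquad(k\text{ odd}),
\]
exploiting precisely the shift stability $\sigma\to|\sigma|$ built into Definition~2.9. With this extension, $G=(G_k)_{k\geq 0}$ becomes a genuine filtered subgroup of all of $S_\infty$, stable under removing \emph{single} outer strings and \emph{single} neighboring pairs exactly as in Proposition~2.7. Theorem~2.8 then applies verbatim, with no modification of its steps, and the three cases $G\in\{\{1\},S_\infty^*,S_\infty\}$ correspond via Proposition~2.10 to the three projective spaces.

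Your route, by contrast, stays at even levels and proposes only the weaker operations ``remove pairs of outer strings'' and ``remove pairs of adjacent interior strings'' to preserve parity. These are strictly weaker than the single-string removals that Steps~1--3 of Theorem~2.8 actually use (for instance, Step~1 strips one outer string at a time to reach a permutation acting nontrivially on its boundary leg). So your plan to reproduce those steps inside $\bigcup_{k\in 2\mathbb N}S_k$ would require genuine additional argument, not a straightforward transcription. The detour through a separately generated ``$G^{\mathrm{sph}}$'' is also unnecessary and somewhat ill-defined: there is no sphere present, and the closure properties you need are those of $G$ itself, not of some group it generates.

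In short: your instincts are right, but the paper's one-line extension of $G$ to odd levels eliminates everything after your second paragraph.
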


\begin{proof}
We follow the proof from the affine case. Let $\mathcal R_\sigma$ be the collection of relations associated to a permutation $\sigma\in S_k$ with $k\in 2\mathbb N$, as in Definition 2.9. 

We fix a monomial projective space $P\subset P^{N-1}_+$, and we associate to it a family of subsets $G_k\subset S_k$, as follows:
$$G_k=\begin{cases}
\{\sigma\in S_k|\mathcal R_\sigma\ {\rm hold\ over\ }P\}&(k\ {\rm even})\\
\{\sigma\in S_k|\mathcal R_{|\sigma}\ {\rm hold\ over\ }P\}&(k\ {\rm odd})
\end{cases}$$

As in the affine case, we obtain in this way a filtered group $G=(G_k)$, which is stable under removing outer strings, and under removing neighboring strings. 

Thus the computations in the proof of Theorem 2.8 apply, and show that we have only 3 possible situations, corresponding to the 3 spaces in Proposition 2.10.
\end{proof}

We will see later on, in section 4 below, that the quantum isometry groups of the 3 spheres and 3 projective spaces can be axiomatized as well, in a similar manner.

\section{Quantum isometry groups}

We discuss now the quantum isometry groups of the 3 noncommutative spheres, and of the 3 projective spaces as well. We use the compact quantum group formalism developed by Woronowicz in \cite{wo1}, \cite{wo2}, under the Kac algebra assumption:

\begin{definition}
A finitely generated Hopf $C^*$-algebra is a $C^*$-algebra $A$, given with a unitary matrix $u\in M_N(A)$ whose coefficients generate $A$, such that the formulae
$$\Delta(u_{ij})=\sum_ku_{ik}\otimes u_{kj}\quad,\quad
 \varepsilon(u_{ij})=\delta_{ij}\quad,\quad
S(u_{ij})=u_{ji}^*$$
define morphisms of $C^*$-algebras $\Delta:A\to A\otimes A$, $\varepsilon:A\to\mathbb C$, $S:A\to A^{opp}$.
\end{definition}

The morphisms $\Delta,\varepsilon,S$ are called comultiplication, counit and antipode. Observe that, once the pair $(A,u)$ is given, these morphisms can exist or not. If they exist, they are unique, and we say that we have a finitely generated Hopf $C^*$-algebra.

We have the following result, making the link with standard Hopf algebra theory: 

\begin{proposition}
Let $(A,u)$ be a finitely generated Hopf $C^*$-algebra.
\begin{enumerate} 
\item $\Delta,\varepsilon$ satisfy the usual axioms for a comultiplication and a counit, namely:
\begin{eqnarray*}
(\Delta\otimes id)\Delta&=&(id\otimes \Delta)\Delta\\
(\varepsilon\otimes id)\Delta&=&(id\otimes\varepsilon)\Delta=id
\end{eqnarray*}

\item $S$ satisfies the antipode axiom, on the $*$-subalgebra generated by entries of $u$: 
$$m(S\otimes id)\Delta=m(id\otimes S)\Delta=\varepsilon(.)1$$

\item In addition, the square of the antipode is the identity, $S^2=id$.
\end{enumerate}
\end{proposition}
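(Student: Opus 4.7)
The plan is to verify each of the three assertions directly on the generators $u_{ij}$ (and on the adjoints $u_{ij}^*$), and then to extend via the morphism or multiplicativity properties of $\Delta$, $\varepsilon$, $S$.

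For (1), both sides of the coassociativity identity are $C^*$-algebra morphisms $A\to A\otimes A\otimes A$; a direct computation gives
$$(\Delta\otimes id)\Delta(u_{ij})=(id\otimes\Delta)\Delta(u_{ij})=\sum_{k,\ell}u_{ik}\otimes u_{k\ell}\otimes u_{\ell j},$$
and similarly $(\varepsilon\otimes id)\Delta(u_{ij})=(id\otimes\varepsilon)\Delta(u_{ij})=u_{ij}$. Since the $u_{ij}$ generate $A$ as a $C^*$-algebra, these identities on generators propagate to the whole of $A$.

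For (3), the key point is that $S:A\to A^{opp}$ being a $*$-morphism means $S$ is $*$-preserving and algebra-anti-multiplicative on $A$; consequently $S^2$ is a $*$-preserving algebra morphism $A\to A$. Evaluating on a generator,
$$S^2(u_{ij})=S(u_{ji}^*)=S(u_{ji})^*=(u_{ij}^*)^*=u_{ij},$$
so $S^2$ agrees with $id$ on a $*$-generating set for $A$, and therefore on all of $A$.

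For (2), the computation on a generator is immediate from unitarity of $u$:
$$m(S\otimes id)\Delta(u_{ij})=\sum_k u_{ki}^* u_{kj}=(u^*u)_{ij}=\delta_{ij}=\varepsilon(u_{ij})\cdot 1,$$
and symmetrically $m(id\otimes S)\Delta(u_{ij})=(uu^*)_{ij}=\delta_{ij}$. The main obstacle, which is also the reason the statement is restricted to a $*$-subalgebra, is extending the identity from the generators to the dense $*$-algebra they generate. I would handle this in two steps. First, I would verify the identity on the adjoints $u_{ij}^*$ by applying the anti-multiplicative, $*$-preserving map $S$ to the unitarity relations $uu^*=u^*u=1$; after relabeling of indices this yields $\sum_k u_{ki}u_{kj}^*=\sum_k u_{ik}^*u_{jk}=\delta_{ij}$, which are precisely the sums arising in $m(S\otimes id)\Delta(u_{ij}^*)$ and $m(id\otimes S)\Delta(u_{ij}^*)$, and these match $\varepsilon(u_{ij}^*)\cdot 1=\delta_{ij}\cdot 1$. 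Second, a standard Sweedler-style calculation, based on multiplicativity of $\Delta$ and anti-multiplicativity of $S$, shows that the set of elements on which both antipode identities hold is closed under linear combinations and under products. Taken together, these two steps propagate the antipode identity to the entire $*$-subalgebra generated by the entries of $u$.
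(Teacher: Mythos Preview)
Your proof is correct and follows essentially the same approach as the paper: verify each identity on the generators $u_{ij}$, then extend. The paper does exactly the same computations on $u_{ij}$ for (1) and (2), and dismisses (3) as ``clear from definitions''; it handles the extension step for all three parts in a single sentence invoking ``linearity, involutivity, multiplicativity and continuity''. Your treatment is actually more careful than the paper's on part (2): you rightly observe that $m(S\otimes id)\Delta$ is not an algebra morphism, so the extension to products is not automatic, and you supply the missing Sweedler-style argument together with the explicit check on the $u_{ij}^*$. This is a genuine improvement in rigor over the paper's somewhat cavalier one-line justification, though the underlying strategy is identical.
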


\begin{proof}
By linearity, involutivity, mutiplicativity and continuity, it is enough to do all the verifications on the coefficients of $u$. The two comultiplication axioms follow from:
\begin{eqnarray*}
(\Delta\otimes id)\Delta(u_{ij})&=&(id\otimes \Delta)\Delta(u_{ij})=\sum_{kl}u_{ik}\otimes u_{kl}\otimes u_{lj}\\
(\varepsilon\otimes id)\Delta(u_{ij})&=&(id\otimes\varepsilon)\Delta(u_{ij})=u_{ij}
\end{eqnarray*}

The two antipode axioms follows from the unitarity of $u$, as follows:
\begin{eqnarray*}
m(S\otimes id)\Delta(u_{ij})&=&\sum_ku_{ki}^*u_{kj}=(u^*u)_{ij}=\delta_{ij}\\
m(id\otimes S)\Delta(u_{ij})&=&\sum_ku_{ik}u_{jk}^*=(uu^*)_{ij}=\delta_{ij}
\end{eqnarray*}

Finally, the extra antipode axiom $S^2=id$ is clear from definitions.
\end{proof}

We say that $A$ is cocommutative when $\Sigma\Delta=\Delta$, where $\Sigma(a\otimes b)=b\otimes a$ is the flip. We have the following result, which justifies the terminology and axioms:

\begin{proposition}
The following are finitely generated Hopf $C^*$-algebras:
\begin{enumerate}
\item $C(G)$, with $G\subset U_N$ compact Lie group. Here the structural maps are:
\begin{eqnarray*}
\Delta(\varphi)&=&(g,h)\to \varphi(gh)\\
\varepsilon(\varphi)&=&\varphi(1)\\
S(\varphi)&=&g\to\varphi(g^{-1})
\end{eqnarray*}

\item $C^*(\Gamma)$, with $F_N\to\Gamma$ finitely generated group. Here the structural maps are:
\begin{eqnarray*}
\Delta(g)&=&g\otimes g\\
\varepsilon(g)&=&1\\ 
S(g)&=&g^{-1}
\end{eqnarray*}

\end{enumerate}
Moreover, we obtain in this way all the commutative/cocommutative algebras.
\end{proposition}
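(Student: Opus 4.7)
The plan for parts (1) and (2) is to exhibit the natural fundamental matrix $u$ in each case and verify the structure maps on generators by direct computation. For (1), take $u_{ij} \in C(G)$ to be the coordinate function $u_{ij}(g) = g_{ij}$. The matrix $u \in M_N(C(G))$ is pointwise unitary, and its entries generate $C(G)$ by Stone--Weierstrass: they separate points of $G \subset U_N$, and the relation $u^{-1} = u^*$ ensures each $\overline{u_{ij}}$ lies in the algebra they generate. Identifying $C(G) \otimes C(G) = C(G \times G)$, the matrix-multiplication identity $(gh)_{ij} = \sum_k g_{ik} h_{kj}$ is precisely $\Delta(u_{ij}) = \sum_k u_{ik} \otimes u_{kj}$, while $\varepsilon(u_{ij}) = \delta_{ij}$ and $S(u_{ij}) = u_{ji}^*$ record the identity element and the inverse $g \mapsto g^{-1}$ on $U_N$. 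For (2), take $u = \mathrm{diag}(g_1, \ldots, g_N)$: each $g_i$ is unitary in $C^*(\Gamma)$, so $u$ is a unitary diagonal matrix whose entries generate $C^*(\Gamma)$. Writing $u_{ij} = \delta_{ij} g_i$, the prescribed coproduct $\Delta(g) = g \otimes g$ gives $\Delta(u_{ij}) = \delta_{ij}\, g_i \otimes g_i = \sum_k u_{ik} \otimes u_{kj}$, with the counit and antipode formulas verified the same way.

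For the commutative half of the ``moreover'' claim, the plan is Gelfand duality. If $A$ is commutative, write $A = C(X)$ for a compact Hausdorff $X$. The structure maps $\Delta, \varepsilon, S$ dualize through $C(X) \otimes C(X) = C(X \times X)$ into continuous maps $\mu : X \times X \to X$, a distinguished point $e \in X$, and $\iota : X \to X$; translating the axioms of Proposition~3.2 gives exactly the group axioms, so $X$ is a compact group. The generating unitary $u$ evaluated at each $x \in X$ then gives a continuous homomorphism $X \to U_N$ (a homomorphism by the shape of $\Delta$) which is injective since the $u_{ij}$ generate $C(X)$ and therefore separate points, realizing $X$ as a closed subgroup of $U_N$.

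The cocommutative half is the main obstacle, and the plan is to invoke Woronowicz's Peter--Weyl theory. The central input is that when $\Delta$ is cocommutative, every irreducible finite-dimensional corepresentation of $A$ is one-dimensional, i.e.\ a group-like element $g \in A$ satisfying $\Delta(g) = g \otimes g$, $\varepsilon(g) = 1$ and $S(g) = g^{-1}$. Decomposing $u$ into irreducibles therefore yields a unitary basis change after which $u$ is diagonal with group-like entries $h_1, \ldots, h_N$, and these still generate $A$. The set $\Gamma \subset A$ of all group-like elements forms a discrete group under multiplication with inverse $S$, and is generated by the $h_i$, giving a surjection $F_N \twoheadrightarrow \Gamma$. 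By Peter--Weyl the span of $\Gamma$ is dense in $A$, so the universal property of $C^*(\Gamma)$ provides a surjection $C^*(\Gamma) \twoheadrightarrow A$, and the Kac condition $S^2 = \mathrm{id}$ together with the existence of $\varepsilon$ on $A$ is what rules out proper quotients, yielding $A = C^*(\Gamma)$.
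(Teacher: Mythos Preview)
Your approach matches the paper's exactly: exhibit the coordinate matrix $u=(u_{ij})$ in case (1), the diagonal matrix $u=\mathrm{diag}(g_1,\ldots,g_N)$ in case (2), and for the converse invoke Gelfand in the commutative case and Woronowicz's Peter--Weyl theory in the cocommutative case. Your write-up is in fact more detailed than the paper's, which simply names the matrices and cites \cite{wo1} for the cocommutative direction.

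One caveat: your final sentence overstates what $S^2=\mathrm{id}$ and the existence of $\varepsilon$ buy you. These conditions do not by themselves force $A$ to be the \emph{full} group $C^*$-algebra rather than some intermediate completion of $\mathbb C[\Gamma]$; the existence of $\varepsilon$ only excludes completions in which the trivial representation is not weakly contained (e.g.\ $C^*_{\mathrm{red}}(\Gamma)$ for $\Gamma$ non-amenable), and $S^2=\mathrm{id}$ is automatic here. The paper does not resolve this in the proof either: it acknowledges immediately after the proposition that $C^*_{\mathrm{red}}(\Gamma)$ is also a finitely generated Hopf $C^*$-algebra with the same data, and then introduces the ``full'' convention in Definition~3.4 precisely to remove this ambiguity. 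So your argument is fine as a sketch, but the last clause should be softened to a citation of \cite{wo1} or replaced by the observation that the dense $*$-subalgebra generated by the $h_i$ is $\mathbb C[\Gamma]$, with the identification $A\simeq C^*(\Gamma)$ then being a matter of convention.
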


\begin{proof}
In both cases, we have to exhibit a certain matrix $u$. For the first assertion, we can use the matrix $u=(u_{ij})$ formed by matrix coordinates of $G$, given by:
$$g=\begin{pmatrix}
u_{11}(g)&&u_{1N}(g)\\
&\ddots&\\
u_{N1}(g)&&u_{NN}(g)
\end{pmatrix}$$

For the second assertion, we can use the diagonal matrix formed by generators:
$$u=\begin{pmatrix}
g_1&&0\\
&\ddots&\\
0&&g_N
\end{pmatrix}$$

Finally, the last assertion follows from the Gelfand theorem, in the commutative case, and in the cocommutative case, this follows from the results of Woronowicz in \cite{wo1}.
\end{proof}

Observe that the reduced group algebra $C^*_{red}(\Gamma)$ is a finitely generated Hopf $C^*$-algebra as well, with the same defining matrix $u$, and with $\Delta,\varepsilon,S$ given by the same formulae. In order to overcome this issue, we call $A$ full when it is the enveloping $C^*$-algebra of the $*$-algebra generated by the coefficients of $u$. With this notion in hand, we have:

\begin{definition}
Given a finitely generated full Hopf $C^*$-algebra $A$, we write
$$A=C(G)=C^*(\Gamma)$$
and call $G$ compact matrix quantum group, and $\Gamma$ finitely generated quantum group.
\end{definition}

It follows from Proposition 3.3 that when $A$ is both commutative and cocommutative, $G$ is a compact abelian group, $\Gamma$ is a discrete abelian group, and these two groups are in Pontrjagin duality. With suitable terminology and notations, this Pontrjagin type duality extends to the general case, and we write $G=\widehat{\Gamma},\Gamma=\widehat{G}$. See \cite{ntu}, \cite{wo1}.

Let us discuss now the liberation question for the group $O_N$. First, we have:

\begin{proposition}
We have the presentation result
$$C(O_N)=C^*_{comm}\left((u_{ij})_{i,j=1,\ldots,N}\Big|u=\bar{u},u^t=u^{-1}\right)$$
where $C^*_{comm}$ stands as usual for universal commutative $C^*$-algebra.
\end{proposition}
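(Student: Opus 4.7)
The strategy is to mimic the proof of Proposition 1.3, which is the analogous Gelfand-type presentation statement for the classical sphere. The universal algebra on the right is well-defined as a $C^*$-algebra: the relation $u^t u=1$ gives $\sum_k u_{ki}^2=1$, which together with self-adjointness $u_{ki}=u_{ki}^*$ forces $\|u_{ki}\|\leq 1$ in any $C^*$-seminorm, so the biggest $C^*$-norm is bounded and the enveloping algebra exists. Once this is settled, I would construct two inverse morphisms.

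First, a morphism from right to left. The standard matrix coordinates $u_{ij}\in C(O_N)$ of the inclusion $O_N\subset M_N(\mathbb R)$ are self-adjoint (real-valued), and satisfy $u^t=u^{-1}$ pointwise, since any $g\in O_N$ satisfies these very relations. By the universal property, this defines a $*$-homomorphism from the right-hand algebra to $C(O_N)$, sending standard coordinates to standard coordinates. Since the standard coordinates $u_{ij}$ separate the points of $O_N$ and are closed under the involution, the Stone-Weierstrass theorem shows this morphism has dense image, and hence (being a $*$-morphism of $C^*$-algebras with closed image) is surjective.

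Next, a morphism from left to right. The universal algebra on the right is commutative by construction, so by the Gelfand theorem it has the form $C(X)$ for some compact Hausdorff space $X$. The $N^2$ coordinate functions $u_{ij}$ assemble into a continuous map $X\to M_N(\mathbb R)$ (the entries being real thanks to $u=\bar u$, and bounded by $1$ as noted above), and the relations $u^t=u^{-1}$ in $C(X)$ translate into the pointwise statement that the image of every point of $X$ is an orthogonal matrix. Hence the map factors as an embedding $X\hookrightarrow O_N$, which dualizes to a surjective $*$-morphism $C(O_N)\to C(X)$ sending standard coordinates to standard coordinates.

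Finally, the two morphisms constructed above are mutually inverse, since they are determined by their action on the generators $u_{ij}$ and each sends standard coordinates to standard coordinates. There is no real obstacle here; the only point requiring a moment of care is the preliminary check that the universal $C^*$-algebra is well-defined, i.e.\ that the defining relations force a bound on the generators, so that the enveloping $C^*$-algebra construction can be carried out. This is handled by the observation about $\sum_k u_{ki}^2=1$ above.
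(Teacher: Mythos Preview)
Your proof is correct and follows essentially the same approach as the paper's, which invokes Gelfand's theorem to identify the spectrum of the universal commutative algebra as a subset of $O_N$ and then observes that the two morphisms determined on generators are mutually inverse. Your version is considerably more detailed---you spell out the well-definedness of the universal algebra, the Stone-Weierstrass argument for surjectivity, and why the spectrum embeds into $O_N$---whereas the paper compresses all of this into two sentences, but the underlying strategy (and its model, Proposition~1.3) is identical.
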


\begin{proof}
This follows from the Gelfand theorem, because the conditions $u=\bar{u},u^t=u^{-1}$ show that the spectrum of the algebra on the right is contained in $O_N$. Thus we have a morphism from left to right, inverse to the trivial morphism from right to left.
\end{proof}

We can now proceed with liberation, in the same way as we did for the spheres:

\begin{proposition}
The following universal $C^*$-algebras
\begin{eqnarray*}
C(O_N^+)&=&C^*\left((u_{ij})_{i,j=1,\ldots,N}\Big|u=\bar{u},u^t=u^{-1}\right)\\
C(O_N^*)&=&C(O_N^+)\Big/\Big<abc=cba,\forall a,b,c\in\{u_{ij}\}\Big>
\end{eqnarray*}
are finitely generated Hopf $C^*$-algebras, and we have $O_N\subset O_N^*\subset O_N^+$.
\end{proposition}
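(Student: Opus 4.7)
My plan is to handle well-definedness, the Hopf structure, and the inclusions in that order, essentially mimicking what was already done for the three spheres in Section~1. Since the main content is verifying that the defining relations are compatible with the would-be $\Delta,\varepsilon,S$, most of the work is a routine check, and I will be careful to identify which piece is the least automatic.

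First, for well-definedness: the relations $u = \bar u$ and $u^t = u^{-1}$ imply $\sum_k u_{ik}^2 = 1$ with $u_{ij}=u_{ij}^*$, so $||u_{ij}||\leq 1$ in any $C^*$-seminorm. Thus the biggest $C^*$-seminorm on the $*$-algebra generated by the $u_{ij}$ is finite on every generator, hence finite everywhere, and the enveloping $C^*$-algebra exists. This takes care of both $C(O_N^+)$ and its quotient $C(O_N^*)$.

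Next, for $C(O_N^+)$ I would construct $\Delta,\varepsilon,S$ via the universal property. For $\Delta$, I set $U_{ij}=\sum_k u_{ik}\otimes u_{kj}$ and check that the matrix $U$ is orthogonal in $C(O_N^+)\otimes C(O_N^+)$: self-adjointness is clear, and $(UU^t)_{ij}=\sum_{klm}u_{il}u_{jm}\otimes u_{lk}u_{mk}=\sum_{l}u_{il}u_{jl}\otimes 1=\delta_{ij}$, with $U^tU$ similar. Hence by the universal property there is a unique $*$-homomorphism $\Delta$ with the prescribed formula. For $\varepsilon$, I map $u\mapsto \mathrm{id}_N\in M_N(\mathbb C)$, which is trivially orthogonal. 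For $S:C(O_N^+)\to C(O_N^+)^{opp}$, the image matrix is $u^t$, and one checks $u^t(u^t)^t=u^tu=1$ in $C(O_N^+)^{opp}$; so $S$ exists as a $*$-antihomomorphism.

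For $C(O_N^*)$ I would then check that these three morphisms descend to the half-commutative quotient. The one to watch is $\Delta$: I need $\Delta(u_{ij})\Delta(u_{kl})\Delta(u_{mn})=\Delta(u_{mn})\Delta(u_{kl})\Delta(u_{ij})$ in $C(O_N^*)\otimes C(O_N^*)$. Expanding both sides and applying $abc=cba$ simultaneously in each tensor factor to each triple $u_{ip}u_{kq}u_{mr}$ and $u_{pj}u_{ql}u_{rn}$ yields identical sums, so $\Delta$ passes to the quotient. The counit case is trivial, since $\varepsilon$ lands in the commutative algebra $\mathbb C$. For $S$, which is an antihomomorphism, $S(abc)=S(c)S(b)S(a)$ and $S(cba)=S(a)S(b)S(c)$, and these are equal in $C(O_N^*)^{opp}$ because the half-commutation relation, applied to the generators $S(u_{ij})=u_{ji}$, is symmetric under reversal of order. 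Hence $\Delta,\varepsilon,S$ exist on $C(O_N^*)$ with the same formulas on the generators.

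Finally, for the inclusions: in any commutative algebra, $ab=ba$ trivially implies $abc=cba$, so we have canonical surjective $*$-homomorphisms $C(O_N^+)\to C(O_N^*)\to C(O_N)$ sending standard coordinates to standard coordinates. At the level of noncommutative spaces this reads $O_N\subset O_N^*\subset O_N^+$, as required. The least mechanical step is the passage to the quotient for $\Delta$ on $C(O_N^*)$; everything else is a direct unpacking of the definitions. No deeper obstacle appears because the half-commutation relation $abc=cba$ is manifestly stable under the three operations (tensoring diagonally, evaluating at $1$, and reversing order) that $\Delta,\varepsilon,S$ respectively encode on triples of generators.
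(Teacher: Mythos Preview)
Your proof is correct and follows essentially the same approach as the paper: define the three candidate matrices $u^\Delta,u^\varepsilon,u^S$, verify they are orthogonal so that $\Delta,\varepsilon,S$ exist on $C(O_N^+)$ by universality, and then check that their entries half-commute so that the maps descend to $C(O_N^*)$. You are simply more explicit than the paper (which handles the descent in one sentence), and you add the well-definedness argument for the universal $C^*$-algebra, which the paper omits here but had given earlier for the spheres.
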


\begin{proof}
In order to prove the first assertion, consider the following three matrices, having coefficients in the target algebras of the maps $\Delta,\varepsilon,S$ to be constructed:
$$u^\Delta_{ij}=\sum_ku_{ik}\otimes u_{kj}\quad,\quad
u^\varepsilon_{ij}=\delta_{ij}\quad,\quad
u^S_{ij}=u_{ji}^*$$

These matrices are all three orthogonal, so the structural maps $\varphi=\Delta,\varepsilon,S$ for the algebra $C(O_N^+)$ can be defined by universality, by setting $\varphi(u_{ij})=u^\varphi_{ij}$.

Regarding now the quotient $C(O_N^*)$, here we know that the entries of $u$ half-commute, and it follows that the entries of $u^\Delta,u^\varepsilon,u^S$ half-commute as well. Thus, once again, we can define the structural maps $\varphi=\Delta,\varepsilon,S$ simply by setting $\varphi(u_{ij})=u^\varphi_{ij}$.
\end{proof}

Summarizing, we have constructed quantum group analogues $O_N\subset O_N^*\subset O_N^+$ of the noncommutative spheres $S^{N-1}_\mathbb R\subset S^{N-1}_{\mathbb R,*}\subset S^{N-1}_{\mathbb R,+}$. Our task now will be to find quantum group analogues of the various results in sections 1-2. We will do this gradually.

We recall from \cite{bve} that the diagonal subgroup of $(G,u)$ is the group dual $\widehat{\Gamma}\subset G$ obtained by setting $C^*(\Gamma)=C(G)/<u_{ij}=0,\forall i\neq j>$, with the remark that this algebra is indeed cocommutative. We have the following result, related to Proposition 1.7:

\begin{proposition}
The diagonal subgroups of the $3$ orthogonal quantum groups are:
$$\begin{matrix}
O_N&\subset&O_N^*&\subset&O_N^+\\
\\
\cup&&\cup&&\cup\\
\\
\widehat{\mathbb Z_2^N}&\subset&\widehat{\mathbb Z_2^{\circ N}}&\subset&\widehat{\mathbb Z_2^{*N}}
\end{matrix}$$
That is, we obtain in this way the noncommutative cubes of $S^{N-1}_\mathbb R\subset S^{N-1}_{\mathbb R,*}\subset S^{N-1}_{\mathbb R,+}$.
\end{proposition}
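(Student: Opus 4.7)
The plan is to compute the three diagonal quotients directly from the presentations in Proposition 3.6, and then to match them against the group algebras $C^*(\mathbb Z_2^N)$, $C^*(\mathbb Z_2^{\circ N})$, $C^*(\mathbb Z_2^{*N})$ already computed inside Proposition 1.7. Since the vertical identifications with the noncommutative cubes will then be immediate (the duals on the bottom of the cube diagram in Proposition 1.7 are literally these group algebras, with $g_i$ corresponding to $\sqrt{N}\,x_i$), the real content is to show that the diagonal quotient of $C(O_N^\bullet)$ agrees with the right group algebra for each $\bullet\in\{\varnothing,*,+\}$.

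First I would handle $O_N^+$. Writing $g_i=u_{ii}$ and killing $u_{ij}$ for $i\neq j$, the relation $u=\bar u$ becomes $g_i=g_i^*$, while the relation $u^tu=uu^t=1$ must be analysed entry by entry. For the diagonal entries, $(u^tu)_{ii}=\sum_k u_{ki}^2$ collapses to $g_i^2$, forcing $g_i^2=1$. For $i\neq j$, $(u^tu)_{ij}=\sum_k u_{ki}u_{kj}$ has in each summand at least one off-diagonal factor, so the relation is automatically satisfied and produces nothing new. Thus the diagonal quotient is exactly $C^*(g_1,\dots,g_N\mid g_i=g_i^*, g_i^2=1)=C^*(\mathbb Z_2^{*N})$, as desired.

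For $O_N^*$ I would simply carry over this computation and add the half-commutation $abc=cba$ imposed on all generators $u_{ij}$. After restricting to the diagonal, this specializes to $g_ig_jg_k=g_kg_jg_i$ (the off-diagonal instances reduce to $0=0$), so the diagonal quotient becomes $C^*(g_1,\dots,g_N\mid g_i=g_i^*,g_i^2=1,g_ig_jg_k=g_kg_jg_i)=C^*(\mathbb Z_2^{\circ N})$. The same argument applied to $O_N$ turns full commutation of the $u_{ij}$'s into commutation of the $g_i$'s, yielding $C^*(\mathbb Z_2^N)$.

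Finally I would compare with Proposition 1.7. The rescaling $x_i=g_i/\sqrt N$ identifies the above three group algebras with $C(K^{N-1}_\mathbb R)\subset C(K^{N-1}_{\mathbb R,*})\subset C(K^{N-1}_{\mathbb R,+})$, and this is precisely the second assertion of the proposition. There is no real obstacle here: the only thing to watch is the bookkeeping of which entries of $u^tu=1$ survive after diagonalization, but as noted above the off-diagonal entries contribute nothing, so the argument reduces to the three line-by-line specializations described above.
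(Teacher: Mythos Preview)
Your proof is correct and follows essentially the same approach as the paper: compute the diagonal quotient of $C(O_N^+)$ directly from the presentation to obtain $C^*(\mathbb Z_2^{*N})$, then pass to $C^*(\mathbb Z_2^{\circ N})$ and $C^*(\mathbb Z_2^N)$ by adding the half-commutation and commutation relations. You are simply a bit more explicit than the paper in verifying that the off-diagonal orthogonality relations become vacuous after setting $u_{ij}=0$ for $i\neq j$.
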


\begin{proof}
In order to do the computation for $O_N^+$, we must take the universal algebra $C(O_N^+)$ from Proposition 3.6, and divide by the relations $u_{ij}=0$, for $i\neq j$. We obtain:
$$C^*(\Gamma)=C^*\left((u_{ii})_{i=1,\ldots,N}\Big|u_{ii}=u_{ii}^*=u_{ii}^{-1}\right)$$

Thus we have $\Gamma=\mathbb Z_2^{*N}$, and then by taking the quotient by the relations $abc=cba$ and $ab=ba$ we obtain respectively the groups $\mathbb Z_2^{\circ N},\mathbb Z_2^N$, as claimed.
\end{proof}

Regarding now the projective versions, we use here the following notion:

\begin{definition}
Given a closed subgroup $G\subset O_N^+$, its projective version $G\to PG$ is given by the fact that $C(PG)\subset C(G)$ is the subalgebra generated by $w_{ij,ab}=u_{ia}u_{jb}$.
\end{definition}

Here the fact that $PG$ is indeed a compact quantum group comes from the fact that the matrix $w=(w_{ia,jb})$ is a corepresentation. As a basic example, in the classical case, $G\subset O_N$, we obtain in this way the usual projective version, $PG=G/(G\cap\mathbb Z_2^N)$.

We have the following result, coming from \cite{bve}, \cite{bdu}: 

\begin{theorem}
We have an embedding of $C^*$-algebras
$$C(O_N^*)\subset M_2(C(U_N))\quad:\quad u_{ij}\to\begin{pmatrix}0&v_{ij}\\ \bar{v}_{ij}&0\end{pmatrix}$$
where $v_{ij}$ are the standard coordinates on $U_N$. Also, we have $PO_N^*=PU_N$.
\end{theorem}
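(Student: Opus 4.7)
The plan is to mirror the two-step argument of Theorem 1.13 for spheres, proving the two assertions together at the quantum group level. Existence of the morphism is a direct check: with $U_{ij} = \begin{pmatrix} 0 & v_{ij} \\ \bar v_{ij} & 0 \end{pmatrix}$, self-adjointness is immediate, the orthogonality identity $(UU^t)_{ij} = \delta_{ij}$ follows from $U_{ik}U_{jk} = \mathrm{diag}(v_{ik}\bar v_{jk}, \bar v_{ik}v_{jk})$ together with the unitarity of $v$ in $C(U_N)$, and half-commutation $U_{ij}U_{kl}U_{mn} = U_{mn}U_{kl}U_{ij}$ reduces to the commutativity of $C(U_N)$, since both sides are antidiagonal with top-right entry $v_{ij}\bar v_{kl}v_{mn}$.

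For injectivity I would factor the morphism as
$$C(O_N^*) \xrightarrow{\sigma} C(U_N)\rtimes \mathbb Z_2 \hookrightarrow M_2(C(U_N)),$$
where $\mathbb Z_2 = \langle \tau\rangle$ acts on $C(U_N)$ by coordinate conjugation $v_{ij}\mapsto \bar v_{ij}$, the map $\sigma$ sends $u_{ij}\mapsto v_{ij}\otimes \tau$, and the second arrow is the standard crossed-product matrix model already used in Proposition 1.12. Both algebras carry a natural $\mathbb Z_2$-grading (by word-length parity in the $u_{ij}$ on the left, well-defined since all defining relations preserve parity, and by the $\mathbb Z_2$-action on the right), and $\sigma$ respects them. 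Since $b^*b$ is even for every odd $b$, injectivity of $\sigma$ is equivalent to injectivity on the even subalgebra; this even subalgebra is generated by the products $u_{ia}u_{jb}$ and is precisely $C(PO_N^*)$, and its image is generated by $v_{ia}\bar v_{jb}\otimes 1$, which is $C(PU_N)\subset C(U_N)$. Thus both claims of the theorem reduce to the isomorphism $PO_N^* = PU_N$.

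For this projective identification, one direction is the surjection just produced. For the reverse, I would invoke classicality: the relation $abc=cba$ in $C(O_N^*)$ yields $u_a u_b u_c u_d = u_c u_b u_a u_d = u_c u_d u_a u_b$, so the length-two monomials commute pairwise, meaning that $PO_N^*$ is classical. It therefore embeds into the classical version of $PO_N^+$, and the identification $(PO_N^+)_{\mathrm{class}} = PU_N$, from the Tannakian/categorical analysis of \cite{bdu}, closes the loop. The main obstacle I expect is precisely this last identification, the quantum-group-level analog of Theorem 1.12, which carries comparable depth; everything else reduces to direct verification of defining relations or to the $\mathbb Z_2$-graded injectivity argument already used in the group case.
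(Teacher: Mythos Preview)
Your proposal is correct and follows essentially the same route as the paper: establish the morphism by direct check, factor through the crossed product $C(U_N)\rtimes\mathbb Z_2$, reduce injectivity to the even part via the $\mathbb Z_2$-grading, and identify the even part with $C(PU_N)$ by showing $PO_N^*$ is classical and then invoking the identification of the classical version of $PO_N^+$ with $PU_N$. The paper's proof is terser, simply pointing to the sphere-case argument (Theorem~1.12) and to \cite{bve}, \cite{bdu} for the projective identification and the crossed-product injectivity, but the underlying strategy is the same as yours.
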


\begin{proof}
The fact that we have a morphism as above is clear from definitions, and the equality $PO_N^*=PU_N$ can be deduced from this, as in the sphere case. See \cite{bve}.

Now with this equality in hand, the crossed product techniques explained in section 1 apply to our situation, and show that the morphism is faithful. See \cite{bdu}.
\end{proof}

Our goal now will be to prove that $O_N\subset O_N^*\subset O_N^+$ and their projective versions are the quantum isometry groups of the 3 spheres and 3 projective spaces.

We use the following action formalism, inspired from \cite{gos}, \cite{wan}:

\begin{definition}
Consider a closed subgroup $G\subset O_N^+$, and a closed subset $X\subset S^{N-1}_{\mathbb R,+}$.
\begin{enumerate}
\item We write $G\curvearrowright X$ when the formula $\Phi(x_i)=\sum_au_{ia}\otimes x_a$ defines a morphism of $C^*$-algebras $\Phi:C(X)\to C(G)\otimes C(X)$.

\item We write $PG\curvearrowright PX$ when the formula $\Phi(x_ix_j)=\sum_{ab}u_{ia}u_{jb}\otimes x_ax_b$ defines a morphism of $C^*$-algebras $\Phi:C(PX)\to C(PG)\otimes C(PX)$.
\end{enumerate}
\end{definition}

As a first remark, in the case where the above morphisms $\Phi$ exist, they are automatically coaction maps, in the sense that they satisfy the following conditions:
$$(id\otimes\Phi)\Phi=(\Delta\otimes id)\Phi\quad,\quad (\varepsilon\otimes id)\Phi=id$$

We call a closed subset $X\subset S^{N-1}_{\mathbb R,+}$ algebraic when the quotient map $C(S^{N-1}_{\mathbb R,+})\to C(X)$ comes from a collection of polynomial relations between the standard coordinates.
 
We have the following result, from \cite{ba2}:

\begin{proposition}
Assuming that $X\subset S^{N-1}_{\mathbb R,+}$ is algebraic, there exist:
\begin{enumerate}
\item A biggest quantum group $G\subset O_N^+$ acting affinely on $X$.

\item A biggest quantum group $G\subset O_N^+$ acting projectively on $PX$.
\end{enumerate} 
\end{proposition}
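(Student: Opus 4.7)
The plan is to construct the maximal acting quantum group directly, as a universal quotient of $C(O_N^+)$, and then verify that this quotient inherits a Hopf algebra structure. Since $X\subset S^{N-1}_{\mathbb R,+}$ is algebraic, we have a presentation
$$C(X)=C(S^{N-1}_{\mathbb R,+})\Big/\Big<f_\alpha(x_1,\ldots,x_N)=0,\ \alpha\in\mathcal I\Big>$$
for a certain family of noncommutative polynomials $f_\alpha$. For an affine coaction $\Phi(x_i)=\sum_a u_{ia}\otimes x_a$ to descend to $C(X)$, one needs precisely that $f_\alpha(\Phi(x_1),\ldots,\Phi(x_N))=0$ in $C(O_N^+)\otimes C(X)$ for every $\alpha$. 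Expanding, each such expression has the form $\sum_\beta P^\alpha_\beta(u)\otimes m^\alpha_\beta$, where $m^\alpha_\beta$ is a monomial in $x_1,\ldots,x_N$ and $P^\alpha_\beta(u)$ is a polynomial in the $u_{ij}$.

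For step (1), I choose a basis of $C(X)$ among the monomials $m^\alpha_\beta$, keeping only the coefficients $P^\alpha_\beta(u)$ in front of basis elements as genuine obstructions, and I let $J\subset C(O_N^+)$ be the closed two-sided $*$-ideal generated by all these relevant $P^\alpha_\beta(u)$. Setting $C(G):=C(O_N^+)/J$, the coaction $\Phi$ descends by construction, and $G$ is visibly the biggest subgroup of $O_N^+$ with this property: any other $H\subset O_N^+$ acting on $X$ satisfies exactly the same polynomial identities in its coordinates, so $C(H)$ is a quotient of $C(G)$.

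For step (2), the main task is to show $C(G)$ remains a Hopf $C^*$-algebra, i.e.\ that $J$ is compatible with $\Delta$, $\varepsilon$, $S$. The key computation is
$$(\Delta\otimes id)\Phi(x_i)=\sum_{a,b}u_{ia}\otimes u_{ab}\otimes x_b=(id\otimes\Phi)\Phi(x_i),$$
which shows that applying $\Delta\otimes id$ to $f_\alpha(\Phi(x_i))$ yields $(id\otimes\Phi)$ applied to the same expression; thus if the latter vanishes in $C(G)\otimes C(X)$, so does the former in $C(G)\otimes C(G)\otimes C(X)$, forcing $\Delta(J)\subset J\otimes C(O_N^+)+C(O_N^+)\otimes J$. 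The counit relation is immediate since $(\varepsilon\otimes id)\Phi=id$ reduces $f_\alpha(\Phi(x_i))$ to $f_\alpha(x_i)=0$. For the antipode, one uses that $S(u_{ij})=u_{ji}^*$ together with the fact that $f_\alpha$ is preserved under the canonical involution-and-order-reversal, which is guaranteed because $C(X)$ is a $*$-algebra presented by $*$-invariant relations.

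For the projective assertion (2), the argument is identical in structure, working with the corepresentation $w_{ij,ab}=u_{ia}u_{jb}$ on $PS^{N-1}_{\mathbb R,+}\supset PX$. One writes $PX$ as an algebraic subspace of $P^{N-1}_+$ via relations $g_\alpha(p_{ij})=0$ lifted from those of $X$ (together with the projective relations $p=p^*=p^2$, $Tr(p)=1$), and repeats the universal-quotient construction inside $C(O_N^+)$. The only subtlety is that the resulting biggest group acts by definition on $PX$ rather than $X$, which is exactly what is claimed. The main technical obstacle throughout is verifying the bialgebra compatibility of the ideal $J$; once the coaction identity $(\Delta\otimes id)\Phi=(id\otimes\Phi)\Phi$ is in hand, everything else is formal.
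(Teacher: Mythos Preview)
Your proof follows exactly the paper's construction: define $C(G)$ as the quotient of $C(O_N^+)$ by the relations $P_\alpha(X_1,\ldots,X_N)=0$ with $X_i=\sum_a u_{ia}\otimes x_a$, and analogously with $X_{ij}=\sum_{ab}u_{ia}u_{jb}\otimes x_ax_b$ in the projective case. The paper's proof stops right there and does not verify the Hopf structure of the quotient, so your argument is actually more detailed than the original; your one loose end is the antipode step, where $*$-closure of the defining ideal of $C(X)$ does not by itself yield $S(J)\subset J$ (applying $S$ also transposes the row/column indices of $u$, so one is really asserting that the \emph{transposed} matrix $u^t$ acts as well), but the paper does not address this point either and implicitly defers it to the cited reference.
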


\begin{proof}
Assume indeed that there are noncommutative polynomials $P_\alpha$ such that:
$$C(X)=C(S^{N-1}_{\mathbb R,+})/<P_\alpha(x_1,\ldots,x_N)=0>$$

If we want a construct an action $G\curvearrowright X$, the elements $\Phi(x_i)=\sum_au_{ia}\otimes x_a$ must satisfy the relations satisfied by $x_1,\ldots,x_N$. Thus, the universal quantum group $G\subset O_N^+$ as in (1) appears as follows, where $X_i=\sum_au_{ia}\otimes x_a\in C(O_N^+)\otimes C(X)$:
$$C(G)=C(O_N^+)/<P_\alpha(X_1,\ldots,X_N)=0>$$

The proof of (2) is similar, by using the variables $X_{ij}=\sum_{ab}u_{ia}u_{jb}\otimes x_ax_b$. 
\end{proof}

We have the following result, with respect to the above notions:

\begin{theorem}
The quantum isometry groups of the spheres and projective spaces are
$$\begin{matrix}
O_N&\subset&O_N^*&\subset&O_N^+\\
\\
\downarrow&&\downarrow&&\downarrow\\
\\
PO_N&\subset&PU_N&\subset&PO_N^+
\end{matrix}$$
with respect to the affine and projective action notions introduced above.
\end{theorem}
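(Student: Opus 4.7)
The plan is to prove each of the six equalities by establishing two inclusions: first, that the claimed group does act on the space (so the quantum isometry group is at least this large), and second, that any $G \subset O_N^+$ with an affine action on the sphere (respectively any $PG$ with a projective action) must satisfy the defining relations of the claimed group (so the quantum isometry group is at most this large). Only the second direction, which I will call the \emph{maximality} part, carries real content; the first, the \emph{existence} part, amounts to repackaging facts already available.

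For the existence part, the three affine actions have essentially already been verified in Section~3: with $X_i = \sum_a u_{ia} \otimes x_a$ one has $X_i = X_i^*$ from $u = \bar u$ and $\sum_i X_i^2 = 1 \otimes 1$ from $u^t u = 1$, giving $O_N^+ \curvearrowright S^{N-1}_{\mathbb R,+}$. The half-commutation of the $u_{ij}$ in $O_N^*$ combined with the half-commutation of the $x_a$ on $S^{N-1}_{\mathbb R,*}$ yields $X_i X_j X_k = X_k X_j X_i$, and the commutative case is classical. The projective actions then follow automatically: the coaction $\Phi(p_{ab}) = \sum_{c,d} u_{ac} u_{bd} \otimes p_{cd}$ descends from the affine one through the embedding $C(PX)\subset C(X)$ of Definition~1.9, and passes to $PG$ because its image lies in the subalgebra generated by the $w_{ij,ab} = u_{ia}u_{jb}$.

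For the maximality part, I would treat the free case first. Given any $G\subset O_N^+$ with an affine action $\Phi(x_i) = \sum_a u_{ia}\otimes x_a$ on $S^{N-1}_{\mathbb R,+}$, the conditions $X_i = X_i^*$ and $\sum_i X_i^2 = 1$ force $u = \bar u$ and $u^t u = 1$ directly, so $G\subset O_N^+$ is automatic. For the half-liberated sphere, the identity $\Phi(x_i x_j x_k) = \Phi(x_k x_j x_i)$ expands as
\begin{equation*}
\sum_{a,b,c} \bigl( u_{ia} u_{jb} u_{kc} - u_{kc} u_{jb} u_{ia} \bigr)\otimes x_a x_b x_c = 0,
\end{equation*}
where on the right one may already apply $x_a x_b x_c = x_c x_b x_a$; a linear independence argument among the degree-three monomials on $S^{N-1}_{\mathbb R,*}$ then yields $u_{ia} u_{jb} u_{kc} = u_{kc} u_{jb} u_{ia}$, that is, $G\subset O_N^*$. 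The classical case is analogous at degree two, extracting $u_{ia} u_{jb} = u_{jb} u_{ia}$ from $\Phi(x_i x_j) = \Phi(x_j x_i)$ and concluding $G\subset O_N$. The projective statements follow by the same template, with $u_{ia} u_{jb}$ in place of $u_{ia}$ acting on $p_{ab}$; Theorem~3.10 then rewrites $PO_N^*$ as $PU_N$, giving the middle column.

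The main obstacle is the linear independence step: to strip off the $\otimes x_a x_b$ or $\otimes x_a x_b x_c$ factors one needs, modulo the defining relations of the relevant sphere, a sufficiently free basis of low-degree monomials. I would establish this either by invoking the faithful matrix model of Theorem~1.13 (which provides explicit, visibly linearly independent images of the $x_a x_b x_c$ in $M_2(C(S^{N-1}_{\mathbb C}))$) or, uniformly for all three spheres, by appealing to the general affine quantum isometry group framework of \cite{ba2} cited in Proposition~3.12. Once that technicality is in hand, the matching of the $6$ entries of the diagram is a direct comparison of relations.
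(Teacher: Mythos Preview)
Your overall structure is right, and the existence direction is fine. The gap is in the maximality step: you claim that linear independence of the monomials $x_ax_b$ (respectively $x_ax_bx_c$) lets you read off $u_{ia}u_{jb}=u_{jb}u_{ia}$ (respectively $u_{ia}u_{jb}u_{kc}=u_{kc}u_{jb}u_{ia}$) directly. It does not. On $S^{N-1}_\mathbb R$ the monomials satisfy $x_ax_b=x_bx_a$, so the only linearly independent family is $\{x_ax_b:a\leq b\}$. Expanding $\Phi(x_ix_j)=\Phi(x_jx_i)$ against that basis gives, for $a<b$, only the \emph{symmetrized} relation
\[
u_{ia}u_{jb}+u_{ib}u_{ja}=u_{ja}u_{ib}+u_{jb}u_{ia},
\]
together with $u_{ia}u_{ja}=u_{ja}u_{ia}$ from the diagonal terms. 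The first of these does not by itself force $u_{ia}u_{jb}=u_{jb}u_{ia}$; one needs a second, independent equation. The same thing happens at degree three on $S^{N-1}_{\mathbb R,*}$: since $x_ax_bx_c=x_cx_bx_a$, comparing coefficients yields $u_{ia}u_{jb}u_{kc}+u_{ic}u_{jb}u_{ka}=u_{kc}u_{jb}u_{ia}+u_{ka}u_{jb}u_{ic}$ for $a<c$, which is strictly weaker than half-commutation.

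The paper supplies the missing ingredient via the antipode. Working on the projective side, from $p_{ij}=p_{ji}$ one extracts $w_{ij,ab}+w_{ij,ba}=w_{ji,ab}+w_{ji,ba}$; applying $S$ (using $S(w_{ij,ab})=w_{ba,ji}$) and relabelling produces a second relation $w_{ji,ba}+w_{ij,ba}=w_{ji,ab}+w_{ij,ab}$, and subtracting the two gives $w_{ij,ab}=w_{ji,ba}$, i.e.\ $u_{ia}u_{jb}=u_{jb}u_{ia}$. The half-liberated case is handled analogously on $P^{N-1}_\mathbb C$, using the relation $p_{ab}p_{cd}=p_{ad}p_{cb}$ and again the antipode to break a symmetrized identity into the genuine half-commutation $[u_{jb},u_{ka},u_{ld}]=0$. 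The affine statements are then \emph{deduced from} the projective ones, not the other way round as you suggest. So your plan is salvageable, but you must insert the antipode trick at the point where you pass from the symmetrized relation to the individual one; linear independence alone will not close the argument.
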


\begin{proof}
The fact that the 3 quantum groups on top act affinely on the corresponding 3 spheres is known since \cite{bgo}, and is elementary. By restriction, the 3 quantum groups on the bottom have actions on the corresponding 3 projective spaces.

We must prove now that all these actions are universal. At right there is nothing to prove, so we are left with studying the actions on $S^{N-1}_\mathbb R,S^{N-1}_{\mathbb R,*}$ and on $P^{N-1}_\mathbb R,P^{N-1}_\mathbb C$.

\underline{$S^{N-1}_\mathbb R$.} Here the fact that the action $O_N\curvearrowright S^{N-1}_\mathbb R$ is universal is known from \cite{bhg}, and follows as well from the fact that the action $PO_N\curvearrowright P^{N-1}_\mathbb R$ is universal, proved below.

\underline{$S^{N-1}_{\mathbb R,*}$.} The situation is similar here, with the universality of $O_N^*\curvearrowright S^{N-1}_{\mathbb R,*}$ being known since \cite{bgo}, and following as well from the universality of $PU_N\curvearrowright P^{N-1}_\mathbb C$, proved below.

\underline{$P^{N-1}_\mathbb R$.} In terms of the projective coordinates $w_{ij,ab}=u_{ia}u_{jb}$ and $p_{ij}=x_ix_j$, the coaction map is given by $\Phi(p_{ij})=\sum_{ab}w_{ij,ab}\otimes p_{ab}$, and we have:
\begin{eqnarray*}
\Phi(p_{ij})&=&\sum_{a<b}(w_{ij,ab}+w_{ij,ba})\otimes p_{ab}+\sum_aw_{ij,aa}\otimes p_{aa}\\
\Phi(p_{ji})&=&\sum_{a<b}(w_{ji,ab}+w_{ji,ba})\otimes p_{ab}+\sum_aw_{ji,aa}\otimes p_{aa}
\end{eqnarray*}

By comparing these two formulae, and then by using the linear independence of the variables $p_{ab}=x_ax_b$ for $a\leq b$, we conclude that we must have:
$$w_{ij,ab}+w_{ij,ba}=w_{ji,ab}+w_{ji,ba}$$

Let us apply now the antipode to this formula. For this purpose, observe first that we have $S(w_{ij,ab})=S(u_{ia}u_{jb})=S(u_{jb})S(u_{ia})=u_{bj}u_{ai}=w_{ba,ji}$. Thus by applying the antipode we obtain $w_{ba,ji}+w_{ab,ji}=w_{ba,ij}+w_{ab,ij}$, and by relabelling, we obtain: 
$$w_{ji,ba}+w_{ij,ba}=w_{ji,ab}+w_{ij,ab}$$

Now by comparing with the original relation, we obtain $w_{ij,ab}=w_{ji,ba}$. But, with $w_{ij,ab}=u_{ia}u_{jb}$, this formula reads $u_{ia}u_{jb}=u_{jb}u_{ia}$. Thus our quantum group $G\subset O_N^+$ must be classical, $G\subset O_N$, and so we have $PG\subset PO_N$, as claimed.

\underline{$P^{N-1}_\mathbb C$.} Consider a coaction map, written as $\Phi(p_{ij})=\sum_{ab}u_{ia}u_{jb}\otimes p_{ab}$, with $p_{ab}=z_a\bar{z}_b$. The idea here will be that of using the formula $p_{ab}p_{cd}=p_{ad}p_{cb}$. We have:
\begin{eqnarray*}
\Phi(p_{ij}p_{kl})&=&\sum_{abcd}u_{ia}u_{jb}u_{kc}u_{ld}\otimes p_{ab}p_{cd}\\
\Phi(p_{il}p_{kj})&=&\sum_{abcd}u_{ia}u_{ld}u_{kc}u_{jb}\otimes p_{ad}p_{cb}
\end{eqnarray*}

The terms at left being equal, and the last terms at right being equal too, we deduce that, with $[a,b,c]=abc-cba$, we must have the following formula:
$$\sum_{abcd}u_{ia}[u_{jb},u_{kc},u_{ld}]\otimes p_{ab}p_{cd}=0$$

Now since the quantities $p_{ab}p_{cd}=z_a\bar{z}_bz_c\bar{z}_d$ at right depend only on the numbers $|\{a,c\}|,|\{b,d\}|\in\{1,2\}$, and this dependence produces the only possible linear relations between the variables $p_{ab}p_{cd}$, we are led to $2\times2=4$ equations, as follows:

(1) $u_{ia}[u_{jb},u_{ka},u_{lb}]=0$, $\forall a,b$.

(2) $u_{ia}[u_{jb},u_{ka},u_{ld}]+u_{ia}[u_{jd},u_{ka},u_{lb}]=0$, $\forall a$, $\forall b\neq d$.

(3) $u_{ia}[u_{jb},u_{kc},u_{lb}]+u_{ic}[u_{jb},u_{ka},u_{lb}]=0$, $\forall a\neq c$, $\forall b$.

(4) $u_{ia}[u_{jb},u_{kc},u_{ld}]+u_{ia}[u_{jd},u_{kc},u_{lb}]+u_{ic}[u_{jb},u_{ka},u_{ld}]+u_{ic}[u_{jd},u_{ka},u_{lb}]=0$, $\forall a\neq c$, $\forall b\neq d$.

We will need in fact only the first two formulae. Since (1) corresponds to (2) at $b=d$, we conclude that (1,2) are equivalent to (2), with no restriction on the indices. By multiplying now this formula to the left by $u_{ia}$, and then summing over $i$, we obtain:
$$[u_{jb},u_{ka},u_{ld}]+[u_{jd},u_{ka},u_{lb}]=0$$

By applying the antipode we get $[u_{dl},u_{ak},u_{bj}]+[u_{bl},u_{ak},u_{dj}]=0$, and by relabelling:
$$[u_{ld},u_{ka},u_{jb}]+[u_{jd},u_{ka},u_{lb}]=0$$

Now by comparing with the original relation, we obtain $[u_{jb},u_{ka},u_{ld}]=[u_{jd},u_{ka},u_{lb}]=0$. Thus our quantum group is half-classical, $G\subset O_N^*$, and we are done.
\end{proof}

\section{Representation theory}

In this section we discuss the quantum group analogues of the axiomatization results obtained in section 2. We use representation theory methods, from  \cite{wo1}, \cite{wo2}.

Let $G$ be a compact matrix quantum group, with defining matrix $u=(u_{ij})$. We let $C^\infty(G)\subset C(G)$ be the dense $*$-subalgebra generated by the coefficients of $u$. Observe that in the classical case this is indeed the algebra of smooth functions on $G$.

Our basic object of study will be the corepresentations of $C(G)$:

\begin{definition}
A finite dimensional smooth unitary corepresentation of $C(G)$ is a unitary matrix $U\in M_K(C^\infty(G))$, satisfying $\Delta(U_{ij})=\sum_kU_{ik}\otimes U_{kj}$ and $\varepsilon(U_{ij})=\delta_{ij}$.
\end{definition}

When $G$ is classical, the corepresentations of $C(G)$ correspond to the representations of $G$. Also, when $G=\widehat{\Gamma}$ is a group dual, the group elements $g\in\Gamma$ are 1-dimensional corepresentations of $C^*(\Gamma)$, and, as explained in \cite{wo1}, each corepresentation of $C^*(\Gamma)$ decomposes as a direct sum of such 1-dimensional corepresentations.

In general now, the defining matrix $u=(u_{ij})$ is a corepresentation, called the fundamental one. By tensoring this corepresentation with itself, we obtain a whole family of corepresentations of $C(G)$, indexed by the positive integers $k\in\mathbb N$:
$$u^{\otimes k}=(u_{i_1j_1}\ldots u_{i_kj_k})_{i_1\ldots i_k,j_1\ldots j_k}$$

Observe that by tensoring $u$ with its complex conjugate we can obtain a bigger family of corepresentations, indexed by the words $k\in\mathbb N*\mathbb N$. However, since our quantum groups have a self-conjugate fundamental representation, we won't need this extension.

We will be interested in computing Schur-Weyl categories, defined as follows:

\begin{definition}
Given $G\subset O_N^+$, the collection $C=(C_{kl})$ of the linear spaces 
$$C_{kl}=Hom(u^{\otimes k},u^{\otimes l})$$
where $u$ is the fundamental corepresentaton, is called the Schur-Weyl category of $G$.
\end{definition}

The Tannakian duality results found by Woronowicz in \cite{wo2} show that this category completely determines $(G,u)$. To be more precise, given a collection of linear spaces $C_{kl}\subset\mathcal L((\mathbb C^N)^{\otimes k},(\mathbb C^N)^{\otimes l})$ which form a tensor $C^*$-category with duals, the associated compact matrix quantum group $(G,u)$ can be constructed as follows:
$$C(G)=C^*\left((u_{ij})_{i,j=1,\ldots,N}\Big|T\in Hom(u^{\otimes k},u^{\otimes l}),\forall k,l\in\mathbb N,\forall T\in C_{kl}\right)$$

Let us go back now to our quantum groups, $O_N\subset O_N^*\subset O_N^+$. It is known since Brauer \cite{bra} that the Schur-Weyl category for $O_N$ is spanned by certain linear maps ``coming from pairings'', and this fundamental result will be our guiding fact. 

Let us denote by $P_2(k,l)$ the set of pairings between an upper row of $k$ points, and a lower row of $l$ points. Observe that $P_2(k,l)=\emptyset$ for $k+l$ odd. We have:

\begin{definition} 
Associated to $\pi\in P_2(k,l)$ is the linear map $T_\pi:(\mathbb C^N)^{\otimes k}\to(\mathbb C^N)^{\otimes l}$,
$$T_\pi(e_{i_1}\otimes\ldots\otimes e_{i_k})=\sum_{j_1\ldots j_l}\delta_\pi\begin{pmatrix}i_1&\ldots&i_k\\ j_1&\ldots&j_l\end{pmatrix}e_{j_1}\otimes\ldots\otimes e_{j_l}$$
where $\delta_\pi\in\{0,1\}$ is the Kronecker type symbol associated to $\pi$.
\end{definition}

To be more precise, in order to construct $\delta_\pi(^i_j)\in\{0,1\}$, let us put the indices of $i=(i_1,\ldots,i_k)$ on the upper $k$ points of $\pi$, and the indices of $j=(j_1,\ldots,j_l)$ on the lower $l$ points of $\pi$. If there is at least one string of $\pi$ joining distinct indices, we set $\delta_\pi(^i_j)=0$. Otherwise, when all strings of $\pi$ join pairs of equal indices, we set $\delta_\pi(^i_j)=1$.

Here are a few basic examples of such maps, of interest in what follows:
$$T_{||\ldots|}=id\quad,\quad T_\cap(1)=\sum_ie_i\otimes e_i\quad,\quad T_\cup(e_i\otimes e_j)=\delta_{ij}$$
$$T_{\slash\!\!\!\backslash}(e_i\otimes e_j)=e_j\otimes e_i\quad,\quad T_{\slash\hskip-1.6mm\backslash\hskip-1.1mm|\hskip0.5mm}(e_i\otimes e_j\otimes e_k)=e_k\otimes e_j\otimes e_i$$

Let us first prove that the usual categorical operations on the linear maps $T_\pi$, namely the composition, tensor product and conjugation, are compatible with the usual categorical operations on the pairings, namely the composition $(\pi,\sigma)\to[^\sigma_\pi]$, the horizontal concatenation $(\pi,\sigma)\to[\pi\sigma]$, and the upside-down turning $\pi\to\pi^*$. We have:

\begin{proposition}
The assignement $\pi\to T_\pi$ is categorical, in the sense that
$$T_\pi\otimes T_\sigma=T_{[\pi\sigma]}\quad,\quad T_\pi T_\sigma=N^{c(\pi,\sigma)}T_{[^\sigma_\pi]}\quad,\quad T_\pi^*=T_{\pi^*}$$
where $c(\pi,\sigma)$ is the number of closed loops obtained when composing.
\end{proposition}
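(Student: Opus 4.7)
The plan is to verify each of the three identities separately by unwinding the definition of $T_\pi$ on basis vectors, reducing everything to statements about the Kronecker symbols $\delta_\pi$.

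First I would handle the tensor product identity $T_\pi\otimes T_\sigma=T_{[\pi\sigma]}$, which is the easiest. Applying $T_\pi\otimes T_\sigma$ to $e_{i_1}\otimes\cdots\otimes e_{i_{k+k'}}$ produces a sum indexed by $(j_1,\ldots,j_{l+l'})$ with coefficient $\delta_\pi(^{i_1\ldots i_k}_{j_1\ldots j_l})\,\delta_\sigma(^{i_{k+1}\ldots i_{k+k'}}_{j_{l+1}\ldots j_{l+l'}})$. Since the strings of $[\pi\sigma]$ decompose as the disjoint union of the strings of $\pi$ (on the left block of points) and the strings of $\sigma$ (on the right block), this product is exactly $\delta_{[\pi\sigma]}$, and the formula follows.

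Next I would prove the adjoint formula $T_\pi^*=T_{\pi^*}$. The matrix coefficients of $T_\pi$ in the standard basis are given directly by $\langle T_\pi(e_i), e_j\rangle=\delta_\pi(^i_j)$. Taking the adjoint swaps the roles of the upper and lower multi-indices, i.e.\ the new coefficients are $\delta_\pi(^j_i)$. But turning $\pi$ upside-down to form $\pi^*$ exchanges the upper and lower rows of points, so $\delta_{\pi^*}(^i_j)=\delta_\pi(^j_i)$, and the identity follows.

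The main technical step, and the one I expect to be the real obstacle, is the composition formula $T_\pi T_\sigma=N^{c(\pi,\sigma)}T_{[^\sigma_\pi]}$. Here I would compute $(T_\pi T_\sigma)(e_{i_1}\otimes\cdots\otimes e_{i_k})$ by applying first $T_\sigma$ and then $T_\pi$, obtaining
\[
\sum_{j,m} \delta_\sigma\!\begin{pmatrix}i\\ m\end{pmatrix}\,\delta_\pi\!\begin{pmatrix}m\\ j\end{pmatrix}\, e_{j_1}\otimes\cdots\otimes e_{j_l},
\]
so the coefficient in front of $e_{j_1}\otimes\cdots\otimes e_{j_l}$ is $\sum_m \delta_\sigma(^i_m)\delta_\pi(^m_j)$. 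Stacking $\sigma$ above $\pi$ to form $[^\sigma_\pi]$ merges the lower row of $\sigma$ with the upper row of $\pi$; assigning the index $m$ to that middle row and summing enforces that each string of the composed diagram $[^\sigma_\pi]$ contributes a single Kronecker factor, giving the base symbol $\delta_{[^\sigma_\pi]}(^i_j)$. The remaining strings, namely those that close up entirely inside the middle layer without touching the top or bottom rows, are not determined by $i,j$: each such closed loop contributes an independent sum $\sum_{m'} \delta_{m' m'}=N$, so the total scalar factor is $N^{c(\pi,\sigma)}$. Combining gives the claimed formula. The bookkeeping between the upper/middle/lower indices, and the identification of the closed loops, is the delicate point; once it is organized cleanly the verification is a direct computation.
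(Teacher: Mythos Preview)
Your proposal is correct and matches the paper's proof essentially line for line: the paper verifies the three identities by computing on basis vectors and reducing each to the corresponding property of the Kronecker symbols $\delta_\pi$, with the composition formula handled exactly as you describe (summing over the middle row and extracting the $N^{c(\pi,\sigma)}$ factor from the closed loops). The only difference is cosmetic---the paper treats the composition identity before the adjoint identity---but the arguments themselves are identical.
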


\begin{proof}
The concatenation axiom follows from the following computation:
\begin{eqnarray*}
&&(T_\pi\otimes T_\sigma)(e_{i_1}\otimes\ldots\otimes e_{i_p}\otimes e_{k_1}\otimes\ldots\otimes e_{k_r})\\
&=&\sum_{j_1\ldots j_q}\sum_{l_1\ldots l_s}\delta_\pi\begin{pmatrix}i_1&\ldots&i_p\\j_1&\ldots&j_q\end{pmatrix}\delta_\sigma\begin{pmatrix}k_1&\ldots&k_r\\l_1&\ldots&l_s\end{pmatrix}e_{j_1}\otimes\ldots\otimes e_{j_q}\otimes e_{l_1}\otimes\ldots\otimes e_{l_s}\\
&=&\sum_{j_1\ldots j_q}\sum_{l_1\ldots l_s}\delta_{[\pi\sigma]}\begin{pmatrix}i_1&\ldots&i_p&k_1&\ldots&k_r\\j_1&\ldots&j_q&l_1&\ldots&l_s\end{pmatrix}e_{j_1}\otimes\ldots\otimes e_{j_q}\otimes e_{l_1}\otimes\ldots\otimes e_{l_s}\\
&=&T_{[\pi\sigma]}(e_{i_1}\otimes\ldots\otimes e_{i_p}\otimes e_{k_1}\otimes\ldots\otimes e_{k_r})
\end{eqnarray*}

The composition axiom follows from the following computation:
\begin{eqnarray*}
&&T_\pi T_\sigma(e_{i_1}\otimes\ldots\otimes e_{i_p})\\
&=&\sum_{j_1\ldots j_q}\delta_\sigma\begin{pmatrix}i_1&\ldots&i_p\\j_1&\ldots&j_q\end{pmatrix}
\sum_{k_1\ldots k_r}\delta_\pi\begin{pmatrix}j_1&\ldots&j_q\\k_1&\ldots&k_r\end{pmatrix}e_{k_1}\otimes\ldots\otimes e_{k_r}\\
&=&\sum_{k_1\ldots k_r}N^{c(\pi,\sigma)}\delta_{[^\sigma_\pi]}\begin{pmatrix}i_1&\ldots&i_p\\k_1&\ldots&k_r\end{pmatrix}e_{k_1}\otimes\ldots\otimes e_{k_r}\\
&=&N^{c(\pi,\sigma)}T_{[^\sigma_\pi]}(e_{i_1}\otimes\ldots\otimes e_{i_p})
\end{eqnarray*}

Finally, the involution axiom follows from the following computation:
\begin{eqnarray*}
&&T_\pi^*(e_{j_1}\otimes\ldots\otimes e_{j_q})\\
&=&\sum_{i_1\ldots i_p}<T_\pi^*(e_{j_1}\otimes\ldots\otimes e_{j_q}),e_{i_1}\otimes\ldots\otimes e_{i_p}>e_{i_1}\otimes\ldots\otimes e_{i_p}\\
&=&\sum_{i_1\ldots i_p}\delta_\pi\begin{pmatrix}i_1&\ldots&i_p\\ j_1&\ldots& j_q\end{pmatrix}e_{i_1}\otimes\ldots\otimes e_{i_p}\\
&=&T_{\pi^*}(e_{j_1}\otimes\ldots\otimes e_{j_q})
\end{eqnarray*}

Summarizing, we have proved that our correspondence is indeed categorical.
\end{proof}

We will need as well the following result:

\begin{proposition}
For a closed subgroup $G\subset O_N^+$, the following hold:
\begin{enumerate}
\item $T_{\slash\!\!\!\backslash}\in End(u^{\otimes 2})$ precisely when $G\subset O_N$.

\item $T_{\slash\hskip-1.6mm\backslash\hskip-1.1mm|\hskip0.5mm}\in End(u^{\otimes 3})$ precisely when $G\subset O_N^*$.
\end{enumerate}
\end{proposition}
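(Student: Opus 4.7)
The plan is to translate the intertwiner condition $T\in End(u^{\otimes k})$ into its matrix form $Tu^{\otimes k}=u^{\otimes k}T$ in $M_{N^k}(C(G))$, and then read off the resulting polynomial relations between the entries $u_{ij}$. These will turn out to be precisely the relations which, by Propositions 3.5 and 3.6, cut $O_N$ and $O_N^*$ out of $O_N^+$, so both equivalences reduce to the defining universal properties of these two quantum groups.

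With the convention $(u^{\otimes k})_{i_1\ldots i_k,\,j_1\ldots j_k}=u_{i_1j_1}\cdots u_{i_kj_k}$ used in the paper, for part (1) I would compare both sides of $T_{\slash\!\!\!\backslash}u^{\otimes 2}=u^{\otimes 2}T_{\slash\!\!\!\backslash}$ entry by entry. Writing $T_{\slash\!\!\!\backslash}$ as the swap of the two tensor factors, a short index calculation yields $(T_{\slash\!\!\!\backslash}u^{\otimes 2})_{i_1i_2,l_1l_2}=u_{i_2l_1}u_{i_1l_2}$ versus $(u^{\otimes 2}T_{\slash\!\!\!\backslash})_{i_1i_2,l_1l_2}=u_{i_1l_2}u_{i_2l_1}$. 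Since the four indices are arbitrary, equality for all of them is exactly the commutation of any pair of entries of $u$, and by Proposition 3.5 this is precisely the defining relation of $C(O_N)$ inside $C(O_N^+)$.

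For part (2), the same bookkeeping applied to $T_{\slash\hskip-1.6mm\backslash\hskip-1.1mm|\hskip0.5mm}$, viewed as the reversal of three tensor factors, gives $(T_{\slash\hskip-1.6mm\backslash\hskip-1.1mm|\hskip0.5mm}u^{\otimes 3})_{i_1i_2i_3,l_1l_2l_3}=u_{i_3l_1}u_{i_2l_2}u_{i_1l_3}$ against $(u^{\otimes 3}T_{\slash\hskip-1.6mm\backslash\hskip-1.1mm|\hskip0.5mm})_{i_1i_2i_3,l_1l_2l_3}=u_{i_1l_3}u_{i_2l_2}u_{i_3l_1}$. Setting $a=u_{i_3l_1}$, $b=u_{i_2l_2}$, $c=u_{i_1l_3}$, this is exactly the relation $abc=cba$ for arbitrary triples of entries of $u$, which by Proposition 3.6 is the defining relation of $C(O_N^*)$. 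In both parts, the converse direction is automatic: if the relevant commutation holds on the generators of $C(G)$, the two monomials in question coincide in $C(G)$ and the intertwiner condition is satisfied.

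There is essentially no obstacle beyond index bookkeeping. The one point that requires attention is that, because of the convention above, multiplying by $T_\pi$ on the left of $u^{\otimes k}$ permutes the \emph{upper} row of indices of the monomial $u_{i_1j_1}\cdots u_{i_kj_k}$, while multiplying on the right permutes the \emph{lower} row; this asymmetry is exactly what allows the two sides to collapse to two distinct monomials whose equality is the desired commutation or half-commutation relation.
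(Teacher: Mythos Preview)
Your proof is correct and is essentially the same argument as the paper's: both translate the intertwiner condition into an equality of monomials in the $u_{ij}$ and identify it with the defining relations of $O_N$, respectively $O_N^*$. The only cosmetic difference is that the paper applies $u^{\otimes k}$ to basis vectors $e_{i_1}\otimes\cdots\otimes e_{i_k}\otimes 1$ and compares coefficients, whereas you compare matrix entries of $T_\pi u^{\otimes k}$ and $u^{\otimes k}T_\pi$ directly; the resulting relations are identical.
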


\begin{proof}
We use the formulae of $T_{\slash\!\!\!\backslash},T_{\slash\hskip-1.6mm\backslash\hskip-1.1mm|\hskip0.5mm}$ given after Definition 4.3 above.

(1) By using $T_{\slash\!\!\!\backslash}(e_i\otimes e_j)=e_j\otimes e_i$, we have the following formulae:
\begin{eqnarray*}
(T_{\slash\!\!\!\backslash}\otimes1)u^{\otimes 2}(e_i\otimes e_j\otimes1)&=&\sum_{kl}e_l\otimes e_k\otimes u_{ki}u_{lj}\\
u^{\otimes 2}(\bar{T}_{\slash\!\!\!\backslash}\otimes1)(e_i\otimes e_j\otimes1)&=&\sum_{kl}e_l\otimes e_k\otimes u_{li}u_{ki}
\end{eqnarray*}

We therefore obtain the commutation relations $ab=ba$, and we are done.

(2) By using $T_{\slash\hskip-1.6mm\backslash\hskip-1.1mm|\hskip0.5mm}(e_i\otimes e_j\otimes e_k)=e_k\otimes e_j\otimes e_i$, we have the following formulae:
\begin{eqnarray*}
(T_{\slash\hskip-1.6mm\backslash\hskip-1.1mm|\hskip0.5mm}\otimes1)u^{\otimes 2}(e_i\otimes e_j\otimes e_k\otimes1)
&=&\sum_{abc}e_c\otimes e_b\otimes e_a\otimes u_{ai}u_{bj}u_{ck}\\
u^{\otimes 2}(\bar{T}_{\slash\hskip-1.6mm\backslash\hskip-1.1mm|\hskip0.5mm}\otimes1)(e_i\otimes e_j\otimes e_k\otimes1)
&=&\sum_{abc}e_c\otimes e_b\otimes e_a\otimes u_{ck}u_{bj}u_{ai}
\end{eqnarray*}

We therefore obtain the half-commutation relations $abc=cba$, and we are done.
\end{proof}

We can now formulate the Brauer theorem, as well as its noncommutative generalizations, regarding $O_N^*,O_N^+$. Let us call a pairing ``balanced'' if, when labelling cyclically its legs $\bullet\circ\bullet\circ\ldots$, each string connects a black leg to a while leg. We have:

\begin{theorem}
The spaces $C_{kl}$ for the quantum groups $O_N\subset O_N^*\subset O_N^+$ are respectively
$$span(P_2(k,l))\supset span(P_2^*(k,l))\supset span(NC_2(k,l))$$
where $P_2\supset P_2^*\supset NC_2$ are the pairings, balanced pairings, and noncrossing pairings.
\end{theorem}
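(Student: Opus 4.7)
The plan is to apply Tannakian duality in the form recalled just after Definition 4.2: any collection of tensors closed under the categorical operations of Proposition 4.4 determines a compact matrix quantum group, and conversely the Schur-Weyl category recovers the quantum group. So I must prove, for each of the three quantum groups, a two-sided inclusion between $C_{kl}$ and the claimed span.

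For the inclusion $\supset$, I would first observe that for $O_N^+$ the defining orthogonality relations $u\bar{u} = \bar{u}u = 1$ are exactly the statements $T_\cap \in \mathrm{Hom}(1,u^{\otimes 2})$ and $T_\cup \in \mathrm{Hom}(u^{\otimes 2},1)$, while trivially $T_{||\ldots|} = \mathrm{id} \in \mathrm{End}(u^{\otimes k})$. Using the categorical rules of Proposition 4.4 (concatenation, composition, involution), any $T_\pi$ with $\pi \in NC_2(k,l)$ can be built from these three building blocks, since noncrossing pairings are inductively generated by $\cap,\cup,|$ under horizontal and vertical composition. Hence $\mathrm{span}(NC_2(k,l)) \subset C_{kl}$ for $O_N^+$. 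Then Proposition 4.5(2) adds $T_{\slash\hskip-1.6mm\backslash\hskip-1.1mm|\hskip0.5mm}$ to the generators for $O_N^*$, and Proposition 4.5(1) adds $T_{\slash\!\!\!\backslash}$ for $O_N$.

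Next I need the purely combinatorial lemma that, as categories of pairings under the three operations above, $NC_2$ together with the half-liberated crossing generates $P_2^*$, and $NC_2$ together with the ordinary crossing generates $P_2$. For $P_2$ this is classical: any crossing in a pairing can be removed by composing with a basic crossing, and iterating reduces any $\pi \in P_2$ to a noncrossing pairing with crossings attached. For $P_2^*$ the same reduction works, but one must check that the balanced condition (black-to-white labelling) is preserved, which it is because the half-liberated crossing is itself balanced and the categorical operations respect the alternating colouring (this was essentially the content of Proposition 2.5). Combined with the first step, this yields $\supset$ in all three cases.

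For the reverse inclusion $\subset$, I invoke Tannakian reconstruction. If I set $C'_{kl} := \mathrm{span}(NC_2(k,l))$ and take $G'$ to be the associated quantum group via
\[
C(G') = C^*\!\left((u_{ij})\,\Big|\, T \in \mathrm{Hom}(u^{\otimes k},u^{\otimes l}),\ \forall T \in C'_{kl}\right),
\]
then the relations encoded by $T_\cap, T_\cup$ give exactly $u\bar u = \bar u u = 1$, so $G' = O_N^+$. The same argument for $\mathrm{span}(P_2^*)$ and $\mathrm{span}(P_2)$ yields $O_N^*$ and $O_N$ by Proposition 4.5. Since we already showed $\supset$, and since the Schur-Weyl category of $G'$ contains $C'$ by construction while $C'$ determines $G'$, Tannakian duality forces the morphism spaces to agree. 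The main obstacle is the combinatorial step of generating $P_2^*$ from $NC_2$ and the half-liberated crossing while tracking the black/white balance; the rest is routine given Propositions 4.4 and 4.5 and the Tannakian formalism of \cite{wo2}.
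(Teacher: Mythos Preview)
Your proposal is correct and follows essentially the same approach as the paper: both arguments rest on Proposition 4.4 (the spans form tensor $C^*$-categories), Proposition 4.5 (identifying the relevant generating intertwiners $T_{\slash\!\!\!\backslash}$ and $T_{\slash\hskip-1.6mm\backslash\hskip-1.1mm|\hskip0.5mm}$), and Woronowicz's Tannakian duality. The paper's proof is a terse one-paragraph version of what you wrote out; in particular, the combinatorial generation facts you flag as ``the main obstacle'' (that $P_2$ is generated as a category by $NC_2$ together with the basic crossing, and $P_2^*$ by $NC_2$ together with the half-liberated crossing) are left implicit in the paper but are indeed needed, and your sketch of them is adequate.
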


\begin{proof}
This follows indeed from Proposition 4.4, Proposition 4.5, and from the Tannakian duality results established by Woronowicz in \cite{wo2}. Indeed, by Proposition 4.4 each of the 3 collections of spaces in the statement is a tensor $C^*$-category, which must therefore correspond to a certain quantum group $G\subset O_N^+$. But Proposition 4.5 shows that these quantum groups are exactly those in the statement, and we are done.
\end{proof}

As a first application, we will prove that, under suitable axioms, $O_N^*$ is the only intermediate object $O_N\subset G\subset O_N^+$, and $PU_N$ is the only intermediate object $PO_N\subset G\subset PO_N^+$. In order to formulate our statement, we recall the following notion, from \cite{bsp}:

\begin{definition}
An intermediate quantum group $O_N\subset G\subset O_N^+$ is called easy when 
$$span(NC_2(k,l))\subset Hom(u^{\otimes k},u^{\otimes l})\subset span(P_2(k,l))$$
comes via $Hom(u^{\otimes k},u^{\otimes l})=span(D(k,l))$, for certain sets of pairings $D(k,l)$.
\end{definition}

Observe that $O_N,O_N^*,O_N^+$ are all easy, due to Theorem 4.6 above.

In general now, in the context of the above definition, observe that by ``saturating'' the sets $D(k,l)$, we can always assume that the collection $D=(D(k,l))$ is a category of pairings, in the sense that it is stable under the vertical and horizontal concatenation, and the upside-down turning, and contains the semicircle. See \cite{bsp}.

With the above notion in hand, we have the following result, from \cite{bve}:

\begin{theorem}
The only intermediate easy quantum groups 
$$O_N\subset G\subset O_N^+$$ 
are the basic orthogonal quantum groups, $O_N\subset O_N^*\subset O_N^+$.
\end{theorem}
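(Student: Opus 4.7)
By Tannakian duality, an intermediate easy quantum group $O_N\subset G\subset O_N^+$ corresponds bijectively to a category of pairings $D$ with $NC_2\subset D\subset P_2$, and the problem reduces to showing that $NC_2,\ P_2^*,\ P_2$ are the only such categories. So my plan is to classify these intermediate categories of pairings, using exactly the same diagrammatic strategy as in the proof of Theorem 2.8 for monomial spheres.

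Suppose $D$ strictly contains $NC_2$. First I would pick a crossing pairing $\pi\in D\setminus NC_2$, and use the category operations to simplify it. The key tools are: composing with semicircles $\cap,\cup\in NC_2\subset D$ (which ``caps off'' or ``removes'' pairs of legs), and tensoring with the identity, both of which preserve $D$. Using these operations to remove adjacent noncrossing strings and outer noncrossing strings, I would aim to reduce $\pi$ to a minimal crossing pairing. The natural target is to show that $D$ must then contain either the basic crossing $\slash\!\!\!\backslash$ or the half-liberated crossing $\slash\!\!\!\backslash\hskip-1.1mm|\,$; this mirrors Step 2 of Theorem 2.8, where arbitrary nontrivial permutations were reduced to one of a small list of generators.

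Next, I would analyze the two cases. If $\slash\!\!\!\backslash\in D$, then by Proposition 4.5(1) the associated quantum group satisfies $G\subset O_N$, so $G=O_N$ and $D=P_2$. If instead $\slash\!\!\!\backslash\hskip-1.1mm|\,\in D$ but $\slash\!\!\!\backslash\notin D$, then Proposition 4.5(2) gives $G\subset O_N^*$, so $P_2^*\subset D$; and since $D$ contains no basic crossing, every pairing in $D$ must be balanced (black-white alternating), giving $D\subset P_2^*$, hence $D=P_2^*$.

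The main obstacle is the maximality step: showing there is no category strictly between $P_2^*$ and $P_2$. To handle this, suppose $\pi\in D\setminus P_2^*$, so $\pi$ has a string connecting two legs of the same parity (in the cyclic black/white labelling). By capping with semicircles from $P_2^*\subset D$ and conjugating by balanced permutations, I would reduce $\pi$ down to the basic crossing $\slash\!\!\!\backslash$, which then forces $D=P_2$. This parity argument is the exact analogue of Step 4 in Theorem 2.8, where an element $\sigma\in S_k\setminus S_k^*$ was reduced to the basic transposition $(1,2)$ by conjugating with elements of $S_\infty^*$ and deleting vertical strings. Once this step is in place, the trichotomy $D\in\{NC_2,P_2^*,P_2\}$ follows, and combined with Theorem 4.6 this yields the three quantum groups in the statement.
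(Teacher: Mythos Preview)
Your proposal is correct and follows essentially the same strategy as the paper: reduce to classifying categories $NC_2\subset D\subset P_2$, then use capping operations to descend from an arbitrary crossing (resp.\ unbalanced) pairing to the half-liberated (resp.\ basic) crossing, exactly paralleling the permutation-group argument of Theorem 2.8. The paper organizes the descent slightly differently---proving in one stroke that capping can be done so as to preserve membership in $P_2\setminus P_2^*$ or in $P_2^*\setminus NC_2$ until the number of strings drops to $\le 3$---but the content is the same as what you outline.
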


\begin{proof}
We agree that the points of a pairing $\pi\in P_2(k,l)$ are counted counterclockwise starting from bottom left, and modulo $k+l$. For $i=1, 2, \ldots,k+l$ we denote by $\pi^i$ the partition obtained by connecting with a semicircle the $i$-th and $(i+1)$-th points. The partitions $\pi^i$ will be called ``cappings'' of $\pi$, and will be generically denoted $\pi'$. 

\underline{Step I.} Let $\pi\in P_2-NC_2$, having $s\geq 4$ strings. Our claim is that:
\begin{enumerate}
\item If $\pi\in P_2-P_2^*$, there exists a capping $\pi'\in P_2-P_2^*$.

\item If $\pi\in P_2^*-NC_2$, there exists a capping $\pi'\in P_2^*-NC_2$.
\end{enumerate}

Indeed, we can use a rotation in order to assume that $\pi$ has no upper points. In other words, our data is a partition $\pi\in P_2(0, 2s)-NC_2(0, 2s)$, with $s\geq 4$.

(1) The assumption $\pi\notin P_2^*$ tells us that $\pi$ has certain strings having an odd number of crossings. We fix such an ``odd'' string, and we try to cap $\pi$, as for this string to remain odd in the resulting partition $\pi'$. An examination of all the possible pictures shows that this is possible, provided that our partition has $s\geq 3$ strings, and we are done.

(2) The assumption $\pi\notin NC_2$ tells us that $\pi$ has certain crossing strings. We fix such a pair of crossing strings, and we try to cap $\pi$, as for these strings to remain crossing in $\pi'$. Once again, an examination of all the possible pictures shows that this is possible, provided that our partition has $s\geq 4$ strings, and we are done.

\underline{Step II.} Consider a partition $\pi\in P_2(k,l)-NC_2(k,l)$. Our claim is that:
\begin{enumerate}
\item If $\pi\in P_2(k, l)-P_2^*(k,l)$ then $<\pi>=P_2$.

\item If $\pi\in P_2^*(k,l)-NC_2(k,l)$ then $<\pi>=P_2^*$.
\end{enumerate}

This can be proved by recurrence on the number of strings, $s=(k+l)/2$. Indeed, by using the results in Step I, at any $s\geq 4$ we have a descent procedure $s\to s-1$, and this leads to the situation $s\in\{1,2,3\}$, where the statement is clear.

\underline{Step III.} Assume now that we are given an easy quantum group $O_N\subset G\subset O_N^+$, coming from certain sets of pairings $D(k,l)\subset P_2(k,l)$. We have three cases:

(1) If $D\not\subset P_2^*$, we obtain $G=O_N$.

(2) If $D\subset P_2,D\not\subset NC_2$, we obtain $G=O_N^*$.

(3) If $D\subset NC_2$, we obtain $G=O_N^+$.
\end{proof}

In the projective case now, we have the following notion, from \cite{bme}:

\begin{definition}
A projective category of pairings is a collection of subsets 
$$NC_2(2k,2l)\subset E(k,l)\subset P_2(2k,2l)$$
stable under the usual categorical operations, and satisfying $\sigma\in E\implies |\sigma|\in E$.
\end{definition}

As basic examples here, we have the categories $NC_2\subset P_2^*\subset P_2$, where $P_2^*$ is the category of balanced pairings. This follows indeed from definitions.

Now with the above notion in hand, we can formulate:

\begin{definition}
A quantum group $PO_N\subset H\subset PO_N^+$ is called projectively easy when 
$$span(NC_2(2k,2l))\subset Hom(v^{\otimes k},v^{\otimes l})\subset span(P_2(2k,2l))$$
comes via $Hom(v^{\otimes k},v^{\otimes l})=span(E(k,l))$, for a certain projective category $E=(E(k,l))$.
\end{definition}

Observe that, given any easy quantum group $O_N\subset G\subset O_N^+$, its projective version $PO_N\subset PG\subset PO_N^+$ is projectively easy in our sense. In particular the quantum groups $PO_N\subset PU_N\subset PO_N^+$ are all projectively easy, coming from $NC_2\subset P_2^*\subset P_2$.

We have in fact the following general result:

\begin{proposition}
We have a bijective correspondence between the affine and projective categories of partitions, given by $G\to PG$ at the quantum group level.
\end{proposition}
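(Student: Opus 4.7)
The plan is to exhibit the bijection combinatorially. Given an affine category of pairings $D = (D(k,l))$, define the projective category $E$ by $E(k,l) := D(2k, 2l) \subset P_2(2k, 2l)$. Conversely, given a projective category $E$, define $D$ to be the affine category generated by $\bigcup_{k,l} E(k,l)$ inside $P_2$. I would then argue these maps are mutually inverse and correspond to $G \mapsto PG$ at the quantum group level.

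\emph{Forward map is well-defined.} Stability of $E$ under tensor, composition, and involution is inherited directly from $D$. For the additional axiom $\sigma \mapsto |\sigma|$ of Definition 4.9, I would realize $|\sigma|$ as a concrete categorical expression built from $\sigma$ together with pairings in $NC_2 \subset D$: framing $\sigma \in D(2k,2l)$ by an extra pair of boundary strings in the prescribed way exhibits $|\sigma|$ as an element of $D(2k+2, 2l+2) = E(k+1, l+1)$.

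\emph{Matching with $G \mapsto PG$.} Since $v_{ij,ab} = u_{ia}u_{jb}$, we have $v = u \otimes u$ as corepresentations, so $v^{\otimes k} = u^{\otimes 2k}$, and the intertwining equations in $C(PG) \subset C(G)$ are identical. Hence
\[
\mathrm{Hom}_{PG}(v^{\otimes k}, v^{\otimes l}) = \mathrm{Hom}_G(u^{\otimes 2k}, u^{\otimes 2l}) = \mathrm{span}(D(2k,2l)) = \mathrm{span}(E(k,l)),
\]
so $PG$ is projectively easy with category $E$. This confirms that the combinatorial assignment $D \mapsto E$ is precisely $G \mapsto PG$.

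\emph{Inverse and main obstacle.} The two assignments are mutually inverse. Starting from $D$, restricting to even-even and then taking affine closure recovers $D$: any odd-odd $\pi \in D(k,l)$ satisfies $\pi \otimes |\, \in D(k+1,l+1)$, and $\pi$ is recovered by capping the extra string. Starting from $E$, taking affine closure and restricting to even-even should give back $E$. The main difficulty lies here: one must show that the affine closure introduces no new pairings in $P_2(2k,2l)$ beyond $E(k,l)$ itself. The stability axiom $\sigma \mapsto |\sigma|$ in Definition 4.9 is exactly what is needed: any affine categorical operation that might produce a new even-even pairing from elements of $E$ of possibly mixed parities can, via the $|\cdot|$ operation, be reorganized as a purely projective operation on $E$. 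A careful bookkeeping of these reorganizations — tracking how odd-indexed intermediate pieces combine — is the technical heart of the argument, and completes the bijection.
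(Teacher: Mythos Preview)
Your approach is correct in outline but differs from the paper's in one key respect. You define the inverse $E \mapsto D$ as the affine category \emph{generated} by $\bigcup E(k,l)$, and then must argue that this closure introduces no new pairings in the even-even components --- which you rightly flag as the technical heart. The paper sidesteps this closure problem entirely by writing down an explicit candidate for $D$:
\[
D(k,l)=\begin{cases}
E(k,l)&(k,l\ {\rm even})\\
\{\sigma:|\sigma\in E(k+1,l+1)\}&(k,l\ {\rm odd})
\end{cases}
\]
and then checking directly, via a short case analysis on the parities of the legs and using the axiom $\sigma\in E\Rightarrow|\sigma|\in E$, that this $D$ is already stable under composition, tensor product, and involution. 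With this formula the composition $E\to D\to E$ is the identity \emph{by construction}, since $D(2k,2l)=E(k,l)$ on the nose; no closure argument is needed at all.

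Your route can certainly be completed, but the cleanest way to execute your ``careful bookkeeping'' is precisely to exhibit the paper's explicit $D$: once you have any category $D'\supset E$ with $D'(2k,2l)=E(k,l)$, the closure of $E$ is trapped between $E$ and $D'$ on even-even components, and you are done. So the two arguments ultimately converge, but the paper's explicit-formula approach is shorter and turns the difficulty you identified into a routine verification. Your treatment of the quantum-group correspondence $G\mapsto PG$ via $v=u\otimes u$ matches the paper's.
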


\begin{proof}
The construction of correspondence $D\to E$ is clear, simply by setting:
$$E(k,l)=D(2k,2l)$$

Conversely, given $E=(E(k,l))$ as in Definition 4.10, we can set:
$$D(k,l)=\begin{cases}
E(k,l)&(k,l\ {\rm even})\\
\{\sigma:|\sigma\in E(k+1,l+1)\}&(k,l\ {\rm odd})
\end{cases}$$

Our claim is that $D=(D(k,l))$ is a category of partitions. Indeed:

(1) The composition action is clear. Indeed, when looking at the numbers of legs involved, in the even case this is clear, and in the odd case, this follows from:
$$|\sigma,|\tau\in E\implies |^\sigma_\tau\in E\implies{\ }^\sigma_\tau\in D$$

(2) For the tensor product axiom, we have 4 cases to be investigated. The even/even case is clear, and the odd/even, even/odd, odd/odd cases follow respectively from:
$$|\sigma,\tau\in E\implies|\sigma\tau\in E\implies\sigma\tau\in D$$
$$\sigma,|\tau\in E\implies|\sigma|,|\tau\in E\implies|\sigma||\tau\in E\implies|\sigma\tau\in E\implies\sigma\tau\in D$$
$$|\sigma,|\tau\in E\implies||\sigma|,|\tau\in E\implies||\sigma||\tau\in E\implies \sigma\tau\in E\implies\sigma\tau\in D$$

(3) Finally, the conjugation axiom is clear from definitions.

Now with these definitions in hand, both compositions $D\to E\to D$ and $E\to D\to E$ follow to be the identities, and the quantum group assertion is clear as well.
\end{proof}

Now back to the uniqueness issues, we have here:

\begin{theorem}
The only intermediate projectively easy quantum groups 
$$PO_N\subset G\subset PO_N^+$$ 
are the basic projective quantum groups, $PO_N\subset PU_N\subset PO_N^+$.
\end{theorem}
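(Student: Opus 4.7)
The natural plan is to reduce this projective classification to the affine one (Theorem 4.8) via the bijection of Proposition 4.11. Given any intermediate projectively easy quantum group $PO_N\subset G\subset PO_N^+$, we have by hypothesis a projective category of pairings $E=(E(k,l))$ satisfying $NC_2(2k,2l)\subset E(k,l)\subset P_2(2k,2l)$, and describing the Hom-spaces of $G$ with respect to $v=(v_{ij,ab})$.

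First, I would invoke Proposition 4.11 to lift $E$ to an affine category of pairings $D=(D(k,l))$ with $NC_2\subset D\subset P_2$, by means of the explicit formula
\begin{equation*}
D(k,l)=\begin{cases}E(k,l)&(k,l\text{ even})\\ \{\sigma:|\sigma\in E(k+1,l+1)\}&(k,l\text{ odd}).\end{cases}
\end{equation*}
The associated affine easy quantum group $\widetilde G\subset O_N^+$ obtained via Tannakian duality then satisfies $O_N\subset\widetilde G\subset O_N^+$, and by construction its projective version is our given $G$, in the sense that $P\widetilde G=G$ (this is exactly the quantum group half of Proposition 4.11).

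Next, I would apply Theorem 4.8 to $\widetilde G$, which forces $\widetilde G\in\{O_N,O_N^*,O_N^+\}$. Taking projective versions and invoking Theorem 3.10 (which gives $PO_N^*=PU_N$), we conclude that $G=P\widetilde G$ is one of $PO_N$, $PU_N$, $PO_N^+$, as required.

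The only step that requires any care is the verification that the assignment $E\to D$ above genuinely lands in the affine categories and that the two resulting quantum groups are related by $P\widetilde G=G$; but both of these are precisely what Proposition 4.11 asserts, so there is no real obstacle beyond unwinding definitions. In this sense the present theorem is a direct corollary of the affine classification, once the correspondence of Proposition 4.11 is in hand.
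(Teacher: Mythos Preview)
Your argument is correct and mirrors the paper's own proof exactly: the paper simply states that the result follows from the affine classification (Theorem 4.8) together with the duality of Proposition 4.11, and you have faithfully unpacked this. One small correction: the identification $PO_N^*=PU_N$ is Theorem 3.9 in the paper, not Theorem 3.10 (which is a definition).
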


\begin{proof}
This follows from the uniqueness result in the affine case, Theorem 4.8 above, and from the duality established in Proposition 4.11.
\end{proof}

We refer to \cite{rwe} for further results regarding the easy quantum groups.

\section{The Weingarten formula}

In this section we develop the integration theory over the quantum groups $O_N^\times$, and then over the associated spheres $S^{N-1}_{\mathbb R,\times}$, by using ideas from \cite{bco}, \cite{bgo}, \cite{csn}, \cite{wei}. 

Assume first that $G$ is an arbitrary compact quantum group. We have:

\begin{definition}
The unique positive unital linear form $\int:C(G)\to\mathbb C$ satisfying
$$\left(id\otimes\int\right)\Delta(\varphi)=\left(\int\otimes id\right)\Delta(\varphi)=\int\varphi$$
for any $\varphi\in C(G)$ is called Haar functional of $G$.
\end{definition}

The existence of the Haar functional can be established by starting with an arbitrary positive unital linear form $f:C(G)\to\mathbb C$, and then by performing convolution powers, with the convolution product being given by $f*g=(f\otimes g)\Delta$:
$$\int=\lim_{k\to\infty}\frac{1}{k}\sum_{r=1}^kf^{*r}$$

Let us also mention that, due to our restricted axioms, assuming $S^2=id$, the Haar functional is a trace. We refer to Woronowicz' paper \cite{wo1} for the proof of these facts.

At the level of basic examples, the situation is as follows:

\begin{proposition}
The Haar functional is as follows:
\begin{enumerate}
\item In the classical case, $\int$ is the integration with respect to the Haar measure.

\item In the group dual case, $G=\widehat{\Gamma}$, the integration is given by $\int g=\delta_{g1}$, $\forall g\in\Gamma$.
\end{enumerate}
\end{proposition}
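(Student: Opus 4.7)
The plan is, in both cases, to exhibit a natural positive unital linear form, verify the bi-invariance property from Definition 5.1, and invoke uniqueness of the Haar functional to identify it with $\int$.

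First I would handle (1): let $\mu$ denote the Haar measure on the compact group $G$ and set $I(\varphi)=\int_G\varphi\,d\mu$, which is manifestly positive and unital. Under the identification $C(G)\otimes C(G)\subset C(G\times G)$ the comultiplication from Proposition 3.3 acts by $\Delta(\varphi)(g,h)=\varphi(gh)$, so $(id\otimes I)\Delta(\varphi)$ is the function $g\mapsto\int_G\varphi(gh)\,d\mu(h)$. The right invariance of $\mu$ reduces this to the constant $I(\varphi)$, and the symmetric computation for $(I\otimes id)\Delta$ uses the left invariance. Thus $I$ satisfies the defining Haar equations, and the uniqueness built into Definition 5.1 forces $I=\int$.

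For (2), with $G=\widehat{\Gamma}$ and $C(G)=C^*(\Gamma)$, I would define $I$ on the group algebra $\mathbb{C}[\Gamma]$ by $I\bigl(\sum_g c_g g\bigr)=c_1$. A short computation of $I(a^*a)=\sum_g|c_g|^2\geq 0$ shows $I$ is a positive tracial state on $\mathbb{C}[\Gamma]$, so by universality it extends continuously to the full $C^*$-algebra $C^*(\Gamma)$. Since $\Delta(g)=g\otimes g$ for $g\in\Gamma$ by Proposition 3.3, we get
$$(id\otimes I)\Delta(g)=g\cdot I(g)=g\cdot\delta_{g1}=\delta_{g1}\cdot 1_{C^*(\Gamma)},$$
the last step holding because both sides vanish unless $g=1$. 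This equals $I(g)\cdot 1$, and the symmetric computation covers the other invariance. Uniqueness once more yields $I=\int$.

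The only mild technicality is the positivity of the candidate in (2), handled by the $\sum_g|c_g|^2$ check above; I do not expect any serious obstacle, since all the nontrivial content is packaged into Definition 5.1 and the uniqueness result from Woronowicz \cite{wo1}.
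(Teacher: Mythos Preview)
Your proof is correct and rests on the same idea as the paper's---the invariance equation together with uniqueness of the Haar functional---but the direction of the argument is reversed. The paper starts from the abstract Haar functional $\int$ and reads off its form: in (1) it represents $\int$ by a probability measure (Riesz) and observes that the invariance equations force this measure to be left/right invariant, hence Haar; in (2) it plugs $\varphi=g$ into the invariance condition to get $g\cdot\int g=\int g$, which immediately forces $\int g=\delta_{g1}$. You instead build the candidate first, check positivity and invariance by hand, and then invoke uniqueness. Your route avoids the implicit appeal to Riesz in (1), at the cost of having to verify positivity in (2); the paper's route in (2) is a one-line deduction that sidesteps any positivity check since existence is already granted by Woronowicz. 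Either way the substance is the same.
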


\begin{proof}
These assertions are both elementary, as follows:

(1) When $G$ is classical we must have $\int\varphi=\int_G\varphi(g)d\mu(g)$, for a certain probability measure $\mu$. The conditions in Definition 5.1 express the fact that $\mu$ must be left and right invariant, $\mu(gX)=\mu(Xg)=\mu(X)$. Thus $\mu$ must be the Haar measure of $G$.

(2) When $G=\widehat{\Gamma}$ is a group dual, the group elements $g\in\Gamma$ span the dense subalgebra $\mathbb C[\Gamma]\subset C^*(\Gamma)$, and the invariance conditions in Definition 5.1 applied to them simply read $g\int g=g\int g=\int g$. Thus the Haar functional is given by $\int g=\delta_{g1}$ in this case.
\end{proof}

In practice, $\int$ can be computed by using representation theory, by using:

\begin{proposition}
For any corepresentation $U\in M_K(C(G))$, the operator 
$$P=\left(id\otimes\int\right)U\in M_K(\mathbb C)$$
is the orthogonal projection onto the space $Fix(U)=\{x\in\mathbb C^K|U(x)=x\otimes1\}$.
\end{proposition}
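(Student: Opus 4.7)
The plan is to check the two defining properties of the orthogonal projection onto $Fix(U)$: that $P$ is idempotent with range $Fix(U)$, and that $P$ is self-adjoint, with the main technical input being the invariance properties of the Haar functional established in Definition 5.1.

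First I would reformulate the statement. Writing $P_{ij}=\int U_{ij}$ and viewing $U$ as acting on $\mathbb{C}^K\otimes C(G)$ via $U(e_j\otimes1)=\sum_i e_i\otimes U_{ij}$, one has the clean identity $Px=(id\otimes\int)(Ux)$ for every $x\in\mathbb{C}^K$. With this in hand, I would prove $Im(P)\subset Fix(U)$ by a direct computation: for any $x$,
\begin{eqnarray*}
U(Px\otimes1)&=&\sum_{ijk}x_k\,e_i\otimes U_{ij}\int U_{jk}\ =\ \sum_{ik}x_k\,e_i\otimes(id\otimes\int)\Delta(U_{ik})\\
&=&\sum_{ik}x_k(\textstyle\int U_{ik})e_i\otimes1\ =\ Px\otimes1,
\end{eqnarray*}
where the crucial middle step uses left invariance $(id\otimes\int)\Delta(a)=(\int a)\cdot1$. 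Conversely, if $x\in Fix(U)$ then $Ux=x\otimes1$, so $Px=(id\otimes\int)(x\otimes1)=x$ by unitality of $\int$. This gives $Fix(U)\subset Im(P)$ and shows that $P$ acts as the identity on $Fix(U)$. Combining with the first inclusion, $P^2=P$ and $Im(P)=Fix(U)$.

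It remains to show $P=P^*$ as a matrix in $M_K(\mathbb{C})$, i.e.\ $\overline{P_{ji}}=P_{ij}$. Since $\int$ is a positive unital linear form, $\overline{\int a}=\int a^*$ for all $a$, so $\overline{P_{ji}}=\int U_{ji}^*$. Now $U$ is a unitary corepresentation, and from Proposition 3.2 applied entry-wise (or, more fundamentally, from the general Hopf algebra identity) we have $S(U_{ij})=U_{ji}^*$. Under our Kac-type axiom $S^2=id$, the Haar functional is a trace and in particular satisfies $\int\circ\,S=\int$ (this is standard, and follows by applying $S$ to both sides of the invariance axiom). Hence $\int U_{ji}^*=\int S(U_{ij})=\int U_{ij}=P_{ij}$, which is what we wanted.

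The main obstacle is really the first step, the $Im(P)\subset Fix(U)$ verification, which is entirely powered by the invariance axiom for $\int$; the other steps are formalities. The self-adjointness, although technically separate, is where one genuinely uses that we are in the Kac setting (so that $\int\circ S=\int$ is available); without this, one would only obtain an oblique projection, and an additional deformation by the modular data would be needed.
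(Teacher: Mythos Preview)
Your proof is correct and follows essentially the same line as the paper's, which simply records the matrix identities $UP=PU=P$ coming from the two invariance conditions in Definition~5.1 and then defers to Woronowicz for the remaining formalities; you have spelled those formalities out. One minor remark on the self-adjointness step: the paper's implicit route, via $UP=PU=P$ and unitarity of $U$, is slightly more economical than your antipode argument. Indeed, from $UP=P$ and $PU=P$ one gets $U^*P=P$ and $PU^*=P$ by multiplying by $U^*$, and taking adjoints gives $UP^*=P^*$ and $P^*U=P^*$; then applying $(id\otimes\int)$ to $UP^*=P^*$ yields $PP^*=P^*$, and comparing with its adjoint gives $P=P^*$. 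This avoids invoking $S(U_{ij})=U_{ji}^*$ for a general corepresentation and the identity $\int\circ S=\int$, both of which are true but require a little extra work to justify carefully.
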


\begin{proof}
The invariance conditions in Definition 5.1 applied to $\varphi=U_{ij}$ read:
$$\sum_kU_{ik}P_{kj}=\sum_kP_{ik}U_{kj}=P_{ij}$$

Thus we have $UP=PU=P$, and this gives the result. See Woronowicz \cite{wo1}.
\end{proof}

Now back to our orthogonal quantum groups, the Schur-Weyl duality results obtained in section 4 above, along with a linear algebra trick, give the following result:

\begin{theorem}
We have the Weingarten formula
$$\int_{O_N^\times}u_{i_1j_1}\ldots u_{i_kj_k}=\sum_{\pi,\sigma\in P_2^\times(k)}\delta_\pi(i)\delta_\sigma(j)W_{kN}(\pi,\sigma)$$
where $W_{kN}=G_{kN}^{-1}$, with $G_{kN}(\pi,\sigma)=N^{|\pi\vee\sigma|}$, and where $\delta$ are Kronecker type symbols.
\end{theorem}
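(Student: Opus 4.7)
The plan is to combine Proposition 5.3, which realizes the Haar integral as a projection, with the Schur--Weyl description of the fixed space from Theorem 4.6, and then to extract the matrix coefficients via the elementary Gram-matrix formula for orthogonal projections onto a spanning set.

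First, I would apply Proposition 5.3 to the corepresentation $U=u^{\otimes k}$ on $(\mathbb C^N)^{\otimes k}$. The resulting operator
$$P=\left(id\otimes\int_{O_N^\times}\right)u^{\otimes k}\in M_{N^k}(\mathbb C),\qquad P_{i_1\ldots i_k,\,j_1\ldots j_k}=\int_{O_N^\times}u_{i_1j_1}\ldots u_{i_kj_k},$$
is the orthogonal projection onto $\mathrm{Fix}(u^{\otimes k})$. Next, Theorem 4.6 (read in the case $k\to 0$, so that fixed vectors correspond to $\mathrm{Hom}(1,u^{\otimes k})$) identifies this fixed space as
$$\mathrm{Fix}(u^{\otimes k})=\mathrm{span}\bigl\{T_\pi:\pi\in P_2^\times(k)\bigr\},$$
where $P_2^\times$ is $P_2$, $P_2^*$ or $NC_2$ according to whether $\times$ denotes the classical, half-liberated or free case, and $T_\pi\in(\mathbb C^N)^{\otimes k}$ has components $(T_\pi)_{i_1\ldots i_k}=\delta_\pi(i_1,\ldots,i_k)$.

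The decisive linear-algebra input is the standard fact that for any spanning family $\{v_\pi\}$ of a subspace $V\subset H$ with Gram matrix $G(\pi,\sigma)=\langle v_\pi,v_\sigma\rangle$, the orthogonal projection onto $V$ has the explicit form
$$\mathrm{Proj}_V=\sum_{\pi,\sigma}W(\pi,\sigma)\,v_\pi\, v_\sigma^*,\qquad W=G^{-1},$$
when the family is linearly independent (otherwise $W$ is the Moore--Penrose pseudo-inverse of $G$). Applied with $v_\pi=T_\pi$, this immediately yields
$$P_{i_1\ldots i_k,\,j_1\ldots j_k}=\sum_{\pi,\sigma\in P_2^\times(k)}W_{kN}(\pi,\sigma)\,\delta_\pi(i)\,\delta_\sigma(j),$$
which is the claimed Weingarten formula, pending the identification of the Gram matrix.

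To finish I would compute the Gram entries directly from the definition of $T_\pi$:
$$\langle T_\pi,T_\sigma\rangle=\sum_{i_1,\ldots,i_k}\delta_\pi(i)\,\delta_\sigma(i)=\bigl|\{i:i\text{ is constant on the blocks of }\pi\vee\sigma\}\bigr|=N^{|\pi\vee\sigma|},$$
which is exactly $G_{kN}(\pi,\sigma)$. Alternatively this follows from the compositional rule in Proposition 4.4, counting closed loops in the concatenation $[^\pi_\sigma]$. The main technical wrinkle is that for small $N$ the vectors $T_\pi$ may fail to be linearly independent, so $G_{kN}$ need not be invertible; one handles this by interpreting $W_{kN}$ as the pseudo-inverse, noting that the final projection formula, and hence the integral identity, is insensitive to this choice because $P$ depends only on the span of the $T_\pi$, not on the particular generating family used.
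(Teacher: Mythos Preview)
Your proposal is correct and follows essentially the same approach as the paper: apply Proposition 5.3 to realize the integral matrix as the orthogonal projection onto $\mathrm{Fix}(u^{\otimes k})$, use Theorem 4.6 to span this space by the $T_\pi$, and then express the projection via the inverse of the Gram matrix $G_{kN}(\pi,\sigma)=N^{|\pi\vee\sigma|}$. Your treatment is in fact slightly more careful than the paper's, since you explicitly compute the Gram entries and flag the pseudo-inverse issue for small $N$, which the paper glosses over.
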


\begin{proof}
In view of the above, let us arrange all the integrals to be computed, at a fixed value of $k\in\mathbb N$, in a single big matrix, of size $N^k\times N^k$, as follows:
$$P_{i_1\ldots i_k,j_1\ldots j_k}=\int_{O_N^\times}u_{i_1j_1}\ldots u_{i_kj_k}$$

By using Proposition 5.3, and then Theorem 4.6, this matrix $P$ is the orthogonal projection onto the following linear space:
$$Fix(u^{\otimes k})=Hom(1,u^{\otimes k})=span\left(\xi_\pi\Big|\pi\in P_2^\times(k)\right)$$ 

By a standard linear algebra computation, it follows that we have $P=WE$, where $E(x)=\sum_{\pi\in P_2^\times(k)}<x,\xi_\pi>\xi_\pi$, and where $W$ is the inverse on $span(T_\pi|\pi\in P_2^\times(k))$ of the restriction of $E$. But this restriction is the linear map given by $G_{kN}$, so $W$ is the linear map given by $W_{kN}$, and this gives the formula in the statement.
\end{proof}

We recall now that the uniform measure on $S^{N-1}_\mathbb R$ is the unique probability measure which is invariant by rotations. In the case of the half-liberated sphere $S^{N-1}_{\mathbb R,*}$ and of the free sphere $S^{N-1}_{\mathbb R,+}$, establishing such a uniqueness result is a quite non-trivial task, and we will have to use the Weingarten formula. Let us begin with:

\begin{definition}
We endow $C(S^{N-1}_{\mathbb R,\times})$ with its canonical trace, coming as the composition
$$tr:C(S^{N-1}_{\mathbb R,\times})\to C(O_N^\times)\to\mathbb C$$
of the morphism given by $x_i\to u_{1i}$, and of the Haar integral of $O_N^\times$.
\end{definition}

Observe that the morphism $C(S^{N-1}_{\mathbb R,\times})\to C(O_N^\times)$ is indeed well-defined, because the variables $X_i=u_{1i}$ satisfy the defining relations for the coordinates of $S^{N-1}_{\mathbb R,\times}$.

In the classical case, we obtain in this way the integration with respect to the uniform measure on $S^{N-1}_\mathbb R$. This is well-known, and follows as well from the results below.

In general now, let us first find an abstract characterization of this canonical trace, via invariance, similar to the one from the classical case. We will need:

\begin{proposition}
The canonical trace $tr:C(S^{N-1}_{\mathbb R,\times})\to\mathbb C$ has the ergodicity property
$$(I\otimes id)\Phi=tr(.)1$$
where $I$ is the Haar functional of $O_N^\times$, and $\Phi$ is the coaction map.
\end{proposition}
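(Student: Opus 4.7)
The plan is to check the identity on monomials, and unfold both sides using the Weingarten formula of Theorem 5.4. By linearity and continuity it suffices to prove
\[(I\otimes\mathrm{id})\Phi(x_{i_1}\cdots x_{i_k})=tr(x_{i_1}\cdots x_{i_k})\cdot 1\]
for every $k\in\mathbb{N}$ and every multi-index $i=(i_1,\ldots,i_k)$. First I would apply $I\otimes\mathrm{id}$ to
\[\Phi(x_{i_1}\cdots x_{i_k})=\sum_{j_1,\ldots,j_k}u_{i_1j_1}\cdots u_{i_kj_k}\otimes x_{j_1}\cdots x_{j_k},\]
use Theorem 5.4 on each scalar coefficient, and collect the sum over $j$ into the element $\xi_\sigma:=\sum_j\delta_\sigma(j)\,x_{j_1}\cdots x_{j_k}\in C(S^{N-1}_{\mathbb{R},\times})$. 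This yields
\[(I\otimes\mathrm{id})\Phi(x_{i_1}\cdots x_{i_k})=\sum_{\pi,\sigma\in P_2^\times(k)}W_{kN}(\pi,\sigma)\,\delta_\pi(i)\,\xi_\sigma.\]

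For the right hand side, the definition of $tr$ and a second application of Weingarten give
\[tr(x_{i_1}\cdots x_{i_k})=I(u_{1i_1}\cdots u_{1i_k})=\sum_{\pi,\sigma}W_{kN}(\pi,\sigma)\,\delta_\pi(1,\ldots,1)\,\delta_\sigma(i)=\sum_{\pi,\sigma}W_{kN}(\pi,\sigma)\,\delta_\sigma(i),\]
since $\delta_\pi(1,\ldots,1)=1$ for every pairing $\pi$. The Gram matrix $G_{kN}(\pi,\sigma)=N^{|\pi\vee\sigma|}$ is symmetric, so its inverse $W_{kN}$ is symmetric as well, and after swapping the labels $\pi\leftrightarrow\sigma$ the right hand side reads $\sum_{\pi,\sigma}W_{kN}(\pi,\sigma)\,\delta_\pi(i)$. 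Comparing the two expansions, the proposition reduces to the single identity $\xi_\sigma=1$ in $C(S^{N-1}_{\mathbb{R},\times})$ for every $\sigma\in P_2^\times(k)$.

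The key lemma $\xi_\sigma=1$ is the main obstacle, and I would establish it by reading off the fixed-vector information at the $O_N^\times$-level. Namely, by Theorem 4.6 the map $T_\sigma$ belongs to $\mathrm{Hom}(1,u^{\otimes k})=\mathrm{Fix}(u^{\otimes k})$, which is exactly the statement
\[\sum_{j}\delta_\sigma(j)\,u_{i_1j_1}\cdots u_{i_kj_k}=\delta_\sigma(i)\cdot 1\qquad\text{in }C(O_N^\times)\]
for every multi-index $i$. Specializing $i=(1,\ldots,1)$ gives $\sum_j\delta_\sigma(j)\,u_{1j_1}\cdots u_{1j_k}=1$, i.e.\ $\pi_0(\xi_\sigma)=\pi_0(1)$, where $\pi_0:C(S^{N-1}_{\mathbb{R},\times})\to C(O_N^\times)$ is the morphism $x_i\mapsto u_{1i}$ appearing in Definition 5.5. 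The conclusion $\xi_\sigma=1$ then follows once $\pi_0$ is known to be injective, i.e.\ once the first-row map $O_N^\times\to S^{N-1}_{\mathbb{R},\times}$ is a quantum surjection; this is classical for $O_N\to S^{N-1}_{\mathbb R}$, and can be checked in the other two cases either via the matrix model of Theorem 3.8 (for $O_N^*$) or via the universality of the defining presentation on a single row of generators (for $O_N^+$). As a backup I would, if needed, verify $\xi_\sigma=1$ directly sphere-by-sphere: reduce innermost strings using $\sum_ix_i^2=1$ in the free case; rearrange by commutativity in the classical case; and use the half-commutation $abc=cba$ in the half-liberated case to move a string into neighboring position and absorb it in $\sum_ix_i^2=1$. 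Once the lemma is in hand, the Weingarten expansions of the two sides match term by term, and the ergodicity formula follows.
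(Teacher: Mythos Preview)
Your overall architecture matches the paper exactly: expand both sides via the Weingarten formula, reduce everything to the identity $\xi_\sigma=1$ for each $\sigma\in P_2^\times(k)$, and conclude by the symmetry of $W_{kN}$. The only point that needs comment is how you establish $\xi_\sigma=1$.

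Your primary route---deducing $\xi_\sigma=1$ from the fixed-vector relation in $C(O_N^\times)$ together with injectivity of $\pi_0:C(S^{N-1}_{\mathbb R,\times})\to C(O_N^\times)$---has a gap: the injectivity of $\pi_0$ is not available at this stage. The only place the paper compares the sphere algebra with the row algebra is Proposition~5.8, which establishes isomorphism only at the GNS level and whose proof invokes Theorem~5.7, which in turn rests on the present proposition; so appealing to it here would be circular. Proving injectivity of $\pi_0$ directly at the full $C^*$-level is a genuine fact (for $O_N^+$ in particular) that the paper does not supply independently, and your phrase ``universality of the defining presentation on a single row'' does not settle it, since a priori the first row of a free orthogonal matrix could satisfy more relations than those defining the free sphere.

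Your backup approach---proving $\xi_\sigma=1$ directly in each of the three spheres by simplifying $\sum_j\delta_\sigma(j)\,x_{j_1}\cdots x_{j_k}$ using $\sum_ix_i^2=1$ together with the commutation or half-commutation relations---is precisely what the paper does, and is the right argument here. In the free case $\sigma$ is noncrossing, so one contracts innermost caps; in the classical case one commutes freely and then contracts; in the half-liberated case $\sigma\in P_2^*$, so the half-commutation moves needed to bring the two legs of a string into adjacent positions are available. Promote that backup to the main argument and the proof is complete.
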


\begin{proof}
It is enough to check the equality in the statement on an arbitrary product of coordinates, $x_{i_1}\ldots x_{i_k}$. The left term is as follows:
\begin{eqnarray*}
(I\otimes id)\Phi(x_{i_1}\ldots x_{i_k})
&=&\sum_{j_1\ldots j_k}I(u_{i_1j_1}\ldots u_{i_kj_k})x_{j_1}\ldots x_{j_k}\\
&=&\sum_{j_1\ldots j_k}\sum_{\pi,\sigma\in P_2^\times(k)}\delta_\pi(i)\delta_\sigma(j)W_{kN}(\pi,\sigma)x_{j_1}\ldots x_{j_k}\\
&=&\sum_{\pi,\sigma\in P_2^\times(k)}\delta_\pi(i)W_{kN}(\pi,\sigma)\sum_{j_1\ldots j_k}\delta_\sigma(j)x_{j_1}\ldots x_{j_k}
\end{eqnarray*}

Let us look now at the last sum on the right. The situation is as follows:

(1) In the free case we have to sum quantities of type $x_{j_1}\ldots x_{j_k}$, over all choices of multi-indices $j=(j_1,\ldots,j_k)$ which fit into our given noncrossing pairing $\sigma$, and just by using the condition $\sum_ix_i^2=1$, we conclude that the sum is 1. 

(2) The same happens in the classical case. Indeed, our pairing $\sigma$ can now be crossing, but we can use the commutation relations $x_ix_j=x_jx_i$, and the sum is again 1.

(3) Finally, the same happens as well in the half-liberated case, because the fact that our pairing $\sigma$ has now an even number of crossings allows us to use the half-commutation relations $x_ix_jx_k=x_kx_jx_i$, in order to conclude that the sum to be computed is 1. 

Thus the sum on the right is 1, in all cases, and we get:
$$(I\otimes id)\Phi(x_{i_1}\ldots x_{i_k})
=\sum_{\pi,\sigma\in P_2^\times(k)}\delta_\pi(i)W_{kN}(\pi,\sigma)$$

On the other hand, another application of the Weingarten formula gives:
\begin{eqnarray*}
tr(x_{i_1}\ldots x_{i_k})
&=&I(u_{1i_1}\ldots u_{1i_k})\\
&=&\sum_{\pi,\sigma\in P_2^\times(k)}\delta_\pi(1)\delta_\sigma(i)W_{kN}(\pi,\sigma)\\
&=&\sum_{\pi,\sigma\in P_2^\times(k)}\delta_\sigma(i)W_{kN}(\pi,\sigma)
\end{eqnarray*}

Since the Weingarten function is symmetric in $\pi,\sigma$, this gives the result.
\end{proof} 

With the above ergodicity result in hand, we can now formulate an abstract characterization of the trace constructed in Definition 5.5, as follows:

\begin{theorem}
There is a unique positive unital trace $tr:C(S^{N-1}_{\mathbb R,\times})\to\mathbb C$ satisfying
$$(id\otimes tr)\Phi(x)=tr(x)1$$
where $\Phi$ is the coaction map, and this is the canonical trace.
\end{theorem}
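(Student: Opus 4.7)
The plan is to prove existence and uniqueness separately. Uniqueness follows immediately from Proposition 5.6, while existence requires a short Weingarten-type computation.

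For existence, I would verify directly that the canonical trace of Definition 5.5 satisfies the invariance condition. Expanding $(id\otimes tr)\Phi(x_{i_1}\ldots x_{i_k})$ as $\sum_j u_{i_1j_1}\ldots u_{i_kj_k}\cdot tr(x_{j_1}\ldots x_{j_k})$ and substituting $tr(x_{j_1}\ldots x_{j_k}) = I(u_{1j_1}\ldots u_{1j_k})$ via Theorem 5.4 reduces the verification to the identity
$$\sum_{j_1,\ldots,j_k}\delta_\sigma(j)\,u_{i_1j_1}\ldots u_{i_kj_k} \;=\; \delta_\sigma(i)\cdot 1,\qquad \sigma\in P_2^\times(k).$$
This is precisely the statement that the vector with components $\delta_\sigma(j)$ is fixed by $u^{\otimes k}$, i.e.\ $T_\sigma\in Hom(1,u^{\otimes k})$, which holds by Theorem 4.6. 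After this substitution, a second use of Weingarten together with $\delta_\pi(1,\ldots,1)=1$ recovers $tr(x_{i_1}\ldots x_{i_k})\cdot 1$ on the right. The positivity, unitality and trace property of $tr$ are inherited for free from the Haar state $I$ (tracial under the Kac condition $S^2=id$ noted after Definition 5.1) via the $*$-morphism $x_i\mapsto u_{1i}$.

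For uniqueness, let $\tau:C(S^{N-1}_{\mathbb R,\times})\to\mathbb C$ be any positive unital linear functional satisfying $(id\otimes\tau)\Phi(x)=\tau(x)\cdot 1$; the trace hypothesis on $\tau$ is not actually needed. Applying the Haar functional $I$ of $O_N^\times$ to both sides and commuting scalar functionals across the two tensor factors in the standard way yields
$$\tau(x) \;=\; I\bigl((id\otimes\tau)\Phi(x)\bigr) \;=\; (I\otimes\tau)\Phi(x) \;=\; \tau\bigl((I\otimes id)\Phi(x)\bigr) \;=\; \tau\bigl(tr(x)\cdot 1\bigr) \;=\; tr(x),$$
where the fourth equality is Proposition 5.6 and the last uses $\tau(1)=1$. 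Hence $\tau=tr$, and combined with existence this finishes the proof.

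The main obstacle, such as it is, lies in the existence half: one must recognise the inner sum $\sum_j\delta_\sigma(j)u_{i_1j_1}\ldots u_{i_kj_k}$ as $\delta_\sigma(i)\cdot 1$ via the intertwiner picture $T_\sigma\in Fix(u^{\otimes k})$ furnished by Section 4. Once that is in place, the rest is bookkeeping, and uniqueness is essentially free given the ergodicity statement of Proposition 5.6.
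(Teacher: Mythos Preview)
Your proof is correct. The uniqueness half is essentially identical to the paper's argument: both apply $I$ to the assumed invariance condition and then invoke Proposition 5.6 (ergodicity) to conclude $\tau=tr$.

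For existence the approaches differ. The paper dispatches this in a single sentence, saying it ``follows from the invariance condition in Definition 5.1, applied to $G=O_N^\times$''---that is, from bi-invariance of the Haar functional $I$, pushed forward along $x_i\mapsto u_{1i}$. Your route is more explicit: you expand via Weingarten and reduce to the identity $\sum_j\delta_\sigma(j)\,u_{i_1j_1}\cdots u_{i_kj_k}=\delta_\sigma(i)\cdot 1$, which you correctly recognise as the fixed-vector statement $\xi_\sigma\in Fix(u^{\otimes k})$ from Theorem 4.6. This buys you a fully self-contained verification that does not lean on the (slightly non-obvious) compatibility between the coaction $\Phi$ and the comultiplication $\Delta$ under the embedding $\pi$. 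The paper's one-liner, by contrast, is more conceptual but requires the reader to see why Haar invariance on $O_N^\times$ transports to $\Phi$-invariance on the sphere; unpacking that actually amounts to the same fixed-vector observation (or to the symmetry $P_{ij}=P_{ji}$ of the matrix in Proposition 5.3), so your longer computation is arguably the honest content behind the paper's sentence.
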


\begin{proof}
First of all, it follows from the invariance condition in Definition 5.1, applied to $G=O_N^\times$, that the canonical trace has the invariance property in the statement.

In order to prove now the uniqueness, let $\tau$ be as in the statement. We have:
$$\tau (I\otimes id)\Phi(x)=(I\otimes\tau)\Phi(x)=I(id\otimes\tau)\Phi(x)=I(\tau(x)1)=\tau(x)$$

On the other hand, according to Proposition 5.6, we have as well:
$$\tau (I\otimes id)\Phi(x)=\tau(tr(x)1)=tr(x)$$

Thus we obtain $\tau=tr$, and this finishes the proof.
\end{proof}

We discuss in what follows the various properties of $tr$. As a first observation, in practice we can always do our computations over $O_N^\times$, because we have:

\begin{proposition}
The following algebras, with generators and traces, are isomorphic, when replaced with their GNS completions with respect to their canonical traces:
\begin{enumerate}
\item The algebra $C(S^{N-1}_{\mathbb R,\times})$, with generators $x_1,\ldots,x_N$.

\item The row algebra $R_N^\times\subset C(O_N^\times)$ generated by $u_{11},\ldots,u_{1N}$.
\end{enumerate}
\end{proposition}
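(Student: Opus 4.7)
The plan is to exhibit a single surjective $*$-homomorphism $\pi : C(S^{N-1}_{\mathbb R,\times}) \to R_N^\times$ that intertwines the two traces, and then to argue that any such map descends to an isomorphism of GNS completions. Concretely, define $\pi$ on generators by $x_i \mapsto u_{1i}$. To see this is well-defined, I would verify the three defining relations of $S^{N-1}_{\mathbb R,\times}$ on the row $(u_{11},\ldots,u_{1N})$: self-adjointness follows from $u = \bar u$, the quadratic relation $\sum_i u_{1i}^2 = 1$ is the $(1,1)$-entry of $uu^t = 1$, and (in the classical, half-liberated cases) the commutation or half-commutation relations among row entries are inherited from those of $O_N$ or $O_N^*$. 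Surjectivity onto $R_N^\times$ is immediate from the definition of the row algebra.

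Next, I would observe that by Definition 5.5 the canonical trace on $C(S^{N-1}_{\mathbb R,\times})$ is literally the composition $tr = I \circ \pi$ (with $I$ the Haar integral of $O_N^\times$), so $\pi$ automatically intertwines the traces:
\begin{equation*}
tr(a) = I(\pi(a)), \qquad \forall a \in C(S^{N-1}_{\mathbb R,\times}).
\end{equation*}
One can also give a hands-on verification using the Weingarten formula of Theorem 5.4: for a monomial $x_{i_1}\cdots x_{i_k}$, the right-hand side $I(u_{1i_1}\cdots u_{1i_k}) = \sum_{\pi,\sigma}\delta_\pi(1,\ldots,1)\delta_\sigma(i)W_{kN}(\pi,\sigma)$ matches the expression for $tr(x_{i_1}\cdots x_{i_k})$ derived in the proof of Proposition 5.6, but invoking Definition 5.5 directly is cleaner.

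The final step is the standard GNS argument. For any $a \in \ker \pi$, the trace-preservation gives $tr(a^*a) = I(\pi(a)^*\pi(a)) = 0$, so $\ker\pi$ is contained in the $tr$-null ideal. Therefore $\pi$ descends to an isometric, surjective linear map between the pre-Hilbert quotients $C(S^{N-1}_{\mathbb R,\times})/N_{tr}$ and $R_N^\times/N_I$, which extends by density to a unitary between the GNS Hilbert spaces. Since $\pi$ is already a $*$-homomorphism, the GNS representations of the two algebras are spatially equivalent under this unitary, yielding the desired $*$-isomorphism of GNS completions, sending $x_i$ to $u_{1i}$.

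There is no real obstacle: the only mildly nontrivial point is the verification that the $u_{1i}$ satisfy the sphere relations, which is direct in each of the three cases $\times \in \{\emptyset, *, +\}$. Everything else is formal, once Definition 5.5 is read as a tautology making $\pi$ trace-preserving by construction.
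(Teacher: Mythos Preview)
Your proposal is correct and follows essentially the same approach as the paper: construct the surjection $\pi:x_i\mapsto u_{1i}$, observe from Definition 5.5 that $tr=I\circ\pi$ so $\pi$ is trace-preserving by construction, and then invoke the standard GNS argument. Your write-up is in fact more explicit than the paper's, which phrases the trace-preservation step via the uniqueness result in Theorem 5.7 (a slight detour, since $tr=I\pi$ is already the definition).
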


\begin{proof}
Consider the quotient map $\pi:C(S^{N-1}_{\mathbb R,\times})\to R_N^\times$, which was used for constructing $tr$. The invariance property of the integration functional $I:C(O_N^\times)\to\mathbb C$ shows that $tr'=I\pi$ satisfies the invariance condition in Theorem 5.7, so we have $tr=tr'$. Together with the positivity of $tr$ and with the basic properties of the GNS construction, this shows that $\pi$ induces an isomorphism at the level of GNS algebras, as claimed.
\end{proof}

We make now the following convention, for the reminder of this section: 

\begin{definition}
We agree from now on to replace each algebra $C(S^{N-1}_{\mathbb R,\times})$ with its GNS completion with respect to the canonical trace.
\end{definition}

As a first observation, the classical sphere $S^{N-1}_\mathbb R$ is left unchanged by this modification, because the trace comes from the usual uniform measure on it. The same happens for the half-liberated sphere $S^{N-1}_{\mathbb R,*}$, due to the injectivity of the morphism in Theorem 1.14 above. The free sphere $S^{N-1}_{\mathbb R,+}$, however, is probably ``cut'' by this construction, for instance because this happens at the quantum group level, where $O_N^+$ is not coamenable.

With the above convention in hand, we can now discuss the geometry of our spheres. Our main result here will be the fact that, with a suitable formalism, the universal affine actions constructed in section 3 above are ``isometric''. Let us begin with:

\begin{definition}
Given a compact Riemannian manifold $X$, we denote by $\Omega^1(X)$ the space of smooth $1$-forms on $X$, with scalar product given by
$$<\omega,\eta>=\int_X<\omega(x),\eta(x)>dx$$
and we construct the Hodge Laplacian $\Delta:L^2(X)\to L^2(X)$ by setting $\Delta=d^*d$, where $d:C^\infty(X)\to\Omega^1(X)$ is the usual differential map, and $d^*$ is its adjoint. 
\end{definition}

According to a standard result in differential geometry, the isometry group $\mathcal G(X)$ is then the group of diffeomorphisms $\varphi:X\to X$ whose induced action on $C^\infty(X)$ commutes with $\Delta$. Following now Goswami \cite{gos}, we can formulate:

\begin{definition}
Associated to a compact Riemannian manifold $X$ are:
\begin{enumerate}
\item ${\rm Diff}^+(X)$: the category of compact quantum groups acting on $X$.

\item $\mathcal G^+(X)\in {\rm Diff}^+(X)$: the universal object with a coaction commuting with $\Delta$.
\end{enumerate}
\end{definition}

Here the coactions $\Phi:C(X)\to C(G)\otimes C(X)$ must satisfy by definition the axioms in section 3 above, as well as the following smoothness assumption:
$$\Phi(C^\infty(X))\subset C(G)\otimes C^\infty(X)$$

As for the commutation condition with $\Delta$ in (2) above, this regards the canonical extension of the action to the space $L^2(X)$. For details here, see \cite{gos}.

Let us discuss now the case of the noncommutative Riemannian manifolds. We use here some very light axioms, inspired from Connes' ones from \cite{con}:

\begin{definition}
A compact spectral triple $X=(A,H,D)$ consists of the following:
\begin{enumerate}
\item $A$ is a unital $C^*$-algebra.

\item $H$ is a Hilbert space, on which $A$ acts.

\item $D$ is an unbounded self-adjoint operator on $H$, with compact resolvents, such that $[D,\phi]$ has a bounded extension, for any $\phi$ in a dense $*$-subalgebra $\mathcal A\subset A$.
\end{enumerate}
\end{definition}

The guiding examples come from the compact Riemannian manifolds $X$. Indeed, associated to $X$ are several triples $(A,H,D)$, with $\mathcal A\subset A$ being $C^\infty(X)\subset C(X)$:
\begin{enumerate}
\item $H$ is the space of square-integrable spinors, and $D$ is the Dirac operator.

\item $H$ is the space of forms on $X$, and $D$ is the Hodge-Dirac operator $d+d^*$.

\item $H=L^2(X,dv)$, $dv$ being the Riemannian volume, and $D=d^*d$. 
\end{enumerate}

Here the first example is the most interesting one, because under a number of supplementary axioms, a reconstruction theorem for $X$ holds, in terms of $(A,H,D)$. See \cite{con}. In view of Definition 5.11 (2), however, the last example will be in fact the one that we will be interested in. Once again following Goswami \cite{gos}, we have:

\begin{definition}
Associated to a compact spectral triple $X=(A,H,D)$ are:
\begin{enumerate}
\item ${\rm Diff}^+(X)$: the category of compact quantum groups acting on $(A,H)$.

\item $\mathcal G^+(X)\in {\rm Diff}^+(X)$: the universal object with a coaction commuting with $D$.
\end{enumerate}
\end{definition}

In other words, $\mathcal G^+(X)$ must have a unitary representation $U$ on $H$ which commutes with $D$, satisfies $U1_A=1\otimes1_A$, and is such that $ad_U$ maps $A''$ into itself.

Now back to our spheres, we will construct now a spectral triple, satisfying the conditions in Definition 5.12. The idea is to use the inclusion $S^{N-1}_\mathbb R\subset S^{N-1}_{\mathbb R,\times}$, and to construct the Laplacian filtration as the pullback of the Laplacian filtration for $S^{N-1}_\mathbb R$.

More precisely, we have the following construction:

\begin{proposition}
Associated to $S^{N-1}_{\mathbb R,\times}$ is the triple $(A,H,D)$, where $A=C(S^{N-1}_{\mathbb R,\times})$, and where $D$ acting on $H=L^2(A,tr)$ is defined as follows:
\begin{enumerate}
\item Set $H_k=span(x_{i_1}\ldots x_{i_r}|r\leq k)$.

\item Define $E_k=H_k\cap H_{k-1}^\perp$, so that we have $H=\oplus_{k=0}^\infty E_k$.

\item Finally, set $Dx=\lambda_kx$, for any $x\in E_k$, where $\lambda_k$ are distinct numbers.
\end{enumerate} 
\end{proposition}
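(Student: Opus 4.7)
The plan is to verify the three conditions of Definition 5.12 in turn, the nontrivial point being the boundedness of the commutators $[D,x_i]$. First I would observe that each $H_k$ is finite dimensional, being spanned by the monomials $x_{i_1}\ldots x_{i_r}$ with $r\leq k$, so the chain $H_{-1}=\{0\}\subset H_0\subset H_1\subset\ldots$ is well defined and increasing, and setting $E_k=H_k\cap H_{k-1}^\perp$ produces an orthogonal decomposition $H=\bigoplus_k E_k$, provided the union $\bigcup_k H_k$ is dense in $H=L^2(A,tr)$. But the latter holds because $\bigcup_k H_k$ is the $*$-algebra of polynomials in the generators, which is norm-dense in $A$ by the universal $C^*$-algebra presentation, and $A$ in turn is dense in $L^2(A,tr)$ by GNS.

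Next, given this decomposition, one chooses any distinct real sequence with $\lambda_k\to\infty$, for instance $\lambda_k=k$, and defines $D$ diagonally by $Dx=\lambda_k x$ for $x\in E_k$. Then $D$ is essentially self-adjoint on the algebraic direct sum of the $E_k$, since it is a real diagonal operator with respect to an orthogonal decomposition. The resolvent $(D+i)^{-1}$ is diagonal with eigenvalues $(\lambda_k+i)^{-1}$ of finite multiplicity $\dim E_k<\infty$, tending to $0$; so it is compact, as required.

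The main obstacle, and the key step, is to show that each multiplication operator $x_i$ has bounded commutator with $D$. The essential geometric input is the following filtration lemma: multiplication by $x_i$ sends $E_k$ into $E_{k-1}\oplus E_k\oplus E_{k+1}$. The upper bound is immediate: $x_iH_k\subset H_{k+1}$. For the lower bound, using that $x_i=x_i^*$ and that $tr$ is a trace (hence $\langle x_iv,w\rangle=tr(w^*x_iv)=tr((x_iw)^*v)=\langle v,x_iw\rangle$), one has for any $w\in H_{k-2}$ that $x_iw\in H_{k-1}$ and $v\in E_k\perp H_{k-1}$, so
\[
\langle x_iv,w\rangle=\langle v,x_iw\rangle=0.
\]
Thus $x_iv\in H_{k+1}\cap H_{k-2}^\perp=E_{k-1}\oplus E_k\oplus E_{k+1}$, proving the lemma.

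Finally, given the lemma, decompose $x_iv=v_{k-1}+v_k+v_{k+1}$ for $v\in E_k$ and compute
\[
[D,x_i]v=D(x_iv)-\lambda_k x_iv=(\lambda_{k-1}-\lambda_k)v_{k-1}+(\lambda_{k+1}-\lambda_k)v_{k+1}.
\]
With $\lambda_k=k$ the coefficients are bounded by $1$, so $\|[D,x_i]v\|^2\leq\|v_{k-1}\|^2+\|v_{k+1}\|^2\leq\|x_iv\|^2\leq\|x_i\|^2\|v\|^2$, which shows $[D,x_i]$ extends boundedly to $H$. The Leibniz rule $[D,\phi\psi]=[D,\phi]\psi+\phi[D,\psi]$ then propagates boundedness of $[D,\cdot]$ from the generators $x_i$ to the whole dense $*$-subalgebra $\mathcal A\subset A$ of polynomials, completing the verification that $(A,H,D)$ is a spectral triple.
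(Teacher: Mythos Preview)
Your proof is correct and follows essentially the same route as the paper: the key step in both is the filtration lemma $x_iE_k\subset E_{k-1}\oplus E_k\oplus E_{k+1}$, proved via self-adjointness of multiplication by $x_i$ together with $x_iH_k\subset H_{k+1}$, and then the boundedness of $[D,x_i]$ follows immediately with the choice $\lambda_k=k$. You supply more of the surrounding details (density of $\bigcup_kH_k$, compactness of the resolvent, the Leibniz extension to all of $\mathcal A$) than the paper does, but the core argument is the same.
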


\begin{proof}
We have to show that $[D,T_i]$ is bounded, where $T_i$ is the left multiplication by $x_i$. Since $x_i\in A$ is self-adjoint, so is the corresponding operator $T_i$. Now since $T_i(H_k)\subset H_{k+1}$, by self-adjointness we get $T_i(H_k^\perp)\subset H_{k-1}^\perp$. Thus we have: 
$$T_i(E_k)\subset E_{k-1}\oplus E_k\oplus E_{k+1}$$

This gives a decomposition of type $T_i=T_i^{-1}+T_i^0+T_i^1$. It is routine to check that we have $[D,T^\alpha_i]=\alpha T^\alpha_i$ for any $\alpha\in\{-1,0,1\}$, and this gives the result.
\end{proof}

As an illustration, in the classical case the situation is as follows:

\begin{proposition}
For the sphere $S^{N-1}_\mathbb R$ we have $D=f(\sqrt{d^*d})$, where: 
$$f(s)=1-\frac{N}{2}+\frac{1}{2}\sqrt{4s^2+(N-2)^2}$$
In particular, the eigenspaces of $D$ and $\sqrt{d^*d}$ coincide. 
\end{proposition}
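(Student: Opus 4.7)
The plan is to reduce the statement to the classical theory of spherical harmonics on $S^{N-1}_\mathbb R$. The key observation is that the filtration $H_0\subset H_1\subset H_2\subset\ldots$ from Proposition 5.14, when applied in the classical case, is exactly the filtration by polynomial degree on the sphere, and the orthogonal complements $E_k=H_k\cap H_{k-1}^\perp$ are therefore the familiar spaces of spherical harmonics.

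First, I would verify that $E_k=\mathcal H_k$, where $\mathcal H_k$ denotes the space of restrictions to $S^{N-1}_\mathbb R$ of harmonic homogeneous polynomials of degree $k$. This is a classical fact: the algebra $C^\infty(S^{N-1}_\mathbb R)$ contains the polynomial algebra $\mathbb R[x_1,\ldots,x_N]/\langle\sum x_i^2-1\rangle$, and with respect to the $L^2$ inner product coming from the uniform measure on the sphere, one has the orthogonal decomposition $H_k=\bigoplus_{r\leq k}\mathcal H_r$. Taking orthogonal complements gives $E_k=\mathcal H_k$.

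Next, I would invoke the classical identity $\Delta_{S^{N-1}}Y=k(k+N-2)Y$ for $Y\in\mathcal H_k$, where $\Delta=d^*d$ is the Hodge Laplacian. This comes from writing the Euclidean Laplacian in polar coordinates,
\[
\Delta_{\mathbb R^N}=\partial_r^2+\frac{N-1}{r}\partial_r+\frac{1}{r^2}\Delta_{S^{N-1}},
\]
and applying it to a harmonic function of the form $r^kY(\theta)$. Consequently $\sqrt{d^*d}$ has eigenvalue $\sqrt{k(k+N-2)}$ on $E_k$.

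Finally, with the natural normalization $\lambda_k=k$ in the construction of $D$, it remains to check that $f(\sqrt{k(k+N-2)})=k$. This is immediate from the identity
\[
4k(k+N-2)+(N-2)^2=(2k+N-2)^2,
\]
which gives $f(\sqrt{k(k+N-2)})=1-\tfrac{N}{2}+\tfrac{1}{2}(2k+N-2)=k$. Since $f$ is strictly increasing on $[0,\infty)$, the operators $D$ and $\sqrt{d^*d}$ have the same eigenspaces, namely the $\mathcal H_k$. The only genuinely nontrivial ingredient is the Peter--Weyl style decomposition $E_k=\mathcal H_k$; once this classical identification is in place, the rest is a routine computation.
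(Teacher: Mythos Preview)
Your proposal is correct and follows essentially the same approach as the paper, which simply invokes the well-known fact that the Laplacian on $S^{N-1}_\mathbb R$ diagonalizes on the spaces $E_k$ with eigenvalues $k(k+N-2)$. You have unpacked this one-line reference into its constituent parts---the identification $E_k=\mathcal H_k$ via spherical harmonics, the polar-coordinate derivation of the eigenvalue, and the explicit check that $f(\sqrt{k(k+N-2)})=k$ under the normalization $\lambda_k=k$---which is exactly what the paper's terse proof is implicitly relying on.
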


\begin{proof}
This follows from the well-known fact that $\sqrt{d^*d}$ diagonalizes as in Proposition 5.14, the corresponding eigenvalues being $k(k+N-2)$, with $k\in\mathbb N$.
\end{proof}

In general, it is quite unclear what the eigenvalues should be. This question, raised some time ago in \cite{bgo}, is still open. However, with our formalism, we can now prove:

\begin{theorem}
We have $\mathcal G^+(S^{N-1}_{\mathbb R,\times})=O_N^\times$.
\end{theorem}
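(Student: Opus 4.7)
The plan is to prove the two inclusions separately, with Theorem 3.13 providing the central input.

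For the inclusion $O_N^\times\subset\mathcal G^+(S^{N-1}_{\mathbb R,\times})$, I would start from the affine coaction $\Phi(x_i)=\sum_au_{ia}\otimes x_a$ of Theorem 3.13. The key observation is that an affine coaction sends each monomial $x_{i_1}\ldots x_{i_r}$ to a sum of monomials of the same length, with coefficients in $C(O_N^\times)$, so $\Phi$ preserves each finite-dimensional subspace $H_k$ from Proposition 5.14. Since by Theorem 5.7 the trace $tr$ is $\Phi$-invariant, the coaction extends to a unitary corepresentation $U$ on $H=L^2(A,tr)$, and this unitary still preserves each $H_k$, hence each eigenspace $E_k=H_k\cap H_{k-1}^\perp$. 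Thus $U$ commutes with $D$, giving one inclusion.

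For the converse, let $G\in{\rm Diff}^+(X)$ have coaction $\Phi:A\to A\otimes C(G)$ whose unitary extension $U$ to $H$ commutes with $D$. Then $U$ preserves each eigenspace, in particular $E_0=\mathbb C\cdot 1$ and $E_1$. The first step is to identify $E_1=span(x_1,\ldots,x_N)$, which requires $tr(x_i)=0$; this is immediate from the Weingarten formula at $k=1$, or alternatively from the sign symmetry $x_i\mapsto -x_i$ of the defining relations. Writing $\Phi(x_i)=\sum_au_{ia}\otimes x_a$ then defines a matrix $u=(u_{ia})$ over $C(G)$. The next step is to show that $u$ is orthogonal: self-adjointness $u_{ia}^*=u_{ia}$ follows from $x_i^*=x_i$ together with linear independence of $x_1,\ldots,x_N$ in $H$ (itself a consequence of $tr(x_ix_j)=\delta_{ij}/N$, given by Weingarten at $k=2$), and $uu^t=u^tu=1$ then follows from unitarity of $U|_{E_1}$ as a corepresentation, after identifying $E_1$ with $\mathbb C^N$ via this orthogonal basis.

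With orthogonality of $u$ in hand, the assignment $u_{ij}\mapsto u_{ij}$ determines a $C^*$-algebra morphism $C(O_N^+)\to C(G)$, and the original coaction is recognized as an affine action of $G$ on $S^{N-1}_{\mathbb R,\times}$ in the sense of Definition 3.11. The universality statement in Theorem 3.13 then yields $G\subset O_N^\times$, completing the proof. The main obstacle I anticipate is the bridge between the abstract Hilbert-space statement ``$U|_{E_1}$ is a unitary corepresentation'' and the concrete orthogonality relations on the generators $u_{ij}$; this requires identifying the trace-induced inner product on $E_1$ with a scalar multiple of the standard one on $\mathbb C^N$, which rests on the $k=2$ Weingarten computation noted above.
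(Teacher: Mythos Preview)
Your proposal is correct and follows essentially the same route as the paper. The first inclusion is argued identically: the affine coaction preserves each $H_k$, extends unitarily to $H$, hence preserves each $E_k$ and commutes with $D$. For the converse the paper simply writes ``the universality of $O_N^\times$ follows from Theorem 3.12'', whereas you spell out the bridge that makes this citation legitimate: commutation with $D$ forces preservation of $E_1=\mathrm{span}(x_1,\ldots,x_N)$, so the coaction on the generators has the affine form $\Phi(x_i)=\sum_a u_{ia}\otimes x_a$ with $u$ orthogonal, and then Theorem 3.12 applies. This is exactly what the paper is invoking, only compressed; your Weingarten computations at $k=1,2$ to identify $E_1$ and verify orthogonality of $u$ are the right way to justify that step. (Note your numbering is off by one throughout: the relevant results are Definition 3.10 and Theorem 3.12 in the paper, not 3.11 and 3.13.)
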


\begin{proof}
Consider the coaction map $\Phi:C(S^{N-1}_{\mathbb R,\times})\to C(O_N^\times)\otimes C(S^{N-1}_{\mathbb R,\times})$. This extends to a unitary representation on the GNS space $H$, that we denote by $U$. We have $\Phi(H_k)\subset C(O_N^\times)\otimes H_k$, which reads $U(H_k)\subset H_k$. By unitarity we get as well $U(H_k^\perp)\subset H_k^\perp$, so each $E_k$ is $U$-invariant, and $U,D$ must commute. We conclude that $\Phi$ is isometric with respect to $D$. Finally, the universality of $O_N^\times$ follows from Theorem 3.12.
\end{proof}

\section{Hyperspherical laws}

In this section we investigate the problem of computing the integral over the spheres $S^{N-1}_\mathbb R\subset S^{N-1}_{\mathbb R,*}\subset S^{N-1}_{\mathbb R,+}$ of polynomial quantities of type $x_{i_1}\ldots x_{i_k}$. 

As a first observation, we have the following elementary result:

\begin{proposition}
We have the formula
$$\int_{S^{N-1}_{\mathbb R,\times}}x_{i_1}\ldots x_{i_k}\,dx=0$$
unless each $x_i$ appears an even number of times.
\end{proposition}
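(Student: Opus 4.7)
The plan is to deduce this from the Weingarten formula of Theorem 5.4, via the identification of $\int_{S^{N-1}_{\mathbb R,\times}}$ with the composition $tr$ of Definition 5.5. Explicitly, the integral to be computed equals
$$\int_{O_N^\times} u_{1i_1}\ldots u_{1i_k} = \sum_{\pi,\sigma\in P_2^\times(k)}\delta_\pi(1,\ldots,1)\,\delta_\sigma(i_1,\ldots,i_k)\,W_{kN}(\pi,\sigma),$$
so the task reduces to showing that if some value $j\in\{1,\ldots,N\}$ occurs an odd number of times among the indices $i_1,\ldots,i_k$, then $\delta_\sigma(i_1,\ldots,i_k)=0$ for every pairing $\sigma\in P_2^\times(k)$.

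This last fact is purely combinatorial: by the definition of $\delta_\sigma$, a nonzero value forces the positions joined by any string of $\sigma$ to carry equal indices. In particular, the subset of positions carrying a fixed value $j$ must be a union of strings of $\sigma$, hence of even cardinality. If $j$ appears an odd number of times, this is impossible and $\delta_\sigma(i_1,\ldots,i_k)=0$ for all $\sigma$; consequently the whole Weingarten sum collapses to zero. Note that this argument also handles the case $k$ odd, since then $P_2^\times(k)=\emptyset$ and the sum is empty.

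A conceptually cleaner alternative, which I would mention as a remark, is to use the sign-change symmetry: for each $i$, the map $x_i\mapsto -x_i$, $x_j\mapsto x_j$ ($j\neq i$) extends to a $*$-automorphism $\tau_i$ of $C(S^{N-1}_{\mathbb R,\times})$, because the defining relations (self-adjointness, $\sum_j x_j^2=1$, together with either commutation, half-commutation, or nothing) are manifestly invariant under any sign change of the generators. The diagonal matrix $\mathrm{diag}(1,\ldots,-1,\ldots,1)$ lies in $O_N\subset O_N^\times$, and evaluating the invariance relation $(id\otimes tr)\Phi=tr(\cdot)1$ of Theorem 5.7 at this classical point of the quantum group shows that $tr\circ\tau_i=tr$. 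Applying this to $x_{i_1}\ldots x_{i_k}$ gives $tr(x_{i_1}\ldots x_{i_k})=(-1)^{m_i}tr(x_{i_1}\ldots x_{i_k})$, where $m_i$ counts occurrences of $i$, forcing the integral to vanish whenever some $m_i$ is odd.

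There is no real obstacle here; the only subtlety worth flagging is that the first approach bundles the odd-$k$ case into the same parity argument, whereas the sign-change approach uses only the existing invariance machinery and is perhaps the more natural one to record for later use.
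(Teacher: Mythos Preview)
Your proof is correct. Your primary argument, via the Weingarten formula, differs from the paper's actual proof of this proposition, which is precisely your ``conceptually cleaner alternative'': the paper simply observes that $x_i\mapsto -x_i$ extends to a $*$-automorphism of $C(S^{N-1}_{\mathbb R,\times})$ preserving the trace, so that $I=-I$ whenever $x_i$ occurs an odd number of times. (The paper does not spell out \emph{why} the trace is preserved; your remark does, via the invariance in Theorem 5.7 evaluated at a diagonal sign matrix.) Amusingly, the paper also records your Weingarten argument, but only \emph{after} Proposition 6.2, as an illustration that the sphere Weingarten formula recovers Proposition 6.1. So you and the paper present the same two arguments with the emphasis reversed: the sign-change route is shorter and more transparently geometric, while the Weingarten route is heavier but entirely self-contained from the integration machinery already developed in section 5.
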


\begin{proof}
This follows from the fact that for any $i$ we have an automorphism of $C(S^{N-1}_{\mathbb R,\times})$ given by $x_i\to -x_i$. Indeed, this automorphism must preserve the trace, so if $x_i$ appears an odd number of times, the integral in the statement satisfies $I=-I$, so $I=0$.
\end{proof}

In order to compute the remaining integrals, which are non-trivial, we can use the Weingarten formula, adapted to the sphere case. The statement here is:

\begin{proposition}
We have the Weingarten type formula
$$\int_{S^{N-1}_{\mathbb R,\times}}x_{i_1}\ldots x_{i_k}\,dx=\sum_{\sigma\in P_2^\times(k)}\delta_\sigma(i)\sum_{\pi\in P_2^\times(k)}W_{kN}(\pi,\sigma)$$
where $W_{kN}=G_{kN}^{-1}$, with $G_{kN}(\pi,\sigma)=N^{|\pi\vee\sigma|}$, and where $\delta$ are Kronecker type symbols.
\end{proposition}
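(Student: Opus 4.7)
The plan is to reduce the integral over the sphere to an integral over the quantum group $O_N^\times$ via the canonical trace, and then apply the Weingarten formula from Theorem 5.4.

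First, by Definition 5.5, the integral is computed as
\[
\int_{S^{N-1}_{\mathbb R,\times}} x_{i_1}\ldots x_{i_k}\,dx \;=\; tr(x_{i_1}\ldots x_{i_k}) \;=\; I(u_{1i_1}\ldots u_{1i_k}),
\]
where $I$ denotes the Haar functional on $O_N^\times$. This is just the composition $C(S^{N-1}_{\mathbb R,\times}) \to C(O_N^\times) \to \mathbb C$ applied to the monomial $x_{i_1}\ldots x_{i_k}$, sending $x_{i_r} \mapsto u_{1 i_r}$.

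Next, I would apply Theorem 5.4 to this expression, taking the first row of indices to be $(1,1,\ldots,1)$ and the second row to be $(i_1,\ldots,i_k)$. This yields
\[
I(u_{1i_1}\ldots u_{1i_k}) \;=\; \sum_{\pi,\sigma\in P_2^\times(k)} \delta_\pi(1,\ldots,1)\,\delta_\sigma(i_1,\ldots,i_k)\,W_{kN}(\pi,\sigma).
\]
The key simplification is that $\delta_\pi(1,\ldots,1)=1$ for every pairing $\pi\in P_2^\times(k)$: since all the entries of the multi-index are equal, any Kronecker constraint imposed by $\pi$ is automatically satisfied. Substituting this and reorganizing the double sum gives exactly the stated formula.

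There is no real obstacle here, since Theorem 5.4 and Definition 5.5 together essentially do all the work; the computation is a specialization already appearing inside the proof of Proposition 5.6. The only point to be careful about is verifying that the morphism $x_i \mapsto u_{1i}$ used in Definition 5.5 is well-defined, which follows because the elements $u_{11},\ldots,u_{1N}$ form the first row of an orthogonal matrix (so they satisfy $u_{1i}=u_{1i}^*$ and $\sum_i u_{1i}^2=1$), and they also satisfy the relevant (half-)commutation relations inherited from $O_N^\times$; but this is already implicit in Definition 5.5.
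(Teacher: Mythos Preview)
Your proof is correct and follows essentially the same approach as the paper, which simply cites the Weingarten formula for the quantum groups (Theorem 5.4) together with the identification $x_i=u_{1i}$. Your write-up is in fact more explicit than the paper's one-line proof, spelling out the simplification $\delta_\pi(1,\ldots,1)=1$ that makes the formula collapse to the stated form.
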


\begin{proof}
This follows indeed from the Weingarten formula for the quantum groups, via the identification $x_i=u_{1i}$ coming from Proposition 5.8 above.
\end{proof}

As an illustration, we can recover in this way the vanishing result in Proposition 6.1. Indeed, if some $x_i$ appears an odd number of times, we have $\delta_\sigma(i)=0$ for any $\sigma$.

As an application now, we will compute the $N\to\infty$ behavior of the hyperspherical laws. For this purpose, we will need some standard facts from free probability theory \cite{bpa}, \cite{nsp}, \cite{vdn}. Assume that $(A,tr)$ is a $C^*$-algebra with a trace. We have:

\begin{definition}
The law of $a=a^*\in A$ is the real probability measure given by:
$$tr(a^k)=\int_\mathbb Rx^kd\mu(x)$$
Equivalently, $\mu$ comes from the restriction $tr:C(X)\subset A\to\mathbb C$, where $X=Spec(a)$.
\end{definition}

We recall that the real Gaussian distribution, appearing in classical probability theory via the central limit theorem (CLT) has as density $\frac{1}{\sqrt{2\pi}}e^{-x^2/2}dx$. 

In Voiculescu's free probability theory, the CLT procedure makes sense as well, and converges to the semicircle law on $[-2,2]$, having density $\frac{1}{2\pi}\sqrt{4-x^2}dx$.

Finally, let us recall that the moduli of the complex Gaussian variables are called Rayleigh variables. We refer to \cite{bpa}, \cite{nsp}, \cite{vdn} for all this material.
 
Now back to the noncommutative spheres, our result here is as follows:

\begin{theorem}
With $N\to\infty$, the variables $\sqrt{N}x_i\in C(S^{N-1}_{\mathbb R,\times})$ are as follows:
\begin{enumerate}
\item $S^{N-1}_\mathbb R$: real Gaussian.

\item $S^{N-1}_{\mathbb R,*}$: symmetrized Rayleigh.

\item $S^{N-1}_{\mathbb R,+}$: semicircular.
\end{enumerate}
\end{theorem}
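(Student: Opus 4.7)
The plan is to compute the moments of $\sqrt{N}x_1$ via the Weingarten formula from Proposition 6.2, take $N\to\infty$, and then identify the resulting moment sequences with the three stated laws. Since all coordinates $x_i$ are equivalent under rotations preserving the canonical trace, it suffices to treat $x_1$.

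First, I would specialize Proposition 6.2 to the multi-index $i_1=\cdots=i_k=1$, which forces $\delta_\sigma(i)=1$ for every $\sigma\in P_2^\times(k)$, giving
$$\int_{S^{N-1}_{\mathbb R,\times}}x_1^k\,dx=\sum_{\pi,\sigma\in P_2^\times(k)}W_{kN}(\pi,\sigma)=\langle\mathbb 1,G_{kN}^{-1}\mathbb 1\rangle,$$
where $\mathbb 1$ is the all-ones vector indexed by $P_2^\times(k)$. The key analytic step is the asymptotic $G_{kN}(\pi,\sigma)=N^{|\pi\vee\sigma|}$ with $|\pi\vee\sigma|\le k/2$ and equality iff $\pi=\sigma$. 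This shows $G_{kN}=N^{k/2}(I+R_N)$ with $\|R_N\|=O(N^{-1})$, hence $W_{kN}=N^{-k/2}(I+O(N^{-1}))$ entrywise on the diagonal, with off-diagonal entries of strictly lower order. Summing and multiplying by $N^{k/2}$, I get
$$\lim_{N\to\infty}\int_{S^{N-1}_{\mathbb R,\times}}(\sqrt{N}\,x_1)^k\,dx=|P_2^\times(k)|.$$

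It remains to identify these three moment sequences. In the classical case $\times=\varnothing$, $|P_2(k)|=(k-1)!!$ for $k$ even (and $0$ otherwise by Proposition 6.1), which are the moments of the standard real Gaussian. In the free case $\times=+$, $|NC_2(k)|$ is the Catalan number $C_{k/2}$ for $k$ even, giving precisely the moments of the standard semicircle on $[-2,2]$. In the half-liberated case, the balanced pairings in $P_2^*(2m)$ are exactly those matching the $m$ odd (black) points with the $m$ even (white) points, yielding $|P_2^*(2m)|=m!$; these are the even moments $\Gamma(1+m)=m!$ of the modulus of a standard complex Gaussian (a Rayleigh variable), so the symmetrized version has the claimed distribution and vanishing odd moments.

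The main obstacle is the Weingarten asymptotics: rigorously justifying that $G_{kN}$ is asymptotically $N^{k/2}$ times the identity, together with the combinatorial lemma $|\pi\vee\sigma|\le k/2$ with equality iff $\pi=\sigma$ (valid in each of the three pairing categories $P_2$, $P_2^*$, $NC_2$, since a pair-partition of $k$ points has exactly $k/2$ blocks). Once this is in place, everything else is identification of moment sequences, and the recognition of $m!$ as the moments of the symmetrized Rayleigh law is the only point requiring explicit computation.
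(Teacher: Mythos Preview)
Your proposal is correct and follows essentially the same route as the paper: specialize the Weingarten formula to equal indices, use the asymptotic $G_{kN}\simeq N^{k/2}I$ (via the key fact $|\pi\vee\sigma|\le k/2$ with equality iff $\pi=\sigma$) to get $W_{kN}\simeq N^{-k/2}I$, deduce that the $k$-th moment of $\sqrt{N}x_1$ tends to $|P_2^\times(k)|$, and then identify these counts as $(k-1)!!$, $m!$, and the Catalan numbers respectively. Your write-up is in fact slightly more explicit than the paper's on both the matrix asymptotics and the identification of $m!$ with the symmetrized Rayleigh moments.
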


\begin{proof}
We use the Weingarten formula, from Proposition 6.2 above. Since with $N\to\infty$ the Gram matrix $G_{kN}(\pi,\sigma)=N^{|\pi\vee\sigma|}$ is asymptotically constant, $G_{kN}(\pi,\sigma)\simeq\delta_{\pi,\sigma}N^{k/2}$, its inverse is asymptotically constant as well, $W_{kN}(\pi,\sigma)\simeq\delta_{\pi,\sigma}N^{-k/2}$, and so:
$$\int_{S^{N-1}_{\mathbb R,\times}}x_{i_1}\ldots x_{i_k}\,dx\simeq N^{-k/2}\sum_{\sigma\in P_2^\times(k)}\delta_\sigma(i)$$

With this formula in hand, we can compute the asymptotic moments of each coordinate $x_i$. Indeed, by setting $i_1=\ldots=i_k=i$, all Kronecker symbols are 1, and we get:
$$\int_{S^{N-1}_{\mathbb R,\times}}x_i^k\,dx\simeq N^{-k/2}\#(P_2^\times(k))$$

Thus the even asymptotic moments of $\sqrt{N}x_i$ are the numbers $\#(P_2^\times(2l))$, which are equal respectively to $(2l)!!,l!,\frac{1}{l+1}\binom{2l}{l}$, and this gives the result.
\end{proof}

Regarding now the ``$N$ fixed'' problematics, we first have the following result:

\begin{proposition}
The spherical integral of $x_{i_1}\ldots x_{i_k}$ vanishes, unless each $a\in\{1,\ldots,N\}$ appears an even number of times in the sequence $i_1,\ldots,i_k$. We have
$$\int_{S^{N-1}_\mathbb R}x_{i_1}\ldots x_{i_k}\,dx=\frac{(N-1)!!l_1!!\ldots l_N!!}{(N+\sum l_i-1)!!}$$
where $m!!=(m-1)(m-1)(m-5)\ldots$, and $l_a$ is this number of occurrences.
\end{proposition}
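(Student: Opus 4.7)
The plan has two parts: the vanishing statement, and the explicit formula in the non-vanishing case.

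For the vanishing, I would use the reflection $\varepsilon_a:x_a\mapsto -x_a$ (other coordinates fixed), which is an isometry of $\mathbb R^N$ preserving $S^{N-1}_\mathbb R$ and hence its rotation-invariant uniform measure. Applied to the integrand, this reflection multiplies the monomial $x_{i_1}\ldots x_{i_k}$ by $(-1)^{l_a}$, so if some $l_a$ is odd, the integral equals its own negative and must vanish. This reduces the problem to the case where every $l_a$ is even, and we may write the monomial as $x_1^{l_1}\cdots x_N^{l_N}$ with $l_a\in 2\mathbb N$.

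For the explicit value, the standard trick is to compute the Gaussian integral
$$I=\int_{\mathbb R^N}x_1^{l_1}\cdots x_N^{l_N}\,e^{-|x|^2/2}\,\frac{dx}{(2\pi)^{N/2}}$$
in two ways. In Cartesian coordinates, by independence of the coordinates under the standard Gaussian measure, $I$ factors as $\prod_a\mu_{l_a}$, where $\mu_m=(m-1)!!$ is the $m$-th moment of $N(0,1)$, computed by the usual Wick pair counting. In polar coordinates $x=r\omega$, with $k=\sum_al_a$ and writing $\mathrm{Vol}(S^{N-1})=2\pi^{N/2}/\Gamma(N/2)$, the integral factors as
$$I=\frac{\mathrm{Vol}(S^{N-1})}{(2\pi)^{N/2}}\int_0^\infty r^{k+N-1}e^{-r^2/2}\,dr\cdot\int_{S^{N-1}_\mathbb R}\omega_1^{l_1}\cdots\omega_N^{l_N}\,d\omega,$$
where the radial integral evaluates to $2^{(k+N)/2-1}\Gamma((k+N)/2)$ by the substitution $u=r^2/2$.

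Equating the two expressions for $I$ and solving for the spherical integral yields
$$\int_{S^{N-1}_\mathbb R}x_1^{l_1}\cdots x_N^{l_N}\,dx=\frac{\Gamma(N/2)}{2^{k/2}\Gamma((k+N)/2)}\prod_a(l_a-1)!!=\frac{\prod_a(l_a-1)!!}{N(N+2)\cdots(N+k-2)},$$
where the second equality comes from iterating $\Gamma(t+1)=t\,\Gamma(t)$ a total of $k/2$ times. Rewriting the denominator $N(N+2)\cdots(N+k-2)$ as a ratio of standard double factorials gives exactly $(N+k-2)!!/(N-2)!!$, and translating to the paper's convention $m!!=(m-1)(m-3)\cdots$ produces the stated expression $(N-1)!!\,l_1!!\cdots l_N!!/(N+\sum l_i-1)!!$.

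The main obstacle here is not conceptual but notational: the entire proof is a classical computation, and the only delicate point is the bookkeeping that converts the $\Gamma$-function ratio into double-factorial form under the chosen convention. Once the parities are fixed (both $N+k$ and each $l_a$ being even in the non-vanishing case), this translation is completely routine.
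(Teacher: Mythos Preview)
Your argument is correct, and it takes a genuinely different route from the paper. The paper proves the formula by a direct computation in spherical coordinates: it writes the integral over $S^{N-1}_\mathbb R$ as an iterated integral in the angles $t_1,\ldots,t_{N-1}$, applies the classical identity
$$\int_0^{\pi/2}\cos^p t\,\sin^q t\,dt=\left(\frac{\pi}{2}\right)^{\varepsilon(p)\varepsilon(q)}\frac{p!!\,q!!}{(p+q+1)!!}$$
to each factor, and then observes that the resulting product of double factorials telescopes. Your approach instead passes through the ambient Gaussian measure on $\mathbb R^N$, factoring the integral in Cartesian coordinates (independence of the coordinates) and in polar coordinates (separation of radial and angular parts), and equating the two. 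Your method is shorter and avoids the somewhat delicate bookkeeping of the telescoping product and the powers of $\pi/2$; the paper's method is more self-contained in that it does not invoke the Wick formula for Gaussian moments or the volume of $S^{N-1}$ as external inputs. Both are entirely standard, and the translation between the $\Gamma$-function ratio and the paper's double-factorial convention that you describe is indeed routine once all exponents are even.
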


\begin{proof}
First, the result holds indeed at $N=2$, due to the following well-known formula, where $\varepsilon(p)=1$ when $p\in\mathbb N$ is even, and $\varepsilon(p)=0$ when $p$ is odd:
$$\int_0^{\pi/2}\cos^pt\sin^qt\,dt=\left(\frac{\pi}{2}\right)^{\varepsilon(p)\varepsilon(q)}\frac{p!!q!!}{(p+q+1)!!}$$

In general now, in view of Proposition 6.1, we can restrict attention to the case $l_a\in 2\mathbb N$. The integral in the statement can be written in spherical coordinates, as follows:
$$I=\frac{2^N}{V}\int_0^{\pi/2}\ldots\int_0^{\pi/2}x_1^{l_1}\ldots x_N^{l_N}J\,dt_1\ldots dt_{N-1}$$

Here $V$ is the volume of the sphere, $J$ is the Jacobian, and the $2^N$ factor comes from the restriction to the $1/2^N$ part of the sphere where all the coordinates are positive.

The normalization constant in front of the integral is:
$$\frac{2^N}{V}
=\frac{2^N}{N\pi^{N/2}}\cdot\Gamma\left(\frac{N}{2}+1\right)
=\left(\frac{2}{\pi}\right)^{[N/2]}(N-1)!!$$

As for the unnormalized integral, this is given by:
\begin{eqnarray*}
I'=\int_0^{\pi/2}\ldots\int_0^{\pi/2}
&&(\cos t_1)^{l_1}
(\sin t_1\cos t_2)^{l_2}\\
&&\ldots\ldots\ldots\\
&&(\sin t_1\sin t_2\ldots\sin t_{N-2}\cos t_{N-1})^{l_{N-1}}\\
&&(\sin t_1\sin t_2\ldots\sin t_{N-2}\sin t_{N-1})^{l_N}\\
&&\sin^{N-2}t_1\sin^{N-3}t_2\ldots\sin^2t_{N-3}\sin t_{N-2}\\
&&dt_1\ldots dt_{N-1}
\end{eqnarray*}

By rearranging the terms, we obtain:
\begin{eqnarray*}
I'
&=&\int_0^{\pi/2}\cos^{l_1}t_1\sin^{l_2+\ldots+l_N+N-2}t_1\,dt_1\\
&&\int_0^{\pi/2}\cos^{l_2}t_2\sin^{l_3+\ldots+l_N+N-3}t_2\,dt_2\\
&&\ldots\ldots\ldots\\
&&\int_0^{\pi/2}\cos^{l_{N-2}}t_{N-2}\sin^{l_{N-1}+l_N+1}t_{N-2}\,dt_{N-2}\\
&&\int_0^{\pi/2}\cos^{l_{N-1}}t_{N-1}\sin^{l_N}t_{N-1}\,dt_{N-1}
\end{eqnarray*}

Now by using the above-mentioned formula at $N=2$, this gives:
\begin{eqnarray*}
I'
&=&\frac{l_1!!(l_2+\ldots+l_N+N-2)!!}{(l_1+\ldots+l_N+N-1)!!}\left(\frac{\pi}{2}\right)^{\varepsilon(N-2)}\\
&&\frac{l_2!!(l_3+\ldots+l_N+N-3)!!}{(l_2+\ldots+l_N+N-2)!!}\left(\frac{\pi}{2}\right)^{\varepsilon(N-3)}\\
&&\ldots\ldots\ldots\\
&&\frac{l_{N-2}!!(l_{N-1}+l_N+1)!!}{(l_{N-2}+l_{N-1}+l_N+2)!!}\left(\frac{\pi}{2}\right)^{\varepsilon(1)}\\
&&\frac{l_{N-1}!!l_N!!}{(l_{N-1}+l_N+1)!!}\left(\frac{\pi}{2}\right)^{\varepsilon(0)}
\end{eqnarray*}

Now observe that the various double factorials multiply up to quantity in the statement, modulo a $(N-1)!!$ factor, and that the $\frac{\pi}{2}$ factors multiply up to $(\frac{\pi}{2})^{[N/2]}$. Thus by multiplying with the normalization constant, we obtain the result.
\end{proof}

In the case of the half-liberated sphere, we have the following result:

\begin{proposition}
The half-liberated integral of $x_{i_1}\ldots x_{i_k}$ vanishes, unless each index $a$ appears the same number of times at odd and even positions in $i_1,\ldots,i_k$. We have
$$\int_{S^{N-1}_{\mathbb R,*}}x_{i_1}\ldots x_{i_k}\,dx=4^{\sum l_i}\frac{(2N-1)!l_1!\ldots l_n!}{(2N+\sum l_i-1)!}$$
where $l_a$ denotes this number of common occurrences.
\end{proposition}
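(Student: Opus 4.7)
The strategy is to exploit the faithful matrix model $\Psi: C(S^{N-1}_{\mathbb R,*}) \hookrightarrow M_2(C(S^{N-1}_\mathbb C))$, $x_i \mapsto X_i = \begin{pmatrix} 0 & z_i \\ \bar{z}_i & 0 \end{pmatrix}$, established in Theorem 1.14, and to reduce the computation of half-liberated spherical integrals to standard integrals on the complex sphere.

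First I would verify that, under $\Psi$, the canonical trace of Definition 5.5 on $C(S^{N-1}_{\mathbb R,*})$ pulls back to $\mathrm{tr}_{M_2} \otimes \int_{S^{N-1}_\mathbb C}$, where $\mathrm{tr}_{M_2}$ is the normalized trace. By Theorem 5.7 the canonical trace is characterized by invariance under the $O_N^*$-coaction, so it suffices to check that $\Psi$ intertwines this coaction with a $U_N$-coaction on $C(S^{N-1}_\mathbb C)$; this fits with the projective identification $PO_N^*=PU_N$ from Theorem 3.12.

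Next I would compute $\Psi(x_{i_1}\cdots x_{i_k})=X_{i_1}\cdots X_{i_k}$ explicitly. A direct induction on $k$ shows that this product is off-diagonal when $k$ is odd, and is diagonal when $k=2l$ is even, with $(1,1)$-entry $z_{i_1}\bar{z}_{i_2}z_{i_3}\bar{z}_{i_4}\cdots z_{i_{2l-1}}\bar{z}_{i_{2l}}$ and $(2,2)$-entry the complex conjugate. The odd case gives zero (consistent with Proposition 6.1 and with the parity constraint of the vanishing criterion), and in the even case, using that the uniform measure on $S^{N-1}_\mathbb C$ is invariant under $z\mapsto\bar z$, one gets
$$\int_{S^{N-1}_{\mathbb R,*}} x_{i_1}\cdots x_{i_{2l}}\,dx \;=\; \int_{S^{N-1}_\mathbb C} z_{i_1}\bar z_{i_2}\cdots z_{i_{2l-1}}\bar z_{i_{2l}}\,dz.$$
By commutativity of the $z_a$, the integrand rearranges to $\prod_a z_a^{p_a}\bar z_a^{q_a}$, where $p_a$ counts occurrences of index $a$ at odd positions and $q_a$ at even positions. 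The torus action $(z_1,\ldots,z_N)\mapsto(e^{i\theta_1}z_1,\ldots,e^{i\theta_N}z_N)$ preserves $S^{N-1}_\mathbb C$, and forces the integral to vanish unless $p_a=q_a$ for every $a$, which is precisely the vanishing criterion of the proposition.

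Finally, when $p_a=q_a=l_a$ for each $a$, the integrand becomes $\prod_a|z_a|^{2l_a}$, and one must evaluate the classical Dirichlet integral $\int_{S^{N-1}_\mathbb C}\prod_a|z_a|^{2l_a}\,dz$. I would compute this by realizing $S^{N-1}_\mathbb C=S^{2N-1}_\mathbb R$ via $z_a=x_a+iy_a$, expanding $|z_a|^{2l_a}=(x_a^2+y_a^2)^{l_a}$ by the binomial theorem, and integrating term by term with Proposition 6.4. The sums over binomial coefficients collapse via the classical identity $\sum_{k=0}^{l}\binom{2k}{k}\binom{2(l-k)}{l-k}=4^l$, which is what produces the factor $4^{\sum l_i}$; the $(2N-1)!$ and $(2N+\sum l_i-1)!$ in the denominator arise from converting the double-factorial ratio of Proposition 6.4 into ordinary factorials on $S^{2N-1}_\mathbb R$. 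The main obstacle is the trace-compatibility step, since the conclusion depends on being able to move freely between computations on $S^{N-1}_{\mathbb R,*}$ and on $S^{N-1}_\mathbb C$; once that is granted, the remainder is a bookkeeping exercise with the above binomial identity.
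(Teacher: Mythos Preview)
Your approach is essentially the same as the paper's. The paper simply asserts that the integral ``can be viewed as an integral over $S^{N-1}_\mathbb C$'', writing $I=\int_{S^{N-1}_\mathbb C}z_{i_1}\bar z_{i_2}\cdots z_{i_{2l-1}}\bar z_{i_{2l}}\,dz$ without further comment, and then proceeds exactly as you do: torus invariance forces the odd/even parity condition, and in the non-vanishing case the integrand becomes $\prod_a|z_a|^{2l_a}$, which is pushed to $S^{2N-1}_\mathbb R$, expanded binomially, evaluated via Proposition~6.5 (you wrote 6.4), and collapsed with $\sum_r\binom{2r}{r}\binom{2l-2r}{l-r}=4^l$. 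The only difference is that you make the reduction step explicit via the model of Theorem~1.14 together with the uniqueness of the invariant trace (Theorem~5.7), whereas the paper leaves this implicit; your version is therefore slightly more honest about where the identification comes from, but the mathematical content is the same.
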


\begin{proof}
In view of Proposition 6.1 above, we can assume that $k$ is even, $k=2l$. The corresponding integral can be viewed as an integral over $S^{N-1}_\mathbb C$, as follows:
$$I=\int_{S^{N-1}_\mathbb C}z_{i_1}\bar{z}_{i_2}\ldots z_{i_{2l-1}}\bar{z}_{i_{2l}}\,dz$$

Now by using the same argument as in the proof of Proposition 6.1, but this time with transformations of type $p\to\lambda p$ with $|\lambda|=1$, we see that $I$ vanishes, unless each $z_a$ appears as many times as $\bar{z}_a$ does, and this gives the first assertion.

Assume now that we are in the non-vanishing case. Then the $l_a$ copies of $z_a$ and the $l_a$ copies of $\bar{z}_a$ produce by multiplication a factor $|z_a|^{2l_a}$, so we have:
$$I=\int_{S^{N-1}_\mathbb C}|z_1|^{2l_1}\ldots|z_N|^{2l_N}\,dz$$

Now by using the standard identification $S^{N-1}_\mathbb C\simeq S^{2N-1}_\mathbb R$, we obtain:
\begin{eqnarray*}
I
&=&\int_{S^{2N-1}_\mathbb R}(x_1^2+y_1^2)^{l_1}\ldots(x_N^2+y_N^2)^{l_N}\,d(x,y)\\
&=&\sum_{r_1\ldots r_N}\binom{l_1}{r_1}\ldots\binom{l_N}{r_N}\int_{S^{2N-1}_\mathbb R}x_1^{2l_1-2r_1}y_1^{2r_1}\ldots x_N^{2l_N-2r_N}y_N^{2r_N}\,d(x,y)
\end{eqnarray*}

By using the formula in Proposition 6.5, we obtain:
\begin{eqnarray*}
I
&=&\sum_{r_1\ldots r_N}\binom{l_1}{r_1}\ldots\binom{l_N}{r_N}\frac{(2N-1)!!(2r_1)!!\ldots(2r_N)!!(2l_1-2r_1)!!\ldots (2l_N-2r_N)!!}{(2N+2\sum l_i-1)!!}\\
&=&\sum_{r_1\ldots r_N}\binom{l_1}{r_1}\ldots\binom{l_N}{r_N}
\frac{(2N-1)!(2r_1)!\ldots (2r_N)!(2l_1-2r_1)!\ldots (2l_N-2r_N)!}{(2N+\sum l_i-1)!r_1!\ldots r_N!(l_1-r_1)!\ldots (l_N-r_N)!}
\end{eqnarray*}

We can rewrite the sum on the right in the following way:
\begin{eqnarray*}
I
&=&\sum_{r_1\ldots r_N}\frac{l_1!\ldots l_N!(2N-1)!(2r_1)!\ldots (2r_N)!(2l_1-2r_1)!\ldots (2l_N-2r_N)!}{(2N+\sum l_i-1)!(r_1!\ldots r_N!(l_1-r_1)!\ldots (l_N-r_N)!)^2}\\
&=&\sum_{r_1}\binom{2r_1}{r_1}\binom{2l_1-2r_1}{l_1-r_1}\ldots\sum_{r_N}\binom{2r_N}{r_N}\binom{2l_N-2r_N}{l_N-r_N}\frac{(2N-1)!l_1!\ldots l_N!}{(2N+\sum l_i-1)!}
\end{eqnarray*}

The sums on the right being $4^{l_1},\ldots,4^{l_N}$, this gives the formula in the statement.
\end{proof}

Finally, in the case of the free sphere, we have the following result, from \cite{bcz}:

\begin{theorem}
The moments of the free hyperspherical law are given by
$$\int_{S^{N-1}_{\mathbb R,+}}x_1^{2l}\,dx=\frac{1}{(N+1)^l}\cdot\frac{q+1}{q-1}\cdot\frac{1}{l+1}\sum_{r=-l-1}^{l+1}(-1)^r\begin{pmatrix}2l+2\cr l+r+1\end{pmatrix}\frac{r}{1+q^r}$$
where $q\in [-1,0)$ is such that $q+q^{-1}=-N$.
\end{theorem}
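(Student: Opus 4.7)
The strategy is to reduce the moment to a Haar integral on $O_N^+$, transport the computation to $SU_q(2)$ via a matrix model, and conclude using Woronowicz's explicit formula for the Haar state.

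By Proposition 5.8 the $2l$-th moment of $x_1$ equals $\int_{O_N^+} u_{11}^{2l}$. Applying the Weingarten formula (Proposition 6.2) with all indices set equal to $1$ collapses every Kronecker symbol to $1$, giving
$$\int_{S^{N-1}_{\mathbb R,+}} x_1^{2l}\,dx \;=\; \sum_{\pi,\sigma\in NC_2(2l)} W_{2l,N}(\pi,\sigma) \;=\; \xi^t\, G_{2l,N}^{-1}\,\xi,$$
where $\xi$ is the all-ones vector in the basis of noncrossing pair partitions and $G_{2l,N}(\pi,\sigma)=N^{|\pi\vee\sigma|}$ is the Gram matrix of the Temperley--Lieb algebra at loop parameter $N$. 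Direct inversion of $G_{2l,N}$ being impractical, the idea is to import the calculation into a Hopf algebra whose Haar state is known in closed form.

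Following \cite{bcz}, one chooses $q\in[-1,0)$ with $q+q^{-1}=-N$, the precise condition that aligns the loop value $-(q+q^{-1})$ of Temperley--Lieb at parameter $q$ with $N$. Since the Schur--Weyl category of $O_N^+$ is Temperley--Lieb at loop value $N$ (Theorem 4.6) and the representation category of $SU_q(2)$ is the same Temperley--Lieb category, Woronowicz's Tannakian duality \cite{wo2} produces a morphism from $C(O_N^+)$ into a matrix algebra over $C(SU_q(2))$ which is faithful on the dense polynomial subalgebra (hence on moments of $u_{11}$). Under it, $u_{11}$ becomes an explicit polynomial in the standard generators $\alpha,\gamma$ of $C(SU_q(2))$, and its $2l$-th power decomposes into spin-$k$ isotypic components via the Clebsch--Gordan rules of $SU_q(2)$, with multiplicities given by $q$-ballot numbers $\binom{2l}{l-k}-\binom{2l}{l-k-1}$.

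The Woronowicz Haar state acts on the matrix coefficients of the spin-$k$ irreducible corepresentation as the weighted trace $[k+1]_q^{-1}\sum_{j=-k}^{k} q^{2j}(\cdot)_{jj}$, so integrating each isotypic component gives an explicit $q$-hypergeometric sum indexed by $k=0,\ldots,l$. The main obstacle is the final step: the reindexing $k\mapsto r=k+1$, combined with the involutive symmetry $r\mapsto -r$ induced by the antipode of $SU_q(2)$, must be used to symmetrize the expression over $r\in[-l-1,l+1]$, and a nontrivial telescoping identity then collapses the $q$-character sum to $\frac{q+1}{q-1}\cdot\frac{1}{(N+1)^l(l+1)}\sum_{r=-l-1}^{l+1}(-1)^r\binom{2l+2}{l+r+1}\frac{r}{1+q^r}$. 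This concluding $q$-series identity, which is responsible for both the alternating binomial shape and the exact prefactor, is not reducible to standard $q$-binomial manipulations and is the technical heart of the argument of \cite{bcz}; everything preceding it is structural preparation.
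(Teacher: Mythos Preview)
Your broad strategy---reduce to moments of $u_{11}$ on $O_N^+$, transport to $SU_q(2)$ with $q+q^{-1}=-N$, then evaluate via the known Haar state---matches the paper's. But the two intermediate mechanisms you invoke both diverge from what the paper actually does, and the first of them is a genuine gap.

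\textbf{The link to $SU_q(2)$.} You claim that Tannakian duality ``produces a morphism from $C(O_N^+)$ into a matrix algebra over $C(SU_q(2))$'' sending $u_{11}$ to an explicit polynomial in $\alpha,\gamma$. No such morphism is constructed, and Tannakian duality alone does not supply one: $O_N^+$ and $SU_q(2)$ have equivalent Temperley--Lieb representation categories but different fiber functors (the fundamental corepresentations have dimensions $N$ and $2$), so they are monoidally equivalent, not related by a Hopf $*$-morphism. The paper's actual device is different and concrete: it introduces the family $O_F^+$ for $F\in GL_N(\mathbb R)$ with $F^2=1$, identifies $O_F^+\simeq SU_q(2)$ for the specific $2\times 2$ matrix $F=\begin{pmatrix}0&\sqrt{-q}\\ 1/\sqrt{-q}&0\end{pmatrix}$, writes down an $F$-twisted Weingarten formula for $O_F^+$ with the \emph{same} Weingarten matrix $W_{kN}$ as for $O_N^+$, and then compares the two Weingarten sums to conclude that the element $w=\alpha+\alpha^*+\gamma-q\gamma^*=\sum_{ij}v_{ij}\in C(SU_q(2))$ has the same moments as $\sqrt{N+2}\,u_{ij}$. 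This is an identification of \emph{laws}, not a morphism; your phrasing obscures the actual argument and would not stand as written.

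\textbf{The final evaluation.} You propose decomposing $u_{11}^{2l}$ into spin-$k$ isotypic pieces via Clebsch--Gordan and applying the Haar state componentwise. The paper takes a more direct route: it uses Woronowicz's explicit representation $\pi_u$ of $C(SU_q(2))$ on $l^2(\mathbb N)$, under which $w$ becomes a concrete tridiagonal operator $M$, together with the Haar formula $\int_{SU_q(2)}x=(1-q^2)\int_{\mathbb T}tr(D\pi_u(x))\,du/(2\pi iu)$. The closed-form expression in the statement then comes from operator-theoretic and calculus manipulations with $M$ and $D$, not from a representation-theoretic telescoping identity. Your Clebsch--Gordan route may be workable in principle, but it is not the paper's argument, and the ``nontrivial telescoping identity'' you allude to would itself require proof.
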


\begin{proof}
The idea is that $x_1\in C(S^{N-1}_{\mathbb R,+})$ has the same law as $u_{11}\in C(O_N^+)$, which has the same law as a certain variable $w\in C(SU^q_2)$, which can be in turn modelled by an explicit operator on $l^2(\mathbb N)$, whose law can be computed by using advanced calculus.

Let us first explain the relation between $O_N^+$ and $SU^q_2$. To any matrix $F\in GL_N(\mathbb R)$ satisfying $F^2=1$ we associate the following universal algebra:
$$C(O_F^+)=C^*\left((u_{ij})_{i,j=1,\ldots,N}\Big|u=F\bar{u}F={\rm unitary}\right)$$

Observe that $O_{I_N}^+=O_N^+$. In general, the above algebra satisfies Woronowicz's generalized axioms in \cite{wo1}, which do not include the strong antipode axiom $S^2=id$.

At $N=2$, up to a trivial equivalence relation on the matrices $F$, and on the quantum groups $O_F^+$, we can assume that $F$ is as follows, with $q\in [-1,0)$:
$$F=\begin{pmatrix}0&\sqrt{-q}\\
1/\sqrt{-q}&0\end{pmatrix}$$

Our claim is that for this matrix we have $O_F^+=SU^q_2$. Indeed, the relations $u=F\bar{u}F$ tell us that $u$ must be of the following special form:
$$u=\begin{pmatrix}\alpha&-q\gamma^*\cr \gamma&\alpha^*\end{pmatrix}$$

Thus $C(O_F^+)$ is the universal algebra generated by two elements $\alpha,\gamma$, with the relations making the above matrix $u$ unitary. But these unitarity conditions are:
$$\alpha\gamma=q\gamma\alpha,\quad 
\alpha\gamma^*=q\gamma^*\alpha,\quad
\gamma\gamma^*=\gamma^*\gamma,\quad
\alpha^*\alpha+\gamma^*\gamma=1,\quad
\alpha\alpha^*+q^2\gamma\gamma^*=1$$

We recognize here the relations in \cite{wo1} defining the algebra $C(SU^q_2)$, and it follows that we have an isomorphism of Hopf $C^*$-algebras $C(O_F^+)\simeq C(SU^q_2)$.

Now back to the general case, let us try to understand the integration over $O_F^+$. Given $\pi\in NC_2(2k)$ and $i=(i_1,\ldots,i_{2k})$, we set $\delta_\pi^F(i)=\prod_{s\in\pi}F_{i_{s_l}i_{s_r}}$, with the product over all the strings $s=\{s_l\curvearrowright s_r\}$ of $\pi$. Our claim is that the following family of vectors, with $\pi\in NC_2(2k)$, spans the space of fixed vectors of $u^{\otimes 2k}$, for the quantum group $O_F^+$:
$$\xi_\pi=\sum_i\delta_\pi^F(i)e_{i_1}\otimes\ldots\otimes e_{i_{2k}}$$ 

Indeed, having $\xi_\cap$ fixed by $u^{\otimes 2}$ is equivalent to assuming that $u=F\bar{u}F$ is unitary. 

By using now the above vectors, we obtain the following Weingarten formula:
$$\int_{O_F^+}u_{i_1j_1}\ldots u_{i_{2k}j_{2k}}=\sum_{\pi\sigma}\delta_\pi^F(i)\delta_\sigma^F(j)W_{kN}(\pi,\sigma)$$

With these preliminaries in hand, let us start the computation. Let $N\in\mathbb N$, and consider the number $q\in [-1,0)$ satisfying $q+q^{-1}=-N$. Our claim is that we have:
$$\int_{O_N^+}\varphi(\sqrt{N+2}\,u_{ij})=\int_{SU^q_2}\varphi(\alpha+\alpha^*+\gamma-q\gamma^*)$$

Indeed, the moments of the variable on the left are given by:
$$\int_{O_N^+}u_{ij}^{2k}=\sum_{\pi\sigma}W_{kN}(\pi,\sigma)$$

On the other hand, the moments of the variable on the right, which in terms of the fundamental corepresentation $v=(v_{ij})$ is given by $w=\sum_{ij}v_{ij}$, are given by:
$$\int_{SU^q_2}w^{2k}=\sum_{ij}\sum_{\pi\sigma}\delta_\pi^F(i)\delta_\sigma^F(j)W_{kN}(\pi,\sigma)$$

We deduce that $w/\sqrt{N+2}$ has the same moments as $u_{ij}$, which proves our claim.

In order to do now the computation over $SU^q_2$, we can use a matrix model due to Woronowicz \cite{wo1}, where the standard generators $\alpha,\gamma$ are mapped as follows:
\begin{eqnarray*}
\pi_u(\alpha)e_k&=&\sqrt{1-q^{2k}}e_{k-1}\\
\pi_u(\gamma)e_k&=&uq^k e_k
\end{eqnarray*}

Here $u\in\mathbb T$ is a parameter, and $(e_k)$ is the standard basis of $l^2(\mathbb N)$. The point with this representation is that it allows the computation of the Haar functional. Indeed, if $D$ is the diagonal operator given by $D(e_k)=q^{2k}e_k$, then the formula is as follows:
$$\int _{SU^q_2}x=(1-q^2)\int_{\mathbb T}tr(D\pi_u(x))\frac{du}{2\pi iu}$$

With the above model in hand, the law of the variable that we are interested in is as follows, where $M(e_k)=e_{k+1}+q^k(u-qu^{-1})e_k+(1-q^{2k})e_{k-1}$:
$$\int_{SU^q_2}\varphi(\alpha+\alpha^*+\gamma-q\gamma^*)=(1-q^2)\int_{\mathbb T}tr(D\varphi(M))\frac{du}{2\pi iu}$$

The point now is that the integral on the right can be computed, by using advanced calculus methods, and this gives the result. We refer here to \cite{bcz}. 
\end{proof}

The computation of the joint free hypersperical laws remains an open problem. Open as well is the question of finding a more conceptual proof for the above formula.

\section{Twisting results}

We have seen in the previous sections that, under very strong axioms, there are only three noncommutative spheres, $S^{N-1}_\mathbb R\subset S^{N-1}_{\mathbb R,*}\subset S^{N-1}_{\mathbb R,+}$. Moreover, these spheres can be sucessfully studied by using their quantum isometry groups, $O_N\subset O_N^*\subset O_N^+$.

We discuss now some extensions of these facts. The idea is that $S^{N-1}_\mathbb R\subset S^{N-1}_{\mathbb R,*}\subset S^{N-1}_{\mathbb R,+}$ should be thought of as corresponding to a Drinfeld-Jimbo parameter $q=1$, and the question is about what happens when using the next simplest parameter, $q=-1$.

Let us begin with the definition of the twisted spheres, from \cite{ba1}:

\begin{definition}
The twisted spheres $\bar{S}^{N-1}_\mathbb R\subset\bar{S}^{N-1}_{\mathbb R,*}\subset S^{N-1}_{\mathbb R,+}$ are constructed by imposing the following conditions on the standard coordinates $x_1,\ldots,x_N$:
\begin{enumerate}
\item $\bar{S}^{N-1}_\mathbb R$: $x_ix_j=-x_jx_i$, for any $i\neq j$.

\item $\bar{S}^{N-1}_{\mathbb R,*}$: $x_ix_jx_k=-x_kx_jx_i$ for $i,j,k$ distinct, $x_ix_jx_k=x_kx_jx_i$ otherwise.
\end{enumerate}
\end{definition}

Here the fact that we have indeed an inclusion $\bar{S}^{N-1}_\mathbb R\subset\bar{S}^{N-1}_{\mathbb R,*}$ comes from the computations $abc=-bac=bca=-cba$ for $a,b,c\in\{x_i\}$ distinct, and $aab=-aba=baa$ for $a,b\in\{x_i\}$ distinct, where $x_1,\ldots,x_N$ are the standard coordinates on $\bar{S}^{N-1}_\mathbb R$.

Let us also mention that $S^{N-1}_{\mathbb R,+}$ cannot be twisted, or rather, that it is equal to its own twist. We will come back later to this issue, with some results in this direction.

Let us discuss now the twisting of $O_N,O_N^*$. The definition here is as follows:

\begin{definition}
The twisted quantum groups $\bar{O}_N\subset\bar{O}_N^*\subset O_N^+$ are constructed by imposing the following conditions on the standard coordinates $u_{ij}$,
$$\bar{O}_N:ab=\begin{cases}
-ba&{\rm for}\ a\neq b\ {\rm on\ the\ same\ row\ or\ column\ of\ }u\\
ba&{\rm otherwise}
\end{cases}$$
$$\hskip-20.9mm \bar{O}_N^*:abc=\begin{cases}
-cba&{\rm for\ }r\leq2,s=3{\rm\ or\ }r=3,s\leq2\\
cba&{\rm for\ }r\leq2,s\leq 2{\rm\ or\ }r=s=3
\end{cases}$$
where $r,s\in\{1,2,3\}$ are the number of rows/columns of $u$ spanned by $a,b,c\in\{u_{ij}\}$.
\end{definition}

It is routine to check that both $\bar{O}_N,\bar{O}_N^*$ are indeed quantum groups, and that we have an inclusion $\bar{O}_N\subset\bar{O}_N^*$. These facts, as well as Definition 7.2 itself, are however best understood from a Schur-Weyl viewpoint. We will develop the Schur-Weyl theory now, and we will come back later to the spheres, with a quantum isometry group result.

Let us first fine-tune our partition formalism, as follows:

\begin{definition}
We let $P(k,l)$ be the set of partitions between an upper row of $k$ points and a lower row of $l$ points, and consider the following subsets of $P(k,l)$:
\begin{enumerate}
\item $P_2(k,l)\subset P_{even}(k,l)$: the pairings, and the partitions with blocks having even size.

\item $NC_2(k,l)\subset NC_{even}(k,l)\subset NC(k,l)$: the subsets of noncrossing partitions. 

\item $Perm(k,k)\subset P_2(k,k)$: the pairings having only up-to-down strings.
\end{enumerate}
\end{definition}

Given $\pi\in P(k,l)$, we can always switch pairs of neighbors, belonging to different blocks, either in the upper row, or in the lower row, as to make $\pi$ noncrossing. We will need the following standard result, regarding the behavior of this operation:

\begin{proposition}
There is a signature map $\varepsilon:P_{even}\to\{-1,1\}$, given by $\varepsilon(\pi)=(-1)^c$, where $c$ is the number of switches needed to make $\pi$ noncrossing. In addition:
\begin{enumerate}
\item For $\pi\in Perm(k,k)\simeq S_k$, this is the usual signature.

\item For $\pi\in P_2$ we have $(-1)^c$, where $c$ is the number of crossings.

\item For $\pi\in P$ obtained from $\sigma\in NC_{even}$ by merging blocks, the signature is $1$.
\end{enumerate}
\end{proposition}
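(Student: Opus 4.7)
The plan is to establish well-definedness first by exhibiting $\varepsilon$ as a manifestly well-defined function, and then to verify the three listed properties. The natural candidate is a \emph{parallel pairing} construction: for $\pi \in P_{even}$, let $\hat\pi \in P_2$ be the pairing obtained by pairing consecutive elements within each block — if a block is $\{p_1 < \ldots < p_{2m}\}$, take the $m$ pairs $\{p_1,p_2\}, \{p_3,p_4\}, \ldots, \{p_{2m-1},p_{2m}\}$. I would then set $s(\pi) := (-1)^{c(\hat\pi)}$, where $c$ counts crossings of a pairing, and show that $s$ satisfies the characterization in the statement, which gives well-definedness of the signature map.

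Two facts do the work. First, if $\pi \in NC_{even}$ then $\hat\pi \in NC_2$, since the nested-or-disjoint block structure of a noncrossing even partition is inherited by the consecutive sub-pairs. Second, if $\pi'$ is obtained from $\pi$ by a switch of two neighbors $i, i+1$ belonging to distinct blocks $B_1, B_2$, then $\hat{\pi'}$ is obtained from $\hat\pi$ by the analogous switch: after the switch, $i+1$ occupies in $B_1$ exactly the slot that $i$ did (and so inherits the same consecutive partner), and symmetrically for $i$ in $B_2$. At the pairing level, a switch flips $c$ by exactly $\pm 1$, since only the mutual crossing status of the two affected pairs can change — for any third pair $\{a,b\}$, the symmetric difference of the intervals $(i,\cdot)$ and $(i+1,\cdot)$ is $\{i\}$ or $\{i+1\}$, neither of which lies in a third pair. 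Combining, $s$ changes sign under every switch and equals $+1$ on noncrossing partitions, so $s(\pi) = (-1)^{\text{(number of switches to noncrossing)}}$, and in particular the right-hand side is independent of the choice of switches.

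Properties (1) and (2) are then immediate. For (2), when $\pi \in P_2$ every block is already a pair, so $\hat\pi = \pi$ and $\varepsilon(\pi) = (-1)^{c(\pi)}$. For (1), a permutation $\sigma \in S_k$ viewed as $\pi \in Perm(k,k)$ gives a straight pairing whose number of crossings equals the number of inversions of $\sigma$, matching the usual signature. For (3), by induction on the number of mergers it suffices to show that merging two blocks $B_1, B_2$ of $\sigma \in NC_{even}$ changes $c(\hat\cdot)$ by an even number. A case analysis on the relative position of $B_1, B_2$ shows that $\hat\cdot$ only actually changes in the subcase where $B_2$ is nested inside an interval $(p_i, p_{i+1})$ of $B_1$ with $i$ odd; in that subcase the pair $\{p_i, p_{i+1}\}$ together with the $q$-pairs of $B_2$ get replaced by shifted pairs $\{p_i, q_1\}, \{q_2, q_3\}, \ldots, \{q_{2b}, p_{i+1}\}$. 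The noncrossing structure of $\sigma$ then forces every other block $B_3$ to contribute an even number (either $0$ or $2$) of new crossings, the $+2$ case occurring precisely when the elements of $B_3$ straddle $B_2$, producing a single ``straddling'' pair of $\hat\pi$ that crosses both endpoint new pairs $\{p_i, q_1\}$ and $\{q_{2b}, p_{i+1}\}$. The main obstacle is this bookkeeping step in (3): organizing the case split on the position of $B_3$ relative to the sub-intervals determined by $B_2$ so that every contribution to the crossing change is visibly even.
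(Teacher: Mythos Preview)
Your route to well-definedness is different from the paper's and in some ways cleaner. The paper argues via a canonical ``standard form'': it describes an explicit algorithm taking any $\pi$ to a specific noncrossing partition, and then checks that moving an already-noncrossing even partition into standard form costs an even number of switches, so that the parity of any switching sequence to any noncrossing target is well-defined. Your auxiliary pairing $\hat\pi$ instead produces the signature as a manifestly well-defined quantity $(-1)^{c(\hat\pi)}$, and you then verify the two defining properties (value $1$ on $NC_{even}$, sign flip under a switch). This makes (1) and (2) fall out for free, whereas the paper treats them separately.

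There is, however, a genuine gap in your treatment of (3), and it is not the bookkeeping you flag. Your inductive reduction --- ``it suffices to show that merging two blocks $B_1,B_2$ of $\sigma\in NC_{even}$ changes $c(\hat{\,\cdot\,})$ by an even number'' --- does not set up a valid induction. After one merger the resulting partition is generally no longer in $NC_{even}$, so for the \emph{next} merger your case analysis no longer applies: that analysis uses both that $B_1,B_2$ are nested or disjoint, and that every remaining block $B_3$ is noncrossing with respect to $B_1$ and $B_2$. Concretely, take $\sigma$ with blocks $\{1,6\},\{2,5\},\{3,4\}$ and first merge $\{1,6\}$ with $\{3,4\}$: the two surviving blocks $\{1,3,4,6\}$ and $\{2,5\}$ now cross, and your single-merger argument says nothing about merging them. (Note that merging two crossing blocks \emph{can} change the sign: merging $\{1,3\}$ and $\{2,4\}$ takes $c(\hat{\,\cdot\,})$ from $1$ to $0$.) The paper's one-line ``by recurrence on $n$'' is no more detailed here, but your write-up commits to a specific inductive claim that is not what the induction actually needs. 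One repair, staying within your framework: show that the mergers can always be ordered so that at each step you absorb a single \emph{innermost} $\sigma$-block $B_j$ into the union $U$ of the remaining $\sigma$-blocks destined for the same $\pi$-block; then $B_j$ sits in a single gap of $U$, and you can rerun your odd/even gap case split, now checking the crossing change against each \emph{pair} of the current $\hat{\pi_k}$ rather than against $\sigma$-blocks.
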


\begin{proof}
We must first prove that the number $c$ in the statement is well-defined modulo 2. In order to do so, observe that any partition $\pi\in P(k,l)$ can be put in ``standard form'', by ordering its blocks according to the appearence of the first leg in each block, counting clockwise from top left, and then by performing the switches as for block 1 to be at left, then for block 2 to be at left, and so on. Here the required switches are also uniquely determined, by the order coming from counting clockwise from top left. 

Here is an example of such an algorithmic switching operation, for a pairing:
$$\xymatrix@R=3mm@C=3mm{\circ\ar@/_/@{.}[drr]&\circ\ar@{-}[dddl]&\circ\ar@{-}[ddd]&\circ\\
&&\ar@/_/@{.}[ur]&\\
&&\ar@/^/@{.}[dr]&\\
\circ&\circ\ar@/^/@{.}[ur]&\circ&\circ}
\xymatrix@R=6mm@C=1mm{&\\\to\\&\\& }
\xymatrix@R=3mm@C=3mm{\circ\ar@/_/@{.}[dr]&\circ\ar@{-}[dddl]&\circ&\circ\ar@{-}[dddl]\\
&\ar@/_/@{.}[ur]&&\\
&&\ar@/^/@{.}[dr]&\\
\circ&\circ\ar@/^/@{.}[ur]&\circ&\circ}
\xymatrix@R=6mm@C=1mm{&\\\to\\&\\&}
\xymatrix@R=3mm@C=3mm{\circ\ar@/_/@{.}[r]&\circ&\circ\ar@{-}[dddll]&\circ\ar@{-}[dddl]\\
&&&\\
&&\ar@/^/@{.}[dr]&\\
\circ&\circ\ar@/^/@{.}[ur]&\circ&\circ}
\xymatrix@R=6mm@C=1mm{&\\\to\\&\\& }
\xymatrix@R=3mm@C=3mm{\circ\ar@/_/@{.}[r]&\circ&\circ\ar@{-}[dddll]&\circ\ar@{-}[dddll]\\
&&&\\
&&&\\
\circ&\circ&\circ\ar@/^/@{.}[r]&\circ}$$
\vskip-8mm
 
The point now is that, under the assumption $\pi\in NC_{even}(k,l)$, each of the moves required for putting a leg at left, and hence for putting a whole block at left, requires an even number of switches. Thus, putting $\pi$ is standard form requires an even number of switches. Now given $\pi,\pi'\in P_2$ having the same block structure, the standard form coincides, so the number of switches $c$ required for the passage $\pi\to\pi'$ is indeed even.

Regarding now the remaining assertions, these can be proved as follows:

(1) For $\pi\in Perm(k,k)$ the standard form is $\pi'=id$, and the passage $\pi\to id$ comes by composing with a number of transpositions, which gives the signature. 

(2) For a general $\pi\in P_2$, the standard form is of type $\pi'=|\ldots|^{\cup\ldots\cup}_{\cap\ldots\cap}$, and the passage $\pi\to\pi'$ requires $c$ mod 2 switches, where $c$ is the number of crossings. 

(3) For a partition $\pi\in P_{even}$ coming from $\sigma\in NC_{even}$ by merging a certain number $n$ of blocks, the fact that the signature is 1 follows by recurrence on $n$. 
\end{proof}

We can make act partitions in $P_{even}$ on tensors in a twisted way, as follows:

\begin{definition}
Associated to any partition $\pi\in P_{even}(k,l)$ is the linear map
$$\bar{T}_\pi(e_{i_1}\otimes\ldots\otimes e_{i_k})=\sum_{\sigma\leq\pi}\varepsilon(\sigma)\sum_{j:\ker(^i_j)=\sigma}e_{j_1}\otimes\ldots\otimes e_{j_l}$$
where $\varepsilon:P_{even}\to\{-1,1\}$ is the signature map.
\end{definition}

Here, and in what follows, the kernel of a multi-index is the partition obtained by joining pairs of multi-indices. Observe the similarity with Definition 4.3.

Let us first prove that the above construction is categorical:

\begin{proposition}
The assignement $\pi\to\bar{T}_\pi$ is categorical, in the sense that
$$\bar{T}_\pi\otimes\bar{T}_\sigma=\bar{T}_{[\pi\sigma]}\quad,\quad\bar{T}_\pi\bar{T}_\sigma=N^{c(\pi,\sigma)}\bar{T}_{[^\sigma_\pi]}\quad,\quad\bar{T}_\pi^*=\bar{T}_{\pi^*}$$
where $c(\pi,\sigma)$ is the number of closed loops obtained when composing.
\end{proposition}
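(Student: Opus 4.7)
The plan is to verify each of the three identities separately, following exactly the same skeleton as the proof of Proposition 4.4, with the only new ingredient being the bookkeeping of the signatures $\varepsilon(\sigma)$. The key auxiliary fact I will use repeatedly is that $\varepsilon$ is multiplicative under concatenation and invariant under upside-down turning, both of which follow directly from the description of $\varepsilon$ given in Proposition 7.5.

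\textbf{Tensor product axiom.} I would simply expand both sides on a basis tensor. The key structural observation is that any refinement $\tau\leq [\pi\sigma]$ splits uniquely as $\tau=[\tau_1\tau_2]$ with $\tau_1\leq\pi$ and $\tau_2\leq\sigma$, because no block of $[\pi\sigma]$ has legs in both halves, so the same is true of any refinement. Combined with $\varepsilon([\tau_1\tau_2])=\varepsilon(\tau_1)\varepsilon(\tau_2)$ (the noncrossing switches on the two halves are independent), the double sum appearing in $\bar T_\pi\otimes\bar T_\sigma$ reassembles into the single sum defining $\bar T_{[\pi\sigma]}$.

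\textbf{Involution axiom.} This is essentially a tautology once the right observations are recorded: the block structure of $\ker(^i_j)$ depends symmetrically on upper and lower indices, the relation $\sigma\leq\pi$ is preserved under upside-down turning, and $\varepsilon(\sigma^*)=\varepsilon(\sigma)$, since the number of switches needed to make $\sigma^*$ noncrossing is the same as for $\sigma$ (viewed from the other side). Plugging these into the definitions of $\bar T_\pi^*$ and $\bar T_{\pi^*}$ yields equality.

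\textbf{Composition axiom.} This is the main step. Expanding both sides, $\bar T_\pi\bar T_\sigma$ applied to $e_{i_1}\otimes\cdots$ produces the multiple sum
\[
\sum_{\rho\leq\sigma,\ \eta\leq\pi}\varepsilon(\rho)\varepsilon(\eta)\sum_{k}\Big(\sum_{j:\,\ker(^i_j)=\rho,\ \ker(^j_k)=\eta}1\Big)\,e_{k_1}\otimes\cdots
\]
For fixed outer indices $(i,k)$, the pair $(\rho,\eta)$ together with the multi-index $j$ determines a kernel partition $\mu$ on the outer legs which is automatically a refinement of $[^\sigma_\pi]$, and summing out the middle $j$'s which are unconstrained by $\mu$ produces precisely $N^{c(\rho,\eta)}$ where $c(\rho,\eta)$ counts closed loops, matching the factor on the right-hand side. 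What needs to be established is the signature identity $\varepsilon(\rho)\varepsilon(\eta)=\varepsilon(\mu)$ for each contributing configuration; then grouping the sum over $(\rho,\eta)$ by the resulting $\mu$ and invoking the concatenation/tensor argument once more recovers $N^{c(\pi,\sigma)}\bar T_{[^\sigma_\pi]}$.

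\textbf{Main obstacle.} The serious point is the signature identity $\varepsilon(\rho)\varepsilon(\eta)=\varepsilon(\mu)$ in the composition step, since blocks of $\rho$ and $\eta$ can merge through the middle indices, and Proposition 7.5 only gives the signature directly in special cases. I would handle this in two stages: first check it when $\rho,\eta$ are noncrossing, where both sides equal $+1$ by Proposition 7.5(3) applied to $\mu$ (which is then a merging of a noncrossing partition); then extend by induction on the number of crossings, observing that a single crossing switch in $\rho$ or $\eta$ produces a single switch in the noncrossing form of $\mu$, so each sign flip on the left is matched by one on the right. An alternative, perhaps cleaner, route is to realize $\bar T_\pi$ as a twist $\bar T_\pi=\sum_{\sigma\leq\pi}\varepsilon(\sigma)T^{\mathrm{pure}}_\sigma$ of the pure kernel operators and reduce the identity to the untwisted categorical structure from Proposition 4.4 together with multiplicativity of $\varepsilon$ under concatenation.
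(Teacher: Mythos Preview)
Your approach is essentially the same as the paper's: both reduce the three identities to signature identities on kernel partitions, with concatenation and involution being straightforward and composition being the substantive step. The paper, like you, isolates the key claim
\[
\varepsilon\Big(\ker\begin{pmatrix}i\\ j\end{pmatrix}\Big)\,\varepsilon\Big(\ker\begin{pmatrix}j\\ k\end{pmatrix}\Big)=\varepsilon\Big(\ker\begin{pmatrix}i\\ k\end{pmatrix}\Big)
\]
and once this holds the counting of $j$'s proceeds exactly as in the untwisted Proposition~4.4, giving the factor $N^{c(\pi,\sigma)}$ (note: this exponent is $c(\pi,\sigma)$, not your $c(\rho,\eta)$, since the number of free middle loops is fixed by $\pi,\sigma$ and $(i,k)$; also $\mu=\ker(^i_k)$ is determined by $(i,k)$ alone, not by $(\rho,\eta,j)$ as your phrasing suggests).

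Where the paper differs from your sketch is in how the signature identity for composition is established. Rather than your induction on crossings, the paper uses a \emph{joint switching} trick: first perform switches on the shared middle row so that $\ker(j_1\ldots j_q)$ becomes noncrossing---each such switch acts simultaneously on the bottom of $\rho$ and the top of $\eta$, so their signatures change together---and then switch the remaining top legs of $\rho$ and bottom legs of $\eta$ independently. This brings both $\rho$ and $\eta$ to noncrossing forms $\rho',\eta'$ at a signature cost that exactly matches the cost of bringing $\mu$ to $\mu'\leq[^{\rho'}_{\eta'}]$; since $[^{\rho'}_{\eta'}]$ is noncrossing, Proposition~7.4(3) gives $\varepsilon(\mu')=1$ and the identity follows. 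This is cleaner than tracking individual crossing switches, because a switch in an outer row of $\rho$ (say) need not correspond to any single switch in $\mu$, so your inductive step ``a single crossing switch in $\rho$ or $\eta$ produces a single switch in the noncrossing form of $\mu$'' is not literally true and would need more care to make rigorous.
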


\begin{proof}
We follow the proof of Proposition 4.4. We just have to understand the behavior of the twisted version of the Kronecker symbol construction $\pi\to\delta_\pi$, under the various categorical operations on the partitions $\pi$, and the verification goes as follows:

\underline{1. Concatenation.} It is enough to check the following formula:
$$\varepsilon\left(\ker\begin{pmatrix}i_1&\ldots&i_p\\ j_1&\ldots&j_q\end{pmatrix}\right)
\varepsilon\left(\ker\begin{pmatrix}k_1&\ldots&k_r\\ l_1&\ldots&l_s\end{pmatrix}\right)=
\varepsilon\left(\ker\begin{pmatrix}i_1&\ldots&i_p&k_1&\ldots&k_r\\ j_1&\ldots &j_q&l_1&\ldots&l_s\end{pmatrix}\right)$$

Let us denote by $\pi,\sigma$ the partitions on the left, so that the partition on the right is of the form $\rho\leq[\pi\sigma]$. By switching to the noncrossing form, $\pi\to\pi'$ and $\sigma\to\sigma'$, the partition on the right transforms into $\rho\to\rho'\leq[\pi'\sigma']$. Now since $[\pi'\sigma']$ is noncrossing, we can use Proposition 7.4 (3), and we obtain the result.

\underline{2. Composition.} Here we must establish the following formula:
$$\varepsilon\left(\ker\begin{pmatrix}i_1&\ldots&i_p\\ j_1&\ldots&j_q\end{pmatrix}\right)
\varepsilon\left(\ker\begin{pmatrix}j_1&\ldots&j_q\\ k_1&\ldots&k_r\end{pmatrix}\right)
=\varepsilon\left(\ker\begin{pmatrix}i_1&\ldots&i_p\\ k_1&\ldots&k_r\end{pmatrix}\right)$$

Let $\pi,\sigma$ be the partitions on the left, so that the partition on the right is of the form $\rho\leq[^\pi_\sigma]$. Our claim is that we can jointly switch $\pi,\sigma$ to the noncrossing form. Indeed, we can first switch as for $\ker(j_1\ldots j_q)$ to become noncrossing, and then switch the upper legs of $\pi$, and the lower legs of $\sigma$, as for both these partitions to become noncrossing. 

Now observe that when switching in this way to the noncrossing form, $\pi\to\pi'$ and $\sigma\to\sigma'$, the partition on the right transforms into $\rho\to\rho'\leq[^{\pi'}_{\sigma'}]$. Now since $[^{\pi'}_{\sigma'}]$ is noncrossing, we can apply Proposition 7.4 (3), and we obtain the result.

\underline{3. Involution.} Here we must prove the following formula:
$$\varepsilon\left(\ker\begin{pmatrix}i_1&\ldots&i_p\\ j_1&\ldots&j_q\end{pmatrix}\right)=\varepsilon\left(\ker\begin{pmatrix}j_1&\ldots&j_q\\ i_1&\ldots&i_p\end{pmatrix}\right)$$

But this formula is trivial, and this finishes the proof.
\end{proof}

We can now formulate an abstract twisting result, as follows:

\begin{theorem}
The Schur-Weyl categories for $\bar{O}_N,\bar{O}_N^*,O_N^+$ are given by
$$Hom(u^{\otimes k},u^{\otimes l})=span(\bar{T}_\pi|\pi\in D(k,l))$$
for any $k,l\in\mathbb N$, where $D\subset P_2$ is the category of pairings for $O_N,O_N^*,O_N^+$. 
\end{theorem}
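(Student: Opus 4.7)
The plan is to invoke Woronowicz's Tannakian duality \cite{wo2} to reconstruct, from each of the three candidate tensor categories $C_{kl} = span(\bar{T}_\pi \mid \pi \in D(k,l))$ with $D \in \{P_2, P_2^*, NC_2\}$, a unique compact matrix quantum group $\bar{G} \subset O_N^+$ whose intertwiner spaces are precisely the $C_{kl}$. The input required for this step is already in place: by Proposition 7.6 the assignment $\pi \mapsto \bar{T}_\pi$ is compatible with composition, tensor product and involution, so each $C$ is a tensor $C^*$-category with duals. The remaining work is to identify the three resulting quantum groups with $\bar{O}_N$, $\bar{O}_N^*$ and $O_N^+$ respectively.

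The case $D = NC_2$ would come first. Here the key observation is that any coarsening $\sigma \in P_{even}$ of a noncrossing pairing $\pi \in NC_2$ is obtained from the noncrossing partition $\pi$ by merging blocks, so by Proposition 7.4 (3) every such $\sigma$ has $\varepsilon(\sigma) = +1$. Consequently $\bar{T}_\pi$ reduces to $T_\pi$ on all of $NC_2$, and $(C_{kl})$ coincides with the Brauer-type category of $O_N^+$ given by Theorem 4.6, so $\bar{G} = O_N^+$ in that case.

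For the other two cases, I would use that $P_2$ and $P_2^*$ are generated as categories over $NC_2$ by the basic crossing $\slash\!\!\!\backslash$ and the half-liberated crossing $\slash\hskip-1.6mm\backslash\hskip-1.1mm|\hskip0.5mm$ respectively, so it suffices to check what each of the single additional Schur-Weyl relations imposes. Expanding Definition 7.5 for the basic crossing yields
$$\bar{T}_{\slash\!\!\!\backslash}(e_i \otimes e_j) = \begin{cases} e_i \otimes e_i & \text{if } i = j, \\ -\,e_j \otimes e_i & \text{if } i \neq j, \end{cases}$$
the $-1$ coming from $\varepsilon(\slash\!\!\!\backslash) = -1$, and the $+1$ on the diagonal coming from the merged coarsening to a single four-element block, which has signature $+1$ by Proposition 7.4 (3). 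Writing $(\bar{T}_{\slash\!\!\!\backslash} \otimes id)\, u^{\otimes 2} = u^{\otimes 2}\,(\bar{T}_{\slash\!\!\!\backslash} \otimes id)$ in the style of Proposition 4.5 and isolating coefficients of $e_k \otimes e_l$ in the four index regimes (on each side, indices equal or distinct) would return exactly the sign-adorned commutation relations of $\bar{O}_N$: anticommutation of two distinct generators sharing a row or a column, and commutation otherwise. The analogous step for the half-liberated crossing, which has three crossings and hence signature $-1$, should similarly reproduce the sign pattern of Definition 7.2 for $\bar{O}_N^*$: one gets $abc = -cba$ precisely when the row/column spans $(r,s)$ of $a, b, c$ are mixed (one equals $3$ and the other is $\leq 2$), and $abc = +cba$ in the pure cases $r = s = 3$ and $r, s \leq 2$.

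The main obstacle is the combinatorial sign bookkeeping in the last two steps: for each input kernel one has to enumerate the coarsenings of the generating crossing that lie in $P_{even}$, compute their signatures via Proposition 7.4, and then verify case by case that the resulting total sign on $u_{i_1 j_1} \cdots u_{i_k j_k}$ coincides with the piecewise rule of Definition 7.2. Once this is carried out, the final identification of the Tannakian quantum group with the expected twisted one is routine, thanks to the categoriality established in Proposition 7.6.
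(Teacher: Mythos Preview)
Your proposal is correct and follows essentially the same route as the paper: invoke Proposition 7.6 and Tannakian duality to produce a quantum group from each twisted category, observe that on $NC_2$ the twisted maps $\bar{T}_\pi$ collapse to the untwisted $T_\pi$ (so the free case gives $O_N^+$), and then identify the remaining two quantum groups by computing $\bar{T}_{\slash\!\!\!\backslash}$ and $\bar{T}_{\slash\hskip-1.6mm\backslash\hskip-1.1mm|\hskip0.5mm}$ explicitly and matching the resulting intertwining conditions against Definition 7.2. The paper carries out exactly these computations, listing the relevant sub-kernels and their signatures, and concludes in the same way.
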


\begin{proof}
The correspondence $\pi\to\bar{T}_\pi$ being categorical, the linear spaces in the statement form a tensor $C^*$-category, which produces via \cite{wo2} a compact quantum group $\bar{G}\subset O_N^+$. We must prove that this quantum group is precisely $\bar{G}=\bar{O}_N,\bar{O}_N^*,O_N^+$.

First of all, the result is clear for $O_N^+$, because Proposition 7.4 (1) shows that for any $\pi\in NC_{even}$, and in particular for any $\pi\in NC_2$, we have $T_\pi=\bar{T}_\pi$.

In order to deal now with $O_N,O_N^*$, observe first that we have:
$$\bar{T}_{\slash\!\!\!\backslash}(e_i\otimes e_j)
=\begin{cases}
-e_j\otimes e_i&{\rm for}\ i\neq j\\
e_j\otimes e_i&{\rm otherwise}
\end{cases}$$
$$\ \ \ \ \ \ \ \ \ \bar{T}_{\slash\hskip-1.6mm\backslash\hskip-1.1mm|\hskip0.5mm}(e_i\otimes e_j\otimes e_k)
=\begin{cases}
-e_k\otimes e_j\otimes e_i&{\rm for}\ i,j,k\ {\rm distinct}\\
e_k\otimes e_j\otimes e_i&{\rm otherwise}
\end{cases}$$

Indeed, the basic crossings $\slash\!\!\!\backslash=\ker(^{ab}_{ba}),\slash\hskip-2.0mm\backslash\hskip-1.7mm|=\ker(^{abc}_{cba})$ are both odd, because they have respectively 1 and 3 crossings, and their various subpartitions are as follows, all even:
$$\ker\begin{pmatrix}a&a\\a&a\end{pmatrix},\ \ker\begin{pmatrix}a&a&b\\b&a&a\end{pmatrix},\ \ker\begin{pmatrix}a&b&a\\a&b&a\end{pmatrix},\ \ker\begin{pmatrix}b&a&a\\a&a&b\end{pmatrix},\ \ker\begin{pmatrix}a&a&a\\a&a&a\end{pmatrix}$$

Now since the relations $\bar{T}_{\slash\!\!\!\backslash}\in End(u^{\otimes 2})$, $\bar{T}_{\slash\hskip-1.6mm\backslash\hskip-1.1mm|\hskip0.5mm}\in End(u^{\otimes 3})$ correspond precisely to the relations in Definition 7.2, defining $\bar{O}_N,\bar{O}_N^*$, this gives the result.
\end{proof}

As an application, we can now integrate over the twisted quantum groups:

\begin{theorem}
We have the Weingarten formula
$$\int_{\bar{O}_N^\times}u_{i_1j_1}\ldots u_{i_kj_k}=\sum_{\pi,\sigma\in P_2^\times(k)}\bar{\delta}_\pi(i_1,\ldots,i_k)\bar{\delta}_\sigma(j_1,\ldots, j_k)W_{kN}(\pi,\sigma)$$
where $\bar{\delta}_\pi(i)\in\{-1,0,1\}$ is equal to $\varepsilon(\ker(i))$ if $\ker(i)\leq\pi$, and is $0$ otherwise.
\end{theorem}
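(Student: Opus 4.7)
The plan is to follow the strategy of Theorem 5.4, with the twisted fixed vectors $\bar\xi_\pi:=\bar T_\pi(1)$ playing the role of the untwisted $\xi_\pi$. First, I would invoke Proposition 5.3 applied to $G=\bar O_N^\times$ to identify the matrix with entries $\int_{\bar O_N^\times}u_{i_1j_1}\ldots u_{i_kj_k}$ as the orthogonal projection of $(\mathbb C^N)^{\otimes k}$ onto $Fix(u^{\otimes k})=Hom(1,u^{\otimes k})$. Theorem 7.7 then identifies this fixed-point space as the span of the vectors $\bar\xi_\pi$ with $\pi\in P_2^\times(k)$, and unfolding Definition 7.5 with empty upper row shows that
\[\bar\xi_\pi=\sum_{j_1,\ldots,j_k}\bar\delta_\pi(j_1,\ldots,j_k)\,e_{j_1}\otimes\ldots\otimes e_{j_k},\]
so that the coordinates of $\bar\xi_\pi$ are precisely the symbols $\bar\delta_\pi$ appearing in the statement.

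The key computation is the Gram matrix $G(\pi,\sigma)=\langle\bar\xi_\pi,\bar\xi_\sigma\rangle$. A multi-index $j$ contributes iff it ``fits'' both $\pi$ and $\sigma$ (i.e., $\ker(j)\leq\pi$ and $\ker(j)\leq\sigma$), in which case the summand is $\varepsilon(\ker(j))^2=1$. The crucial point---and the main technical obstacle---is that $\varepsilon(\ker(j))$ is actually well-defined in this situation, which is where the pairing hypothesis enters: if $j$ fits a pairing $\pi\in P_2^\times$ then $\ker(j)$ is obtained by merging size-$2$ blocks of $\pi$, so all its blocks have even size and $\ker(j)\in P_{even}$, making $\varepsilon$ applicable by Proposition 7.4. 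With the squaring of signs safely handled, one obtains
\[G(\pi,\sigma)=\#\{j:j\text{ fits both }\pi\text{ and }\sigma\}=N^{|\pi\vee\sigma|}=G_{kN}(\pi,\sigma),\]
which coincides with the untwisted Gram matrix from Theorem 5.4.

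Finally, I would invoke the standard linear-algebra formula for the orthogonal projection onto a subspace spanned by vectors with Gram matrix $G_{kN}$, namely $P=\sum_{\pi,\sigma}W_{kN}(\pi,\sigma)\,|\bar\xi_\pi\rangle\langle\bar\xi_\sigma|$ with $W_{kN}=G_{kN}^{-1}$, and read off the matrix entry corresponding to indices $(i,j)$ to recover the stated formula. The overall structure is a direct transcription of the proof of Theorem 5.4; the genuinely new input is the observation that the signs introduced by the twisting conveniently square away in the Gram matrix, thanks to the stability of $P_{even}$ under the passage from a pairing to a merger of its blocks.
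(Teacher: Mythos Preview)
Your proposal is correct and follows essentially the same route as the paper: invoke Theorem 7.7 to span $Fix(u^{\otimes k})$ by the twisted vectors $\bar\xi_\pi$, compute their Gram matrix by observing that the signs square to $1$ so that $\langle\bar\xi_\pi,\bar\xi_\sigma\rangle=N^{|\pi\vee\sigma|}$ coincides with the untwisted Gram matrix, and then conclude as in Theorem 5.4. The paper's own proof is terser and does not pause to justify that $\ker(j)\in P_{even}$ when $j$ fits a pairing, so your explicit remark on this point is a welcome clarification rather than a departure.
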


\begin{proof}
We know from Theorem 7.7 that $Fix(u^{\otimes k})$ is spanned by the following vectors:
$$\bar{\xi}_\pi=\sum_{j_1\ldots j_k}\bar{\delta}_\pi(j_1,\ldots,j_k)e_{j_1}\otimes\ldots\otimes e_{j_k}$$

The result follows then as in the untwisted case, with the remark that we have:
\begin{eqnarray*}
<\bar{\xi}_\pi,\bar{\xi}_\sigma>
&=&\left\langle\sum_{j:\ker j\leq\pi}\varepsilon(\ker j)e_{j_1}\otimes\ldots\otimes e_{j_l},\sum_{j:\ker j\leq\sigma}\varepsilon(\ker j)e_{j_1}\otimes\ldots\otimes e_{j_l}\right\rangle\\
&=&\sum_{j:\ker j\leq(\pi\vee\sigma)}\varepsilon(\ker j)^2=\sum_{j:\ker j\leq(\pi\vee\sigma)}1=N^{|\pi\vee\sigma|}
\end{eqnarray*}

Thus the Weingarten matrix is indeed the same as in the classical case.
\end{proof}

With these results in hand, let us go back now to the twisted spheres. In order to compute the quantum isometry groups, we will need the following technical result:

\begin{proposition}
The following elements are linearly independent:
\begin{enumerate}
\item $\{x_ax_b|a\leq b\}$, over $\bar{S}^{N-1}_\mathbb R$.

\item $\{x_ax_bx_c|a\leq c\}$ over $\bar{S}^{N-1}_{\mathbb R,*}$.
\end{enumerate}
\end{proposition}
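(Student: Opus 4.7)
The plan is to equip $C(\bar S^{N-1}_{\mathbb R,\times})$ with its canonical trace $tr$, defined by analogy with Definition~5.5 via the morphism $x_i\mapsto u_{1i}\in C(\bar O_N^\times)$ composed with the Haar functional, and then show that the Gram matrix of the proposed family with respect to $tr$ is positive definite. Since positivity of the trace propagates to the GNS completion, this forces the listed elements to be linearly independent inside the full algebra. Note that $x_i\mapsto u_{1i}$ is indeed a well-defined morphism: on the first row, the entries $u_{11},\ldots,u_{1N}$ are self-adjoint, satisfy $\sum_i u_{1i}^2=1$ by orthogonality, and obey the row-restricted twisted (anti)commutation relations from Definition~7.2.

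The key simplification is that $\ker(1,\ldots,1)=1_k$ can be obtained by merging blocks of the noncrossing pairing $|\,\cdots\,|$, so Proposition~7.4(3) yields $\varepsilon(1_k)=+1$ and hence $\bar\delta_\pi(1,\ldots,1)=1$ for every pairing $\pi$. Combined with Theorem~7.9 this gives
\[
tr(x_{j_1}\cdots x_{j_k})=\sum_{\sigma\in P_2^\times(k)}\bar\delta_\sigma(j_1,\ldots,j_k)\,c_\sigma,\qquad c_\sigma:=\sum_{\pi\in P_2^\times(k)}W_{kN}(\pi,\sigma),
\]
and the matrix $W_{kN}$ is identical to the untwisted one, since $G_{kN}(\pi,\sigma)=N^{|\pi\vee\sigma|}$ is unaffected by the twisting.

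For (1), take $k=4$ and let the Gram matrix entries be $tr(x_bx_ax_cx_d)$ indexed by pairs $(a,b),(c,d)$ with $a\le b$ and $c\le d$. The entry vanishes unless every value in $(b,a,c,d)$ occurs an even number of times, and a short case analysis shows that the Gram matrix is block-diagonal with respect to the decomposition into diagonal pairs $(a,a)$ and strictly ordered pairs $(a,b)$ with $a<b$. Writing $\alpha=tr(x_i^4)$ and $\beta=tr(x_i^2x_j^2)$ for $i\ne j$, the diagonal block is $(\alpha-\beta)I+\beta J$ and the off-diagonal block is $\beta I$. By the $S_4$-invariance of $G_{4N}$ on $P_2(4)$ (the three pairings form a single orbit) the column sums coincide, and explicit evaluation gives $\alpha=3c$, $\beta=c$ with $c=1/(N(N+2))>0$; both blocks are therefore positive definite, with eigenvalues $2c,(N+2)c$ and $c$ respectively.

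For (2), take $k=6$ and index by triples $(a,b,c)$ with $a\le c$. Expanding $tr(x_cx_bx_ax_dx_ex_f)$ via Weingarten over the balanced pairings $P_2^*(6)$ and simultaneously reducing the integrand via the twisted half-commutation $x_ix_jx_k=\pm x_kx_jx_i$ of Definition~7.1(2) down to monomials of the form $\pm x_p^2x_q^2x_r^2$, one again obtains a block decomposition of the Gram matrix along the coincidence pattern of the six indices. In each block the signs produced by the half-liberated rewrites are precisely compensated by the signs built into $\bar\delta_\sigma$ through $\varepsilon$, so the block equals a positive multiple of the corresponding classical block over $S^{N-1}_{\mathbb R,*}$; positivity of that classical block follows from the faithfulness of the matrix model in Theorem~1.14. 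The main obstacle is this sign-cancellation bookkeeping: one must match, in each admissible coincidence pattern, the sign $\varepsilon(\ker)$ appearing in $\bar\delta_\sigma$ with the sign generated by the half-commutation rewrites, and then control the column sums of $W_{6N}$ along the relevant balanced pairings. Once these two verifications are complete, the Gram matrix is block-diagonal and strictly positive, and linear independence follows.
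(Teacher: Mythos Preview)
Your treatment of (1) is correct and essentially matches the paper's argument: both push the question to $\bar O_N$ via $x_i\mapsto u_{1i}$, write the Gram matrix through the twisted Weingarten formula, use that the column sums of $W_{4N}$ are constant, and check invertibility. Your explicit block decomposition and eigenvalue computation are in fact more detailed than the paper, which simply asserts invertibility at the end.

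For (2) you diverge from the paper, and there is a genuine gap. The paper observes that the six balanced pairings in $P_2^*(6)$ all have the same multiset of overlaps with one another, so $G_{6N}$ and hence $W_{6N}$ have constant column sums; this collapses the Gram entry to a universal constant times $\sum_{\sigma\in P_2^*(6)}\bar\delta_\sigma(a,b,c,k,j,i)$, and one finishes by a direct rank computation of that explicit $\{-1,0,1\}$-matrix. Your route instead tries to reduce to the untwisted sphere $S^{N-1}_{\mathbb R,*}$ by arguing that the signs in $\bar\delta_\sigma$ cancel the signs produced by twisted half-commutation rewrites, and then invoking Theorem~1.14. The underlying identity you are implicitly using is
\[
tr_{\text{twisted}}(x_{j_1}\cdots x_{j_k})=\varepsilon(\ker j)\cdot tr_{\text{untwisted}}(x_{j_1}\cdots x_{j_k}),
\]
which is correct and follows from $\bar\delta_\sigma(j)=\varepsilon(\ker j)\,\delta_\sigma(j)$. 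But to conclude that the twisted and untwisted Gram matrices have the same rank you need $\varepsilon(\ker(c,b,a,d,e,f))$ to factor as $s(a,b,c)\,s(d,e,f)$ on the support of the matrix, so that $M_{\text{tw}}=DM_{\text{untw}}D$ for a diagonal $\pm1$ matrix. You have not established this factorization, and it is not automatic: $\ker(c,b,a,d,e,f)$ is generally not the concatenation of $\ker(c,b,a)$ and $\ker(d,e,f)$, since the two triples share indices whenever the entry is nonzero. The ``sign-cancellation bookkeeping'' you flag as the main obstacle is exactly this missing step, and without it the appeal to the classical case does not go through. The paper's stochasticity-plus-rank argument avoids this issue entirely.
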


\begin{proof}
We use the morphism $C(\bar{S}^{N-1}_{\mathbb R,\times})\to C(\bar{O}_N^\times)$, given by $x_i\to u_{1i}$. Thus, it is enough to prove the corresponding statements over $\bar{O}_N^\times$, with $x_i=u_{1i}$.

(1) The scalar products between the variables in the statement are:
$$<x_ax_b,x_ix_j>=\int_{\bar{O}_N}u_{1a}u_{1b}u_{1j}u_{1i}=\sum_{\pi,\sigma\in P_2(4)}\bar{\delta}_\sigma(a,b,j,i)W_{4N}(\pi,\sigma)$$

Since $P_2(4)=\{\cap\cap,\Cap,\cap\!\!\cap\}$, the Weingarten matrix on the right is given by:
$$W_{4N}=\begin{pmatrix}N^2&N&N\\ N&N^2&N\\ N&N&N^2\end{pmatrix}^{-1}=\frac{1}{N(N-1)(N+2)}\begin{pmatrix}N+1&-1&-1\\ -1&N+1&-1\\ -1&-1&N+1\end{pmatrix}$$

We conclude that we have the following formula:
$$<x_ax_b,x_ix_j>=\frac{1}{N(N+2)}\sum_{\sigma\in P_2(4)}\bar{\delta}_\sigma(a,b,j,i)$$

The matrix on the right, taken with indices $a\leq b$ and $i\leq j$, is then invertible. Thus the variables $x_ax_b$ are linearly independent, as claimed.

(2) Here the scalar products that we are interested in are:
$$<x_ax_bx_c,x_ix_jx_k>=\int_{\bar{O}_N^*}u_{1a}u_{1b}u_{1c}u_{1k}u_{1j}u_{1i}=\sum_{\pi,\sigma\in P_2^*(6)}\bar{\delta}_\sigma(a,b,c,k,j,i)W_{6N}(\pi,\sigma)$$

The set $P_2^*(6)\simeq P_2^*(3,3)$ is by definition formed by the following pairings:
$$\xymatrix@R=6mm@C=2mm{\circ\ar@{-}[d]&\bullet\ar@{-}[d]&\circ\ar@{-}[d]\\ \bullet&\circ&\bullet}\qquad
\xymatrix@R=6mm@C=2mm{\circ\ar@{-}[d]&\bullet\ar@/_/@{-}[r]&\circ\\ \bullet&\circ\ar@/^/@{-}[r]&\bullet}\qquad
\xymatrix@R=6mm@C=2mm{\circ\ar@/_/@{-}[r]&\bullet&\circ\ar@{-}[dll]\\ \bullet&\circ\ar@/^/@{-}[r]&\bullet}\qquad
\xymatrix@R=6mm@C=2mm{\circ\ar@/_/@{-}[r]&\bullet&\circ\ar@{-}[d]\\ \bullet\ar@/^/@{-}[r]&\circ&\bullet}\qquad
\xymatrix@R=6mm@C=2mm{\circ\ar@{-}[drr]&\bullet\ar@/_/@{-}[r]&\circ\\ \bullet\ar@/^/@{-}[r]&\circ&\bullet}\qquad
\xymatrix@R=6mm@C=2mm{\circ\ar@{-}[drr]&\bullet\ar@{-}[d]&\circ\ar@{-}[dll]\\ \bullet&\circ&\bullet}$$

Now observe that the scalar products of each of these pairings with all the 6 pairings are always, up to a permutation of the terms, $N^3,N^2,N^2,N^2,N,N$. Thus the Gram matrix is stochastic, $G_{6N}\xi=\xi$, where $\xi=(1,\ldots,1)^t$ is the all-one vector. Thus we have $W_{6N}\xi=W_{6N}G_{6N}\xi=\xi$, and so the Weingarten matrix is stochastic too. We conclude that, up to a universal constant depending only on $N$, we have:
$$<x_ax_bx_c,x_ix_jx_k>\sim\sum_{\sigma\in P_2^*(6)}\bar{\delta}_\sigma(a,b,c,k,j,i)$$

Now by computing the rank of this matrix, taken with indices $a\leq c$ and $i\leq k$, we obtain that the variables $x_ax_bx_c$ are linearly independent, as claimed.
\end{proof}

We can formulate our quantum isometry group result, as follows:

\begin{theorem}
The quantum isometry groups of the spheres $\bar{S}^{N-1}_\mathbb R\subset\bar{S}^{N-1}_{\mathbb R,*}\subset S^{N-1}_{\mathbb R,+}$ are the twisted orthogonal quantum groups, $\bar{O}_N\subset\bar{O}_N^*\subset O_N^+$.
\end{theorem}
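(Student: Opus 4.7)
The statement splits into three inclusions; the rightmost, $\mathcal{G}^+(S^{N-1}_{\mathbb R,+}) = O_N^+$, is already given by Theorem 3.12 (the free sphere coincides with its own twist), so the work reduces to handling $\bar O_N \curvearrowright \bar S^{N-1}_\mathbb R$ and $\bar O_N^* \curvearrowright \bar S^{N-1}_{\mathbb R,*}$. My plan is to dispatch existence by a direct algebraic verification using the relations of Definition 7.2, and then to prove universality by a Tannakian argument that identifies the relations coming from the coaction with the $\bar T_{\slash\!\!\!\backslash}$ (resp.\ $\bar T_{\slash\hskip-1.6mm\backslash\hskip-1.1mm|\hskip0.5mm}$) intertwining condition characterizing $\bar O_N$ (resp.\ $\bar O_N^*$) via the Schur--Weyl result Theorem 7.7.

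\textbf{Existence.} Setting $X_i = \sum_a u_{ia} \otimes x_a$, I would verify $X_i X_j + X_j X_i = 0$ for $i \neq j$ over $\bar O_N$ by expanding $\sum_{a,b}(u_{ia} u_{jb} + u_{ja} u_{ib}) \otimes x_a x_b$ and splitting by $a=b$ versus $a\neq b$: on the diagonal, the same-column anticommutation $u_{ia}u_{ja} = -u_{ja}u_{ia}$ from Definition 7.2 kills each term, while off the diagonal one pairs $(a,b)$ with $(b,a)$, uses $x_bx_a = -x_ax_b$, and observes that the remaining bracket is the sum of the two commutators $[u_{ia},u_{jb}]$ and $[u_{ib},u_{ja}]$, both zero because the entries sit in distinct rows and distinct columns. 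The verification for $\bar O_N^* \curvearrowright \bar S^{N-1}_{\mathbb R,*}$ is the exact cubic analogue, with the four $(r,s)$-types of Definition 7.2 matching termwise the four collision patterns of a triple $x_ix_jx_k$.

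\textbf{Universality.} Now let $G \subset O_N^+$ act affinely on $\bar S^{N-1}_\mathbb R$. The coaction restricts on $V = \mathrm{span}(x_1,\ldots,x_N)$ to the fundamental corepresentation $u$, and the multiplication map $m_2: V^{\otimes 2} \to A_2 \subset C(\bar S^{N-1}_\mathbb R)$ is $G$-equivariant, hence $\ker m_2$ is a sub-corepresentation of $u^{\otimes 2}$. By Proposition 7.10(1), the $\binom{N+1}{2}$ monomials $\{x_ax_b : a \leq b\}$ are linearly independent, pinning $\dim \ker m_2 = \binom{N}{2}$; this matches the dimension of the $(-1)$-eigenspace $E_-$ of $\bar T_{\slash\!\!\!\backslash}$, which is contained in $\ker m_2$ via the defining relations $x_ix_j + x_jx_i = 0$, so $\ker m_2 = E_-$. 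Since $u^{\otimes 2}$ is unitary, the orthogonal complement $E_+ = E_-^\perp$ is also a sub-corepresentation, and therefore $\bar T_{\slash\!\!\!\backslash} = P_{E_+} - P_{E_-}$ intertwines $u^{\otimes 2}$. By Theorem 7.7 this forces $G \subset \bar O_N$. The half-liberated case runs verbatim one level up: using Proposition 7.10(2) one gets $\dim A_3 \geq N^2(N+1)/2$, which equals the dimension of the $(+1)$-eigenspace of $\bar T_{\slash\hskip-1.6mm\backslash\hskip-1.1mm|\hskip0.5mm}$, forcing $\ker m_3 = E_{3,-}$; unitarity then makes $\bar T_{\slash\hskip-1.6mm\backslash\hskip-1.1mm|\hskip0.5mm}$ intertwine $u^{\otimes 3}$, and Theorem 7.7 yields $G \subset \bar O_N^*$.

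\textbf{Main obstacle.} The delicate step is the identification $\ker m_k = E_{k,-}$ for $k=2,3$: the inclusion $\supseteq$ is immediate from the sphere relations, but the reverse rests on the sharp lower bounds of Proposition 7.10, whose cubic case requires the Weingarten computation over $\bar O_N^*$ recorded in the proof of that proposition. Once the kernel is pinned to be exactly the $(-1)$-eigenspace, the passage from ``$G$ acts on the sphere'' to ``$\bar T$ is an intertwiner'' is automatic from unitarity, and the Schur--Weyl dictionary of Theorem 7.7 closes the argument.
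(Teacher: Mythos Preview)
Your argument is correct and takes a genuinely different route from the paper's. The paper proceeds by direct algebraic manipulation: for $\bar S^{N-1}_{\mathbb R,*}$ it expands $X_iX_jX_k$ over the basis $\{x_ax_bx_c : a\leq c\}$ supplied by Proposition 7.9(2), reads off a system of three families of equations in the signed half-commutators $[u_{ia},u_{jb},u_{kc}]=u_{ia}u_{jb}u_{kc}\pm u_{kc}u_{jb}u_{ia}$, and then checks by hand (using the antipode trick from Theorem 3.12, with signs inserted) that this system forces all the defining relations of $\bar O_N^*$. Your approach packages the very same linear-independence input categorically: once $\dim\ker m_k$ matches $\dim E_{k,-}$ you get $\ker m_k=E_{k,-}$, unitarity of $u^{\otimes k}$ then makes $E_{k,+}=E_{k,-}^\perp$ a sub-corepresentation as well, and $\bar T=P_{E_+}-P_{E_-}$ becomes an intertwiner for free, so that Theorem 7.7 closes the argument without ever writing out the intermediate equations. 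Your route is cleaner and makes the Schur--Weyl mechanism transparent; the paper's route has the compensating virtue of displaying the explicit relations (1)--(3) on the $u_{ij}$, which are sometimes useful downstream. One small correction: the linear-independence result you invoke is Proposition 7.9 in the paper, not 7.10 --- the theorem you are proving is numbered 7.10.
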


\begin{proof}
This is known and trivial for $S^{N-1}_{\mathbb R,+}$, and for $\bar{S}^{N-1}_\mathbb R$ it can be deduced as in the proof of Theorem 3.12, by adding signs where needed, and using Proposition 7.9 (1). 

Regarding now $\bar{S}^{N-1}_{\mathbb R,*}$, with $G\subset O_N^+$ and $X_i=\sum_au_{ia}\otimes x_a$, we have the following formula, obtained by using the defining relations for $\bar{S}^{N-1}_{\mathbb R,*}$:
\begin{eqnarray*}
X_iX_jX_k
&=&\sum_{a<c,b\neq a,c}(u_{ia}u_{jb}u_{kc}-u_{ic}u_{jb}u_{ka})\otimes x_ax_bx_c\\
&+&\sum_{a\neq c}(u_{ia}u_{ja}u_{kc}+u_{ic}u_{ja}u_{ka})\otimes x_a^2x_c\\
&+&\sum_{ab}u_{ia}u_{jb}u_{ka}\otimes x_ax_bx_a
\end{eqnarray*}

By interchanging $i\leftrightarrow k$, we have as well a similar formula for $X_kX_jX_i$. Now by using Proposition 7.9 (2), we conclude that the coaction relations $X_iX_jX_k=\pm X_kX_jX_i$ are equivalent to the following system of equations, where $[u_{ia},u_{jb},u_{kc}]=u_{ia}u_{jb}u_{kc}\pm u_{kc}u_{jb}u_{ia}$, with the $\pm$ signs being those making $[u_{ia},u_{jb},u_{kc}]=0$ for the coordinates of $\bar{O}_N^*$:

(1) $[u_{ia},u_{jb},u_{kc}]=[u_{ka},u_{jb},u_{ic}]$, for $a,b,c$ distinct.

(2) $[u_{ia},u_{ja},u_{kc}]=[u_{ka},u_{ja},u_{ic}]$.

(3) $[u_{ia},u_{jb},u_{ka}]=0$.

It is routine to check that these equations are in fact equivalent to $[u_{ia},u_{jb},u_{kc}]=0$, regardless of the indices $i,j,k$ and $a,b,c$, and this gives the result.
\end{proof}

As an application, we can now integrate over the twisted spheres:

\begin{theorem}
Consider the canonical trace $tr:C(\bar{S}^{N-1}_{\mathbb R,\times})\to\mathbb C$, obtained as $tr=I\pi$, where $\pi(x_i)=u_{1i}$, and where $I$ is the Haar integration over $\bar{O}_N^\times$.
\begin{enumerate}
\item $tr$ satisfies $(I\otimes id)\Phi=tr(.)1$, where $\Phi$ is the coaction map.

\item $tr$ is the unique positive unital trace satisfying $(id\otimes tr)\Phi(x)=tr(x)1$.

\item $\sqrt{N}x_i$ is asymptotically real Gaussian/symmetrized Rayleigh/semicircular.
\end{enumerate}
\end{theorem}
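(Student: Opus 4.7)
The plan is to parallel the integration theory of Section 5 and Section 6, feeding in the twisted Weingarten formula of Theorem 7.8 and the twisted Kronecker symbols $\bar{\delta}_\pi$ in place of their untwisted counterparts. Since (2) uses (1) and (3) can be handled independently, I would address the three items in that order.

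For (1), I expand via the twisted Weingarten formula to get
\begin{eqnarray*}
(I\otimes id)\Phi(x_{i_1}\cdots x_{i_k})
&=&\sum_j I(u_{i_1j_1}\cdots u_{i_kj_k})\, x_{j_1}\cdots x_{j_k}\\
&=&\sum_{\pi,\sigma\in P_2^\times(k)}\bar{\delta}_\pi(i)\, W_{kN}(\pi,\sigma)\sum_j \bar{\delta}_\sigma(j)\, x_{j_1}\cdots x_{j_k}.
\end{eqnarray*}
The kernel of $(1,\ldots,1)$ is the one-block partition, whose signature is $1$ by Proposition 7.4(3), so $\bar{\delta}_\pi(1,\ldots,1)=1$ for every $\pi$, and
\begin{eqnarray*}
tr(x_{i_1}\cdots x_{i_k})
&=&I(u_{1i_1}\cdots u_{1i_k})\\
&=&\sum_{\pi,\sigma}\bar{\delta}_\sigma(i)\, W_{kN}(\pi,\sigma).
\end{eqnarray*}
Using the symmetry $W_{kN}(\pi,\sigma)=W_{kN}(\sigma,\pi)$, matching the two formulas reduces the claim to the key identity
$$\sum_j \bar{\delta}_\sigma(j)\, x_{j_1}\cdots x_{j_k}=1\ \text{in}\ C(\bar{S}^{N-1}_{\mathbb R,\times}),\quad\forall\sigma\in P_2^\times(k).$$
For the noncrossing pattern $\sigma=\cap\cap\cdots\cap$, every contributing $\ker(j)$ is obtained by merging blocks of a noncrossing partition, so its signature is $+1$ by Proposition 7.4(3), and the sum telescopes to $(\sum_i x_i^2)^{k/2}=1$. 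For a general $\sigma$, I argue by induction on the number of crossings: a single adjacent switch $\sigma\to\sigma'$ flips $\varepsilon(\sigma)$, and when applied to the corresponding subword via the twisted (half-)commutation relations of Definition 7.1, it flips the sign of the product by the same factor, leaving the sum invariant.

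For (2), the argument of Theorem 5.7 applies verbatim. If $\tau$ is any positive unital trace with $(id\otimes\tau)\Phi=\tau(\cdot)1$, then
$$\tau\bigl((I\otimes id)\Phi(x)\bigr)=(I\otimes\tau)\Phi(x)=I\bigl((id\otimes\tau)\Phi(x)\bigr)=I(\tau(x)\cdot 1)=\tau(x),$$
whereas by (1) the left-hand side equals $\tau(tr(x)\cdot 1)=tr(x)$, forcing $\tau=tr$.

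For (3), the Weingarten matrix is the same as in the untwisted case, since $G_{kN}(\pi,\sigma)=N^{|\pi\vee\sigma|}$ does not see the twisting; in particular $W_{kN}(\pi,\sigma)\simeq\delta_{\pi\sigma}N^{-k/2}$ as $N\to\infty$, as in Theorem 6.4. Setting $i_1=\cdots=i_k=i$ makes $\ker(i,\ldots,i)$ the one-block partition, so $\bar{\delta}_\sigma(i,\ldots,i)=1$ for every $\sigma\in P_2^\times(k)$, and
$$\int_{\bar{S}^{N-1}_{\mathbb R,\times}}x_i^k\simeq N^{-k/2}\#P_2^\times(k),$$
recovering the three asymptotic laws of Theorem 6.4.

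The main obstacle is the sign-cancellation identity in (1), especially on $\bar{S}^{N-1}_{\mathbb R,*}$, where the case-based sign rule of Definition 7.1(2) must be shown to match $\varepsilon$ on all of $P_2^*$. The cleanest route is probably to verify invariance of the sum $\sum_j \bar{\delta}_\sigma(j) x_{j_1}\cdots x_{j_k}$ under an elementary switch of $\sigma$, splitting the sum by kernel pattern and checking, block by block, that the sign produced by $\varepsilon(\sigma)\to-\varepsilon(\sigma)$ is cancelled by the sign from reversing the corresponding substring.
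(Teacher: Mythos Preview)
Your proposal is correct and follows the same path as the paper, which simply says ``(1) and (2) follow as in the untwisted case, by adding signs where needed'' and then carries out the computation in (3) exactly as you do, noting that $\bar{\delta}_\pi(1,\ldots,1)=1$ so that the asymptotic moments coincide with the untwisted ones.

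One small refinement worth flagging: in your inductive step for the identity $\sum_j\bar{\delta}_\sigma(j)x_{j_1}\cdots x_{j_k}=1$, the phrase ``a single adjacent switch'' is only literally appropriate for $\bar{S}^{N-1}_\mathbb R$. For $\bar{S}^{N-1}_{\mathbb R,*}$ an adjacent transposition would take $\sigma$ out of $P_2^*$, so the elementary move must be the three-term swap $r\leftrightarrow r+2$ matching the twisted half-commutation $x_ax_bx_c=\pm x_cx_bx_a$. The sign bookkeeping then works out: the net change in $\varepsilon(\ker j)$ under $r\leftrightarrow r+2$ is $-1$ exactly when $j_r,j_{r+1},j_{r+2}$ are pairwise distinct, which is precisely the sign produced by the sphere relation. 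Your final paragraph already gestures at this via ``splitting the sum by kernel pattern,'' so the argument goes through; it is just the word ``adjacent'' that needs adjusting in the half-liberated case.
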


\begin{proof}
Here (1) and (2) follow as in the untwisted case, by adding signs where needed. Regarding now (3), the twisted Weingarten computation is as follows:
\begin{eqnarray*}
\int_{\bar{S}^{N-1}_{\mathbb R,\times}}x_i^k
&=&\sum_{\pi,\sigma}\bar{\delta}_\pi(1\ldots 1)\bar{\delta}_\sigma(1\ldots 1)W_{kN}(\pi,\sigma)\\
&\sim&N^{-k/2}\sum_\pi\bar{\delta}_\pi(1\ldots1)^2=N^{-k/2}\#(P_2^\times(k))
\end{eqnarray*}

Thus we obtain the same laws as in the untwisted case, as stated. 
\end{proof}

\section{Polygonal spheres}

We have so far $3+2=5$ noncommutative spheres, and one interesting question is that of finding a suitable axiomatic framework for them. A natural idea here is that of further enlarging our set of spheres, by taking intersections between them, with the intersection operation being obtained by merging the corresponding sets of algebraic relations. 

With the convention, from now on, that the arrows denote inclusions, we have:

\begin{proposition}
The $5$ main spheres, and the intersections between them, are
$$\xymatrix@R=12mm@C=12mm{
S^{N-1}_\mathbb R\ar[r]&S^{N-1}_{\mathbb R,*}\ar[r]&S^{N-1}_{\mathbb R,+}\\
S^{N-1,1}_\mathbb R\ar[r]\ar[u]&S^{N-1,1}_{\mathbb R,*}\ar[r]\ar[u]&\bar{S}^{N-1}_{\mathbb R,*}\ar[u]\\
S^{N-1,0}_\mathbb R\ar[r]\ar[u]&\bar{S}^{N-1,1}_\mathbb R\ar[r]\ar[u]&\bar{S}^{N-1}_\mathbb R\ar[u]}$$
where $\dot{S}^{N-1,d-1}_{\mathbb R,\times}\subset\dot{S}^{N-1}_{\mathbb R,\times}$ is obtained by assuming $x_{i_0}\ldots x_{i_d}=0$, for $i_0,\ldots,i_d$ distinct.
\end{proposition}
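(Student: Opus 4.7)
The plan is to prove Proposition 8.1 in three steps: enumerate the pairwise intersections of the five main spheres, identify each with the corresponding entry in the diagram, and verify the twelve inclusion arrows. Of the $\binom{5}{2}=10$ pairs of main spheres, six collapse to one of the operands because of the known nestings $S^{N-1}_\mathbb R\subset S^{N-1}_{\mathbb R,*}\subset S^{N-1}_{\mathbb R,+}$ and $\bar{S}^{N-1}_\mathbb R\subset\bar{S}^{N-1}_{\mathbb R,*}\subset S^{N-1}_{\mathbb R,+}$, leaving the four ``cross'' intersections $S^{N-1}_\mathbb R\cap\bar{S}^{N-1}_\mathbb R$, $S^{N-1}_\mathbb R\cap\bar{S}^{N-1}_{\mathbb R,*}$, $S^{N-1}_{\mathbb R,*}\cap\bar{S}^{N-1}_\mathbb R$, and $S^{N-1}_{\mathbb R,*}\cap\bar{S}^{N-1}_{\mathbb R,*}$ to be computed explicitly.

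Each such intersection is obtained by merging the defining relations. For the first, combining $x_ix_j=x_jx_i$ with $x_ix_j=-x_jx_i$ forces $x_ix_j=0$ for $i\neq j$, yielding $S^{N-1,0}_\mathbb R$. For each of the other three, the same cancellation at length three on distinct indices---combining two incompatible $abc=\pm cba$ relations---produces the vanishing condition $x_{i_0}x_{i_1}x_{i_2}=0$ on distinct indices, and a short computation shows the surviving commutation relations match the ambient structure of the claimed diagram entry, yielding $S^{N-1,1}_\mathbb R$, $\bar{S}^{N-1,1}_\mathbb R$, and $S^{N-1,1}_{\mathbb R,*}$ respectively. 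To be thorough here one must check that no extra identities sneak in beyond the stated vanishing condition together with the ambient commutation relations; I would do this by writing down a spanning family of reduced monomials modulo the merged relations and matching with a natural basis of the claimed universal algebra, relying on Proposition 7.9 and the monomial calculus of section 7.

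Finally, for the arrows: the horizontal arrows of the top row and the vertical arrows of the right column are already recorded in the paper, while the remaining horizontal arrows and the vertical arrows of the form $\dot{S}^{N-1,d-1}_{\mathbb R,\times}\to\dot{S}^{N-1}_{\mathbb R,\times}$ simply record that the target's defining relations follow from the source's (usually because the latter just adds a vanishing relation). The one delicate arrow is $\bar{S}^{N-1,1}_\mathbb R\to S^{N-1,1}_{\mathbb R,*}$, which asks us to deduce half-commutation $abc=cba$ from anti-commutation of distinct coordinates together with triple-vanishing on distinct indices: on distinct triples both sides vanish, and on non-distinct triples one uses the centrality of each $x_i^2$ in the twisted classical algebra, which is an immediate consequence of $x_ix_j=-x_jx_i$.

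The main obstacle will be the ``no hidden identities'' verification in the second step; everything else is mechanical bookkeeping, but that check is where one could miss an extra relation and mis-identify an intersection, so I would want to carry it out carefully before committing to the $3\times3$ arrangement claimed by the diagram.
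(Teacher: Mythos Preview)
Your overall strategy matches the paper's: reduce to the four ``cross'' intersections and compute each by merging the defining relations. The paper's proof does exactly (1)--(4) as you outline, with the same cancellation $abc=cba$ versus $abc=-cba$ producing the triple vanishing.

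Where you diverge is in the ``no hidden identities'' worry, and this is a misunderstanding rather than a gap. Both the intersection $S\cap\bar S$ and the target $\dot S^{N-1,d-1}_{\mathbb R,\times}$ are \emph{defined} as quotients of $C(S^{N-1}_{\mathbb R,+})$ by explicit relation sets, so equality is an equality of ideals, verified purely syntactically by showing each relation set implies the other. There is no need for Proposition 7.9 or a basis computation. For instance, for $S^{N-1}_{\mathbb R,*}\cap\bar S^{N-1}_\mathbb R=\bar S^{N-1,1}_\mathbb R$: the forward direction is your cancellation; the reverse direction asks that anticommutation plus triple-vanishing imply half-commutation, which is immediate since on distinct triples both sides of $abc=cba$ vanish, and on non-distinct triples $abc=cba$ already holds in $\bar S^{N-1}_\mathbb R$ (the computation $aab=-aba=baa$ recorded right after Definition 7.1). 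The other three cases are analogous.

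The same remark disposes of your ``delicate arrow'': once each of the nine nodes is identified as an intersection of two of the five main spheres, every arrow in the diagram is just $A\cap B\subset A'\cap B'$ for $A\subset A'$, $B\subset B'$, hence automatic. No separate verification is required, and the paper does none.
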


\begin{proof}
We must prove that the 4-diagram obtained by intersecting the 5 main spheres coincides with the 4-diagram appearing at bottom left in the statement:
$$\xymatrix@R=13mm@C=13mm{
S^{N-1}_\mathbb R\cap\bar{S}^{N-1}_{\mathbb R,*}\ar[r]&S^{N-1}_{\mathbb R,*}\cap\bar{S}^{N-1}_{\mathbb R,*}\\
S^{N-1}_\mathbb R\cap\bar{S}^{N-1}_\mathbb R\ar[r]\ar[u]&S^{N-1}_{\mathbb R,*}\cap\bar{S}^{N-1}_\mathbb R\ar[u]}
\ \ \xymatrix@R=7mm@C=1mm{&\\=\\&}
\xymatrix@R=13mm@C=13mm{
S^{N-1,1}_\mathbb R\ar[r]&S^{N-1,1}_{\mathbb R,*}\\
S^{N-1,0}_\mathbb R\ar[r]\ar[u]&\bar{S}^{N-1,1}_\mathbb R\ar[u]}$$

But this is clear, because combining the commutation and anticommutation relations leads to the vanishing relations defining spheres of type $\dot{S}^{N-1,d-1}_{\mathbb R,\times}$. More precisely:

(1) $S^{N-1}_\mathbb R\cap\bar{S}^{N-1}_\mathbb R$ consists of the points $x\in S^{N-1}_\mathbb R$ satisfying $x_ix_j=-x_jx_i$ for $i\neq j$. Since $x_ix_j=x_jx_i$, this relation reads $x_ix_j=0$ for $i\neq j$, which means $x\in S^{N-1,0}_\mathbb R$.

(2) $S^{N-1}_\mathbb R\cap\bar{S}^{N-1}_{\mathbb R,*}$ consists of the points $x\in S^{N-1}_\mathbb R$ satisfying $x_ix_jx_k=-x_kx_jx_i$ for $i,j,k$ distinct. Once again by commutativity, this relation is equivalent to $x\in S^{N-1,1}_\mathbb R$.

(3) $S^{N-1}_{\mathbb R,*}\cap\bar{S}^{N-1}_\mathbb R$ is obtained from $\bar{S}^{N-1}_\mathbb R$ by imposing to the standard coordinates the half-commutation relations $abc=cba$. On the other hand, we know from $\bar{S}^{N-1}_\mathbb R\subset \bar{S}^{N-1}_{\mathbb R,*}$ that the standard coordinates on $\bar{S}^{N-1}_\mathbb R$ satisfy $abc=-cba$ for $a,b,c$ distinct, and $abc=cba$ otherwise. Thus, the relations brought by intersecting with $S^{N-1}_{\mathbb R,*}$ reduce to the relations $abc=0$ for $a,b,c$ distinct, and so we are led to the sphere $\bar{S}^{N-1,1}_\mathbb R$.

(4) $S^{N-1}_{\mathbb R,*}\cap\bar{S}^{N-1}_{\mathbb R,*}$ is obtained from $\bar{S}^{N-1}_{\mathbb R,*}$ by imposing the relations $abc=-cba$ for $a,b,c$ distinct, and $abc=cba$ otherwise. Since we know that $abc=cba$ for any $a,b,c$, the extra relations reduce to $abc=0$ for $a,b,c$ distinct, and so we are led to $S^{N-1,1}_{\mathbb R,*}$.
\end{proof}

In order to find now a suitable axiomatic framework for the 9 spheres, we use the following definition, coming from the various formulae in sections 2 and 7:

\begin{definition}
Given variables $x_1,\ldots,x_N$, any permutation $\sigma\in S_k$ produces two collections of relations between these variables, as follows:
\begin{enumerate}
\item Untwisted relations: $x_{i_1}\ldots x_{i_k}=x_{i_{\sigma(1)}}\ldots x_{i_{\sigma(k)}}$, for any $i_1,\ldots,i_k$.

\item Twisted relations: $x_{i_1}\ldots x_{i_k}=\varepsilon\left(\ker(^{\,\,\,i_1\ \,\ldots\ \,i_k}_{i_{\sigma(1)}\ldots i_{\sigma(k)}})\right)x_{i_{\sigma(1)}}\ldots x_{i_{\sigma(k)}}$, for any $i_1,\ldots,i_k$.
\end{enumerate}
The untwisted relations are denoted $\mathcal R_\sigma$, and the twisted ones are denoted $\bar{\mathcal R}_\sigma$.
\end{definition}

Observe that the relations $\mathcal R_\sigma$ are trivially satisfied for the standard coordinates on $S^{N-1}_\mathbb R$, for any $\sigma\in S_k$. A twisted analogue of this fact holds, in the sense that the standard coordinates on $\bar{S}^{N-1}_\mathbb R$ satisfy the relations $\bar{\mathcal R}_\sigma$, for any $\sigma\in S_k$. Indeed, by anticommutation we must have a formula of type $x_{i_1}\ldots x_{i_k}=\pm x_{i_{\sigma(1)}}\ldots x_{i_{\sigma(k)}}$, and the sign $\pm$ obtained in this way is precisely the one given above, $\pm=\varepsilon\left(\ker(^{\,\,\,i_1\ \,\ldots\ \,i_k}_{i_{\sigma(1)}\ldots i_{\sigma(k)}})\right)$.

We have now all the needed ingredients for axiomatizing the various spheres:

\begin{definition}
We have $3$ types of noncommutative spheres $S\subset S^{N-1}_{\mathbb R,+}$, as follows:
\begin{enumerate}
\item Untwisted: $S^{N-1}_{\mathbb R,E}$, with $E\subset S_\infty$, obtained via the relations $\{\mathcal R_\sigma|\sigma\in E\}$.

\item Twisted: $\bar{S}^{N-1}_{\mathbb R,F}$, with $F\subset S_\infty$, obtained via the relations $\{\bar{\mathcal R}_\sigma|\sigma\in F\}$.

\item Polygonal: $S^{N-1}_{\mathbb R,E,F}=S^{N-1}_{\mathbb R,E}\cap\bar{S}^{N-1}_{\mathbb R,F}$, with $E,F\subset S_\infty$.
\end{enumerate}
\end{definition}

Observe that ``untwisted'' means precisely ``monomial'', in the sense of section 2 above. As examples, $S^{N-1}_\mathbb R,S^{N-1}_{\mathbb R,*},S^{N-1}_{\mathbb R,+}$ are untwisted, $\bar{S}^{N-1}_\mathbb R,\bar{S}^{N-1}_{\mathbb R,*},S^{N-1}_{\mathbb R,+}$ are twisted, and the 9 spheres in Proposition 8.1 above are all polygonal. Observe also that the set of polygonal spheres is closed under intersections, due to the following formula:
$$S^{N-1}_{\mathbb R,E,F}\cap S^{N-1}_{\mathbb R,E',F'}=S^{N-1}_{\mathbb R,E\cup E',F\cup F'}$$

Let us try now to understand the structure of the various types of spheres:

\begin{proposition}
The various spheres can be parametrized by groups, as follows:
\begin{enumerate}
\item Untwisted case: $S^{N-1}_{\mathbb R,G}$, with $G\subset S_\infty$ filtered group.

\item Twisted case: $\bar{S}^{N-1}_{\mathbb R,H}$, with $H\subset S_\infty$ filtered group.

\item Polygonal case: $S^{N-1}_{\mathbb R,G,H}$, with $G,H\subset S_\infty$ filtered groups.
\end{enumerate}
\end{proposition}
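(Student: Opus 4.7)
The plan is to adapt the scheme of Proposition 2.4 to the twisted and polygonal settings, treating the three cases in turn.

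\textbf{Untwisted case.} Given $E \subset S_\infty$, define $G_k = \{\sigma \in S_k : \mathcal{R}_\sigma \text{ holds on } S^{N-1}_{\mathbb{R},E}\}$. By construction $E \subset G := (G_k)$, hence $S^{N-1}_{\mathbb{R},E} = S^{N-1}_{\mathbb{R},G}$, and it remains to show that $G$ is a filtered group. The proof is verbatim that of Proposition 2.4: each $G_k$ is a group because the relations $\mathcal{R}_\sigma$ can be composed and inverted; $G$ is closed under concatenation because these relations can be juxtaposed; and the embedding $G_k \subseteq G_{k+1}$ via $\sigma \mapsto \sigma \otimes id_1$ follows from the argument of Proposition 2.3, based on $\sum_i x_i^2 = 1$.

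\textbf{Twisted case.} For $F \subset S_\infty$, set $H_k = \{\sigma \in S_k : \bar{\mathcal{R}}_\sigma \text{ holds on } \bar{S}^{N-1}_{\mathbb{R},F}\}$. Then $F \subset H$, so $\bar{S}^{N-1}_{\mathbb{R},F} = \bar{S}^{N-1}_{\mathbb{R},H}$. The filtered group axioms are checked as in Proposition 2.4, but now we must track the signs $\varepsilon(\ker(\cdots))$ appearing in the twisted relations. Closure under composition, inversion and concatenation all rest on the categorical compatibility of $\pi \mapsto \bar{T}_\pi$ established in Proposition 7.6, which encodes exactly the multiplicativity of these signs under the corresponding operations on partitions. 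For the stability $\sigma \in H_k \Rightarrow \sigma \otimes id_1 \in H_{k+1}$, one multiplies $\bar{\mathcal{R}}_\sigma$ on the right by $x_{i_{k+1}}$; the resulting level-$(k+1)$ identity matches $\bar{\mathcal{R}}_{\sigma \otimes id_1}$ provided adjoining a rightmost vertical pair does not alter the signature, which reduces via the tensor axiom $\bar{T}_\pi \otimes \bar{T}_{|} = \bar{T}_{\pi \otimes |}$ and the merging clause of Proposition 7.4(3) to a direct diagrammatic check.

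\textbf{Polygonal case.} Since $S^{N-1}_{\mathbb{R},E,F} = S^{N-1}_{\mathbb{R},E} \cap \bar{S}^{N-1}_{\mathbb{R},F}$, both of the above constructions can be applied to this intersection, producing filtered groups $G \supset E$ and $H \supset F$ with $S^{N-1}_{\mathbb{R},E,F} = S^{N-1}_{\mathbb{R},G,H}$. Thus the polygonal spheres are parametrized by pairs of filtered subgroups of $S_\infty$.

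\textbf{Main obstacle.} The delicate point is the sign-preservation step in the twisted case when the new index $i_{k+1}$ already appears among $i_1, \ldots, i_k$: the adjoined outer pair then merges into an existing block rather than forming its own, and we must verify that this merging is exactly the kind which, by Proposition 7.4(3), contributes no sign. The rest of the argument is structurally identical to that of Proposition 2.4 and proceeds by routine bookkeeping.
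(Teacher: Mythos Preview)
Your proposal is correct and follows the same approach as the paper, which handles (1) by citing Proposition 2.4, says (2) ``follows similarly'' by taking $H$ to be the set of permutations whose twisted relations hold, and obtains (3) by intersecting. You have simply unpacked what ``similarly'' means in (2), and your appeal to the sign-multiplicativity formulae established in the proof of Proposition 7.6 (concatenation and composition) is exactly the right tool: those formulae give directly that $\varepsilon(\ker(^{\,i}_{i\sigma}))\,\varepsilon(\ker(^{\,i\sigma}_{i\sigma\tau}))=\varepsilon(\ker(^{\,i}_{i\sigma\tau}))$ and $\varepsilon(\ker(^{\,i}_{i\sigma}))=\varepsilon(\ker(^{\,i,i_{k+1}}_{i\sigma,i_{k+1}}))$, which is all the sign bookkeeping needed.

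One small remark on your ``main obstacle'' paragraph: the direct appeal to Proposition 7.4(3) is slightly off, since that statement concerns merging blocks of a \emph{noncrossing} partition, whereas $\ker(^{\,i}_{i\sigma})$ need not be noncrossing. The concatenation formula from the proof of Proposition 7.6, which you also invoke, already handles both the repeating and non-repeating cases uniformly (and uses 7.4(3) internally after first reducing to the noncrossing situation), so no separate case analysis is needed.
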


\begin{proof}
Here (1) is from section 2 above, (2) follows similarly, by taking $H\subset S_\infty$ to be the set of permutations $\sigma\in S_\infty$ having the property that the relations $\bar{\mathcal R}_\sigma$ hold for the standard coordinates, and (3) follows from (1,2), by taking intersections.
\end{proof}

Let us write now the 9 main polygonal spheres as in Proposition 8.4 (3). We say that a polygonal sphere parametrization $S=S^{N-1}_{\mathbb R,G,H}$ is ``standard'' when both filtered groups $G,H\subset S_\infty$ are chosen to be maximal. In this case, Proposition 8.4 (3) and its proof tell us that $G,H$ encode all the monomial relations which hold in $S$.

We have the following result, extending some previous findings from section 2:

\begin{theorem}
The standard parametrization of the $9$ main spheres is
$$\xymatrix@R=10mm@C=10mm{
S_\infty\ar@{.}[d]&S_\infty^*\ar@{.}[d]&\{1\}\ar@{.}[d]&G/H\\
S^{N-1}_\mathbb R\ar[r]&S^{N-1}_{\mathbb R,*}\ar[r]&S^{N-1}_{\mathbb R,+}&\{1\}\ar@{.}[l]\\
S^{N-1,1}_\mathbb R\ar[r]\ar[u]&S^{N-1,1}_{\mathbb R,*}\ar[r]\ar[u]&\bar{S}^{N-1}_{\mathbb R,*}\ar[u]&S_\infty^*\ar@{.}[l]\\
S^{N-1,0}_\mathbb R\ar[r]\ar[u]&\bar{S}^{N-1,1}_\mathbb R\ar[r]\ar[u]&\bar{S}^{N-1}_\mathbb R\ar[u]&S_\infty\ar@{.}[l]}$$
so these spheres come from the $3\times 3=9$ pairs of groups among $\{1\}\subset S_\infty^*\subset S_\infty$.
\end{theorem}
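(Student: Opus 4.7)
The strategy is to use the intersection formula from Definition 8.3, writing each of the nine candidate spheres as $S^{N-1}_{\mathbb{R},G,H}=S^{N-1}_{\mathbb{R},G}\cap\bar{S}^{N-1}_{\mathbb{R},H}$, and to verify the entries of the grid by treating three marginals separately: the top row (where $H=\{1\}$, so only untwisted relations survive), the right column (where $G=\{1\}$, so only twisted relations survive), and the four interior intersections.

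For the top row nothing new is needed: these are the pure untwisted monomial spheres $S^{N-1}_{\mathbb{R},G}$, and Theorem 2.8 together with Proposition 2.6 shows that the only available standard parametrizations are $G\in\{S_\infty,S_\infty^*,\{1\}\}$, producing respectively $S^{N-1}_\mathbb{R}$, $S^{N-1}_{\mathbb{R},*}$, $S^{N-1}_{\mathbb{R},+}$. For the right column one runs a twisted analogue of Theorem 2.8: the main new ingredient is a twisted version of Proposition 2.7, asserting that the maximal filtered subgroup $H\subset S_\infty$ attached to a twisted sphere is stable under removing outer strings and removing neighboring strings. Once this is in hand, the descent to a $3$-cycle, the capping argument producing either the basic or the half-liberated crossing, and the parity-preserving uniqueness argument between $S_\infty^*$ and $S_\infty$ all transfer verbatim, forcing $H\in\{S_\infty,S_\infty^*,\{1\}\}$; reading off the associated defining relations from Definition 8.2 identifies these three cases with $\bar{S}^{N-1}_\mathbb{R}$, $\bar{S}^{N-1}_{\mathbb{R},*}$, $S^{N-1}_{\mathbb{R},+}$, matching the right column of the diagram.

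The remaining four interior entries, with $(G,H)\in\{S_\infty,S_\infty^*\}^2$, are the nontrivial intersections already treated in Proposition 8.1: combining commutation or half-commutation with the corresponding twisted relation collapses a product of two or three distinct coordinates to zero, yielding respectively $S^{N-1,0}_\mathbb{R}$, $S^{N-1,1}_\mathbb{R}$, $\bar{S}^{N-1,1}_\mathbb{R}$, $S^{N-1,1}_{\mathbb{R},*}$. Pasting these into the $3\times3$ grid completes the identification.

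The one delicate step is the sign bookkeeping inside the twisted version of Proposition~2.7. To remove a rightmost outer string from $\bar{\mathcal{R}}_\sigma$ with $\sigma(k)=k$, one specialises $i_k=j$, multiplies by $x_j$, and sums over $j$, invoking $\sum_j x_j^2=1$; the reduced identity coincides with $\bar{\mathcal{R}}_{\sigma|}$ precisely when the signature $\varepsilon\left(\ker(^{\,\,i_1\cdots i_{k-1}\,j}_{i_{\sigma(1)}\cdots i_{\sigma(k-1)}\,j})\right)$ is independent of $j$. When $j$ is fresh, the extra vertical pair at the cyclic right edge is an isolated block that can be adjoined to any noncrossing reduction of the inner partition without introducing additional switches, so Proposition 7.6 gives equality of signs. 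When $j$ coincides with some existing $i_s$, the same reduction still works after merging the isolated pair into the block of $i_s$ at the cyclic edge, at the cost of an even number of extra switches; one checks this either by a short diagrammatic argument or by comparing standard forms as in the proof of Proposition~7.4. The neighboring-string case is handled analogously, again using the quadratic relation together with multiplicativity of $\varepsilon$. Granted this sign invariance, the four-step argument of Theorem 2.8 transfers without modification to the twisted setting, and the proof is complete.
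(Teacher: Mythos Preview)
Your argument establishes that each of the nine pairs $(G,H)$ with $G,H\in\{S_\infty,S_\infty^*,\{1\}\}$ produces the corresponding sphere via $S^{N-1}_{\mathbb R,G,H}=S^{N-1}_{\mathbb R,G}\cap\bar{S}^{N-1}_{\mathbb R,H}$. That much is correct, and indeed for the four interior entries it is exactly the content of Proposition~8.1. However, the theorem asserts that these parametrizations are \emph{standard}, which by definition means that $G$ and $H$ are the \emph{maximal} filtered groups of permutations whose untwisted (resp.\ twisted) relations hold over the given sphere. Your proof does not address this maximality. Concretely: for $S^{N-1}_\mathbb R$ you must show that no nontrivial twisted relation $\bar{\mathcal R}_\sigma$ holds (i.e.\ that the maximal $H$ is $\{1\}$), for $\bar{S}^{N-1}_\mathbb R$ you must show that no nontrivial untwisted relation $\mathcal R_\sigma$ holds (maximal $G=\{1\}$), and for $S^{N-1,1}_\mathbb R$, $\bar{S}^{N-1,1}_\mathbb R$ you must pin down the maximal $H$ and $G$ as $S_\infty^*$. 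None of these follow from the intersection description or from the untwisted/twisted classification theorems you invoke.

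The paper's proof proceeds quite differently: it observes that, by the monotonicity $S_1\subset S_2\Rightarrow G(S_2)\subset G(S_1)$, it suffices to compute the maximal $G$ for the three spheres on the bottom row and the maximal $H$ for the three spheres in the left column, and then carries out these computations directly by analysing the sign $\varepsilon(\ker(^{\,\,\,i}_{i\circ\sigma}))$. The detour through a twisted analogue of Theorem~2.8 that you sketch (with the sign-bookkeeping for outer and neighboring strings) is aimed at the \emph{classification} of twisted monomial spheres; that is Theorem~8.6(2), which the paper in fact deduces \emph{from} Theorem~8.5 rather than the other way around. So your effort there is both unnecessary for the present statement and insufficient to fill the actual gap.
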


\begin{proof}
The fact that we have parametrizations as above is known to hold for the 5 untwisted and twisted spheres, and for the remaining 4 spheres, this follows by intersecting. In order to prove now that the parametrizations are standard, we must compute the following two filtered groups, and show that we get the groups in the statement:
$$G=\{\sigma\in S_\infty|{\rm the\ relations\ }\mathcal R_\sigma\ {\rm hold\ over\ }S\}$$ 
$$H=\{\sigma\in S_\infty|{\rm the\ relations\ }\bar{\mathcal R}_\sigma\ {\rm hold\ over\ }S\}$$ 

As a first observation, by using the various inclusions between spheres, we just have to compute $G$ for the spheres on the bottom, and $H$ for the spheres on the left:
$$X=S^{N-1,0}_\mathbb R,\bar{S}^{N-1,1}_\mathbb R,\bar{S}^{N-1}_\mathbb R\implies G=S_\infty,S_\infty^*,\{1\}$$
$$X=S^{N-1,0}_\mathbb R,S^{N-1,1}_\mathbb R,S^{N-1}_\mathbb R\implies H=S_\infty,S_\infty^*,\{1\}$$

The results for $S^{N-1,0}_\mathbb R$ being clear, we are left with computing the remaining 4 groups, for the spheres $S^{N-1}_\mathbb R,\bar{S}^{N-1}_\mathbb R,S^{N-1,1}_\mathbb R,\bar{S}^{N-1,1}_\mathbb R$. The proof here goes as follows:

(1) $S^{N-1}_\mathbb R$. According to the definition of $H=(H_k)$, we have:
\begin{eqnarray*}
H_k
&=&\left\{\sigma\in S_k\Big|x_{i_1}\ldots x_{i_k}=\varepsilon\left(\ker(^{\,\,\,i_1\ \,\ldots\ \,i_k}_{i_{\sigma(1)}\ldots i_{\sigma(k)}})\right)x_{i_{\sigma(1)}}\ldots x_{i_{\sigma(k)}},\forall i_1,\ldots,i_k\right\}\\
&=&\left\{\sigma\in S_k\Big|\varepsilon\left(\ker(^{\,\,\,i_1\ \,\ldots\ \,i_k}_{i_{\sigma(1)}\ldots i_{\sigma(k)}})\right)=1,\forall i_1,\ldots,i_k\right\}\\
&=&\left\{\sigma\in S_k\Big|\varepsilon(\tau)=1,\forall\tau\leq\sigma\right\}\end{eqnarray*}

Now since for any $\sigma\in S_k,\sigma\neq1_k$, we can always find a partition $\tau\leq\sigma$ satisfying $\varepsilon(\tau)=-1$, we deduce that we have $H_k=\{1_k\}$, and so $H=\{1\}$, as desired.

(2) $\bar{S}^{N-1}_\mathbb R$. The proof of $G=\{1\}$ here is similar to the proof of $H=\{1\}$ in (1) above, by using the same combinatorial ingredient at the end.

(3) $S^{N-1,1}_\mathbb R$. By definition of $H=(H_k)$, a permutation $\sigma\in S_k$ belongs to $H_k$ when the following condition is satisfied, for any choice of the indices $i_1,\ldots,i_k$:
$$x_{i_1}\ldots x_{i_k}=\varepsilon\left(\ker(^{\,\,\,i_1\ \,\ldots\ \,i_k}_{i_{\sigma(1)}\ldots i_{\sigma(k)}})\right)x_{i_{\sigma(1)}}\ldots x_{i_{\sigma(k)}}$$

When $|\ker i|=1$ this formula reads $x_r^k=x_r^k$, which is true. When $|\ker i|\geq3$ this formula is automatically satisfied as well, because by using the relations $ab=ba$, and $abc=0$ for $a,b,c$ distinct, which both hold over $S^{N-1,1}_\mathbb R$, this formula reduces to $0=0$. Thus, we are left with studying the case $|\ker i|=2$. Here the quantities on the left $x_{i_1}\ldots x_{i_k}$ will not vanish, so the sign on the right must be 1, and we therefore have:
$$H_k=\left\{\sigma\in S_k\Big|\varepsilon(\tau)=1,\forall\tau\leq\sigma,|\tau|=2\right\}$$

Now by coloring the legs of $\sigma$ clockwise $\circ\bullet\circ\bullet\ldots$, the above condition is satisfied when each string of $\sigma$ joins a white leg to a black leg. Thus $H_k=S_k^*$, as desired.

(4) $\bar{S}^{N-1,1}_\mathbb R$. The proof of $G=S_\infty^*$ here is similar to the proof of $H=S_\infty^*$ in (3) above, by using the same combinatorial ingredient at the end.
\end{proof}

We can now formulate a classification result, as follows:

\begin{theorem}
The following hold:
\begin{enumerate}
\item $S^{N-1}_\mathbb R\subset S^{N-1}_{\mathbb R,*}\subset S^{N-1}_{\mathbb R,+}$ are the only untwisted monomial spheres.

\item $\bar{S}^{N-1}_\mathbb R\subset\bar{S}^{N-1}_{\mathbb R,*}\subset S^{N-1}_{\mathbb R,+}$ are the only twisted monomial spheres.

\item The $9$ spheres in Theorem 8.5 are the only polygonal ones.
\end{enumerate}
\end{theorem}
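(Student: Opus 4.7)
The plan is to treat the three parts in order. Part (1) is immediate: by Definition 8.3 an ``untwisted monomial sphere'' in the sense of this section coincides exactly with a ``monomial sphere'' in the sense of Definition 2.1, so the statement is Theorem 2.8, with no new argument required.

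For part (2), I would mirror the four-step strategy in the proof of Theorem 2.8, transported to the twisted setting. By Proposition 8.4(2) a twisted monomial sphere is of the form $\bar{S}^{N-1}_{\mathbb R,H}$ for a filtered group $H\subset S_\infty$, and the goal is to force $H\in\{\{1\},S_\infty^*,S_\infty\}$, which by Theorem 8.5 picks out the three spheres $S^{N-1}_{\mathbb R,+}\supset\bar{S}^{N-1}_{\mathbb R,*}\supset\bar{S}^{N-1}_\mathbb R$. The first thing to establish is a twisted analog of Proposition 2.7: that $H$ is stable under (a) removing outer strings and (b) removing neighboring strings. For (a) I take $\sigma\in H_k$ with $\sigma(k)=k$ and apply $\bar{\mathcal{R}}_\sigma$ with $i_k$ chosen generic relative to $i_1,\ldots,i_{k-1},i_{\sigma(1)},\ldots,i_{\sigma(k-1)}$; the rightmost column is then a standalone outer block of size $2$, so the signature of the enlarged kernel agrees with that of the reduced kernel, and multiplying by $x_{i_k}$ and summing reduces to the untwisted argument. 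The non-generic values of $i_k$ are controlled by induction on $k$, since any coincidence between $i_k$ and an earlier index collapses the relation to a shorter one already handled. Case (b) is analogous, inserting $x_a^2$ in the middle of a word and summing over $a$. Once closure under these removals is in hand, Steps 1--4 of the proof of Theorem 2.8 (constructing a $3$-cycle from a commutator, reducing to four explicit permutations, extracting a basic or half-liberated crossing by capping, and invoking the uniqueness of $S_\infty^*$ among parity-preserving subgroups of $S_\infty$) apply verbatim, since they are purely group-theoretic manipulations that do not touch the signs.

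For part (3), I would argue by intersection. By Proposition 8.4(3) any polygonal sphere equals $S^{N-1}_{\mathbb R,G,H}=S^{N-1}_{\mathbb R,G}\cap\bar{S}^{N-1}_{\mathbb R,H}$ for filtered groups $G,H\subset S_\infty$. Parts (1) and (2) force $S^{N-1}_{\mathbb R,G}$ and $\bar{S}^{N-1}_{\mathbb R,H}$ to lie respectively in the three-element lists of main untwisted and twisted spheres, yielding at most $3\times 3=9$ candidate intersections. The nine intersections are precisely those computed in Proposition 8.1 and its diagram, namely the nine spheres displayed in Theorem 8.5, completing the classification.

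The main obstacle is the twisted analog of Proposition 2.7 used in part (2): one has to verify that the signature $\varepsilon$ is compatible with the quadratic simplifications coming from $\sum_a x_a^2=1$, and that coincident index configurations do not disrupt the summation argument. This sign bookkeeping is the only genuine departure from the untwisted proof; once overcome, the combinatorial and group-theoretic skeleton of Theorem 2.8 does all the remaining work.
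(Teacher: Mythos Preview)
Your plan is essentially the one the paper uses, but the paper compresses it drastically: it simply asserts that ``by using standard parametrizations, the above 3 statements are equivalent'' and then invokes Theorem 2.8 for (1). The intended meaning is exactly what you spell out---all three statements reduce to the same classification of filtered subgroups $G\subset S_\infty$ closed under removal of outer and neighboring strings, and that classification is the content of Steps 1--4 of Theorem 2.8. So your parts (1) and (3), and the group-theoretic skeleton of (2), match the paper exactly.

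The one place where your write-up is shaky is the twisted analogue of Proposition 2.7. Your proposal to fix $i_k$ ``generic'', multiply by $x_{i_k}$, and sum does not quite close: if you sum over all $i_k$ you must know the sign for \emph{every} $i_k$, and if you sum only over generic $i_k$ you do not get $\sum_a x_a^2=1$. The ``induction on $k$'' you invoke does not rescue this, since a coincidence $i_k=i_j$ does not collapse the word to a shorter one. What actually makes the argument go through is the unconditional identity
\[
\varepsilon\!\left(\ker\begin{pmatrix}i_1&\ldots&i_{k-1}&i_k\\ i_{\sigma(1)}&\ldots&i_{\sigma(k-1)}&i_k\end{pmatrix}\right)
=\varepsilon\!\left(\ker\begin{pmatrix}i_1&\ldots&i_{k-1}\\ i_{\sigma(1)}&\ldots&i_{\sigma(k-1)}\end{pmatrix}\right)
\]
whenever $\sigma(k)=k$, valid for \emph{all} choices of $i_k$. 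The point is that $T_k$ and $B_k$ sit at adjacent positions in the cyclic order $T_1\ldots T_kB_k\ldots B_1$, and since Proposition 7.4 only permits switches \emph{within a row}, any sequence of switches making the restricted kernel noncrossing can be applied verbatim to the enlarged kernel, after which the pair $\{T_k,B_k\}$---whether a new block or absorbed into an existing one---creates no new crossing. An analogous identity handles neighboring strings. Once this sign identity is in hand, the summation $\sum_{i_k}x_{i_k}^2=1$ works exactly as in the untwisted proof, and your Steps 1--4 go through unchanged. The paper does not write this out either; it is the content hidden behind the word ``equivalent''.
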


\begin{proof}
By using standard parametrizations, the above 3 statements are equivalent. Now since (1) was proved in section 2 above, all the results hold true. 
\end{proof}

Let us discuss now the computation of the quantum isometry groups of the 9 spheres. The result here, extending previous findings from sections 3 and 7, is as follows:

\begin{theorem}
The quantum isometry groups of the $9$ polygonal spheres are
$$\xymatrix@R=12mm@C=17mm{
O_N\ar[r]&O_N^*\ar[r]&O_N^+\\
H_N\ar[r]\ar[u]&H_N^{[\infty]}\ar[r]\ar[u]&\bar{O}_N^*\ar[u]\\
H_N^+\ar[r]\ar[u]&H_N\ar[r]\ar[u]&\bar{O}_N\ar[u]}$$
where $H_N^+,H_N^{[\infty]}$ and $\bar{O}_N,O_N^*,\bar{O}_N^*,O_N^*$ are noncommutative versions of $H_N,O_N$.
\end{theorem}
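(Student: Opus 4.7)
The plan is to reduce to the four polygonal spheres not already treated. The right column $S^{N-1}_{\mathbb R,+}, \bar{S}^{N-1}_{\mathbb R,*}, \bar{S}^{N-1}_\mathbb R$ is handled by Theorem 7.11, and the top row $S^{N-1}_\mathbb R, S^{N-1}_{\mathbb R,*}, S^{N-1}_{\mathbb R,+}$ by Theorem 3.12, so only $S^{N-1,0}_\mathbb R$, $S^{N-1,1}_\mathbb R$, $\bar{S}^{N-1,1}_\mathbb R$ and $S^{N-1,1}_{\mathbb R,*}$ remain, with claimed quantum isometry groups $H_N^+, H_N, H_N, H_N^{[\infty]}$ respectively. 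For each, I would first verify that the claimed quantum group $G_0$ acts affinely on the sphere, which amounts to checking that $X_i=\sum_a u_{ia}\otimes x_a$ satisfies the defining relations of the sphere whenever the $u_{ij}$ satisfy those of $G_0$, and then prove universality: if $G\subset O_N^+$ acts on $S$, then $G\subset G_0$. The universality step follows the pattern of Theorems 3.12 and 7.11, expanding $\Phi$ on the defining relations of $S$ and using linear independence on $S$, of the type established in Proposition 7.9, to isolate relations on the $u_{ij}$.

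The key new ingredient is the treatment of the vanishing relations $x_{i_0}\ldots x_{i_d}=0$. In the simplest case $S^{N-1,0}_\mathbb R$, the identity $X_iX_j=0$ for $i\neq j$ expands as $\sum_a u_{ia}u_{ja}\otimes x_a^2=0$; since on $S^{N-1,0}_\mathbb R$ the $x_a^2$ are linearly independent modulo the single relation $\sum_a x_a^2=1$, one reads off $u_{ia}u_{ja}=0$ for $i\neq j$, which together with orthogonality of $u$ is precisely the defining presentation of $H_N^+$. For $S^{N-1,1}_\mathbb R$ and $\bar{S}^{N-1,1}_\mathbb R$, commutativity or anticommutativity of pairs first forces $G\subset O_N$ or $G\subset\bar{O}_N$ by the arguments of Theorems 3.12 and 7.11; then expanding $X_iX_jX_k=0$ for distinct $i,j,k$ and using linear independence of the surviving cubic monomials forces each row and column of $u$ to contain a single nonzero entry, i.e.\ $G\subset H_N$. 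The half-liberated case $S^{N-1,1}_{\mathbb R,*}$ proceeds analogously, combining half-commutativity (giving $G\subset O_N^*$) with the cubic vanishing to produce the intermediate object $H_N^{[\infty]}$.

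The main technical obstacle I anticipate is establishing the needed linear independence statements on the degenerate spheres, most delicately for $S^{N-1,1}_{\mathbb R,*}$ where half-commutativity and cubic vanishing interact nontrivially. Following the template of Proposition 7.9, I would compute the Gram matrix of the relevant family of monomials on the sphere via the Weingarten formula for the target quantum group and verify that its restriction to a well-chosen index set, such as $\{(a,b,c):a\leq c,\ |\{a,b,c\}|\leq 2\}$ in the cubic case, is invertible. The stochasticity phenomenon observed in Proposition 7.9 suggests that a similar reduction to combinatorial identities on the set of balanced pairings, now restricted to those compatible with the cubic vanishing, should control this step and complete the argument.
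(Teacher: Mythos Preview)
Your proposal is essentially correct and follows the same overall strategy as the paper: reduce to the four bottom-left spheres, verify that the claimed quantum group acts, and prove universality by expanding the coaction on the defining relations and invoking suitable linear independence. The paper likewise defers the full universality argument for $S^{N-1,1}_{\mathbb R,*}$ to \cite{ba2}, so your caveat about that case is exactly right.

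There are two minor tactical differences worth noting. For $S^{N-1,0}_\mathbb R$ the paper does not run your computation; it simply observes that $S^{N-1,0}_\mathbb R$ is the set of $2N$ signed basis vectors, whose quantum symmetry group is known to be $H_N^+$. Your direct argument via $\sum_a u_{ia}u_{ja}\otimes x_a^2=0$ works too, since the $x_a^2$ are linearly independent on that finite set, and it has the merit of being self-contained. For $S^{N-1,1}_\mathbb R$ the paper argues geometrically: after establishing $G\subset O_N$ from the quadratic independence, it notes that $S^{N-1,1}_\mathbb R$ is a union of $\binom{N}{2}$ circles which any classical isometry must permute, forcing $G\subset H_N$. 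Your proposed route through the cubic relation $X_iX_jX_k=0$ is more in line with the $\bar{S}^{N-1,1}_\mathbb R$ case, which the paper does handle exactly as you describe, obtaining $u_{ia}u_{ja}u_{kb}+u_{ja}u_{ka}u_{ib}+u_{ka}u_{ia}u_{jb}=0$ and then multiplying by $u_{kb}$ and summing over $b$.

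One small caution: your claim that half-commutativity on $S^{N-1,1}_{\mathbb R,*}$ already forces $G\subset O_N^*$ ``by the arguments of Theorem 3.12'' is not automatic. That proof routes through $P^{N-1}_\mathbb C$ and uses linear independence of the full family $p_{ab}p_{cd}$, which collapses badly on the polygonal sphere since $x_ax_bx_cx_d=0$ whenever three indices are distinct. The paper avoids this by working directly with $H_N^{[\infty]}$ rather than first descending to $O_N^*$; you may find that route cleaner than trying to salvage the projective argument.
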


\begin{proof}
We already know from sections 3 and 7 that the $O_N$ groups are the correct ones. Regarding the missing 4 computations, those on the bottom left, our precise claim is that we obtain in this way the hyperoctahedral group $H_N$, its free version $H_N^+$, and the ``main'' intermediate liberation $H_N\subset H_N^{[\infty]}\subset H_N^+$, as shown in the diagram above.

Generally speaking, we refer to \cite{ba2} for the proof. In what follows we will only present the main ideas. For the definition and for various technical facts regarding $H_N\subset H_N^{[\infty]}\subset H_N^+$, that we will heavily use in what follows, we refer to \cite{rwe}.

\underline{$S^{N-1,0}_\mathbb R$.} Our sphere here is $S^{N-1,0}_\mathbb R=\mathbb Z_2^{\oplus N}$, formed by the endpoints of the $N$ copies of $[-1,1]$ on the coordinate axes of $\mathbb R^N$. Thus the quantum isometry group is $H_N^+$.

\underline{$S^{N-1,1}_\mathbb R$.} Since the elements $\{x_ix_j|i\leq j\}$ are linearly independent, the trick in \cite{bhg} applies, and gives $G^+(X)\subset O_N$. Now since any affine isometric action $U\curvearrowright S^{N-1,1}_\mathbb R$ must permute the $\binom{N}{2}$ copies of $\mathbb T$ which form our sphere, this gives the result. 

\underline{$\bar{S}^{N-1,1}_\mathbb R$.} By using the maps $\pi_{ij}:C(\bar{S}^{N-1,1}_\mathbb R)\to C(\bar{S}^1_\mathbb R)$ given by $x_k=0$ for $k\neq i,j$, we see that the variables $\{x_ix_j|i\leq j\}$ are once again linearly independent. With this fact in hand, a suitable adaptation of the trick in \cite{bhg} applies, and gives $G^+(X)\subset\bar{O}_N$.

Consider now a quantum subgroup $G\subset\bar{O}_N$. In order to have a coaction map $\Phi:C(\bar{S}^{N-1,1}_\mathbb R)\to C(G)\otimes C(\bar{S}^{N-1,1}_\mathbb R)$, given as usual by $\Phi(x_i)=\sum_au_{ia}\otimes x_a$, the elements $X_i=\sum_au_{ia}\otimes x_a$ must satisfy $X_iX_jX_k=0$, for any $i,j,k$ distinct. We have:
$$X_iX_jX_k=\sum_{ab}(u_{ia}u_{ja}u_{kb}+u_{ja}u_{ka}u_{ib}+u_{ka}u_{ia}u_{jb})\otimes x_a^2x_b$$

Thus, in order for our quantum group $G\subset\bar{O}_N$ to act on $\bar{S}^{N-1,1}_\mathbb R$, its coordinates must satisfy the following relations, for any $i,j,k$ distinct:
$$u_{ia}u_{ja}u_{kb}+u_{ja}u_{ka}u_{ib}+u_{ka}u_{ia}u_{jb}=0$$

By multiplying to the right by $u_{kb}$ and then by summing over $b$, we deduce from this that we have $u_{ia}u_{ja}=0$, for any $i,j$. Now since the quotient of $C(\bar{O}_N)$ by these latter relations is $C(H_N)$, we conclude that we have $G^+(\bar{S}^{N-1,1}_\mathbb R)=H_N$, as claimed.

\underline{$\bar{S}^{N-1,1}_{\mathbb R,*}$.} Let us first prove that $H_N^{[\infty]}$ acts indeed on our sphere. With $X_i=\sum_au_{ia}\otimes x_a$ as usual, and by using the relations for $S^{N-1,1}_{\mathbb R,*}$, we have:
\begin{eqnarray*}
X_iX_jX_k
&=&\sum_{abc}u_{ia}u_{jb}u_{kc}\otimes x_ax_bx_c=\sum_{a,b,c\ not \ distinct}u_{ia}u_{jb}u_{kc}\otimes x_ax_bx_c\\
&=&\sum_{a\neq b}(u_{ia}u_{ja}u_{kb}+u_{ib}u_{ja}u_{ka})\otimes x_a^2x_b\\
&+&\sum_{a\neq b}u_{ia}u_{jb}u_{ka}\otimes x_ax_bx_a+\sum_au_{ia}u_{ja}u_{ka}\otimes x_a^3
\end{eqnarray*}

Now by using various formulae for $H_N^{[\infty]}$, from \cite{rwe}, we obtain, for $i,j,k$ distinct:
$$X_iX_jX_k=\sum_{a\neq b}(0\cdot u_{kb}+u_{ib}\cdot 0)\otimes x_a^2x_b+\sum_{a\neq b}0\otimes x_ax_bx_a+\sum_a(0\cdot u_{ka})\otimes x_a^3=0$$

It remains to prove that we have $X_iX_jX_k=X_kX_jX_i$, for $i,j,k$ not distinct. By replacing $i\leftrightarrow k$ in the above formula of $X_iX_jX_k$, we obtain:
\begin{eqnarray*}
X_kX_jX_i
&=&\sum_{a\neq b}(u_{ka}u_{ja}u_{ib}+u_{kb}u_{ja}u_{ia})\otimes x_a^2x_b\\
&+&\sum_{a\neq b}u_{ka}u_{jb}u_{ia}\otimes x_ax_bx_a+\sum_au_{ka}u_{ja}u_{ia}\otimes x_a^3
\end{eqnarray*}

Let us compare this formula with the above formula of $X_iX_jX_k$. The last sum being 0 in both cases, we must prove that for any $i,j,k$ not distinct and any $a\neq b$ we have:
$$u_{ia}u_{ja}u_{kb}+u_{ib}u_{ja}u_{ka}=u_{ka}u_{ja}u_{ib}+u_{kb}u_{ja}u_{ia}$$
$$u_{ia}u_{jb}u_{ka}=u_{ka}u_{jb}u_{ia}$$

By symmetry the three cases $i=j,i=k,j=k$ reduce to two cases, $i=j$ and $i=k$. The case $i=k$ being clear, we are left with the case $i=j$, where we must prove:
$$u_{ia}u_{ia}u_{kb}+u_{ib}u_{ia}u_{ka}=u_{ka}u_{ia}u_{ib}+u_{kb}u_{ia}u_{ia}$$
$$u_{ia}u_{ib}u_{ka}=u_{ka}u_{ib}u_{ia}$$

By using $a\neq b$, the first equality reads $u_{ia}^2u_{kb}+0\cdot u_{ka}=u_{ka}\cdot 0+u_{kb}u_{ia}^2$, and since we have $u_{ia}^2u_{kb}=u_{kb}u_{ia}^2$, we are done. As for the second equality, this reads $0\cdot u_{ka}=u_{ka}\cdot 0$, which is true as well, and this ends the proof. Finally, regarding the proof of the universality of the action that we constructed, which is quite technical, we refer here to \cite{ba2}.
\end{proof}

\end{document}